\documentclass[12pt]{article}

\usepackage[T1]{fontenc}
\usepackage{lmodern}
\usepackage[margin=1in]{geometry}
\usepackage{amsmath,amssymb,amsthm,mathtools}
\usepackage{aliascnt}
\usepackage{etoolbox}
\usepackage{booktabs,array,longtable,tabularx}
\usepackage{pdflscape}
\usepackage{xspace,xcolor,seqsplit,enumitem,microtype,setspace,graphicx}
\usepackage{tikz}
\usetikzlibrary{arrows.meta,positioning,calc,fit,backgrounds,decorations.pathreplacing,matrix}
\usepackage[numbers,sort&compress]{natbib}
\usepackage[hypertexnames=false,colorlinks=true,linkcolor=blue!55!black,citecolor=blue!55!black,urlcolor=blue!55!black]{hyperref}
\usepackage[nameinlink,capitalise,noabbrev]{cleveref}
\usepackage{titlesec}
\usepackage{fancyhdr}

\definecolor{navy}{RGB}{29,54,92}
\definecolor{softblue}{RGB}{229,238,249}
\definecolor{softgray}{RGB}{245,246,248}
\definecolor{softgreen}{RGB}{229,244,234}
\definecolor{softamber}{RGB}{252,242,214}
\definecolor{softred}{RGB}{249,231,231}
\definecolor{readblue}{RGB}{37,78,128}
\definecolor{intuitiongreen}{RGB}{48,112,74}
\definecolor{checkpointamber}{RGB}{142,91,19}
\definecolor{usepurple}{RGB}{91,65,126}
\definecolor{auditgray}{RGB}{82,87,96}

\colorlet{figurehard}{red!24}
\colorlet{figurealgorithm}{green!20}
\colorlet{figuremixed}{blue!18}
\colorlet{figureopen}{orange!26}
\colorlet{figuregray}{black!7}

\newcommand{\versiondate}{September 11, 2026}
\newcommand{\currentunit}{Front matter}
\newcommand{\setrunningunit}[1]{\gdef\currentunit{#1}}

\allowdisplaybreaks
\numberwithin{equation}{section}
\newtheorem{theorem}{Theorem}
\makeatletter
\@addtoreset{theorem}{part}
\makeatother
\renewcommand{\thetheorem}{\Roman{part}.\arabic{theorem}}
\newaliascnt{lemma}{theorem}\newtheorem{lemma}[lemma]{Lemma}\aliascntresetthe{lemma}
\newaliascnt{proposition}{theorem}\newtheorem{proposition}[proposition]{Proposition}\aliascntresetthe{proposition}
\newaliascnt{corollary}{theorem}\newtheorem{corollary}[corollary]{Corollary}\aliascntresetthe{corollary}
\newaliascnt{claim}{theorem}\aliascntresetthe{claim}
\theoremstyle{definition}
\newaliascnt{definition}{theorem}\newtheorem{definition}[definition]{Definition}\aliascntresetthe{definition}
\newaliascnt{problem}{theorem}\aliascntresetthe{problem}
\newaliascnt{example}{theorem}\newtheorem{example}[example]{Example}\aliascntresetthe{example}
\theoremstyle{remark}
\newaliascnt{remark}{theorem}\newtheorem{remark}[remark]{Remark}\aliascntresetthe{remark}

\crefname{theorem}{theorem}{theorems}\Crefname{theorem}{Theorem}{Theorems}
\crefname{lemma}{lemma}{lemmas}\Crefname{lemma}{Lemma}{Lemmas}
\crefname{proposition}{proposition}{propositions}\Crefname{proposition}{Proposition}{Propositions}
\crefname{corollary}{corollary}{corollaries}\Crefname{corollary}{Corollary}{Corollaries}
\crefname{claim}{claim}{claims}\Crefname{claim}{Claim}{Claims}
\crefname{definition}{definition}{definitions}\Crefname{definition}{Definition}{Definitions}
\crefname{problem}{problem}{problems}\Crefname{problem}{Problem}{Problems}
\crefname{example}{example}{examples}\Crefname{example}{Example}{Examples}
\crefname{remark}{remark}{remarks}\Crefname{remark}{Remark}{Remarks}

\newcommand{\AWET}{\textnormal{\textsc{AWET}}\xspace}
\newcommand{\RXC}{\textnormal{\textsc{RXC3}}\xspace}
\newcommand{\RPE}{\textnormal{\textsc{RPE}}\xspace}
\newcommand{\PARTITION}{\textnormal{\textsc{Partition}}\xspace}

\newcommand{\T}{\mathcal T}\newcommand{\J}{\mathcal J}\newcommand{\B}{\mathcal B}

\newcommand{\1}{\mathbf 1}
\newcommand{\OPT}{\operatorname{OPT}}
\newcommand{\SDP}{\operatorname{SDP}}\newcommand{\one}{\mathbf 1}\newcommand{\Tr}{\operatorname{Tr}}\newcommand{\R}{\mathbb R}

\newcommand{\abs}[1]{\left|#1\right|}
\newcommand{\floor}[1]{\left\lfloor#1\right\rfloor}\newcommand{\ceil}[1]{\left\lceil#1\right\rceil}
\newcommand{\doi}[1]{\href{https://doi.org/#1}{doi:\,#1}}
\newcommand{\numstr}[1]{\begingroup\ttfamily\scriptsize\seqsplit{#1}\endgroup}
\newcommand{\smallnum}[1]{\begingroup\ttfamily\tiny\seqsplit{#1}\endgroup}
\newcommand{\mathnum}[2][0.36\textwidth]{\vcenter{\hbox{\parbox{#1}{\centering\ttfamily\scriptsize\seqsplit{#2}}}}}
\newcommand{\displaynum}[1]{\mbox{\ttfamily #1}}
\newcommand{\readnum}[1]{\vcenter{\hbox{\parbox{0.36\textwidth}{\centering\ttfamily\small\seqsplit{#1}}}}}
\newcommand{\mednum}[1]{\vcenter{\hbox{\parbox{0.36\textwidth}{\centering\ttfamily\footnotesize\seqsplit{#1}}}}}
\newcommand{\sgnvec}{\boldsymbol{\sigma}}

\newcommand{\bits}{\{0,1\}}

\newenvironment{intuition}{\begin{quote}\small\noindent\textcolor{intuitiongreen}{\textbf{Intuition.}}\ }{\end{quote}}
\newenvironment{howtoread}{\begin{quote}\small\noindent\textcolor{readblue}{\textbf{How to read this step.}}\ }{\end{quote}}
\newenvironment{howto}{\begin{quote}\small\noindent\textcolor{usepurple}{\textbf{How to use this result.}}\ }{\end{quote}}
\newenvironment{proofcheckpoint}{\begin{quote}\small\noindent\textcolor{checkpointamber}{\textbf{Proof checkpoint.}}\ }{\end{quote}}
\newenvironment{arithmeticguide}{\begin{quote}\small\noindent\textcolor{auditgray}{\textbf{Arithmetic guide.}}\ }{\end{quote}}
\newenvironment{finding}[1]{\begin{quote}\small\noindent\textcolor{readblue}{\textbf{#1.}}\ }{\end{quote}}
\newenvironment{caution}[1]{\begin{quote}\small\noindent\textcolor{checkpointamber}{\textbf{#1.}}\ }{\end{quote}}
\newenvironment{partposition}{\begin{quote}\small\noindent\textcolor{readblue}{\textbf{Part positioning.}}\ }{\end{quote}}
\newenvironment{notationreset}{\begin{quote}\small\noindent\textcolor{usepurple}{\textbf{Notation reset.}}\ }{\end{quote}}
\newenvironment{proofcontract}{\begin{quote}\small\noindent\textcolor{checkpointamber}{\textbf{Proof contract.}}\ }{\end{quote}}
\newenvironment{reusableidea}{\begin{quote}\small\noindent\textcolor{intuitiongreen}{\textbf{Reusable idea.}}\ }{\end{quote}}
\newenvironment{sectionexit}{\begin{quote}\small\noindent\textcolor{readblue}{\textbf{Section exit.}}\ }{\end{quote}}
\newcommand{\tablecert}[1]{\par\smallskip\begingroup\small\noindent #1\par\endgroup\smallskip}

\titleformat{\part}[display]
  {\centering\bfseries\color{navy}}
  {\Huge Part \Roman{part}}
  {0.8em}
  {\LARGE}
\titlespacing*{\part}{0pt}{1.2in}{1.0in}

\newcommand{\nextsectionhead}{}
\newcommand{\nextsubsectionhead}{}
\newcommand{\setshortsectionhead}[1]{\gdef\nextsectionhead{#1}}
\newcommand{\setshortsubsectionhead}[1]{\gdef\nextsubsectionhead{#1}}

\hypersetup{
 pdftitle={Asymmetric Weighted Earliness Tardiness: Scheduling with a Nonrestrictive Common Due Date},
 pdfauthor={Nicholas G. Hall; Hans Kellerer; Miao Song},
 pdfsubject={Exact complexity, constant-factor approximation, antithetical ordering, and separating-tree algorithms for AWET},
 pdfkeywords={common due date, earliness, tardiness, scheduling, strong NP-completeness, semidefinite programming, approximation, separable permutation, FPTAS}
}

\begin{document}
\setstretch{1.37931}
\setlength{\baselineskip}{20pt}

\begin{titlepage}
\thispagestyle{empty}
\centering
\vspace*{0.25in}
{\LARGE\bfseries Asymmetric Weighted Earliness-Tardiness:\par}
\vspace{0.22in}
{\Large\bfseries Scheduling with a Nonrestrictive\par}
\vspace{0.08in}
{\Large\bfseries Common Due Date\par}
\vspace{1.4in}
{\Large Nicholas G. Hall\par}
{\large The Ohio State University\par}
\vspace{0.16in}
{\Large Hans Kellerer\par}
{\large University of Graz, Austria\par}
\vspace{0.16in}
{\Large Miao Song\par}
{\large Hong Kong Polytechnic University, Hong Kong\par}
\vspace{2.2in}
{\large \today\par}
\end{titlepage}

\pagenumbering{arabic}
\setcounter{page}{1}
\thispagestyle{plain}
\begin{center}
{\LARGE\bfseries Results at a Glance\par}
\end{center}
\vspace{0.55em}

\begingroup
\small
\setlength{\fboxsep}{4pt}
\noindent\fcolorbox{navy}{softblue}{\parbox{0.93\textwidth}{
\textbf{Part I: Intractability.}
Positive-integer asymmetric \AWET at the boundary nonrestrictive due date is strongly NP-complete through a unary-polynomial exact-cover compiler.}}

\vspace{0.36em}
\noindent\fcolorbox{navy}{softblue}{\parbox{0.93\textwidth}{
\textbf{Part II: General approximation.}
An anchored SDP and deterministic marginal threshold give a polynomial-time $(3+2\sqrt2+\varepsilon)$-approximation for unrestricted \AWET.}}

\vspace{0.36em}
\noindent\fcolorbox{navy}{softblue}{\parbox{0.93\textwidth}{
\textbf{Part III: Antithetical ordering.}
Exact reversal is weakly NP-complete; a two-resource recurrence gives a pseudopolynomial exact algorithm.}}

\vspace{0.36em}
\noindent\fcolorbox{navy}{softblue}{\parbox{0.93\textwidth}{
\textbf{Part IV: Separating-tree analysis.}
For fixed total ratio-order refinements whose ratio permutation is separable, exact separating-tree convolution and geometric trimming give an FPTAS.}}

\vspace{0.36em}
\noindent\fcolorbox{navy}{softgreen}{\parbox{0.93\textwidth}{
\textbf{Part V: Global synthesis.}
The concluding Part integrates the exact-hardness, approximation, weak-hardness, and separating-tree regimes.}}

\vspace{0.42em}
\begin{finding}{Organizing thesis}
Once common-due-date schedule geometry is reduced to the canonical early-set objective, the computational regime is governed by the structure available in the two Smith-ratio orders.  Unrestricted order permits strong NP-completeness and still supports constant-factor approximation; exact reversal produces a weakly hard load-balancing core; and recursive direct/skew compatibility exposes the bounded interface needed for an FPTAS.  The four technical Parts are regimes of one quadratic model.
\end{finding}

\vspace{-0.25em}
\begin{finding}{Four unifying mechanisms}\par\smallskip
\begin{tabularx}{0.96\linewidth}{@{}X X@{}}
\textbf{Canonicalization:} canonical early-set choice $\to$ quadratic objective. &
\textbf{Certificates:} prefix/load squares and SDP pseudo-marginals.\\[2pt]
\textbf{Interfaces:} direct/skew blocks $\to$ four additive coordinates. &
\textbf{Controlled loss:} scale gaps, thresholds, and geometric boxes.\\
\end{tabularx}
\end{finding}
\endgroup

\clearpage
\begin{abstract}
Single-machine asymmetric weighted earliness--tardiness scheduling asks how to sequence jobs around a common synchronization date when early and late completion incur unrelated job-dependent penalties.  At the boundary nonrestrictive date $d=\sum_jp_j$, a compact V-shaped schedule reduces the continuous-time problem to a quadratic choice of a nonempty early set.  We establish four complementary results for this model.  First, the positive-integer problem is strongly NP-complete by a unary-polynomial reduction from Restricted Exact Cover by 3-Sets.  Second, unrestricted \AWET admits a polynomial-time $(3+2\sqrt2+\varepsilon)$-approximation based on an anchored semidefinite relaxation and deterministic marginal thresholding.  Third, when the earliness and tardiness ratio orders are strict reversals, the problem is weakly NP-complete but has an exact two-resource pseudopolynomial dynamic program.  Fourth, for fixed total refinements whose ratio permutation is separable, an exact separating-tree recurrence and coordinated geometric trimming yield an FPTAS.  The proofs use different manifestations of the same canonical objective: scale-separated prefix penalties, positive-semidefinite minimum-kernel covariance, a dominant completed load square, and a bounded four-coordinate decomposition interface.  Together, the results show that the decisive issue is not merely whether the two ratio orders agree, but whether their interaction can be controlled by a global certificate or compressed into a bounded constructive interface.
\end{abstract}

\begingroup\small
\noindent\textbf{Keywords:} scheduling; common due date; asymmetric earliness and tardiness; strong NP-completeness; approximation algorithm; semidefinite programming; antithetical ordering; separable permutation; separating tree; FPTAS.

\smallskip
\noindent\textbf{2020 Mathematics Subject Classification.}
Primary: 90B35, 68Q17.  Secondary: 68Q25, 68W25, 90C27.

\endgroup

\clearpage
\section*{High-level contents}
\setrunningunit{Front matter}
\begin{center}
\small
\begin{tabularx}{0.96\textwidth}{>{\bfseries\raggedright\arraybackslash}p{0.24\textwidth} X r}
\toprule
Unit & Principal content & Page\\
\midrule
Front matter & Purpose, reading paths, literature context, contribution landscape, dependency map, and notation guide & \pageref{sec:global-purpose}\\
Part I: Intractability & Strong NP-completeness through prefix exactness, coded blocks, scale separation, and a global load lock & \pageref{part:intractability}\\
Part II: General Approximation & Anchored SDP, deterministic threshold rounding, and the finite-precision bit-model bridge & \pageref{part:approximation}\\
Part III: Antithetical Ordering & Weak NP-completeness at exact reversal and a pseudopolynomial exact recurrence & \pageref{part:antithetical}\\
Part IV: Separating-Tree Analysis & Exact separating-tree convolution and an FPTAS for fixed refinements with separable ratio permutation & \pageref{part:cotree}\\
Part V: Global Synthesis & Combined methodological landscape, modeling implications, and named open problems & \pageref{part:synthesis}\\
Appendix~\ref{app:algebra} & Canonical-objective algebra & \pageref{app:algebra}\\
Appendix~\ref{app:worked} & Fully worked yes/no certificates for the strong reduction & \pageref{app:worked}\\
Glossary and notation & Cross-Part terminology and symbol definitions & \pageref{app:glossary}\\
References & Bibliography & \pageref{app:references}\\
\bottomrule
\end{tabularx}
\end{center}

\clearpage
\renewcommand{\contentsname}{Detailed contents}
\tableofcontents
\clearpage

\section{Purpose, audience, and reading paths}
\label{sec:global-purpose}
This document contains four full theorem chains: unrestricted strong NP-completeness, a general constant-factor approximation algorithm, weak NP-completeness under strict antithetical ordering, and an FPTAS for separable ratio permutations.  

The intended audience includes researchers and graduate students in scheduling, combinatorial optimization, approximation algorithms, and computational complexity.  Familiarity with NP-completeness, dynamic programming, and elementary semidefinite programming is helpful but not required for the main proof architecture.  Definitions precede the mechanisms that use them, and the longer reductions and algorithms include intuition, proof checkpoints, and worked arithmetic.

There are five natural reading paths.
\begin{description}[style=nextline,leftmargin=4.0cm]
\item[Results path] Read this front matter, the principal theorem statements in Parts I--IV, and Part V.
\item[Hardness path] Read Part I for strong NP-completeness and Part III for the distinct completed-square reduction at exact ratio reversal.
\item[Algorithmic path] Read Part II for the unrestricted constant-factor algorithm and Part IV for the separating-tree FPTAS.  These Parts use different certificates: positive-semidefinite covariance in Part II and bounded additive interfaces in Part IV.
\item[Course path] Read the canonical objective in Section~2, one hardness chain (Part I or III), one algorithmic chain (Part II or IV), the corresponding worked example, and Part V.  This route supports a graduate module without requiring every arithmetic appendix.
\item[Implementation path] Read Sections~12--20 for the anchored-SDP algorithm, Theorem~III.11 for the two-resource recurrence, \cref{CT:sec:exactdp,CT:sec:trimming} for separating-tree tables and trimming, and the worked examples for exact data flow and reconstruction.
\end{description}

\begin{finding}{Reader-facing box contract}
The colored boxes have nonoverlapping roles.  \textcolor{readblue}{\textbf{How to read}} gives navigation or a safe skip; \textcolor{intuitiongreen}{\textbf{Intuition}} explains mechanism; \textcolor{checkpointamber}{\textbf{Proof checkpoint}} records what has been established and what remains; \textcolor{usepurple}{\textbf{How to use}} gives an algorithmic instruction; and \textcolor{auditgray}{\textbf{Arithmetic guide}} gives a reproducibility rule.  A box is used only when it performs one of these functions rather than paraphrasing the adjacent prose.
\end{finding}

\paragraph{Terminological convention.}
A job is \emph{early/on time} when it completes no later than the common due date and \emph{tardy} otherwise.  The superscript $L$ labels the tardiness channel.  The term \emph{strict antithetical} means that, after jobs are indexed by increasing early ratio, their tardiness ratios strictly decrease.  After fixed total tie refinements, the \emph{inversion graph} joins exactly the job pairs whose two ratio orders disagree.  The resulting ratio permutation is \emph{separable} precisely when this inversion graph is a cograph.

\paragraph{Style convention.}
Numbered structural units are capitalized as Parts and Appendices.  Compound modifiers are written consistently as \emph{positive-integer}, \emph{common-due-date}, \emph{ratio-order}, \emph{finite-precision}, and \emph{minimum-kernel}.  A slash is reserved for the intentional combined category \emph{early/on time}; elsewhere the prose uses ordinary conjunctions.

\section{Problem, canonical formulation, and literature context}
\label{sec:global-introduction}

\subsection*{Why the problem matters}
Many production and service systems are organized around a shared synchronization date rather than independent job-specific deadlines.  Components may need to be released together to assembly, orders may be coordinated for a common delivery, or preparatory activities may have to be completed near the start of a downstream stage.  Finishing too early can create holding, storage, financing, obsolescence, or premature-delivery costs; finishing too late can create service failures, contractual penalties, lost production, or disruption to dependent work.  The asymmetric model allows these consequences to differ both from job to job and between the two sides of the target date.  It is deliberately stylized, but it isolates the sequencing tradeoff created by heterogeneous early and tardy consequences; broad discussions of these interpretations appear in the common-due-date surveys~\cite{BakerScudder1990,GordonProthChu2002,RolimNagano2020}.

The boundary choice $d=\sum_jp_j$ is especially informative.  It gives enough aggregate time to process every job by the due date in a compact schedule starting at zero, but it does not make the optimization trivial.  A scheduler may still choose to place some processing after $d$ when the resulting reduction in earliness cost elsewhere outweighs the tardiness incurred.  Because total processing is fixed, every unit placed after the due date reappears as an equal initial offset before the early block.  This conservation relationship leaves a genuine combinatorial question---which jobs should lie on each side---while removing irrelevant calendar slack.

That balance makes the model a useful test case for complexity and approximation.  It is simple enough that exchange arguments determine the optimal order on each side of the due date, reducing the continuous-time problem to one canonical early-set objective.  Yet the remaining set choice is rich enough to display several sharply different regimes: strong NP-completeness without order restrictions, a universal constant-factor approximation, weak NP-completeness under exact reversal of the two ratio orders, and an FPTAS when the ratio interaction has a separable recursive decomposition.

Accordingly, the classifications below answer three complementary questions.  What prevents exact polynomial-time optimization in the general model?  What performance guarantee remains possible without structural assumptions?  Which recognizable forms of order structure restore substantially stronger algorithms?  The answers identify not only complexity boundaries but also the mathematical objects---global certificates and bounded interaction interfaces---that make those boundaries visible.

An \AWET instance consists of jobs $j\in\J$ with positive processing times $p_j$, positive earliness weights $w_j^E$, and positive tardiness weights $w_j^L$.  All jobs are available at time zero and are processed nonpreemptively on one machine.  The common due date is
\[
 d=P:=\sum_{j\in\J}p_j,
\]
and a schedule with completion times $C_j$ has objective
\[
 \sum_{j\in\J}\bigl(w_j^E(d-C_j)^+ + w_j^L(C_j-d)^+\bigr).
\]

The equality $d=\sum_jp_j$ is the \emph{boundary nonrestrictive due date}.  It permits a compact schedule to finish exactly at $d$, but it does not eliminate the early--tardy assignment decision: placing processing after $d$ creates the same amount of initial offset before the early block.  This conservation identity turns schedule geometry into the quadratic set objective used throughout the tutorial.

Write
\[
 r_j=\frac{w_j^E}{p_j},\qquad s_j=\frac{w_j^L}{p_j}.
\]
The canonical-schedule theorem says that an optimum may be chosen compact and V-shaped.  Consequently, for every nonempty early set $E$ and tardy complement $T=\J\setminus E$, the associated due-date-aligned canonical schedule has cost
\begin{equation}
\label{eq:front-canonical}
 \Phi(E)=
 \sum_{\{i,j\}\subseteq E}p_ip_j\min\{r_i,r_j\}
 +\sum_{\{i,j\}\subseteq T}p_ip_j\min\{s_i,s_j\}
 +\sum_{i\in T}p_i^2s_i.
\end{equation}
Moreover, the global optimum is $\OPT=\min_{\varnothing\ne E\subseteq\J}\Phi(E)$.  This formula is the common backbone of all four technical Parts.  Part I compiles an exact-cover signal into it; Part II relaxes and rounds it; Part III shapes it into an equal-load square; and Part IV decomposes it across separating-tree blocks.

\paragraph{Why the normalization \texorpdfstring{$d=\sum_jp_j$}{d = total processing time} is used.}
Put $P=\sum_jp_j$.  The familiar condition $d\ge P$ is a simple sufficient condition for the fixed common due date to be nonrestrictive~\cite{KellererRustogiStrusevich2020}: every compact V-shaped candidate has total processing span $P$, and subtracting $d-P$ from both the due date and all completion times preserves every earliness and tardiness value while keeping all start times nonnegative.  Thus every instance satisfying $d\ge P$ is equivalent, for the present objective, to the normalized case $d=P$.

The condition $d\ge P$ is sufficient rather than necessary.  Let $\Phi(E)$ be the canonical early-set objective in \eqref{eq:front-canonical}.  A sharper instance-dependent sufficient threshold is
\[
 d_{\mathrm{nr}}(I)
 :=\min\left\{\sum_{j\in E}p_j:
 E\in\arg\min_{\varnothing\ne A\subseteq\J}\Phi(A)\right\}.
\]
If $d\ge d_{\mathrm{nr}}(I)$, at least one optimal canonical early arm fits before time $d$, so the due date does not change the unconstrained optimum.  This criterion is less concise because it depends on the instance and on an optimal side assignment; the uniform normalization $d=P$ avoids carrying that threshold through every theorem.  When the due date is early enough to constrain placement of the V-shaped schedule, the problem is in the restrictive regime studied by Hall, Kubiak, and Sethi~\cite{HallKubiakSethi1991}.

\subsection{Common-due-date structure and exact complexity}
Early common-due-date models established the V-shaped structural viewpoint and the importance of distinguishing nonrestrictive and restrictive due-date regimes.  Kanet studied total deviation about a common due date, while Panwalkar, Smith, and Seidmann treated common due-date assignment~\cite{Kanet1981,PanwalkarSmithSeidmann1982}.  Hall and Posner analyzed weighted symmetric earliness--tardiness at a nonrestrictive common due date; Hall, Kubiak, and Sethi treated the companion restrictive setting~\cite{HallPosner1991,HallKubiakSethi1991}.  Broad reviews include Baker and Scudder, Gordon, Proth, and Chu, and Rolim and Nagano~\cite{BakerScudder1990,GordonProthChu2002,RolimNagano2020}.

For the weighted symmetric nonrestrictive model, Hall and Posner proved weak NP-hardness and gave a pseudopolynomial dynamic program~\cite{HallPosner1991}.  Hoogeveen and van de Velde recorded the strong-complexity boundary for general asymmetric weights as unresolved~\cite{HoogeveenVandeVelde1997}.  Part I resolves that published question in the exact formulation used here: positive-integer asymmetric weighted earliness--tardiness scheduling at the boundary due date $d=\sum_jp_j$ is strongly NP-complete.

\subsection{Pseudopolynomial algorithms, FPTAS results, and approximation}
Weak hardness in common-due-date scheduling has long coexisted with pseudopolynomial algorithms and approximation schemes in symmetric or otherwise structured settings.  The Hall--Posner recurrence is an early example; later FPTAS results include the schemes of Kovalyov and Kubiak and Kellerer, Rustogi, and Strusevich~\cite{HallPosner1991,KovalyovKubiak1999,KellererRustogiStrusevich2020}.  These positive results depend on structure absent from unrestricted asymmetric data.

Part II therefore addresses a different boundary.  It permits unrelated positive earliness and tardiness weights and obtains a universal constant-factor guarantee rather than an FPTAS.  Part III then identifies a strict reversed-order class whose hardness is weak and whose state space is pseudopolynomial, while Part IV obtains an FPTAS under a recursive order-decomposition condition.

\subsection{Separable-permutation decomposition and semidefinite methodology}
For fixed total refinements of the two ratio orders, Part IV represents their interaction by a ratio permutation.  Separable permutations admit recursive direct and skew sums and can be recognized, with a separating tree recovered, in linear time~\cite{BoseBussLubiw1998}.  Equivalently, their inversion graphs are cographs, whose union/join decomposition is represented by the corresponding cotree~\cite{CorneilPerlStewart1985,HabibPaul2010}.

Part II uses semidefinite optimization in a different way.  Polynomial-time weak optimization and rational bit-model control are standard parts of semidefinite programming~\cite{Alizadeh1995,GrotschelLovaszSchrijver1988,VandenbergheBoyd1996}.  Semidefinite relaxations are also a classical source of approximation algorithms in combinatorial optimization~\cite{GoemansWilliamson1995}.  The rounding here is not hyperplane rounding: it uses an anchored local-consistency relaxation, deterministic marginal thresholding, and a minimum-kernel covariance inequality tailored to the scheduling objective.

\begin{quote}\small
\noindent\textcolor{checkpointamber}{\textbf{Provenance note.}}
The tutorial proves every stated result.  Historical priority is claimed only where the comparison with published literature is documented explicitly; otherwise the theorem and its scope are stated without a priority claim over unpublished or independent work.
\end{quote}

\section{Contribution map and scope}
\label{sec:front-map}
\begin{center}
\scriptsize
\begin{tabularx}{0.99\textwidth}{>{\bfseries\raggedright\arraybackslash}p{0.045\textwidth} >{\raggedright\arraybackslash}p{0.205\textwidth} >{\raggedright\arraybackslash}p{0.225\textwidth} >{\raggedright\arraybackslash}p{0.245\textwidth} X}
\toprule
Part & Closest established boundary & New result here & Key mechanism & Not implied\\
\midrule
I & Symmetric nonrestrictive weights were weakly NP-hard and pseudopolynomially solvable; published tables left the general asymmetric strong boundary open. & Positive-integer \AWET at $d=\sum_jp_j$ is strongly NP-complete and unary NP-hard. & Effective-atom profile compression, separated prefix-square signal, and an anchor-coupled minimum-kernel load lock. & No APX-hardness or no-PTAS conclusion follows from strong NP-completeness alone.\\
II & FPTAS results were available in symmetric or structured common-due-date settings. & Unrestricted asymmetric \AWET has a polynomial-time $(3+2\sqrt2+\varepsilon)$-approximation. & Anchored SDP, deterministic marginal thresholding, local early-side consistency, and PSD tardy-side covariance payment. & The theorem does not establish a PTAS or optimality of the constant.\\
III & The unrestricted problem is strongly NP-complete, but exact reversal had not been classified by the Part I construction. & Strict antithetical \AWET is weakly NP-complete and has an $O(nW_EP)$ pseudopolynomial recurrence. & Binary pair tagging, a dominant completed load square, and a uniform residual budget. & Weak hardness does not rule out an FPTAS; exact reversal is separable and belongs to Part IV's structured class after fixed refinements.\\
IV & Separable permutations have recursive direct/skew decompositions and linear-time recognition. & For fixed total refinements of the two ratio orders, if the resulting ratio permutation is separable, \AWET admits an FPTAS. & A four-coordinate nonnegative bilinear interface, exact separating-tree convolution, and coordinated geometric trimming. & Separability is sufficient for this algorithm, not claimed necessary for every approximation scheme.\\
\bottomrule
\end{tabularx}
\end{center}
For the Part~III running-time bound, $n=|\J|$ is the number of jobs,
$W_E=\sum_{j\in\J}w_j^E$ is the total earliness weight, and
$P=\sum_{j\in\J}p_j$ is the total processing time.

\subsection*{Why these results belong together}
Each technical Part begins from the same canonical quadratic objective but asks a different question of it.  Part I asks how much information the objective can encode; Part II asks how much of its value can be certified by a relaxation; Part III asks what exact reversal does to both hardness and state-space dimension; and Part IV asks when order decomposition compresses all future interactions into a bounded interface.  This shared starting point is the main reason the four results are best read together.

\begin{figure}[ht]
\centering
\setlength{\fboxsep}{6pt}
\begin{tabular}{@{}c@{\hspace{0.7em}}c@{}}
\fcolorbox{navy}{figurehard}{\parbox{0.42\textwidth}{\centering\textbf{Unrestricted exact problem}\\Strong NP-completeness (Part I)}} &
\fcolorbox{navy}{figurealgorithm}{\parbox{0.42\textwidth}{\centering\textbf{Unrestricted approximation}\\$(3+2\sqrt2+\varepsilon)$-approximation (Part II)}}\\[0.8em]
\fcolorbox{navy}{figuremixed}{\parbox{0.42\textwidth}{\centering\textbf{Strict antithetical order}\\Weak NP-completeness and pseudopolynomial DP (Part III)}} &
\fcolorbox{navy}{figurealgorithm}{\parbox{0.42\textwidth}{\centering\textbf{Fixed separable refinements}\\FPTAS (Part IV)}}
\end{tabular}

\vspace{0.8em}
\fcolorbox{navy}{figureopen}{\parbox{0.88\textwidth}{\centering
\textbf{Open general and structural boundary:} better universal ratios, a general PTAS versus no-PTAS theorem, and structured classes beyond separable permutations.}}
\caption{Front-matter landscape: four proved regimes of the canonical early-set objective and the principal open boundary.  As in the combined landscape, red marks exact intractability, green marks approximation algorithms, blue marks the mixed weak-hardness/pseudopolynomial regime, and amber marks unresolved boundaries.}
\label{fig:front-landscape}
\end{figure}

\clearpage
\subsection*{Dependency map}
\begin{figure}[ht]
\centering
\fcolorbox{navy}{figuremixed}{\parbox{0.90\textwidth}{\centering
\textbf{Shared formal prerequisite:} the canonical compact schedule and early-set objective~\eqref{eq:front-canonical}.}}

\vspace{0.8em}
\footnotesize
\begin{tabularx}{0.96\textwidth}{@{}>{\bfseries\raggedright\arraybackslash}p{0.10\textwidth} X@{}}
\toprule
Part & Logical dependency chain from the shared prerequisite\\
\midrule
I & $\Longrightarrow$ \RXC $\to$ \RPE (Lemmas~I.8--I.9) $\Longrightarrow$ coded blocks and scale separation $\Longrightarrow$ load lock (Lemma~I.15) $\Longrightarrow$ master decomposition (Lemma~I.16) $\Longrightarrow$ Theorem~I.2.\\[3pt]
II & $\Longrightarrow$ anchored SDP (Definition~II.9) $\Longrightarrow$ early and tardy payment (Lemmas~II.12--II.16) $\Longrightarrow$ rational realization (Lemma~II.20) $\Longrightarrow$ threshold algorithm $\Longrightarrow$ Theorem~II.3.\\[3pt]
III & $\Longrightarrow$ binary tagging (Lemma~III.3) $\Longrightarrow$ cost identity (Lemma~III.6) and residual bound $\Longrightarrow$ anchor and gap separation $\Longrightarrow$ weak NP-completeness; independently, the two-resource recurrence gives Theorem~III.11.\\[3pt]
IV & $\Longrightarrow$ fixed refinements and ordered separating tree $\Longrightarrow$ bilinear convolution (Lemma~IV.5) $\Longrightarrow$ exact DP (Theorem~IV.7) $\Longrightarrow$ representative invariant $\Longrightarrow$ FPTAS (Theorem~IV.11).\\
\bottomrule
\end{tabularx}
\caption{Logical dependency map.  The arrows denote formal dependence, not chronology.  \Cref{lem:separation} proves Part I's scale separation; Appendix~\ref{app:algebra} supplies the coefficient algebra, and Appendix~\ref{app:worked} the full yes/no arithmetic.}
\label{fig:dependency-map}
\end{figure}

\subsection*{Notation at a glance}
\begin{center}
\footnotesize
\begin{tabularx}{0.98\textwidth}{>{\raggedright\arraybackslash}p{0.17\textwidth} X >{\raggedright\arraybackslash}p{0.17\textwidth} >{\raggedright\arraybackslash}p{0.20\textwidth}}
\toprule
Symbol & Meaning & Scope & First definition\\
\midrule
$p_j,w_j^E,w_j^L$ & processing time, earliness weight, and tardiness weight & global & Section~2\\
$d$ & common due date; in the main results, the boundary nonrestrictive date $d=\sum_jp_j$ & global & Section~2\\
$r_j,s_j$ & early and tardiness Smith ratios & global & Section~2\\
$E,T$ (or $\mathcal T$) & early/on-time and tardy job sets & global; Part I uses $\mathcal T$ to avoid the scale $T$ & \eqref{eq:front-canonical}\\
$N,G,M,T,H,L$ & scale hierarchy, load-lock parameters, or kernel levels & reset in Part I & notation reset before Section~4\\
$W_E$ & total earliness weight $\sum_{j\in\J}w_j^E$ & Parts III--IV & Section~3\\
$A,B,\Gamma,R$ & master constant, decision threshold, residual budget, and residual & Part I & Section~8.1.9\\
$\mathcal A,\mathcal B,\mathcal U$ & SDP early-pair, tardy-pair, and self-cost accounts & Part II & \eqref{AA:eq:ABC}\\
$e_i,t_i,e_{ij},t_{ij}$ & SDP pseudo-marginals and pseudo-joint masses & Part II & Section~14\\
$Q,S,U,M,D$ & tagged total, selected load, residual budget, dominant scale, and due-date scale & Part III & notation reset before \cref{ANTI:sec:architecture}\\
$\prec_E,\prec_L,\pi$ & fixed ratio-order refinements and ratio permutation & Part IV & \cref{CT:sec:orders}\\
$z_M=(e_M,t_M,u_M,v_M)$ & additive separating-tree signature & Part IV & Definition~IV.4\\
$\lambda,H$ & geometric trimming ratio and separating-tree height & Part IV & \cref{CT:sec:trimming}\\
\bottomrule
\end{tabularx}
\end{center}

\begin{howtoread}
The contribution table separates theorem novelty from proof-technique novelty.  The dependency map shows which formal statements are needed, while the notation table shows where symbols are reset.  Readers may now enter any Part without treating the document as one linear 200-page proof.
\end{howtoread}

\clearpage
\part{Intractability}
\setrunningunit{Part I: Hardness}
\thispagestyle{plain}
\label{part:intractability}
\begin{center}
{\Large\bfseries Strong NP-Completeness of Single-Machine Asymmetric Weighted Earliness--Tardiness Scheduling with a Common Due Date\par}
\end{center}
\medskip
\begin{quote}\small
\noindent\textbf{Part synopsis.}
This Part proves strong, and hence unary, NP-completeness of positive-integer \AWET.  Restricted Exact Cover by 3-Sets is first converted to regular prefix exactness.  Each source set is then represented by an eight-job block with exactly two balanced states.  Separated early-ratio bands turn those states into squared prefix-cover errors; an anchor-coupled minimum-kernel lock enforces the block loads; and integer tardiness offsets correct the remaining logical fields.  The final objective is an exact constant plus a dominant prefix penalty and a bounded residual.
\end{quote}

\begin{partposition}
\textbf{Established boundary.}  The symmetric-weight nonrestrictive model was weakly NP-hard and pseudopolynomially solvable, while the published strong-complexity boundary for general asymmetric weights remained open.  \textbf{Result.}  Positive-integer \AWET at the boundary due date $d=\sum_jp_j$ is strongly NP-complete by a unary-polynomial reduction.  \textbf{Technique.}  Effective-atom profile compression and separated ratio gaps carry the logical signal, while an anchor-coupled minimum-kernel quadratic enforces all block loads simultaneously. 
\end{partposition}

\begin{notationreset}
Here $n$ is the size of the regular exact-cover and prefix-exactness source, $N=n+1$ controls the scale hierarchy, and $m$ denotes the number of constructed jobs when needed.  The global job data remain $p_j,w_j^E,w_j^L$, with ratios $r_j=w_j^E/p_j$ and $s_j=w_j^L/p_j$.  Scale symbols such as $G,M,T,H$, and $L$ are local to this Part.
\end{notationreset}

\begin{proofcontract}
\begin{itemize}[leftmargin=2em,itemsep=1pt,topsep=2pt]
\item \textbf{Input:} an $n$-set Restricted Exact Cover by 3-Sets instance.
\item \textbf{Output:} an \AWET\ instance with eight state jobs per source set and one anchor, together with a threshold $B$.
\item \textbf{Forward invariant:} an exact cover gives balanced block states, zero prefix penalty, and a canonical schedule of cost at most $B$.
\item \textbf{Reverse invariant:} every schedule of cost at most $B$ can be replaced without higher cost by an optimal canonical schedule; that canonical schedule has an early anchor, satisfies the load lock, decodes to block states, and annihilates all prefix-cover errors.
\item \textbf{Size obligation:} every constructed integer is bounded by a fixed polynomial (the written audit uses at most $N^{650}$), so unary length is polynomial.
\end{itemize}
\end{proofcontract}

\section*{Proof navigation for Part I}
Part I has two coordinated lanes.
\begin{center}
\small
\begin{tabularx}{0.96\textwidth}{>{\bfseries\raggedright\arraybackslash}p{0.17\textwidth} X X}
\toprule
Lane & Main chain & Audit support\\
\midrule
Logical lane & \RXC $\to$ prefix equations $\to$ two coded states per source set $\to$ zero prefix error $\to$ exact cover & Appendix~\ref{app:algebra} tracks the canonical-objective coefficients; Appendix~\ref{app:worked} instantiates both directions.\\
Numerical lane & scale hierarchy $\to$ ratio-order separation $\to$ anchor-coupled load lock $\to$ residual domination $\to$ integer decision gap & \Cref{lem:separation} proves the separation inequalities; Appendix~\ref{app:worked} reproduces every scale, field, and threshold value.\\
\bottomrule
\end{tabularx}
\end{center}
The two lanes meet in the master identity.  A theorem-level reader may follow the logical lane and consult only the stated numerical lemmas; an arithmetic-audit reader may reproduce the supporting appendices without changing the logical proof.  The two fully instantiated numerical companions are \cref{app:worked-yes,app:worked-no}: the first follows the forward construction through the 49-job schedule and threshold, while the second repeats the compiler for a closely related no-instance and audits the reverse contradiction.

\section{The \AWET\ problem and historical context}
\label{sec:problem}

\subsection{Formal definition}

A schedule specifies a permutation of the jobs and a nonnegative start time for each job, subject to nonoverlap and nonpreemption.  Because there is only one machine and no release dates, the only combinatorial decisions are the order, the placement of idle time, and which completion lies on each side of the common due date.

\begin{definition}[Asymmetric weighted earliness--tardiness scheduling]
An instance of \AWET\ consists of a set $\J=\{1,\ldots,m\}$ of jobs. Job $j$ has an integer processing time $p_j>0$, an integer earliness weight $w_j^E>0$, and an integer tardiness weight $w_j^L>0$. The jobs must be processed nonpreemptively on one continuously available machine, which can process at most one job at a time. The machine is available from time zero, and all start times must be nonnegative. The common due date is fixed at
\[
 d=\sum_{j\in\J}p_j.
\]
If $C_j$ is the completion time of job $j$, define
\[
 E_j=(d-C_j)^+,
 \qquad
 T_j=(C_j-d)^+,
\]
where $x^+=\max\{x,0\}$. The cost of a schedule is
\[
 F=\sum_{j\in\J}\bigl(w_j^E E_j+w_j^L T_j\bigr).
\]
The decision version of \AWET\ additionally contains an integer bound $B$ and asks whether a feasible schedule of cost at most $B$ exists.
\end{definition}

The terminology ``asymmetric'' refers to the absence of any required relation between $w_j^E$ and $w_j^L$.  A job with $C_j<d$ is \emph{early}; a job with $C_j=d$ has zero deviation and will be grouped with the early jobs; and a job with $C_j>d$ is \emph{tardy}.  We use \AWET\ throughout for both the optimization problem and, when a bound is present, its decision version. Baker and Scudder~\cite{BakerScudder1990} provide a broad review of single-machine sequencing with earliness and tardiness penalties.

The choice $d=\sum_jp_j$ is important.  It equals the length of a compact schedule containing all jobs.  Thus, if a compact sequence starts at time zero, its final job completes exactly at $d$; if some jobs are placed after $d$, the same total amount of processing must appear as an initial offset before the early block.  This conservation identity is used repeatedly in the set formulation and in the numerical schedule.

\begin{figure}[ht]
\centering
\begin{tikzpicture}[x=1cm,y=1cm,>=Latex,font=\small]
\draw[->,thick] (0,0) -- (12.2,0) node[right]{time};
\draw[very thick,navy] (1.0,0) -- (7.0,0);
\draw[very thick,checkpointamber] (7.0,0) -- (11.0,0);
\draw[thick] (7.0,-0.35) -- (7.0,0.55) node[above]{$d$};
\node[above=3pt] at (4.0,0) {early block};
\node[above=3pt] at (9.0,0) {tardy block};
\node[below=6pt,align=center] at (4.0,0) {chronological ratios $r_j$ increase\\toward the due date};
\node[below=6pt,align=center] at (9.0,0) {chronological ratios $s_j$\\decrease away from the due date};
\end{tikzpicture}
\caption{Geometry of a compact canonical schedule.  The early block ends at the common due date and the tardy block starts immediately afterward.}
\label{fig:schedule-geometry}
\end{figure}

For complexity terminology, \emph{strong NP-hardness} means that NP-hardness persists even when every numerical value is bounded by a polynomial in the combinatorial size of the source instance.  A reduction with that property also remains polynomial when the target numbers are written in unary.  The proof below therefore establishes both strong NP-completeness and unary NP-hardness.

Hall and Posner~\cite{HallPosner1991} considered the symmetric-weight case $w_j^E=w_j^L$ with a nonrestrictive common due date. They proved weak NP-hardness and gave a pseudopolynomial dynamic program. Hoogeveen and van de Velde~\cite{HoogeveenVandeVelde1997} explicitly identified strong NP-hardness of the common-due-date problem with general weights as unresolved; their complexity table lists the general case as open. This tutorial resolves the published strong-complexity question for positive-integer asymmetric weighted earliness--tardiness scheduling at the boundary due date $d=\sum_jp_j$.

\subsection{Main result}

\begin{theorem}\label{thm:main}
The decision version of \AWET\ is strongly NP-complete. In particular, \AWET\ is unary NP-hard.
\end{theorem}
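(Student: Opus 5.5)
The proof splits into membership in NP and strong NP-hardness. For membership, I will use the canonical-schedule theorem stated in Section~2. Some optimal schedule is compact and V-shaped, so a yes-instance has a certificate that is just a nonempty early set $E$. From $E$ we evaluate $\Phi(E)$ in \eqref{eq:front-canonical} in polynomial time and compare it with $B$. The verifier's arithmetic is polynomial in the binary input length, so the problem is in NP.

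For hardness I will reduce from Restricted Exact Cover by 3-Sets (\RXC), which is strongly NP-complete because it has no numbers. The reduction must be unary-polynomial: every integer $p_j,w_j^E,w_j^L,B$ must be bounded by a fixed polynomial in $N=n+1$. The plan follows the proof contract in four steps.
\begin{enumerate}
\item Convert \RXC into a regular prefix-exactness problem (\RPE). Cover conditions become a family of linear equations over ordered prefixes of the element order, each required to hold exactly.
\item Represent each source set by an eight-job block whose early/tardy split can be balanced in exactly two ways. These are the ``selected'' and ``unselected'' states, each with the same block load.
\item Add one anchor job, intended to be early in every cheap schedule.
\item Assign early ratios $r_j$ in bands separated by polynomial gaps $N^{c}$. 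In the sum $\sum_{\{i,j\}\subseteq E}p_ip_j\min\{r_i,r_j\}$, the min-kernel then expands as a weighted sum of squared prefix loads. Squaring the prefix-cover errors gives a dominant penalty that vanishes exactly on exact covers.
\end{enumerate}

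The tardiness weights are chosen so that the tardy-side min-kernel, together with the anchor, acts as a quadratic ``load lock.'' This lock heavily penalises any block whose early load deviates from the balanced value. Small integer tardiness offsets then cancel the remaining linear cross terms. With this in place, I would prove a master identity of the form $\Phi(E)=A+(\text{scale})\cdot(\text{prefix penalty})+(\text{lock penalty})+R(E)$, where $A$ is an explicit constant and $|R(E)|\le\Gamma$ is strictly smaller than one unit of the penalty scale. The threshold is then $B=A+\Gamma$.

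Both directions then follow from the identity.
\begin{itemize}
\item \emph{Forward.} An exact cover gives balanced states and zero prefix error, so $\Phi\le B$.
\item \emph{Reverse.} Take any schedule of cost at most $B$ and pass to a canonical optimum. The anchor must be early, since otherwise an anchor-sized term exceeds $B$. The lock forces every block into one of its two states. Any nonzero prefix error costs at least one scale unit, which is more than $\Gamma$. Hence the selected sets form an exact cover.
\end{itemize}
The construction uses polynomially many jobs, and each number is at most $N^{c}$ for a fixed constant $c$, which gives strong NP-completeness and unary NP-hardness.

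The main obstacle is the numerical lane. The hierarchy of scales must be chosen so that four conditions hold simultaneously:
\begin{itemize}
\item the ratio bands are strictly ordered, so the min-kernel picks the intended factor;
\item the lock penalty dominates every prefix-level gain;
\item the prefix penalty dominates the residual $R$;
\item all of this is achieved with polynomially bounded integers.
\end{itemize}
My first attempt would fix geometric scales $G\ll M\ll T\ll H\ll L$ as increasing powers of $N$. I would bound each cross term crudely by products of the largest lower-level quantities and check the separation inequalities one level at a time. Only at the end would I verify that the largest exponent is still a constant.
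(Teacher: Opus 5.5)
Your outline follows the paper's architecture: \RXC\ $\to$ \RPE\ $\to$ two-state eight-job blocks, an early anchor, banded early ratios whose minimum kernel yields squared prefix errors, a tardy minimum-kernel load lock, integer tardiness offsets for the logical field, and a residual-versus-gap threshold. Your NP-membership argument is also the paper's. The one step your plan does not supply, and that choosing scales and bounding cross terms crudely cannot supply, is the load lock itself. With block tardiness ratios $s_i=HL_v$, the $H$-scaled tardy cost is $\frac H2\bigl(b^\top Kb+\sum_vL_vq_v\bigr)$, where $q_v$ is the sum of \emph{squared} processing times of the tardy jobs in block $v$. This is not a function of the loads. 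It already differs by $\frac H2L_vD_v\gg H$ between the two coded states of a single block, where $D_v\approx2M\sum_h\kappa(k_{vh})$. Because it scales with $H$ itself, no hierarchy of powers of $N$ can dominate it.

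The paper removes this term by an exact cancellation, and that cancellation dictates the form of the early ratios. It sets $r_i=f_v+\frac{L_v}2p_i$ with $f=K\beta$, and uses an early anchor with $p_a=H$ and ratio above every state job. The anchor's early pairs with block $v$ then contribute $H\bigl[f_v(2\beta_v-b_v)+\frac{L_v}2(Q_v-q_v)\bigr]$, where $Q_v$ is the block's total squared processing. This cancels every $q_v$ and supplies the linear field $-Hf^\top b$, leaving exactly $\frac H2(b-\beta)^\top K(b-\beta)$. The prefix-sum form of the minimum kernel, with level gaps $L_k-L_{k+1}=2$, then charges at least $H$ to every nonzero integer load error. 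At the small offset scale the same $q_v$-dependence is actually wanted: it is precisely how $\eta_v$ moves the logical field by $-\eta_vD_v/4$.

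Your bookkeeping also needs two repairs. First, a single identity $\Phi(E)=A+(\text{prefix})+(\text{lock})+R(E)$ with $|R(E)|\le\Gamma$ for all $E$ is false. Off the coded manifold the prefix penalty is undefined, and the non-lock terms oscillate by an amount $U$ vastly larger than $\Gamma$. The argument has to be two-tier:
\begin{itemize}
\item choose $H>U$, with an extra margin over $2\max_v|\eta_v|$ so the offsets cannot disturb the tardy block order, which forces coded states;
\item only then use $\Phi_{\rm can}=A+\mathcal P(x)+R(\sigma)$ with $|R|\le\Gamma$, on coded states alone.
\end{itemize}
Second, the reverse direction only gives $\Phi\ge A+\Delta_{\min}/2-\Gamma$. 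So with a threshold near $A+\Gamma$, one unit of prefix penalty must exceed $2\Gamma$, not $\Gamma$. The bound must also be an integer, so take $B=\lfloor A+\Gamma\rfloor$ and use integrality of canonical costs; the paper secures $\Delta_{\min}/2>2\Gamma+1$.
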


Membership in NP follows from \cref{lem:compact,prop:setobjective}.  If any schedule has cost at most the decision bound, then a globally optimal canonical schedule also has cost at most that bound.  A certificate may specify its nonempty early set and the canonical orders on the two sides of $d$; ratio comparisons use cross multiplication, and all start times, completion times, and objective terms are obtained by polynomially many integer additions and multiplications.  The remainder of Part~I proves strong NP-hardness.

\clearpage

\section{Overview of the proof}
\label{sec:overview}

The proof follows the two reductions in \cref{fig:chain}.  Reduction I rewrites Restricted Exact Cover by 3-Sets as an equivalent system of prefix equations.  Reduction II compiles that system into a scheduling instance.  Each source set becomes an eight-job block consisting of three occurrence pairs and one controller pair.  The two balanced orientations of the block encode one binary variable $x_v$.

The scheduling construction then performs three tasks.  First, an effective atom summarizes each complete two-job pair for cross-block calculations; its position places occurrence information in an element-specific early-ratio band, and the gaps between bands produce squared prefix errors.  Second, a field correction centers those squares at the required prefix values.  Third, an anchor and a positive-definite minimum-kernel penalty force every block of an optimal canonical assignment to use one of its two balanced orientations.  The decision threshold then separates zero prefix error from every nonzero prefix error.  Keeping these tasks separate is the main organizational principle of the proof.

\begin{finding}{Main-line map: logic and arithmetic}
\begin{description}[style=nextline,leftmargin=2.7cm,itemsep=2pt]
\item[Logic] source exact cover $\to$ prefix equalities $\to$ block spins $\to$ signed prefix totals $\to$ zero squared error.
\item[Arithmetic] positional digits $\to$ separated ratio bands $\to$ positive-definite load lock $\to$ bounded within-band residual $\to$ strict threshold gap.
\end{description}
\Cref{lem:separation} certifies the band and gap inequalities; Appendix~\ref{app:algebra} certifies the field and coupling algebra; Appendix~\ref{app:worked} certifies one full yes-instance and one full no-instance.
\end{finding}

\begin{figure}[t]
\centering
\resizebox{0.98\textwidth}{!}{%
\begin{tikzpicture}[
  node distance=4.2cm,
  every node/.style={font=\small},
  box/.style={draw,rounded corners,minimum width=3.35cm,minimum height=1.05cm,align=center,fill=figurehard},
  arr/.style={-{Latex[length=3mm]},thick}
]
\node[box] (rxc) {Restricted Exact Cover\\by 3-Sets (\RXC)};
\node[box,right=of rxc] (rpe) {Regular Prefix\\Exactness (\RPE)};
\node[box,right=of rpe] (awet) {Asymmetric weighted\\earliness--tardiness (\AWET)};
\draw[arr] (rxc) -- node[above,align=center]{prefix\\differences} (rpe);
\draw[arr] (rpe) -- node[above,align=center]{coded-block compiler} node[below,align=center]{load lock + prefix gaps} (awet);
\end{tikzpicture}%
}
\caption{Reduction chain. Both transformations are polynomial. Every numerical value in the second transformation is bounded by a fixed polynomial in the \RPE\ instance size.}
\label{fig:chain}
\end{figure}

\section{A canonical set formulation of \AWET}
\label{sec:structure}

For each job, define its earliness and tardiness Smith ratios by
\[
 r_j=\frac{w_j^E}{p_j},
 \qquad
 s_j=\frac{w_j^L}{p_j}.
\]
These ratios determine the canonical order once the early-job set is fixed.  Integrality is not needed in this section; it is imposed by the reduction later.

\begin{lemma}[Compact canonical schedule]\label{lem:compact}
Every \AWET\ instance has an optimal schedule with the following properties:
\begin{enumerate}[label=(\roman*)]
\item there is no idle time between consecutive jobs;
\item at least one job completes exactly at $d$;
\item read outward from $d$, the jobs completing no later than $d$ are in nonincreasing order of $r_j$;
\item read outward from $d$, the tardy jobs are in nonincreasing order of $s_j$.
\end{enumerate}
\end{lemma}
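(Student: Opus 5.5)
I would prove \cref{lem:compact} by starting from an arbitrary optimal schedule and applying a finite sequence of cost-nonincreasing transformations. Each transformation enforces one property without destroying those already obtained. An optimum exists, and the argument is cleanest over the finitely many compact candidates. For a fixed permutation, the cost is a convex piecewise-linear function of the start times. Hence for each order an optimal timing exists, and there are finitely many orders. Alternatively, I would first show that compact schedules dominate and then minimize over the finitely many (order, offset) pairs, taking the offset from the breakpoints.

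\textbf{Step 1 (no interior idle time).} Split the jobs into those with $C_j\le d$ and those with $C_j>d$; by nonoverlap the first group precedes the second in the sequence. Any idle time inside the early group can be removed by shifting all earlier jobs right. Because they never pass $d$, no earliness increases. Likewise, idle time inside the tardy group, or between the last early job and the first tardy job when the latter starts after $d$, can be removed by shifting later jobs left, as long as the first tardy job still completes after $d$ or reaches exactly $d$. The remaining gap is a gap straddling $d$ between the early and tardy parts. I would remove it by translating the whole block. Shifting the schedule by $\delta$ changes the cost linearly in $\delta$ with slope $\sum_{\text{tardy}}w_j^L-\sum_{\text{early}}w_j^E$ until some completion crosses $d$. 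So moving in the nonincreasing direction until either the gap closes or some job hits $d$ never increases the cost.

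\textbf{Step 2 (some job completes at $d$).} I would then translate the now compact sequence. The cost as a function of the shift is piecewise linear and convex, with breakpoints exactly when some $C_j=d$, so a minimizer is attained at a breakpoint. The key feasibility point is the normalization $d=\sum_jp_j$. A compact block whose job $k$ completes at $d$ starts at $d-\sum_{\text{jobs up to }k}p_j\ge 0$, so every breakpoint position is feasible. Thus the compact schedule can be moved to a breakpoint without increasing cost.

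\textbf{Steps 3--4 (V-shape via adjacent interchange).} With the early set $E$ ending exactly at $d$ and the tardy set starting at $d$, consider two adjacent early jobs $i$ then $j$. Swapping them changes the cost by $w_i^Ep_j-w_j^Ep_i$, since $i$'s earliness grows by $p_j$ and $j$'s shrinks by $p_i$. This change is $\le0$ exactly when $r_i\le r_j$ fails in the wrong direction, i.e.\ a swap is beneficial or neutral when $r_i>r_j$. Hence sorting so that ratios are nonincreasing outward from $d$ (increasing chronologically) does not increase cost. Symmetrically, for adjacent tardy jobs $i$ then $j$ the swap changes the cost by $w_j^Lp_i-w_i^Lp_j$, giving nonincreasing $s_j$ read outward. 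Each swap keeps the set of jobs on each side, the compactness, and a completion at $d$ unchanged, so (i) and (ii) survive. Bubble sort terminates.

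The main obstacle I expect is Step 1's handling of the gap straddling $d$ together with Step 2's feasibility, i.e.\ verifying that translations never force a start time below zero. This is where the boundary choice $d=\sum_jp_j$ must be used explicitly. A secondary care point is tie-breaking: with equal ratios, "nonincreasing" permits either order, so zero-change swaps are harmless.
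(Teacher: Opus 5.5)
Your route is the paper's: close idle intervals by moving blocks toward $d$, translate the compact sequence to a breakpoint of its convex piecewise-linear cost (with $d=\sum_jp_j$ guaranteeing a nonnegative start at every breakpoint), and sort each side by adjacent interchanges. Your explicit existence argument and breakpoint-feasibility check are, if anything, more careful than the paper's text.

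The one step that fails as written is the last move of your Step~1. A rigid translation of the whole schedule by $\delta$ leaves the length of every idle interval unchanged, so it cannot close any gap. The slope $\sum_{\text{tardy}}w_j^L-\sum_{\text{early}}w_j^E$ is therefore beside the point; if it points left, the first start time may already be $0$ and you are stuck. The case this move is meant to handle is the last early job followed, after idle time, by a tardy job that starts before $d$. That is an idle interval lying wholly before $d$, and the one-sided move of your first case closes it. Shift every job preceding the interval rightward by its length. Those jobs still complete no later than the tardy job's start, hence before $d$, so only earliness decreases. In general, remove every idle interval by moving the jobs on its side or sides toward $d$. Move the earlier part rightward if the interval ends by $d$, the later part leftward if it starts at or after $d$, and both until they meet at $d$ if it contains $d$ in its interior. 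This is exactly the paper's Step~1. Since all weights are positive, each such move strictly lowers the cost, so an optimal schedule in fact has no internal idle time.

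Second, your interchange bookkeeping has a sign slip. The displayed differences, the justification ``$i$'s earliness grows by $p_j$,'' and the criterion ``this change is $\le0$'' do not agree with one another. If early job $i$ immediately precedes early job $j$, the swap delays $i$ by $p_j$ and advances $j$ by $p_i$, so the cost changes by $w_j^Ep_i-w_i^Ep_j=p_ip_j(r_j-r_i)$. If tardy job $i$ immediately precedes tardy job $j$, the cost changes by $w_i^Lp_j-w_j^Lp_i=p_ip_j(s_i-s_j)$. With $i$ chronologically first, your displayed expressions are the negatives of these. The conclusions you state are nevertheless correct: swap when the earlier early job has the larger $r$, and finish with nonincreasing $r_j$ and nonincreasing $s_j$ read outward from $d$. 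So this is a bookkeeping correction, not a structural problem.
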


\begin{proof}
We establish the four properties in order.

\par\medskip\noindent\textbf{Step 1: remove internal idle time.}\quad
Choose an optimal schedule with minimum idle time between its first and last jobs.  An idle interval wholly before $d$ can be closed by shifting the block immediately to its left rightward; every affected job remains early and becomes no more costly.  The symmetric left shift closes an interval wholly after $d$.  If an interval contains $d$, shift the adjacent blocks toward $d$.  Each operation weakly decreases cost, so an optimal schedule with minimum internal idle time has none.  This proves (i).

\par\medskip\noindent\textbf{Step 2: put a completion at the due date.}\quad
Fix a compact job sequence and let its first job start at $z\ge0$.  Because the sequence has total length $d$, each completion time has the form $C_j(z)=z+\widehat C_j$.  The total cost is a convex piecewise-linear function of $z$, being a sum of terms
\[
 w_j^E(d-C_j(z))^+ + w_j^L(C_j(z)-d)^+.
\]
A minimum on $[0,\infty)$ may be chosen at $z=0$ or at a breakpoint.  At $z=0$ the last job completes at $d$; at a breakpoint some job completes at $d$.  An endpoint of any flat minimum interval has the same property.  This proves (ii).

\par\medskip\noindent\textbf{Step 3: order the early jobs.}\quad
Let adjacent early job $i$ be farther from $d$ than adjacent early job $j$.  In the order $i,j$, the order-dependent contribution is
\[
 w_i^Ep_j=p_ip_jr_i;
\]
in the reverse order it is $p_ip_jr_j$.  The smaller ratio must therefore be farther from $d$.  Repeated adjacent exchanges prove (iii).

\par\medskip\noindent\textbf{Step 4: order the tardy jobs.}\quad
Let tardy job $i$ be closer to $d$ than tardy job $j$.  In the order $i,j$, the order-dependent contribution is
\[
 w_j^Lp_i=p_ip_js_j;
\]
in the reverse order it is $p_ip_js_i$.  Again the smaller ratio belongs farther from $d$, and adjacent exchanges prove (iv).
\end{proof}

\begin{intuition}
The lemma removes the continuous part of the scheduling decision.  Once the early-job set is chosen, both sides are packed around $d$ and ordered by their ratios.  The reduction may therefore optimize over job subsets rather than over arbitrary start times.
\end{intuition}

Let $E$ be the set of jobs completing no later than $d$.  To distinguish the tardy-job set from the numerical scale $T$ introduced in \cref{sec:red2}, write $\T=\J\setminus E$.  For a prescribed $E$, the ratio rules determine a due-date-aligned canonical schedule up to ties.  The structural lemma guarantees that at least one global optimum is of this form.

\begin{proposition}[Canonical set objective]\label{prop:setobjective}
For every nonempty $E\subseteq\J$, put $\T=\J\setminus E$.  Arrange the jobs of $E$ before $d$ in the early ratio order of \cref{lem:compact}, with the last early job completing at $d$, and arrange the jobs of $\T$ immediately after $d$ in the corresponding tardy ratio order.  The cost of this feasible due-date-aligned canonical schedule is
\begin{align}
 \Phi(E)
 ={}&\sum_{\{i,j\}\subseteq E}p_ip_j\min\{r_i,r_j\}
 \label{eq:setobj}\tag{SO}\\
 &+\sum_{\{i,j\}\subseteq \T}p_ip_j\min\{s_i,s_j\}
 +\sum_{i\in \T}p_i^2s_i.
 \nonumber
\end{align}
Moreover,
\begin{equation}
 \OPT=\min_{\varnothing\ne E\subseteq\J}\Phi(E).
 \label{eq:setopt}
\end{equation}
\end{proposition}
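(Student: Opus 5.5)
The proposition has two parts: an explicit cost formula for the due-date-aligned canonical schedule of a fixed early set $E$, and the identity $\OPT=\min_{E\ne\varnothing}\Phi(E)$. I will compute the two sides of $d$ separately, writing every earliness and tardiness as a sum of processing times. Then I will invoke \cref{lem:compact} for the optimality statement.

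\emph{Early side.} Place the jobs of $E$ so that the last one completes at $d$. The earliness of job $i\in E$ is $d-C_i=\sum_{j\in E,\ j\text{ after } i}p_j$. The early cost is therefore $\sum_{i\in E} w_i^E\sum_{j\text{ after } i}p_j=\sum_{\{i,j\}\subseteq E}p_ip_j r_{\text{farther}}$. Each unordered pair contributes $p_i p_j$ times the $r$-value of whichever member is farther from $d$. The ordering of \cref{lem:compact}(iii) places the smaller ratio farther from $d$, so this value is $\min\{r_i,r_j\}$. Ties are harmless because then both values coincide.

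\emph{Tardy side.} The jobs of $\T$ start at $d$ consecutively. For $i\in\T$ the tardiness is $C_i-d=p_i+\sum_{j\in\T,\ j\text{ before } i}p_j$. The tardy cost is $\sum_{i\in\T}w_i^L p_i+\sum_{\{i,j\}\subseteq\T}p_ip_j s_{\text{farther}}$. Since $w_i^L p_i=p_i^2 s_i$, the first term is the self term. By \cref{lem:compact}(iv) the farther job has the smaller $s$, which gives $\min\{s_i,s_j\}$.

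\emph{Feasibility.} The schedule is feasible because its first job starts at $d-\sum_{E}p_j\ge 0$, using $d=\sum_{\J}p_j$. Adding the two sides yields \eqref{eq:setobj}.

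\emph{Optimality.} Since each $\Phi(E)$ is the cost of a feasible schedule, $\OPT\le\min_E\Phi(E)$. For the reverse inequality, take the optimal schedule from \cref{lem:compact}. Property (ii) guarantees that some job completes exactly at $d$. Hence the set $E$ of jobs completing no later than $d$ is nonempty, and by (i) its last member ends at $d$. Then (i), (iii) and (iv) show that this schedule is precisely the canonical schedule for $E$, up to the ordering of tied jobs, and tied jobs do not affect the cost by the pair formula. Thus $\OPT=\Phi(E)\ge\min\Phi$.

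The main subtle point is this last identification. One must check that compactness together with a completion at $d$ forces the early block to end exactly at $d$ and the tardy block to begin at $d$, with no gap across $d$. Here (i) is used, and the tie argument is needed so that any ratio-consistent order gives the same $\Phi(E)$.
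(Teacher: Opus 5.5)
Your proposal is correct and follows essentially the same route as the paper: earliness and tardiness are decomposed into pairwise terms $p_ip_j\min\{\cdot,\cdot\}$ plus the tardy self terms, feasibility follows from $d=\sum_jp_j$, and the reverse inequality comes from identifying the compact canonical optimum of \cref{lem:compact} with the due-date-aligned schedule of its early set. One small wording slip: it is property (ii), not (i), that makes the last early job end at $d$, while (i) is what makes the tardy block start exactly at $d$; your final paragraph already uses (i) in that correct role.
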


\begin{proof}
Fix a nonempty set $E$ and put $\T=\J\setminus E$.

\par\medskip\noindent\textbf{Feasibility.}\quad
Place the canonical early block so that its last job completes at $d$, followed immediately by the canonical tardy block.  The first start time is
\[
 d-\sum_{i\in E}p_i
 =\sum_{i\in \T}p_i\ge0,
\]
because $d=\sum_{j\in\J}p_j$.  Thus this due-date-aligned canonical schedule is feasible.

\par\medskip\noindent\textbf{Its cost.}\quad
For an unordered pair $\{i,j\}\subseteq E$, the processing time of the job closer to $d$ contributes to the earliness of the job farther from $d$.  The exchange order places the smaller early ratio farther away, so the pair contributes
\[
 p_ip_j\min\{r_i,r_j\}.
\]
There is no early self term because a job's own processing time ends at its completion.  An unordered tardy pair similarly contributes $p_ip_j\min\{s_i,s_j\}$.  Every tardy job also includes its own processing time in its tardiness, giving the self term
\[
 w_i^Lp_i=p_i^2s_i.
\]
Summing these contributions proves \eqref{eq:setobj}.

\par\medskip\noindent\textbf{Global optimality identity.}\quad
Let $V=\min_{\varnothing\ne E\subseteq\J}\Phi(E)$.  Every schedule just constructed is feasible, so $\OPT\le V$.  Conversely, \cref{lem:compact} supplies a globally optimal schedule with no internal idle time, at least one completion exactly at $d$, and the two canonical ratio orders.  Let $E^*$ be its jobs completing no later than $d$.  Because a completion occurs at $d$ and there is no internal idle time, the early block ends at $d$ and the tardy block starts immediately afterward; no job straddles the due date.  The preceding pair calculation therefore gives the schedule's cost as $\Phi(E^*)$.  Hence
\[
 \OPT=\Phi(E^*)\ge V.
\]
Together with $\OPT\le V$, this proves \eqref{eq:setopt}.
\end{proof}

\begin{proofcheckpoint}
Equation \eqref{eq:setobj} is the exact cost of each due-date-aligned canonical schedule, and \eqref{eq:setopt} says that minimizing these values gives the true scheduling optimum.  The first term depends only on which jobs are early, and the other two only on which jobs are tardy.  This is the point at which the scheduling instance becomes a discrete quadratic optimization problem.
\end{proofcheckpoint}

Encode the early--tardy choice by a spin
\[
 c_i=\begin{cases}
 +1,&i\in E,\\
 -1,&i\in \T.
 \end{cases}
\]
Then $\1\{i\in E\}=(1+c_i)/2$ and $\1\{i\in\T\}=(1-c_i)/2$.  A product of two indicators expands into a constant, two linear terms, and one quadratic term.  Below, these are called the constant term, logical fields, and logical couplings.

\section{Reduction I: \RXC\ to regular prefix exactness}
\label{sec:red1}

Gonzalez~\cite{Gonzalez1985} proved that Restricted Exact Cover by 3-Sets is NP-complete.

\begin{definition}[\RXC]\label{def:rxc}
An instance consists of a universe $X=\{e_1,\ldots,e_n\}$ and a family $\mathcal S=\{S_1,\ldots,S_n\}$.  Every $S_v$ has three elements, and every element belongs to three sets.  Its incidence matrix is
\[
 A^{\mathrm{inc}}=(a_{kv})\in\{0,1\}^{n\times n},
 \qquad
 a_{kv}=
 \begin{cases}
 1,&e_k\in S_v,\\
 0,&e_k\notin S_v.
 \end{cases}
\]
Rows represent elements and columns represent sets.  A vector $x\in\{0,1\}^n$ selects column $v$ when $x_v=1$.  The selected sets form an \emph{exact cover} exactly when
\[
 A^{\mathrm{inc}}x=\1,
 \qquad\text{equivalently}\qquad
 \sum_{v=1}^n a_{kv}x_v=1
 \quad(k=1,\ldots,n).
\]
The decision question is whether such a vector exists.
\end{definition}

Each row and each column contains three ones, so counting incidences gives $|\mathcal S|=|X|$; we use $n$ for both.  The intermediate problem keeps the same feasible vectors but replaces the individual row equations by cumulative prefix equations.  This form matches the intervals that will arise on the scheduling ratio axis.

\begin{definition}[Regular Prefix Exactness]\label{def:rpe}
An instance of \RPE\ is an ordered universe $X=\{e_1,\ldots,e_n\}$ together with the same three-uniform, three-regular family $\mathcal S=\{S_1,\ldots,S_n\}$.  Define
\[
 d_{v,k}=\sum_{\ell=1}^k a_{\ell v}
          =\abs{S_v\cap\{e_1,\ldots,e_k\}},
 \qquad
 C_k(x)=\sum_{v=1}^n d_{v,k}x_v,
\]
\RPE\ asks whether there is $x\in\{0,1\}^n$ such that
\[
 C_k(x)=k
 \qquad(k=1,\ldots,n).
\]
\end{definition}

Here $d_{v,k}$ counts the entries equal to one among the first $k$ rows of column $v$, and $C_k(x)$ counts selected incidences in those rows.  Thus $C_k(x)=k$ says that the first $k$ elements contain exactly $k$ selected incidences.

\begin{lemma}[Prefix-difference identity]\label{lem:prefixdifference}
For every binary vector $x$ and every $k=1,\ldots,n$, with $C_0(x)=0$,
\[
 C_k(x)-C_{k-1}(x)=\sum_{v:e_k\in S_v}x_v.
\]
\end{lemma}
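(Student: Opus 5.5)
The plan is a direct computation from the definitions in \cref{def:rpe}, using the convention $C_0(x)=0$. First we check that this convention is consistent with the definition: setting $k=0$ in $d_{v,k}=\sum_{\ell=1}^k a_{\ell v}$ gives an empty sum, so $d_{v,0}=0$ for every $v$, and hence $C_0(x)=\sum_v d_{v,0}x_v=0$. With that, the identity can be proved uniformly for all $k=1,\ldots,n$, with no separate case for $k=1$.

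Next, we rewrite each difference of prefix counts. For every column $v$ and every $k\ge1$, the telescoping of the defining sum gives
\[
 d_{v,k}-d_{v,k-1}=\sum_{\ell=1}^{k}a_{\ell v}-\sum_{\ell=1}^{k-1}a_{\ell v}=a_{kv}.
\]
Multiplying by $x_v$ and summing over $v$ uses the linearity of $C_k(x)$ in the coefficients $d_{v,k}$:
\[
 C_k(x)-C_{k-1}(x)=\sum_{v=1}^n (d_{v,k}-d_{v,k-1})x_v=\sum_{v=1}^n a_{kv}x_v .
\]

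Finally, we translate the incidence entry back into set membership. By \cref{def:rxc}, $a_{kv}=1$ exactly when $e_k\in S_v$, and $a_{kv}=0$ otherwise. The sum therefore keeps precisely the terms $x_v$ with $e_k\in S_v$, which is $\sum_{v:e_k\in S_v}x_v$, as claimed.

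There is no real obstacle. The only point needing care is the boundary convention $C_0=0$, and the empty-sum reading of $d_{v,0}$ makes it consistent with the general formula. It is also worth noting, for later use, that this identity makes \RPE\ equivalent to \RXC: the row equations $\sum_v a_{kv}x_v=1$ hold for all $k$ if and only if $C_k(x)=k$ holds for all $k$. This follows by induction on $k$ using the identity just proved.
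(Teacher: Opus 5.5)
Your proposal is correct and follows the paper's proof essentially verbatim: telescoping $d_{v,k}-d_{v,k-1}=a_{kv}$, summing against $x_v$, and reading $a_{kv}=1$ as $e_k\in S_v$. Your explicit check that the empty-sum convention gives $d_{v,0}=0$, and hence $C_0(x)=0$, is a small addition that the paper leaves implicit.
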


\begin{proof}
Adding row $e_k$ changes the cumulative value of column $v$ by
\[
 d_{v,k}-d_{v,k-1}=a_{kv}.
\]
Hence
\[
\begin{aligned}
C_k(x)-C_{k-1}(x)
&=\sum_v\bigl(d_{v,k}-d_{v,k-1}\bigr)x_v\\
&=\sum_v a_{kv}x_v\\
&=\sum_{v:e_k\in S_v}x_v,
\end{aligned}
\]
which is the number of selected sets containing $e_k$.
\end{proof}

\subsection{Constructed instance}

Given an \RXC\ instance, choose any order $e_1,\ldots,e_n$ of its elements and retain the same family $S_1,\ldots,S_n$.

\subsection{The ``if'' proof}

\begin{lemma}\label{lem:red1if}
If the \RXC\ instance has an exact cover, then the constructed \RPE\ instance is a yes-instance.
\end{lemma}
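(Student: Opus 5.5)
The plan is to take the characteristic vector of the exact cover as the witness and verify the prefix equations directly. Let $x\in\{0,1\}^n$ be the indicator of an exact cover, so by \cref{def:rxc} $\sum_{v:e_k\in S_v}x_v=\sum_v a_{kv}x_v=1$ for every $k=1,\ldots,n$. Since the constructed \RPE\ instance uses the same universe (in the chosen order) and the same family $S_1,\ldots,S_n$, the same vector $x$ is a candidate solution of \RPE.

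The key step is to show $C_k(x)=k$ for all $k$, and I will do this by telescoping with \cref{lem:prefixdifference}. With $C_0(x)=0$,
\[
 C_k(x)=\sum_{\ell=1}^k\bigl(C_\ell(x)-C_{\ell-1}(x)\bigr)
 =\sum_{\ell=1}^k\sum_{v:e_\ell\in S_v}x_v
 =\sum_{\ell=1}^k 1=k .
\]
Equivalently, one can bypass the lemma: expanding $d_{v,k}=\sum_{\ell\le k}a_{\ell v}$ gives $C_k(x)=\sum_{\ell\le k}\sum_v a_{\ell v}x_v$, and the exact-cover row equations then give the sum $k$.

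No step here is a genuine obstacle; the argument is a one-line telescoping sum. The only care needed is to check that the element order chosen in the construction is irrelevant: any permutation of the rows still satisfies the row equations $A^{\mathrm{inc}}x=\1$, so the prefix sums hold in every order.
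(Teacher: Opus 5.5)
Your proof is correct and is essentially the paper's argument. The paper double-counts the selected incidences in the first $k$ rows: each row contributes exactly one, while counting by columns gives $\sum_v d_{v,k}x_v=C_k(x)$. That is exactly your lemma-free expansion, and your telescoping through \cref{lem:prefixdifference} is the same count organized row by row.
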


\begin{proof}
Let $x_v=1$ exactly for the subset-columns in an exact cover.  Fix a prefix $\{e_1,\ldots,e_k\}$.  Each of its $k$ element rows is covered by exactly one selected subset-column, so there are exactly $k$ selected incidences in the prefix.  Counting the same incidences column by column gives
\[
\sum_v |S_v\cap\{e_1,\ldots,e_k\}|x_v
=\sum_vd_{v,k}x_v
=C_k(x).
\]
Hence $C_k(x)=k$ for every $k$, as required.
\end{proof}

\subsection{The ``only if'' proof}

\begin{lemma}\label{lem:red1onlyif}
If the constructed \RPE\ instance is a yes-instance, then the original \RXC\ instance has an exact cover.
\end{lemma}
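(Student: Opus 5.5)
The plan is to recover the individual row equations from the cumulative ones by taking differences, using \cref{lem:prefixdifference}. Let $x\in\{0,1\}^n$ satisfy $C_k(x)=k$ for all $k=1,\ldots,n$, and set $C_0(x)=0$. This convention makes the equation $C_0(x)=0$ hold trivially, because every $d_{v,0}$ is an empty sum. Consequently $C_k(x)=k$ holds for every $k=0,1,\ldots,n$.

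For each $k=1,\ldots,n$, \cref{lem:prefixdifference} then gives
\[
 \sum_{v:e_k\in S_v}x_v \;=\; C_k(x)-C_{k-1}(x) \;=\; k-(k-1) \;=\; 1 .
\]
In incidence-matrix form this says $\sum_v a_{kv}x_v=1$ for every row $k$, that is, $A^{\mathrm{inc}}x=\1$. By \cref{def:rxc}, the sets $\{S_v: x_v=1\}$ therefore form an exact cover of $X$: each element $e_k$ lies in exactly one selected set.

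Since the \RPE\ instance uses the same universe and the same family as the source instance, the same $x$ witnesses that the original \RXC\ instance is a yes-instance. There is no real obstacle here. The only point requiring care is the boundary case $k=1$, which relies on the convention $C_0(x)=0$ exactly as stated in \cref{lem:prefixdifference}. Binarity of $x$ is not needed for the telescoping step, only to interpret $x$ as a selection of sets.
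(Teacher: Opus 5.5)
Your proposal is correct and follows the paper's own proof essentially verbatim. With the convention $C_0(x)=0$, \cref{lem:prefixdifference} turns each consecutive difference $C_k(x)-C_{k-1}(x)=1$ into the row equation $\sum_{v:e_k\in S_v}x_v=1$, so $A^{\mathrm{inc}}x=\1$ and the selected columns form an exact cover.
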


\begin{proof}
Let $x$ satisfy $C_k(x)=k$ for all $k$, and set $C_0(x)=0$.  By \cref{lem:prefixdifference}, the number of selected subset-columns covering element row $e_k$ is
\[
\sum_{v:e_k\in S_v}x_v
=C_k(x)-C_{k-1}(x)
=k-(k-1)=1.
\]
Thus every element row is covered by exactly one selected subset-column.  Equivalently, $A^{\mathrm{inc}}x=\1$, so the selected columns form an exact cover of the universe.
\end{proof}

\begin{intuition}
The prefix equations contain exactly the same information as $A^{\mathrm{inc}}x=\1$: successive differences recover the row equations.  Their advantage is geometric--the scheduling construction can read cumulative counts from gaps on a one-dimensional ratio axis.
\end{intuition}

\begin{corollary}\label{cor:rpehard}
\RPE\ is NP-complete.
\end{corollary}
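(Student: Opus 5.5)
The plan is the standard two-part NP-completeness argument: membership in NP, then NP-hardness by a polynomial-time reduction from \RXC, which Gonzalez~\cite{Gonzalez1985} proved NP-complete.

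For membership, take a certificate $x\in\{0,1\}^n$. For each $k=1,\ldots,n$ we compute $d_{v,k}=\abs{S_v\cap\{e_1,\ldots,e_k\}}$ and $C_k(x)=\sum_v d_{v,k}x_v$, and check that $C_k(x)=k$. Every quantity involved is an integer of size at most $3n$. The whole check therefore takes $O(n^2)$ arithmetic steps on small numbers, so \RPE\ is in NP.

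For hardness, we use the map described in the subsection on the constructed instance. Given an \RXC\ instance $(X,\mathcal S)$, fix any ordering $e_1,\ldots,e_n$ of $X$ and keep the same family $\mathcal S$. This map runs in polynomial time. It is in fact essentially the identity, apart from writing down the ordering and, if desired, the table of values $d_{v,k}$. It outputs a valid \RPE\ instance, because the family stays three-uniform and three-regular.

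Correctness follows from the two directions already proved. \Cref{lem:red1if} shows that a yes-instance of \RXC\ maps to a yes-instance of \RPE. \Cref{lem:red1onlyif} shows the converse, and it uses the same vector $x$ in both directions. The reduction is therefore answer-preserving, and \RPE\ is NP-hard. Together with membership, this makes \RPE\ NP-complete.

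There is no real obstacle here. The only point needing care is that all numbers in the \RPE\ instance are bounded by $3n$. This polynomial bound matters later, because it keeps the subsequent strong reduction unary-polynomial. It holds because each $d_{v,k}\le 3$ and each target value $k\le n$.
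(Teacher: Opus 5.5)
Your proposal is correct and follows the paper's own proof. Membership uses the certificate $x\in\{0,1\}^n$ with a polynomial-time check of the $n$ prefix equations. Hardness uses the ordering-only map from \RXC, whose correctness is exactly \cref{lem:red1if,lem:red1onlyif}.
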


\begin{proof}
The transformation retains the same binary variables and computes all values $d_{v,k}$ using $O(n^2)$ additions, so it is polynomial.  \Cref{lem:red1if,lem:red1onlyif} prove feasibility equivalence with \RXC.  Membership in NP is immediate: a vector $x\in\{0,1\}^n$ is a certificate, and all $n$ prefix equations are checked in polynomial time.  Since \RXC\ is NP-complete, so is \RPE.
\end{proof}

\section{Reduction II: regular prefix exactness to \AWET}
\label{sec:red2}

Fix an \RPE\ instance $(X,\mathcal S)$ with $X=\{e_1,\ldots,e_n\}$ and $\mathcal S=\{S_1,\ldots,S_n\}$.  We may assume $n\ge3$, since smaller instances can be decided directly.  Set
\[
 N=n+1,
 \qquad
 \kappa(k)=n-k+1.
\]
Thus $N\ge4$, and earlier elements receive larger reverse levels.  The construction creates $8n+1$ jobs: eight \emph{state jobs} for each source set and one global anchor.  All job data are integers.

\begin{howtoread}
Read the construction in four layers.  First, arithmetic digits give each block exactly two balanced states.  Second, early ratios place the three occurrences of each set into the appropriate element bands.  Third, integer tardiness offsets supply the linear field needed to center the prefix squares.  Fourth, the anchor and minimum kernel force every optimal canonical assignment onto the two-state block manifold.  Each layer uses only properties already established by the preceding layers.
\end{howtoread}

\begin{finding}{Two-lane checkpoint}
The logical lane asks only what a balanced block means and how prefix errors decode.  The numerical lane proves that every low-cost schedule is balanced and that one nonzero prefix error dominates every residual term.  Readers following only the theorem architecture may treat the scale lemmas as certified interfaces and return to the proof of \cref{lem:separation} for the full separation arithmetic.
\end{finding}

\begin{table}[ht]
\centering
\caption{Construction blueprint for one source set $S_v$.}
\label{tab:construction-blueprint}
\begin{tabular}{@{}>{\raggedright\arraybackslash}p{0.22\textwidth} >{\raggedright\arraybackslash}p{0.33\textwidth} >{\raggedright\arraybackslash}p{0.34\textwidth}@{}}
\toprule
Component & Encoded information & Later use \\
\midrule
Three occurrence pairs & The three elements of $S_v$ & Their effective atoms appear in the three corresponding element bands. \\
One controller pair & Total signed mass $-3$ & It cancels the three $+1$ occurrence masses below the final element band. \\
Logical spin $\sigma_v$ & Select or reject $S_v$ & $x_v=(1+\sigma_v)/2$ becomes the exact-cover variable. \\
Target load $\beta_v$ & Balanced half of the block processing & The global load lock forces every optimum to one of the two logical states. \\
\bottomrule
\end{tabular}
\end{table}

\subsection{Constructed instance}
\label{subsec:construction}

\subsubsection{Numerical scales}

Use the fixed pair codes
\[
 Q_0=10,
 \quad Q_1=10^2,
 \quad Q_2=10^3,
 \quad Q_3=10^4,
 \quad q=Q_0+Q_1+Q_2+Q_3.
\]
Set
\[
 G=N^{20},
 \qquad
 M=N^{40},
 \qquad
 T=N^{50}.
\]
The ratio-space scale $G$ is distinct from the tardiness ratios $s_i$ and is used consistently throughout the construction and worked examples.  The exponents are deliberately loose; their only purpose is to make the two separations
\[
 T\gg nM\gg q\gg 6,
 \qquad
 GM\gg N^2T
\]
immediate.  Inside a block, the $T$ digit controls the number of unsplit pairs, the $M$ digit controls their aggregate element level, the codes $Q_h$ identify the pairs, and the final constants $1$ and $3$ determine the orientation.  In early-ratio space, the product $GM$ separates consecutive element bands from all block-index variation.

For each $v$, write
\[
 S_v=\{e_{k_{v1}},e_{k_{v2}},e_{k_{v3}}\},
 \qquad
 k_{v1}<k_{v2}<k_{v3}.
\]
Define four base processing values
\begin{align}
 P_{v0}&=T+Q_0,\label{eq:P0}\\
 P_{vh}&=T+M\kappa(k_{vh})+Q_h,
 \qquad h=1,2,3.\label{eq:Ph}
\end{align}
The controller has no $M$ digit.  Each occurrence base records, in decreasing order of scale, a common synchronizing value $T$, its element level $M\kappa(k_{vh})$, and its pair code $Q_h$.  The one-unit heavy--light difference is added when the two jobs are created.  A discrepancy at any scale is too large to be canceled by all smaller scales.

\subsubsection{The eight-job coded block}

For each set $S_v$, create the eight state jobs in \cref{tab:block}.  The fixed sign $\epsilon_i$ specifies one reference orientation; reversing all eight signs gives the other.

\begin{table}[t]
\centering
\caption{Jobs in block $v$. The occurrence pair $h$ represents element $e_{k_{vh}}$.}
\label{tab:block}
\begin{tabular}{cclcc}
\toprule
Pair & Job & Role & Processing time & $\epsilon$\\
\midrule
$0$ & $A_{v0}$ & controller, heavy & $P_{v0}+3$ & $-1$\\
$0$ & $B_{v0}$ & controller, light & $P_{v0}$ & $+1$\\
\midrule
$h\in\{1,2,3\}$ & $A_{vh}$ & occurrence, heavy & $P_{vh}+1$ & $+1$\\
$h\in\{1,2,3\}$ & $B_{vh}$ & occurrence, light & $P_{vh}$ & $-1$\\
\bottomrule
\end{tabular}
\end{table}

Let
\[
 \B_v=\{A_{v0},B_{v0},A_{v1},B_{v1},A_{v2},B_{v2},A_{v3},B_{v3}\}
\]
be the job set of block $v$.  Put
\[
 \beta_v=P_{v0}+P_{v1}+P_{v2}+P_{v3}+3.
\]
The block has total processing time $2\beta_v$.  Its assignment is \emph{canonical} when some $\sigma_v\in\{-1,+1\}$ satisfies
\[
 c_i=\sigma_v\epsilon_i
 \qquad(i\in\B_v).
\]
Thus $\sigma_v$ selects one of two complementary orientations.  The next lemma proves that these are exactly the zero-signed-load assignments.  In either orientation the tardy processing load is $\beta_v$.

The two orientations can be read directly from the signs:
\[
\begin{array}{c|c|c|c}
\sigma_v & \text{early controller job} & \text{early occurrence jobs} & \text{tardy load}\\ \hline
+1 & B_{v0} & A_{v1},A_{v2},A_{v3} & (P_{v0}+3)+P_{v1}+P_{v2}+P_{v3}=\beta_v\\
-1 & A_{v0} & B_{v1},B_{v2},B_{v3} & P_{v0}+\sum_{h=1}^3(P_{vh}+1)=\beta_v
\end{array}
\]
The controller difference $3$ balances the three occurrence differences $1+1+1$.  Consequently both orientations place exactly half of the block's processing on the tardy side.

\begin{figure}[t]
\centering
\resizebox{0.98\textwidth}{!}{%
\begin{tikzpicture}[
  pair/.style={draw,rounded corners,minimum width=2.45cm,minimum height=1.25cm,align=center,fill=figurehard},
  lab/.style={font=\scriptsize,align=center},
  node distance=0.75cm
]
\node[pair] (c) {controller pair\\$-(P_{v0}+3),\ +P_{v0}$};
\node[pair,right=of c] (o1) {occurrence 1\\$+(P_{v1}+1),\ -P_{v1}$};
\node[pair,right=of o1] (o2) {occurrence 2\\$+(P_{v2}+1),\ -P_{v2}$};
\node[pair,right=of o2] (o3) {occurrence 3\\$+(P_{v3}+1),\ -P_{v3}$};
\draw[decorate,decoration={brace,amplitude=5pt,mirror}] ($(c.south west)+(0,-0.2)$) -- ($(o3.south east)+(0,-0.2)$)
 node[midway,below=7pt,lab]{signed processing sum $0$ only for this orientation and its complement};
\end{tikzpicture}%
}
\caption{Signed-processing representation of one coded block. The signs shown are $\epsilon_i$; multiplying every sign by $\sigma_v$ selects one of the two logical states.}
\label{fig:block}
\end{figure}

\subsubsection{Arithmetic synchronization}

\begin{lemma}[Two zero-load states]\label{lem:arithmetic}
For every sign vector $c\in\{-1,+1\}^{8}$ on block $v$,
\[
 \sum_{i\in\B_v}p_ic_i=0
 \quad\Longleftrightarrow\quad
 \text{$c_i=\sigma_v\epsilon_i$ for all $i\in\B_v$ and some
 $\sigma_v\in\{-1,+1\}$.}
\]
\end{lemma}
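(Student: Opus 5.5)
The backward implication is a direct computation, already tabulated above. If $c_i=\sigma_v\epsilon_i$, then
\[
\sum_{i\in\B_v}p_ic_i=\sigma_v\sum_{i\in\B_v}p_i\epsilon_i,
\]
and the reference sum is $-(P_{v0}+3)+P_{v0}+\sum_{h=1}^3\bigl((P_{vh}+1)-P_{vh}\bigr)=-3+3=0$.

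For the forward implication, I will decompose the signed sum pairwise. For pair $h$ write the two signs as $(c_{A_{vh}},c_{B_{vh}})$. A \emph{split} pair (opposite signs) contributes $\pm\delta_h$, where $\delta_0=3$ and $\delta_h=1$ for $h\ge1$, since the base values cancel. An \emph{unsplit} pair with common sign $\tau_h$ contributes $\tau_h(2P_{vh}+\delta_h)$. So the total is
\[
\sum_{h\in U}\tau_h\bigl(2P_{vh}+\delta_h\bigr)+\sum_{h\notin U}\pm\delta_h ,
\]
where $U$ is the set of unsplit pairs. I then read this digit by digit in the hierarchy $T\gg nM\gg q\gg 6$, whose concrete content is $T=N^{50}$, $M=N^{40}$, $Q_h\le10^4$, $N\ge4$.

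\textbf{$T$-digit.} The unsplit pairs contribute $2T\sum_{h\in U}\tau_h$ plus lower-order terms. These lower-order terms are bounded by $2(3Mn+q)+6+6$, which is far below $T$. Hence a zero total forces $\sum_{h\in U}\tau_h=0$, so $|U|\in\{0,2,4\}$ with signs balanced.

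\textbf{$M$-digit.} Among the unsplit pairs, the $M$ part is $2M\sum_{h\in U,h\ge1}\tau_h\kappa(k_{vh})$. This must vanish, because the remaining magnitude is at most $2q+12<M$.

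\textbf{Code digits.} What remains is $2\sum_{h\in U}\tau_hQ_h$ plus a quantity of absolute value at most $3\cdot 4+6$ (the $\delta$ terms). The codes $Q_h=10^{h+1}$ are distinct powers of ten. A signed sum $\sum_{h\in U}\tau_hQ_h$ with $U$ nonempty has absolute value at least $10$: take the largest index in $U$, whose power exceeds the sum of all smaller ones. So $2|\sum_{h\in U}\tau_hQ_h|\ge20$, while the leftover is at most $3\cdot1\cdot2+3\cdot2+\dots$. I expect this to be the main obstacle: a careful bound is needed here. The leftover from unsplit $\delta$'s is $\sum_{h\in U}\tau_h\delta_h$, and from split pairs it is at most $3+1+1+1=6$; together it is at most about $12<20$. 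This contradiction forces $U=\varnothing$, although the constants must be checked since $Q_0=10$ is small. If that bound is too tight, I would instead argue modulo $10$ and by size, or use the fact that the code differences are at least $90$ except for $Q_0$.

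\textbf{Split pairs.} With every pair split, the total is $\pm3\pm1\pm1\pm1$. This is zero only when the three occurrence signs agree and are opposite to the controller sign, because $\pm1\pm1\pm1\in\{\pm1,\pm3\}$ equals $\mp3$ only when all three terms are equal. That configuration is exactly $c_i=\sigma_v\epsilon_i$.
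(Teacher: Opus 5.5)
Your proof is correct and is essentially the paper's argument. The paper writes $u_h=(c_{A_{vh}}+c_{B_{vh}})/2\in\{-1,0,1\}$, which equals your $\tau_h$ on $U$ and $0$ off $U$. It then decodes the signed load $2Tu_\Sigma+2Mz_\kappa+2\sum_hQ_hu_h+\sum_h\delta_hc_{A_{vh}}$ scale by scale, in the same order you use: the $T$ digit, then the $M$ digit, then the pair codes, then the final residual.

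The step you flagged as the main obstacle is not delicate. Every pair, split or unsplit, contributes exactly $\delta_hc_{A_{vh}}$ beyond its $2P_{vh}u_h$ term. So the leftover is at most $\sum_h\delta_h=6<20$, and no modular fallback is needed.
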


\begin{proof}
\par\medskip\noindent\textbf{Forward direction.}\quad
If $c_i=\sigma_v\epsilon_i$, each occurrence pair contributes
\[
\sigma_v\bigl((P_{vh}+1)-P_{vh}\bigr)=\sigma_v,
\]
while the controller contributes
\[
\sigma_v\bigl(-(P_{v0}+3)+P_{v0}\bigr)=-3\sigma_v.
\]
The three $+\sigma_v$ contributions cancel the one $-3\sigma_v$ contribution.

\par\medskip\noindent\textbf{Converse: expose the scale digits.}\quad
Suppose $\sum_{i\in\B_v}p_ic_i=0$, with arbitrary signs.  For pair $h\in\{0,1,2,3\}$ define
\[
 u_h=\frac{c_{A_{vh}}+c_{B_{vh}}}{2}\in\{-1,0,1\}.
\]
Thus $u_h=1,-1,0$ means, respectively, that the pair is all early, all tardy, or split.  Put
\[
 u_{\Sigma}=\sum_{h=0}^3u_h,
 \qquad
 z_{\kappa}=\sum_{h=1}^3\kappa(k_{vh})u_h.
\]
Expanding every processing time by its $T$-, $M$-, and $Q$-parts gives
\begin{align}
 \sum_{i\in\B_v}p_ic_i
 ={}&2T u_{\Sigma}+2M z_{\kappa}+2\sum_{h=0}^3Q_hu_h\label{eq:signedload}\\
 &+c_{A_{v1}}+c_{A_{v2}}+c_{A_{v3}}+3c_{A_{v0}}.\nonumber
\end{align}
The last line is a residual of absolute value at most $6$.

\par\medskip\noindent\textbf{The $T$ digit forces $u_{\Sigma}=0$.}\quad
If $u_{\Sigma}\ne0$, then $|2T u_{\Sigma}|\ge2T$.  The other terms have combined absolute value at most
\[
 2M|z_{\kappa}|+2q+6\le6nM+2q+6<T.
\]
They cannot cancel the $T$ term, contradicting zero signed load.  Hence $u_{\Sigma}=0$.

\par\medskip\noindent\textbf{The $M$ digit forces $z_{\kappa}=0$.}\quad
With $u_{\Sigma}=0$, a nonzero integer $z_{\kappa}$ would give $|2M z_{\kappa}|\ge2M$.  The remaining coded and residual terms have absolute value at most $2q+6<2M$.  Therefore $z_{\kappa}=0$.

\par\medskip\noindent\textbf{The pair codes force every $u_h=0$.}\quad
The sequence $10,100,1000,10000$ is superincreasing for coefficients in $\{-1,0,1\}$.  More explicitly, let $h^\star$ be the largest index with $u_{h^\star}\ne0$.  If $h^\star=0$, the coded sum has magnitude $10$; if $h^\star>0$, then
\[
 Q_{h^\star}-\sum_{h<h^\star}Q_h\ge90>10.
\]
Thus every nonzero vector $u$ satisfies $|\sum_hQ_hu_h|\ge10$.  Its doubled contribution in \eqref{eq:signedload} has magnitude at least $20$, too large to be canceled by the residual of magnitude at most $6$.  Consequently $u_h=0$ for all $h$, so the two jobs in each pair have opposite signs.

\par\medskip\noindent\textbf{The final residual chooses one orientation.}\quad
With all $u_h=0$, equation \eqref{eq:signedload} reduces to
\[
 c_{A_{v1}}+c_{A_{v2}}+c_{A_{v3}}+3c_{A_{v0}}=0.
\]
If $c_{A_{v0}}=-1$, all three occurrence signs must be $+1$; if $c_{A_{v0}}=+1$, all three must be $-1$.  Their partners have the opposite signs.  These are exactly $c=\epsilon$ and $c=-\epsilon$.
\end{proof}

\begin{intuition}
The proof decodes the signed load from largest scale to smallest.  The $T$ and $M$ digits remove aggregate imbalances, the superincreasing $Q_h$ digits force every pair to split, and the final six-unit residual selects one of the two complementary orientations.
\end{intuition}

Let $b_v$ be the total processing time of the tardy jobs in block $v$.  Because the block total is $2\beta_v$, its signed load is $2\beta_v-2b_v$.  Hence \cref{lem:arithmetic} gives the equivalence
\begin{equation}
 b_v=\beta_v
 \quad\Longleftrightarrow\quad
 c_i=\sigma_v\epsilon_i\text{ for every }i\in\B_v.
 \label{eq:loadiffcanonical}
\end{equation}
The global load lock will enforce the left side; the arithmetic lemma then supplies the two-state code on the right.

\subsubsection{Early ratios and the minimum-kernel field}

Define
\[
 \lambda_v=G+n-v+1,
 \qquad
 L_v=2\lambda_v,
\]
so $L_1>L_2>\cdots>L_n>0$.  The factor $2$ is chosen for the exact cancellation used in \cref{lem:loadlock}.  Let
\[
 K_{uv}=\min\{L_u,L_v\},
\]
and write
\[
 K=(K_{uv})_{u,v=1}^n,
 \qquad
 \beta=(\beta_1,\ldots,\beta_n)^\top,
 \qquad
 f=K\beta.
\]
Thus $K$ is the symmetric \emph{minimum kernel} and
$f_v=\sum_{u=1}^nK_{vu}\beta_u$.  This choice centers the load-lock quadratic at the target vector $\beta$.  Every state job $i\in\B_v$ receives early ratio
\begin{equation}
 r_i=f_v+\lambda_vp_i.
 \label{eq:earlyratio}
\end{equation}
Set $w_i^E=p_ir_i$.  The common term $f_v$ moves the whole block, whereas $\lambda_vp_i$ turns processing-time differences into ratio separation.  The same affine form permits the exact two-job compression used below.

\begin{lemma}[Kernel-field identities]\label{lem:fidentities}
For $v=1,\ldots,n-1$,
\[
 f_v-f_{v+1}=2\sum_{u=1}^v\beta_u.
\]
Moreover,
\[
 4T<\beta_v<5T
 \qquad\text{and}\qquad
 0<f_1-f_n<10N^2T.
\]
\end{lemma}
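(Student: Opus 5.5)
The plan is to read all three claims directly off the definitions. The only structural fact needed is that the values $L_u$ strictly decrease in $u$, so that $K_{vu}=\min\{L_v,L_u\}=L_{\max\{u,v\}}$.

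For the difference identity, I would write
\[
 f_v-f_{v+1}=\sum_{u=1}^n\bigl(L_{\max\{u,v\}}-L_{\max\{u,v+1\}}\bigr)\beta_u
\]
and split the index range.

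\textbf{Indices $u\ge v+1$.} Both maxima equal $u$, so these terms vanish.

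\textbf{Indices $u\le v$.} The first maximum is $v$ and the second is $v+1$, so each term is $(L_v-L_{v+1})\beta_u$.

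Since $L_v-L_{v+1}=2(\lambda_v-\lambda_{v+1})=2$, this gives $f_v-f_{v+1}=2\sum_{u\le v}\beta_u$. This is exactly where the factor $2$ in $L_v=2\lambda_v$ is used.

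For the bounds on $\beta_v$, I would expand using \eqref{eq:P0}--\eqref{eq:Ph}:
\[
 \beta_v=4T+M\sum_{h=1}^3\kappa(k_{vh})+q+3.
\]
The added terms are positive, which gives $\beta_v>4T$. For the upper bound, use $\kappa\le n<N$, so the added terms total at most $3nM+q+3$. This is far below $T$, because $M=N^{40}$, $T=N^{50}$ and $N\ge4$; hence $\beta_v<5T$.

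For the last claim, telescope the first identity:
\[
 f_1-f_n=2\sum_{v=1}^{n-1}\sum_{u\le v}\beta_u .
\]
This is positive since every $\beta_u>0$. The double sum has at most $n(n-1)/2$ terms, each below $5T$, so $f_1-f_n<5n(n-1)T<5N^2T<10N^2T$.

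None of these steps is a real obstacle. The only care needed is the index bookkeeping in the case split for the min-kernel difference, in particular handling the boundary index $u=v+1$ correctly.
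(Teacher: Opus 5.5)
Your proof is correct and follows the paper's argument essentially step for step: a row-by-row comparison of the minimum kernel using $L_v-L_{v+1}=2$, the expansion $\beta_v=4T+M\sum_h\kappa(k_{vh})+q+3$, and telescoping of the difference identity. The only difference is that you count the $n(n-1)/2$ terms exactly, obtaining $5n(n-1)T$ where the paper uses the cruder $10n^2T$; both bounds give $10N^2T$.
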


\begin{proof}
Because the levels $L_u$ decrease with $u$, compare rows $v$ and $v+1$ of $K$.  For $u\le v$,
\[
K_{vu}=L_v,
\qquad
K_{v+1,u}=L_{v+1},
\]
so their difference is $L_v-L_{v+1}=2$.  For $u\ge v+1$, both minima equal $L_u$, so their difference is zero.  Multiplying by $\beta_u$ and summing gives
\[
f_v-f_{v+1}=\sum_u(K_{vu}-K_{v+1,u})\beta_u
=2\sum_{u=1}^v\beta_u.
\]
In particular, the fields $f_v$ decrease strictly.

Each $\beta_v$ contains four copies of $T$ plus positive lower-order terms, so $\beta_v>4T$.  For the upper bound,
\[
 \beta_v
 \le4T+3nM+q+3<5T,
\]
because $nM,q+3<T/4$ under the chosen powers and $N\ge4$.  Finally, telescope the first identity:
\[
\begin{aligned}
f_1-f_n
&=2\sum_{v=1}^{n-1}\sum_{u=1}^v\beta_u\\
&<2n\sum_{u=1}^n\beta_u
<2n\cdot n\cdot5T
<10N^2T.
\end{aligned}
\]
\end{proof}

\begin{intuition}
The block shifts $f_v$ vary by only $O(N^2T)$, whereas consecutive element levels will be separated by more than $GM=N^{60}$.  Thus the kernel field is large enough for the load lock but too small to change the element-band order.
\end{intuition}

\subsubsection{Effective atoms: compressing complete pairs}

In this subsection, a \emph{two-job cluster} means one designated occurrence pair or the controller pair of a coded block.  Compression is applied only to complete clusters from different blocks.  Within-block terms are kept in the uncompressed objective: they contribute an assignment-independent constant and part of the baseline field $a_v^0$.  Here $a_v^0$ denotes the coefficient of $\sigma_v$ in the full early state-job cost on coded assignments; its explicit formula is given in \eqref{eq:a0}.

For this calculation, restrict to the coded assignments already characterized by \cref{lem:arithmetic}: $c_i=\sigma_v\epsilon_i$ for $i\in\B_v$.  Thus the early indicator of job $i$ is
\[
 \1\{i\in E\}=\frac{1+c_i}{2}
 =\frac{1+\sigma_v\epsilon_i}{2}.
\]
This is an algebraic restriction at this stage, not yet a claim about optimal assignments.  The global load lock and the subsequent choice of the anchor's tardiness ratio will justify the restriction for an optimal canonical schedule.

For a cluster $g$, define its signed processing mass and signed first ratio moment by
\[
 m_g=\sum_{i\in g}\epsilon_i p_i,
 \qquad
 \mu_g=\sum_{i\in g}\epsilon_i p_i r_i.
\]
Every designated pair has $m_g\ne0$, so define its \emph{effective ratio}
\[
 \rho_g=\frac{\mu_g}{m_g}.
\]
The bookkeeping pair $(m_g,\rho_g)$ is the \emph{effective atom} of cluster $g$: a signed point mass $m_g$ placed at ratio $\rho_g$.

The effective atom is not a new scheduling job and does not replace the two physical jobs inside their own block.  Its role is narrower: when two complete clusters are separated in ratio order, their four physical cross-job terms depend only on the signed mass $m_g$ and first moment $\mu_g=m_g\rho_g$.  The effective atom therefore preserves those cross-block interactions exactly while reducing the amount of algebra.

For an occurrence pair $(v,h)$, $h\in\{1,2,3\}$,
\[
\begin{aligned}
m_{vh}&=(P_{vh}+1)-P_{vh}=1,\\
\mu_{vh}
&=(P_{vh}+1)\bigl(f_v+\lambda_v(P_{vh}+1)\bigr)
-P_{vh}\bigl(f_v+\lambda_vP_{vh}\bigr)\\
&=f_v+\lambda_v(2P_{vh}+1).
\end{aligned}
\]
Hence
\[
 \rho_{vh}=f_v+\lambda_v(2P_{vh}+1).
\]
For the controller pair,
\[
\begin{aligned}
m_{v0}&=-(P_{v0}+3)+P_{v0}=-3,\\
\mu_{v0}
&=-(P_{v0}+3)\bigl(f_v+\lambda_v(P_{v0}+3)\bigr)
+P_{v0}\bigl(f_v+\lambda_vP_{v0}\bigr)\\
&=-3\bigl(f_v+\lambda_v(2P_{v0}+3)\bigr),
\end{aligned}
\]
so
\[
 \rho_{v0}=f_v+\lambda_v(2P_{v0}+3).
\]
Thus a block has three occurrence atoms of mass $+1$ and one controller atom of mass $-3$; its total effective mass is zero.

For complete clusters $g$ and $h$ from different blocks, define the physical signed cross-interaction
\[
 I^{\rm phys}(g,h)=
 \sum_{i\in g}\sum_{j\in h}
 \epsilon_i\epsilon_jp_ip_j\min\{r_i,r_j\},
\]
and its effective-atom counterpart
\[
 I^{\rm comp}(g,h)=m_gm_h\min\{\rho_g,\rho_h\}.
\]
These are signed coefficients arising after the early indicators are expanded; they are not separate scheduling costs.  In particular, a controller mass is negative even though every physical cost is nonnegative.

\begin{finding}{Symbolic micro-example: why a two-job cluster becomes one atom}
For a cluster $g=\{g^+,g^-\}$ with signed processing masses $+p^+$ and $-p^-$, the definitions give
\[
 m_g=p^+-p^-,\qquad \mu_g=p^+r_{g^+}-p^-r_{g^-}.
\]
If every ratio in $g$ exceeds every ratio in a second cluster $h$, each cross minimum selects the ratio in $h$.  Hence the four signed physical terms factor as
\[
 I^{\rm phys}(g,h)
 =\left(\sum_{i\in g}\epsilon_ip_i\right)
  \left(\sum_{j\in h}\epsilon_jp_jr_j\right)
 =m_g\mu_h=m_gm_h\rho_h.
\]
When the effective-ratio order agrees, this is $m_gm_h\min\{\rho_g,\rho_h\}$.  The mass records the net signed processing, while the first moment determines the effective location needed to preserve the interaction.
\end{finding}

\begin{lemma}[Exact pair compression]\label{lem:compression}
Let $g$ and $h$ be complete clusters from different blocks.  Suppose every physical early ratio in $g$ exceeds every physical early ratio in $h$.  Then
\[
 I^{\rm phys}(g,h)=m_gm_h\rho_h.
\]
Consequently, whenever the physical order of the two clusters agrees with their effective-ratio order,
\[
 I^{\rm phys}(g,h)=I^{\rm comp}(g,h).
\]
\end{lemma}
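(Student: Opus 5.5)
The first identity is a direct factorization, and the second follows by combining it with the hypothesis on effective-ratio order. For the first, fix $i\in g$ and $j\in h$. By hypothesis $r_i>r_j$, so $\min\{r_i,r_j\}=r_j$. Substituting into the definition of $I^{\rm phys}(g,h)$ gives
\[
 I^{\rm phys}(g,h)=\sum_{i\in g}\sum_{j\in h}\epsilon_i\epsilon_jp_ip_jr_j
 =\Bigl(\sum_{i\in g}\epsilon_ip_i\Bigr)\Bigl(\sum_{j\in h}\epsilon_jp_jr_j\Bigr)
 =m_g\mu_h .
\]
The sum separates because the summand is a product of a factor depending only on $i$ and a factor depending only on $j$. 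Since $h$ is a designated pair, $m_h\ne0$, and the definition $\rho_h=\mu_h/m_h$ gives $\mu_h=m_h\rho_h$. Hence $I^{\rm phys}(g,h)=m_gm_h\rho_h$. This is exactly the computation in the symbolic micro-example preceding the lemma, carried out for general complete clusters.

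For the consequence, recall what \emph{agrees} means here: the physical order has every ratio of one cluster above every ratio of the other, and the effective ratios are ordered the same way. By symmetry of both $I^{\rm phys}$ and $I^{\rm comp}$ in $(g,h)$, we may relabel so that $g$ is the cluster with the larger physical ratios. Agreement of orders then says $\rho_g\ge\rho_h$, so $\min\{\rho_g,\rho_h\}=\rho_h$. Therefore
\[
 I^{\rm comp}(g,h)=m_gm_h\rho_h=I^{\rm phys}(g,h)
\]
by the first part. Ties in effective ratio cause no trouble, since both candidates for the minimum are then equal.

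There is no genuine obstacle; the only point needing care is the bookkeeping. The separation hypothesis must be strict, or at least ensure that every cross minimum selects the $h$-side ratio, so that the minimum is uniform across all four physical cross terms. The symmetric relabeling must also be applied consistently to both $I^{\rm phys}$ and $I^{\rm comp}$. Verifying that the hypothesis actually holds for clusters in different element bands, using the $GM\gg N^2T$ separation, is not part of this lemma and is deferred to \cref{lem:separation}.
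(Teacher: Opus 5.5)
Your proposal is correct and follows essentially the same route as the paper: the uniform cross minimum $r_j$ makes the double sum factor as $m_g\mu_h=m_gm_h\rho_h$. Agreement of the effective order then gives $\min\{\rho_g,\rho_h\}=\rho_h$, with the reversed case handled by symmetry, just as the paper does.
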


\begin{proof}
The lower ratio always belongs to $h$, so $\min\{r_i,r_j\}=r_j$ for all four cross-job pairs.  The sum therefore factors:
\[
\begin{aligned}
 I^{\rm phys}(g,h)
 &=\sum_{i\in g}\sum_{j\in h}\epsilon_i\epsilon_jp_ip_jr_j\\
 &=\left(\sum_{i\in g}\epsilon_ip_i\right)
   \left(\sum_{j\in h}\epsilon_jp_jr_j\right)\\
 &=m_g\mu_h
 =m_gm_h\rho_h.
\end{aligned}
\]
If the effective order agrees, then $\rho_h=\min\{\rho_g,\rho_h\}$ and the first case gives $I^{\rm comp}(g,h)$.  The symmetric case is identical.
\end{proof}

Let $\mathcal G_v$ be the four designated clusters of block $v$.  For jobs $i\in\B_u$ and $j\in\B_v$, with $u\ne v$, the coded early indicators satisfy
\[
 \1\{i,j\in E\}
 =\frac14\bigl(1+\sigma_u\epsilon_i+\sigma_v\epsilon_j
 +\sigma_u\sigma_v\epsilon_i\epsilon_j\bigr).
\]
Thus the coefficient of $\sigma_u\sigma_v$ records the interaction between the two block choices.  The $64$ physical cross-job terms can be grouped into $16$ cluster-pair interactions, so this coefficient is
\[
 \frac14\sum_{g\in\mathcal G_u}\sum_{h\in\mathcal G_v} I^{\rm phys}(g,h).
\]
When the physical and effective orders agree, \cref{lem:compression} replaces every $I^{\rm phys}(g,h)$ by $I^{\rm comp}(g,h)$.  The scale-separation lemma below verifies these order hypotheses for the construction.  The prefix-profile calculation then denotes the coefficient by $J_{uv}$ and rewrites it as a profile integral.  This is the purpose of effective atoms: they preserve the block-spin couplings exactly while converting the physical cross-block algebra into a one-dimensional signed-mass calculation.

\begin{intuition}
The physical pair is retained in the schedule, but its cross-block effect is summarized by one signed point mass.  Exactness comes from separation: every minimum in the four-term interaction selects the same lower cluster.
\end{intuition}

\paragraph{Ratio bands.}
View the physical ratios $r_i$ and effective ratios $\rho_g$ on a common real line, called early-ratio space.  For element $e_k$, let its \emph{effective element band} be the smallest closed interval containing all occurrence-atom ratios associated with $e_k$:
\[
 \mathcal R_k=[\underline\rho_k,\overline\rho_k].
\]
Define the controller band analogously:
\[
 \mathcal R_0=[\underline\rho_0,\overline\rho_0].
\]
The main-gap lengths are
\begin{align}
 \Delta_k&=\underline\rho_k-\overline\rho_{k+1},
 &&k=1,\ldots,n-1,\label{eq:deltak}\\
 \Delta_n&=\underline\rho_n-\overline\rho_0.\label{eq:deltan}
\end{align}
Let
\[
 W=\sum_{k=0}^n(\overline\rho_k-\underline\rho_k)
\]
be the total band width.  We write $\mathcal R_k>\mathcal R_\ell$ when every point of $\mathcal R_k$ exceeds every point of $\mathcal R_\ell$.  The open intervals between consecutive bands are the \emph{main gaps}.  The main gaps carry the exact-cover signal; all within-band intervals, whose total length is $W$, enter the residual.

\begin{lemma}[Scale separation]\label{lem:separation}
The physical ratio intervals of the two-job clusters are disjoint, and their order agrees with the effective-ratio order.  The bands satisfy
\[
 \mathcal R_1>\mathcal R_2>\cdots>\mathcal R_n>\mathcal R_0,
\]
where $\mathcal R_k=[\underline\rho_k,\overline\rho_k]$. Moreover,
\begin{equation}
 \Delta_k>GM\quad(k=1,\ldots,n),
 \qquad
 W<15N^3T.
 \label{eq:sepbounds}
\end{equation}
\end{lemma}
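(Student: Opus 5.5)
The plan is to write every physical ratio and every effective ratio explicitly in the scale digits $G,M,T$. I then separate the terms that depend on the element level $\kappa(k)$ from those that depend only on the block index $v$ or the pair code.

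\textbf{Step 1: expand the ratios.} For an occurrence atom, \eqref{eq:earlyratio} and \eqref{eq:Ph} give
\[
 \rho_{vh}=f_v+\lambda_v(2P_{vh}+1)
 =2GM\kappa(k_{vh})+\Bigl[f_v+2\lambda_vT+2(n-v+1)M\kappa(k_{vh})+\lambda_v(2Q_h+1)\Bigr].
\]
The controller atom has the same form with $\kappa$ replaced by $0$ and $2Q_h+1$ replaced by $2Q_0+3$. Every physical ratio in a cluster has the form $f_v+\lambda_vp$ with $p\in\{P_{vh},P_{vh}+1\}$, or $p\in\{P_{v0},P_{v0}+3\}$ for the controller. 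Hence the physical interval of a cluster is contained in $[f_v+\lambda_vP,\ f_v+\lambda_v(P+3)]$, which has length at most $3\lambda_v<4G$. The atom $\rho_g$ lies in this interval, since it is a weighted average of the endpoints of the form $f_v+\lambda_v(2P+c)$ with $c\in\{1,3\}$.

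\textbf{Step 2: band widths.} Within band $k$, the term $2GM\kappa(k)$ is constant. The bracket then varies by at most the sum of three contributions.
\begin{itemize}
\item The kernel field varies by $f_1-f_n<10N^2T$, using \cref{lem:fidentities}.
\item The $\lambda$-offsets contribute at most $n(2T+2nM+2q+3)<3NT$, because $M\ll T/N^{2}$.
\item The term $2\lambda_vT$ varies by $2nT$, and the code terms vary by $O(Gq)$, which is negligible since $Gq\ll T$.
\end{itemize}
So each band has width below $11N^2T+O(NT)<15N^2T$, and summing over the $n+1$ bands gives $W<15N^3T$.

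\textbf{Step 3: gaps.} For consecutive element levels, $\kappa(k)-\kappa(k+1)=1$. The dominant term therefore jumps by at least $2\min_v\lambda_vM\ge 2GM$. The same holds from band $n$, where $\kappa=1$, to the controller band, where $\kappa=0$. Subtracting the two adjacent band widths, which total at most $30N^3T\ll GM$, leaves $\Delta_k>GM$. Because every cluster's physical interval lies within $4G$ of its atom, the physical intervals inherit the same band order.

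\textbf{Step 4: disjointness and order agreement.} The remaining and main obstacle is disjointness of clusters inside a single band. Two clusters from the same block never share a band, because the three elements of $S_v$ are distinct and the controller has $\kappa=0$. For clusters from different blocks $u<v$ in the same band, both $f_v$ and $\lambda_v$ strictly decrease in $v$. By \cref{lem:fidentities}, the atom difference is therefore
\[
 (f_u-f_v)+(\lambda_u-\lambda_v)(2P+c)+2\lambda_u(Q_{h}-Q_{h'})\cdots .
\]
I will check that the monotone part is at least $4\beta_1>16T$. This dominates both the possible code difference $2\lambda Q\le 2(G+n)10^4\ll T$ and the interval lengths $<4G$. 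Hence the physical intervals are disjoint and ordered by decreasing $v$, consistently with the atoms.

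Since each atom lies inside its own physical interval and these intervals are pairwise disjoint, the physical order of clusters agrees with the effective-ratio order, which is what \cref{lem:compression} needs. The only delicate bookkeeping is the code-difference term when two blocks use different pair codes $h\ne h'$ for the same element. I expect it to be harmless because $G\cdot 10^4\ll T$, but it is the inequality I would verify first.
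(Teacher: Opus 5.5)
Your Step~1 claims that the effective atom lies inside its cluster's physical interval. That claim is false, and the physical half of the lemma rests on it. For an occurrence cluster the two physical ratios are $f_v+\lambda_vP_{vh}$ and $f_v+\lambda_v(P_{vh}+1)$. As you computed yourself, $\rho_{vh}=f_v+\lambda_v(2P_{vh}+1)$, which lies $\lambda_vP_{vh}>GT$ above the top endpoint. Likewise $\rho_{v0}$ lies $\lambda_vP_{v0}$ above the controller interval. The reason is that $\rho_g=\mu_g/m_g$ uses the signed weights $\epsilon_ip_i$, here $P_{vh}+1$ and $-P_{vh}$. So $\rho_{vh}=r_{A_{vh}}+P_{vh}(r_{A_{vh}}-r_{B_{vh}})$ is an extrapolation, not a weighted average. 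Three parts of your argument therefore lose their support: the last sentence of Step~3 (physical intervals inherit the band order because they lie within $4G$ of their atoms), the conclusion of Step~4 (the physical intervals are disjoint and ordered by decreasing $v$), and your closing paragraph. What Steps~2--4 actually prove concerns only the atoms: the band order, $\Delta_k>GM$, $W<15N^3T$, and the order of atoms within a band. Disjointness of the physical intervals, and agreement of their order with the effective order, remain unproved. That is precisely the hypothesis that \cref{lem:compression} consumes.

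The repair is to run your digit bookkeeping a second time, directly on the physical ratios $f_v+\lambda_vp$ with $p\in[P,P+3]$; this is what the paper does. The level term is now $\lambda_vM\kappa$ rather than $2\lambda_vM\kappa$. Take a level-$a$ cluster and a level-$b$ cluster with $a>b$. The lowest physical ratio of the first minus the highest physical ratio of the second is at least $(G+1)M-N^2M-NT-10N^2T-2G(q+3)>0$. Within one level, for blocks $u<v$, the corresponding difference is at least $(f_u-f_v)+(v-u)T-2G(q+3)>0$. Hence the physical intervals and the atoms are both ordered the same way: higher level first and, within a level, smaller block index first. Agreement of the two orders then follows because both equal this order, not from proximity.

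Two smaller slips should be fixed at the same time. First, for blocks $1<2$ the monotone part of the atom difference is $2\beta_1+2P+1\approx10T$, because $f_1-f_2=2\beta_1$. Your bound $4\beta_1$ is therefore wrong, though $10T$ still suffices. Second, in Step~3 the quantity to subtract from $2GM$ is the total oscillation of your bracket over both adjacent bands, not the sum of the two band widths. Two narrow bands can still have brackets centred at different values. Your Step~2 estimates do bound that oscillation by $O(N^2T)$, so the conclusion survives.
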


\begin{proof}
We give all separation and width bounds explicitly, so that the proof is self-contained and no asymptotic comparison is needed.

For an occurrence cluster $(v,h)$ representing element $e_k$, put $a=\kappa(k)$.  Its physical early-ratio interval and effective ratio are
\[
 \mathcal I_{vh}=
 [f_v+\lambda_v(T+Ma+Q_h),\ f_v+\lambda_v(T+Ma+Q_h+1)],
\]
\[
 \rho_{vh}=f_v+\lambda_v(2T+2Ma+2Q_h+1).
\]
For a controller cluster they are
\[
 \mathcal I_{v0}=
 [f_v+\lambda_v(T+Q_0),\ f_v+\lambda_v(T+Q_0+3)],
 \qquad
 \rho_{v0}=f_v+\lambda_v(2T+2Q_0+3).
\]
For $N\ge4$, the definitions $G=N^{20}$, $M=N^{40}$, $T=N^{50}$, and $q=11110$ imply
\begin{align*}
 n&<N, & G+N&<2G, & |\lambda_v-\lambda_w|&<N,\\
 N^2M&<N^2T, & 8G(q+3)&<N^2T, & GM&>20N^2T,\\
 4G(q+3)&<T, & 3nM+q+3&<T.
\end{align*}
For example, $GM>20N^2T$ is equivalent to $N^8>20$, and $8G(q+3)<N^2T$ is equivalent to $8(q+3)<N^{32}$; both hold for $N\ge4$.

\par\medskip\noindent\textbf{Different element levels.}\quad
Consider occurrence cluster $(v,h)$ for $e_k$ and occurrence cluster $(w,h')$ for $e_\ell$, where $k<\ell$.  Put $a=\kappa(k)$ and $b=\kappa(\ell)$, so $a-b\ge1$ and $b\le n$.  The lowest physical ratio of the first cluster minus the highest physical ratio of the second is
\begin{align*}
 D_{\rm phys}
 ={}&(f_v-f_w)+(\lambda_v-\lambda_w)T\\
 &+M\{\lambda_v(a-b)+(\lambda_v-\lambda_w)b\}
 +\lambda_vQ_h-\lambda_w(Q_{h'}+1).
\end{align*}
Using $f_v-f_w>-(f_1-f_n)>-10N^2T$, $\lambda_v\ge G+1$, $|\lambda_v-\lambda_w|<N$, $b<N$, and $\lambda_w<2G$, we obtain
\[
 D_{\rm phys}
 >(G+1)M-N^2M-NT-10N^2T-4G(q+3).
\]
The adverse terms are less than $13N^2T$, whereas $GM>20N^2T$; hence $D_{\rm phys}>0$.  Thus the full physical intervals are strictly ordered by element level.

For the effective points, the analogous subtraction gives
\[
 D_{\rm eff}
 >2(G+1)M-2N^2M-2NT-10N^2T-8G(q+3).
\]
The adverse terms are less than $15N^2T$, and therefore
\[
 D_{\rm eff}>2GM-15N^2T>GM.
\]
The same estimates, with the lower level's element digit set to zero and controller width $3$ absorbed in the code bound, show that every occurrence cluster lies above every controller cluster, physically and effectively, with effective separation greater than $GM$.

\par\medskip\noindent\textbf{The same element level.}\quad
Suppose two occurrence clusters represent the same element and $v<w$.  Then
\[
 \delta:=\lambda_v-\lambda_w=w-v\ge1,
 \qquad f_v-f_w>0.
\]
The lowest physical ratio in the $v$-cluster minus the highest in the $w$-cluster is at least
\[
 \delta T-4G(q+3)>0,
\]
and the effective-point difference is at least
\[
 2\delta T-8G(q+3)>0.
\]
Thus the physical and effective orders agree inside every element band.  The controller comparison is easier and follows from the same inequalities with the fixed code $Q_0$.

\par\medskip\noindent\textbf{Gap and width bounds.}\quad
The effective comparison between consecutive element levels, and between the last element level and the controller level, is greater than $GM$.  Hence $\Delta_k>GM$ for every $k$.

For a fixed element $e_k$, put $A_k=2T+2M\kappa(k)$.  For any two effective occurrence points in its band,
\begin{align*}
 |\rho_{vh}-\rho_{wh'}|
 &\le |f_v-f_w|
 +|\lambda_v-\lambda_w|(A_k+2q+1)
 +2\lambda_w|Q_h-Q_{h'}|\\
 &<10N^2T+N(2T+2nM+2q+1)+2(G+N)q\\
 &<10N^2T+3NT+N^2T
 <15N^2T.
\end{align*}
For the penultimate inequality, the scale definitions give
$2nM+2q+1<T$, so $N(2T+2nM+2q+1)<3NT$.  Moreover,
\[
 2(G+N)q<4Gq<8G(q+3)<N^2T,
\]
using the preliminary inequalities above.  Thus the final code-variation term is bounded by $N^2T$ itself.
The controller band satisfies the same upper bound.  Since there are $n+1=N$ bands,
\[
 W<15N^3T.
\]
All complete physical cluster intervals are therefore disjoint, their order agrees with the effective-ratio order, and the hypothesis of \cref{lem:compression} holds.
\end{proof}

\begin{figure}[t]
\centering
\resizebox{0.98\textwidth}{!}{%
\begin{tikzpicture}[x=1.2cm,y=1cm,>=Latex]
\draw[->,thick] (0,0) -- (12.3,0) node[right] {increasing early ratio};
\foreach \x/\lab in {1/$\mathcal R_0$,3/$\mathcal R_n$,5/$\mathcal R_{n-1}$,8/$\cdots$,10/$\mathcal R_2$,12/$\mathcal R_1$}{
  \draw[very thick,navy] (\x-0.35,0) -- (\x+0.35,0);
  \node[above=3pt] at (\x,0) {\lab};
}
\draw[decorate,decoration={brace,amplitude=4pt}] (1.35,0.55) -- (2.65,0.55) node[midway,above=5pt] {$\Delta_n$};
\draw[decorate,decoration={brace,amplitude=4pt}] (10.35,0.55) -- (11.65,0.55) node[midway,above=5pt] {$\Delta_1$};
\node[below=8pt,align=center] at (6.5,0) {main gaps carry the prefix-square signal;\\within-band intervals form the bounded residual};
\end{tikzpicture}%
}
\caption{Effective-ratio bands. Element $e_1$ is highest, element $e_n$ is lowest, and the controller band lies below all element bands.}
\label{fig:bands}
\end{figure}

\subsubsection{Prefix profiles and the main quadratic signal}

Write $\1\{\cdot\}$ for an indicator.  For block $v$, define the signed effective-atom profile
\begin{equation}
 F_v(t)=\sum_{h=0}^3m_{vh}\1\{t\le\rho_{vh}\}.
 \label{eq:profile}
\end{equation}
Thus $F_v(t)$ is the signed mass at effective ratios at least $t$.  As $t$ moves downward, each occurrence atom adds $1$; after the controller atom is crossed, the mass $-3$ returns the profile to zero.  The controller therefore prevents a spurious tail below the last element band.

For distinct blocks $u,v$, define the cross-block quadratic coefficient generated by early interactions by the physical-job sum below.  By \cref{lem:compression,lem:separation}, it also has the effective-atom and profile representations
\begin{equation}
 \begin{aligned}
 J_{uv}
 &:=\frac14\sum_{i\in\B_u}\sum_{j\in\B_v}
       \epsilon_i\epsilon_jp_ip_j\min\{r_i,r_j\}\\
 &=\frac14\sum_{g=0}^3\sum_{h=0}^3
       m_{ug}m_{vh}\min\{\rho_{ug},\rho_{vh}\}\\
 &=\frac14\int_0^\infty F_u(t)F_v(t)\,dt.
 \end{aligned}
 \label{eq:profilekernel}
\end{equation}
The first line ranges over the eight physical jobs in each block.  The second ranges over the four effective atoms, whose signed masses are $m_{v0}=-3$ and $m_{vh}=1$ for $h=1,2,3$.

The final equality uses the exact minimum-kernel identity
\[
 \min\{a,b\}=\int_0^\infty\1\{t\le a\}\1\{t\le b\}\,dt.
\]
Equivalently, sort the effective ratios and sum the constant product $F_uF_v$ over consecutive intervals.  The integral notation makes the split into main gaps and within-band intervals explicit.

The factor $1/4$ comes from the product of the two early indicators: the coefficient of $\sigma_u\sigma_v$ for jobs $i\in\B_u$ and $j\in\B_v$ is $\epsilon_i\epsilon_j/4$.  Exact pair compression then gives \eqref{eq:profilekernel}.

On the main gap following element $e_k$, the active occurrence atoms of block $v$ are exactly those corresponding to $S_v\cap\{e_1,\ldots,e_k\}$, while the controller is inactive.  For $k<n$ this gap is $(\overline\rho_{k+1},\underline\rho_k)$; for $k=n$ it is $(\overline\rho_0,\underline\rho_n)$.  Hence
\begin{equation}
 F_v(t)=d_{v,k}
 \quad\text{throughout the $k$th main gap.}
 \label{eq:profileprefix}
\end{equation}
Because a block contains three $+1$ atoms and one $-3$ atom, $|F_v(t)|\le3$ everywhere.

Define the \emph{signed prefix total}
\[
 Y_k(\sigma)=\sum_{v=1}^n\sigma_vd_{v,k}.
\]
This notation is distinct from the source sets $S_v$.

\begin{finding}{Symbolic micro-example: why one ratio gap gives a square}
Fix the $k$th main gap, whose length is $\Delta_k$.  For two blocks $u,v$ alone, the signed profile total is $Y=\sigma_ud_{u,k}+\sigma_vd_{v,k}$, and their interaction on this gap is
\[
 \frac{\Delta_k}{4}\sigma_u\sigma_vd_{u,k}d_{v,k}
 =\frac{\Delta_k}{8}\bigl(Y^2-d_{u,k}^2-d_{v,k}^2\bigr).
\]
The subtracted terms do not depend on the spins.  Thus the gap length weights the square of the signed profile total; summing over all blocks gives the same interpretation for $Y_k(\sigma)$.
\end{finding}

The cross-block quadratic contribution of the $k$th main gap is
\[
\frac{\Delta_k}{4}\sum_{u<v}\sigma_u\sigma_vd_{u,k}d_{v,k}.
\]
Using
\[
Y_k(\sigma)^2
=\sum_vd_{v,k}^2+2\sum_{u<v}\sigma_u\sigma_vd_{u,k}d_{v,k},
\]
this equals, up to the diagonal term $-(\Delta_k/8)\sum_vd_{v,k}^2$ that is independent of $\sigma$,
\begin{equation}
 \frac{\Delta_k}{8}Y_k(\sigma)^2.
 \label{eq:maingapquadratic}
\end{equation}
Thus the gap length $\Delta_k$ weights the square of a signed prefix total.  If $x_v=(1+\sigma_v)/2$, regularity gives $\sum_vd_{v,k}=3k$, because each of the first $k$ elements belongs to three sets.  Therefore
\begin{align*}
Y_k(\sigma)
&=\sum_v(2x_v-1)d_{v,k}\\
&=2\sum_vd_{v,k}x_v-\sum_vd_{v,k}\\
&=2C_k(x)-3k,
\end{align*}
and hence
\begin{equation}
 Y_k(\sigma)+k=2(C_k(x)-k).
 \label{eq:prefixrelation}
\end{equation}
Thus completing the square in $Y_k+k$ produces exactly the prefix error $C_k(x)-k$.

\subsubsection{Logical-field correction}

The main gaps supply the quadratic term in $Y_k$.  To obtain a square centered at $C_k(x)=k$, we must also supply the matching linear term.  For state jobs $i,j$, write
\[
 \alpha_{ij}=p_ip_j\min\{r_i,r_j\}.
\]
Define $a_v^0$ as the coefficient of $\sigma_v$ in the early state-job cost after substituting $c_i=\sigma_v\epsilon_i$ in every block:
\begin{equation}
 a_v^0=
 \frac14\sum_{\{i,j\}\subseteq\B_v}\alpha_{ij}(\epsilon_i+\epsilon_j)
 +\frac14\sum_{\substack{i\in\B_v\\j\notin\B_v}}\alpha_{ij}\epsilon_i.
 \label{eq:a0}
\end{equation}
The first sum collects within-block pairs; the second collects pairs joining block $v$ to another block, with every physical pair counted once.  Thus $a_v^0$ is exactly the baseline coefficient of $\sigma_v$ and is computable in polynomial time.  \Cref{app:algebra} gives the indicator expansion.

The target coefficient that completes the main-gap squares is
\begin{equation}
 a_v^\star=\frac14\sum_{k=1}^n\Delta_k k d_{v,k}.
 \label{eq:astar}
\end{equation}
Indeed,
\[
\sum_va_v^\star\sigma_v
=\sum_{k=1}^n\frac{\Delta_k k}{4}Y_k(\sigma).
\]
Adding this to the main-gap square gives
\[
\frac{\Delta_k}{8}Y_k^2+\frac{\Delta_k k}{4}Y_k
=\frac{\Delta_k}{8}(Y_k+k)^2-\frac{\Delta_k}{8}k^2.
\]
The final term is constant, and \eqref{eq:prefixrelation} converts the square into $(\Delta_k/2)(C_k(x)-k)^2$.

To adjust the actual field $a_v^0$ toward $a_v^\star$, define the signed square-processing moment
\[
 D_v=\sum_{i\in\B_v}\epsilon_ip_i^2.
\]
For an occurrence pair, the signed square difference is
\[
(P_{vh}+1)^2-P_{vh}^2=2P_{vh}+1,
\]
whereas for the controller it is
\[
-(P_{v0}+3)^2+P_{v0}^2=-6P_{v0}-9.
\]
The six copies of $T$ cancel between the three occurrence pairs and the controller.  Substituting the definitions of the $P$ values gives
\begin{equation}
 D_v=
 2M\sum_{h=1}^3\kappa(k_{vh})
 +2(Q_1+Q_2+Q_3)-6Q_0-6>0.
 \label{eq:Dv}
\end{equation}
Positivity follows because the $M$ term is positive and dominates the fixed code terms.
Choose an integer
\begin{equation}
 \eta_v=\operatorname{round}\left(\frac{4(a_v^0-a_v^\star)}{D_v}\right),
 \label{eq:eta}
\end{equation}
where either nearest integer is permitted in a tie, and define
\begin{equation}
 e_v=a_v^0-\frac{\eta_vD_v}{4}-a_v^\star.
 \label{eq:ev}
\end{equation}
Then
\begin{equation}
 |e_v|\le\frac{D_v}{8},
 \qquad
 \sum_{v=1}^nD_v<7N^2M.
 \label{eq:ebound}
\end{equation}
The first inequality is just nearest-integer rounding.  The chosen integer $\eta_v$ differs from the ideal real value $4(a_v^0-a_v^\star)/D_v$ by at most $1/2$; multiplying that error by $D_v/4$ gives $D_v/8$.  Thus exact field cancellation is unnecessary: the remaining field error is explicitly bounded and will be absorbed into $\Gamma$.

\subsubsection{Tardiness ratios, the anchor, and the load-lock scale}

The tardiness ratio of every state job in block $v$ will be
\begin{equation}
 s_i=HL_v+\eta_v.
 \label{eq:lateratio}
\end{equation}
We now define $H$ so that the $HL_v$ layer forces the target block loads.
Recall that
\[
 b_v=\sum_{i\in\T\cap\B_v}p_i
\]
is the actual tardy processing load of block $v$, whereas $\beta_v$ is its prescribed half-load.  By \eqref{eq:loadiffcanonical}, $b_v=\beta_v$ holds exactly for the two coded states.

For a state job $i$, let $\operatorname{blk}(i)$ be its block index.  The worked examples later retain the shorthand $b(i)=\operatorname{blk}(i)$; here $\operatorname{blk}(i)$ avoids confusion with the block load $b_v$.  For state jobs $i,j$, define
\[
 \bar\eta_{ij}=
 \begin{cases}
 \eta_{\operatorname{blk}(i)},&\operatorname{blk}(i)=\operatorname{blk}(j),\\
 \eta_{\max\{\operatorname{blk}(i),\operatorname{blk}(j)\}},&\operatorname{blk}(i)\ne \operatorname{blk}(j).
 \end{cases}
\]
Let $\J_S$ be the set of all $8n$ state jobs, and define
\begin{align}
 U={}&1+
 \sum_{\{i,j\}\subseteq\J_S}p_ip_j\min\{r_i,r_j\}
 +\sum_{i\in\J_S}|\eta_{\operatorname{blk}(i)}|p_i^2\label{eq:U}\\
 &+\sum_{\{i,j\}\subseteq\J_S}|\bar\eta_{ij}|p_ip_j.
 \nonumber
\end{align}
The quantity $U$ is a deliberately crude upper bound on the oscillation of every assignment-dependent term that is not multiplied by $H$.  Here the oscillation of a function means its maximum value minus its minimum value over all early--tardy assignments.  Taking absolute values term by term is wasteful but safe; the added $1$ makes the later inequality strict.  Put
\begin{equation}
 H=U+2\max_v|\eta_v|+2.
 \label{eq:H}
\end{equation}
Add an anchor job $a$ with
\[
 p_a=H,
 \qquad
 r_a=1+\max_{i\in\J_S}r_i.
\]
The earliness weights of all jobs are $w_i^E=p_ir_i$.

With $H$ chosen as in \eqref{eq:H}, the state-job tardiness ratios \eqref{eq:lateratio} are positive and satisfy
\[
 s_i>s_j
 \quad\text{whenever }\operatorname{blk}(i)<\operatorname{blk}(j).
\]
To see this directly, let $\eta_{\max}=\max_v|\eta_v|$.  Positivity follows from $HL_v\ge2H>\eta_{\max}$.  For consecutive blocks,
\[
(HL_v+\eta_v)-(HL_{v+1}+\eta_{v+1})
\ge2H-2\eta_{\max}>0.
\]
Thus the dominant $HL_v$ layer fixes the tardy block order despite the field-correction offsets.  Their tardiness weights are $w_i^L=p_is_i$.

The proof below combines these tardy terms with the anchor's ordinary early-pair terms.  The choice $L_v=2\lambda_v$ makes the squared-processing terms cancel exactly.

\begin{lemma}[Global load lock]\label{lem:loadlock}
Among canonical early--tardy assignments with the anchor early, every optimum satisfies
\[
 b_v=\beta_v
 \qquad(v=1,\ldots,n).
\]
Consequently, every optimal block is in one of its two coded states.
\end{lemma}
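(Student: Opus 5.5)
The plan is to expand the canonical objective $\Phi(E)$ of \cref{prop:setobjective} for assignments with the anchor early. I will split it into a part multiplied by $H$ and a remainder. The $H$-part should turn out to be $H$ times a positive-definite quadratic in the block-load vector $b=(b_1,\ldots,b_n)$, minimized exactly at $b=\beta$. The remainder oscillates by less than $U<H$. Since the coded assignments of \cref{lem:arithmetic} realize $b=\beta$, any assignment with $b\ne\beta$ loses at least $H$ in the $H$-part and gains less than $U$ elsewhere, so it cannot be optimal. The final sentence of the lemma then follows from \eqref{eq:loadiffcanonical}.

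\textbf{Step 1: anchor terms.} The anchor has $r_a>r_i$ for every state job, so each early state job $i$ contributes $p_ap_i\min\{r_a,r_i\}=Hp_ir_i=Hp_i(f_v+\lambda_vp_i)$. Writing $\1\{i\in E\}=1-\1\{i\in\T\}$, this becomes an assignment-independent constant minus $H\sum_v\bigl(f_vb_v+\lambda_v\sum_{i\in\T\cap\B_v}p_i^2\bigr)$.

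\textbf{Step 2: tardy terms.} Every state job has $s_i=HL_{\operatorname{blk}(i)}+\eta_{\operatorname{blk}(i)}$, and the $L_v$ decrease in $v$. Hence $\min\{s_i,s_j\}=H\min\{L_u,L_v\}+\bar\eta_{ij}$, with the $\eta$ of the larger block index attached, which is exactly the definition of $\bar\eta_{ij}$.
\begin{itemize}
\item The $H$-part of the tardy pairs and self terms within block $v$ is $HL_v\bigl(\sum_{\{i,j\}}p_ip_j+\sum_ip_i^2\bigr)=\tfrac{H}{2}L_v\bigl(b_v^2+\sum_{i\in\T\cap\B_v}p_i^2\bigr)$.
\item The $H$-part across blocks $u\ne v$ is $HK_{uv}b_ub_v$.
\end{itemize}
Because $L_v=2\lambda_v$, the squared-processing term $H\lambda_v\sum p_i^2$ cancels exactly against the one from Step~1. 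The total $H$-part is therefore
\[
 H\Bigl(\tfrac12 b^\top Kb-f^\top b\Bigr)+\text{const}
 =\tfrac H2(b-\beta)^\top K(b-\beta)+\text{const},
\]
using $f=K\beta$.

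\textbf{Step 3: integer gap of the quadratic.} For $x=b-\beta\in\mathbb Z^n$, I will use the min-kernel telescoping identity
\[
x^\top Kx=L_n\Bigl(\sum_u x_u\Bigr)^2+\sum_{v=1}^{n-1}(L_v-L_{v+1})\Bigl(\sum_{u\le v}x_u\Bigr)^2,
\]
where $L_v-L_{v+1}=2$ and $L_n\ge2$. If $x\ne0$, some prefix sum of $x$ is a nonzero integer, so $x^\top Kx\ge2$. Thus any assignment with $b\ne\beta$ pays at least $H$ more in the $H$-part than a coded assignment.

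\textbf{Step 4: remainder and comparison.} The remaining assignment-dependent terms are:
\begin{itemize}
\item the early state-state pairs;
\item the $\eta_v p_i^2$ tardy self terms;
\item the $\bar\eta_{ij}p_ip_j$ tardy pair terms;
\item the early-state anchor constant pieces, which are assignment-independent.
\end{itemize}
Each of these ranges between $0$ and its absolute value, so their total oscillation is at most $U-1<H$ by \eqref{eq:U}. Comparing any assignment with $b\ne\beta$ against a coded assignment (which exists by \cref{lem:arithmetic}) gives a strict cost increase, so every optimum has $b=\beta$.

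I expect the main obstacle to be the bookkeeping in Step~2. The within-block tardy pairs must be organized so that the $H$-part is exactly $\tfrac H2L_v(b_v^2+\sum p_i^2)$. The cross-block minima $\min\{s_i,s_j\}$ must split as $H\min\{L_u,L_v\}$ plus $\eta$ of the larger block index, which is what makes the squared-processing cancellation with the anchor exact. It also has to be checked that every $\eta$-dependent piece is covered by the crude bound $U$.
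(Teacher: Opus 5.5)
Your proposal is correct and follows the paper's proof essentially step for step. The anchor's early-pair terms supply $-Hf^\top b$ together with a $-H\lambda_v q_v$ term that cancels the $q_v$ part of the tardy $H$-layer because $L_v=2\lambda_v$; centering via $f=K\beta$ gives $\frac H2(b-\beta)^\top K(b-\beta)$, which is at least $H$ for integer $b\ne\beta$ by the same prefix-sum decomposition of the minimum kernel; and the remainder oscillates by at most $U-1<H$.

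Two points should be tightened. First, $\min\{s_i,s_j\}=HL_{\max}+\eta_{\max}$ relies on the strict block ordering of $HL_v+\eta_v$, which needs $2H>2\max_v|\eta_v|$, not on the monotonicity of the $L_v$ alone. Second, a term $c\cdot\1\{\cdot\}$ oscillates by at most $|c|$, rather than ``ranging between $0$ and $|c|$'' as you wrote.
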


\begin{proof}
The calculation has one purpose: isolate an assignment-dependent quadratic centered at the target load vector $\beta$, then show that every nonzero integer load error costs more than all remaining terms can save.  Let $q_v$ be the sum of squared processing times of the tardy jobs in block $v$, and let $Q_v=\sum_{i\in\B_v}p_i^2$.  The block square-sums $q_v,Q_v$ are unrelated to the code constants $q,Q_h$.  Write $b=(b_1,\ldots,b_n)^\top$.

\par\medskip\noindent\textbf{Dominant tardiness cost.}\quad
Ignore the offsets $\eta_v$ temporarily and retain only the $HL_v$ part of each tardiness ratio.  Within block $v$,
\[
\sum_{\{i,j\}\subseteq \T\cap\B_v}p_ip_j
=\frac12(b_v^2-q_v).
\]
Adding the tardy self terms $q_v$ gives $(b_v^2+q_v)/2$.  Between two blocks $u$ and $v$, all pair terms aggregate to $K_{uv}b_ub_v$.  Hence the full dominant tardiness contribution is
\[
 \frac H2\left(b^\top K b+\sum_{v=1}^nL_vq_v\right).
\]

\par\medskip\noindent\textbf{Interaction with the early anchor.}\quad
The anchor has processing time $H$ and early ratio larger than every state-job ratio.  Therefore, for every early state job $i$, the minimum in the early pair term is $r_i$.  The early jobs in block $v$ have total processing $2\beta_v-b_v$ and squared-processing sum $Q_v-q_v$.  Using $r_i=f_v+(L_v/2)p_i$, the anchor interaction with that block is
\[
 H\left[f_v(2\beta_v-b_v)+\frac{L_v}{2}(Q_v-q_v)\right].
\]
Summing these block contributions gives the full anchor--state-job interaction used by the construction.

\par\medskip\noindent\textbf{Cancellation and centering.}\quad
The $q_v$ terms in the two displayed expressions cancel exactly.  After discarding assignment-independent constants, the remaining dominant expression is
\[
\frac H2b^\top K b-Hf^\top b.
\]
Since $f=K\beta$ and $K$ is symmetric,
\[
\frac H2b^\top K b-H\beta^\top K b
=\frac H2(b-\beta)^\top K(b-\beta)-\frac H2\beta^\top K\beta.
\]
The final term is constant.  Thus the assignment-dependent dominant penalty is exactly
\begin{equation}
 \frac H2(b-\beta)^\top K(b-\beta).
 \label{eq:lockquadratic}
\end{equation}

\par\medskip\noindent\textbf{Positive definiteness of the minimum kernel.}\quad
Let $L_{n+1}=0$, and let $z^{(k)}$ have ones in positions $1,\ldots,k$ and zeros afterward.  Entrywise,
\[
K=\sum_{k=1}^n(L_k-L_{k+1})z^{(k)}(z^{(k)})^\top,
\]
because $K_{uv}=\sum_{k\ge\max\{u,v\}}(L_k-L_{k+1})$.  Consequently, for every vector $y$,
\begin{equation}
 y^\top K y=
 \sum_{k=1}^n(L_k-L_{k+1})
 \left(\sum_{v=1}^ky_v\right)^2.
 \label{eq:minkernelpd}
\end{equation}
All coefficients are at least $2$.  If $y$ is a nonzero integer vector, at least one integer prefix sum is nonzero, so $y^\top K y\ge2$.

\par\medskip\noindent\textbf{Dominance over all other terms.}\quad
Let $R_0$ denote the complete assignment-dependent remainder after the $H$-scaled expression and all assignment-independent constants have been removed.  Once the fixed tardiness block order is used, $R_0$ consists exactly of state--state early pair terms, tardiness-offset self terms, and tardiness-offset pair terms.  The oscillation of a coefficient times a $0$--$1$ indicator is at most the coefficient's absolute value; hence \eqref{eq:U} gives
\[
 \operatorname{osc}(R_0)\le U-1<U.
\]
For a wrong integer load vector $b\ne\beta$, equation \eqref{eq:lockquadratic} is at least $H$.  A correctly loaded coded assignment exists and has zero lock penalty.  Comparing any wrong-load assignment with any such correct-load assignment therefore gives
\[
 F_{\rm wrong}-F_{\rm correct}
 \ge H-\operatorname{osc}(R_0)
 \ge H-(U-1)>0.
\]
Thus no optimum among the canonical anchor-early assignments has a wrong load.  Every such optimum has $b=\beta$, and \eqref{eq:loadiffcanonical} together with \cref{lem:arithmetic} forces each block into one of its two coded states.
\end{proof}

\begin{intuition}
The anchor is not merely a large job.  Its early interactions supply the linear term $-H(K\beta)^\top b$ that shifts the minimum of the tardiness quadratic from zero load to the prescribed load $\beta$.  The minimum kernel then penalizes every nonzero integer load-error vector by a visible amount.
\end{intuition}

Choose any coded assignment, put the anchor early, and compute its schedule cost $C^\star$ using the state-job ratios already fixed. This cost does not depend on the anchor's tardiness ratio. Set
\begin{equation}
 s_a=1+\ceil{\frac{C^\star}{H^2}},
 \qquad
 w_a^L=Hs_a.
 \label{eq:anchorlateratio}
\end{equation}
If the anchor were tardy, its tardy self-cost alone would be
\[
p_a^2s_a=H^2\left(1+\left\lceil C^\star/H^2\right\rceil\right)>C^\star.
\]
All costs are nonnegative, while the chosen coded canonical schedule with an early anchor has cost $C^\star$.  Therefore no optimal canonical assignment can have the anchor tardy.  By \cref{lem:compact}, at least one globally optimal schedule is canonical; choosing such a schedule, its anchor is early.  This argument is independent of whether the chosen coded assignment represents an exact cover; it only supplies an explicit feasible benchmark.

Finally, set
\[
 d=\sum_{j\in\J}p_j.
\]
This completes the construction of the \AWET\ jobs.

\begin{figure}[t]
\centering
\begin{tikzpicture}[
  state/.style={draw,rounded corners,minimum width=1.4cm,minimum height=0.75cm,fill=figurehard,align=center},
  anch/.style={draw,rounded corners,minimum width=1.5cm,minimum height=0.85cm,fill=figureopen,align=center},
  every node/.style={font=\small}
]
\node[anch] (a) {anchor\\$p_a=H$};
\foreach \x in {1,...,5}{
  \node[state] (b\x) at (2.0*\x-6, -1.7) {$\B_{\x}$\\load $b_{\x}$};
  \draw[thick] (a) -- (b\x);
}
\node at (0,-3.0) {$\displaystyle \frac H2(b-\beta)^\top K(b-\beta)$};
\node[align=center] at (0,-3.75) {minimum-kernel positive definiteness enforces all target loads simultaneously};
\end{tikzpicture}
\caption{Schematic of the anchor-coupled global load lock. The figure shows five blocks only; the construction uses $n$ blocks.}
\label{fig:loadlock}
\end{figure}

\subsubsection{Canonical objective, residual bound, and decision threshold}

On the coded state space, write $c_i=\sigma_v\epsilon_i$ in block $v$ and $x_v=(1+\sigma_v)/2$.  The dominant load-lock term is constant.  The blockwise tardiness offset $\eta_v$ changes the logical field of block $v$ by $-\eta_vD_v/4$; cross-block tardy costs are constant because every block has tardy load $\beta_v$.  Therefore the field coefficient is $a_v^\star+e_v$.

At this point every schedule that can be optimal is represented by the $n$ logical spins $\sigma$.  The remaining task is purely algebraic: separate the large, intended main-gap contribution from the small, unintended within-band and rounding contributions.

Expand the full canonical objective as
\begin{equation}
 \Phi_{\rm can}(\sigma)
 =C_0+\sum_{v=1}^n(a_v^\star+e_v)\sigma_v
 +\sum_{1\le u<v\le n}J_{uv}\sigma_u\sigma_v,
 \label{eq:canonicalexpansion}
\end{equation}
where $C_0$ is the computable constant coefficient and $J_{uv}$ is given by \eqref{eq:profilekernel}. Define the main-gap coefficient
\begin{equation}
 J_{uv}^{\rm main}
 =\frac14\sum_{k=1}^n\Delta_kd_{u,k}d_{v,k}.
 \label{eq:Jmain}
\end{equation}
Let
\begin{equation}
 \mathcal P(x)=
 \sum_{k=1}^n\frac{\Delta_k}{2}(C_k(x)-k)^2.
 \label{eq:prefixpenalty}
\end{equation}
Define
\begin{align}
 A={}&C_0-\frac18\sum_{k=1}^n\Delta_k
 \left(k^2+\sum_{v=1}^nd_{v,k}^2\right),\label{eq:Aconstant}\\
 R(\sigma)={}&\sum_{v=1}^ne_v\sigma_v
 +\sum_{1\le u<v\le n}(J_{uv}-J_{uv}^{\rm main})\sigma_u\sigma_v.
 \label{eq:Rremainder}
\end{align}
\begin{howtoread}
Split every $J_{uv}$ into its main-gap part and its within-band remainder. The main-gap quadratic terms contribute $\Delta_kY_k^2/8$ up to a constant, and the target fields contribute $\Delta_kkY_k/4$. Completing the square gives $\Delta_k(Y_k+k)^2/8$, while \eqref{eq:prefixrelation} converts this into $\Delta_k(C_k-k)^2/2$. The lemma below records this simultaneous calculation in full and identifies exactly where every constant and residual term goes.
\end{howtoread}

\begin{lemma}[Exact master decomposition]\label{lem:masterdecomposition}
For every coded state $\sigma\in\{-1,+1\}^n$ and
$x_v=(1+\sigma_v)/2$,
\begin{equation}
 \Phi_{\rm can}(\sigma)=A+\mathcal P(x)+R(\sigma).
 \label{eq:masteridentity}
\end{equation}
The identity is exact: $R(\sigma)$ contains the explicitly identified
within-band and rounding terms, not an omitted asymptotic error.
\end{lemma}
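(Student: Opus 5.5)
The identity is a purely algebraic rearrangement of the expansion \eqref{eq:canonicalexpansion}, so the plan is to start from that expansion and regroup every term. Write $J_{uv}=J_{uv}^{\rm main}+(J_{uv}-J_{uv}^{\rm main})$. Write the field coefficient as $a_v^\star+e_v$. The $e_v\sigma_v$ terms and the within-band differences $J_{uv}-J_{uv}^{\rm main}$ are placed verbatim into $R(\sigma)$ as defined in \eqref{eq:Rremainder}. What remains to verify is
\[
 C_0+\sum_va_v^\star\sigma_v+\sum_{u<v}J_{uv}^{\rm main}\sigma_u\sigma_v=A+\mathcal P(x)
\]
for every $\sigma\in\{-1,+1\}^n$.

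For this I would expand the left side gap by gap. By \eqref{eq:Jmain}, the quadratic part equals $\sum_k\frac{\Delta_k}{4}\sum_{u<v}\sigma_u\sigma_vd_{u,k}d_{v,k}$. Using $\sigma_v^2=1$, this is $\sum_k\frac{\Delta_k}{8}\bigl(Y_k(\sigma)^2-\sum_vd_{v,k}^2\bigr)$. By \eqref{eq:astar}, the linear part is $\sum_k\frac{\Delta_kk}{4}Y_k(\sigma)$. Completing the square within each $k$ gives
\[
 \frac{\Delta_k}{8}\bigl(Y_k+k\bigr)^2-\frac{\Delta_k}{8}\Bigl(k^2+\sum_vd_{v,k}^2\Bigr).
\]
Then \eqref{eq:prefixrelation} gives $(Y_k+k)^2=4(C_k(x)-k)^2$, so the first piece becomes $\frac{\Delta_k}{2}(C_k(x)-k)^2$. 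Summing over $k$ yields $\mathcal P(x)$ from \eqref{eq:prefixpenalty}, and the subtracted constants combine with $C_0$ into exactly $A$ from \eqref{eq:Aconstant}.

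The real content, and the step I expect to need the most care, is justifying that \eqref{eq:canonicalexpansion} itself holds with field coefficient exactly $a_v^\star+e_v$ and quadratic coefficient exactly $J_{uv}$ from \eqref{eq:profilekernel}, with no stray assignment-dependent terms. For this I would expand $\Phi$ from \eqref{eq:setobj} on coded assignments with the anchor early.

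The early part contributes four things. Its constant goes to $C_0$. Its field is $a_v^0$ by \eqref{eq:a0}. Its cross-block couplings equal $J_{uv}$ by \cref{lem:compression,lem:separation}. Within-block products $\sigma_v^2=1$ are constant.

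The anchor's early interactions, and the dominant $HL_v$ tardy part, depend only on the loads $b_v=\beta_v$, so they are constant by \cref{lem:loadlock} and \eqref{eq:loadiffcanonical}.

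For the $\eta$ offsets, cross-block tardy terms are again load-determined, hence constant. Within block $v$, the offset contributes $\eta_v\bigl(\tfrac12(\beta_v^2-q_v)+q_v\bigr)=\eta_v(\beta_v^2+q_v)/2$. Here $q_v=\sum_i p_i^2(1-\sigma_v\epsilon_i)/2$, which has $\sigma_v$-coefficient $-D_v/2$, so the offset field is $-\eta_vD_v/4$. By \eqref{eq:ev}, the total field is therefore $a_v^0-\eta_vD_v/4=a_v^\star+e_v$. With this verified, the regrouping above completes the proof.
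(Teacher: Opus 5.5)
Your proposal is correct and is essentially the paper's proof: you split $J_{uv}$ into main-gap and within-band parts, rewrite the main-gap couplings through $Y_k^2$, complete the square prefix by prefix, and convert with \eqref{eq:prefixrelation}, and your coefficient check for \eqref{eq:canonicalexpansion} matches the paragraph preceding that display and Appendix~\ref{app:algebra}. One wording point: the anchor's early interactions and the dominant $HL_v$ tardy part are not separately load-determined, since each carries a $\pm\frac H2L_vq_v$ term that varies with $\sigma_v$ because $D_v\neq0$; only their sum is load-determined, via the exact $q_v$ cancellation (from $L_v=2\lambda_v$) in the proof of \cref{lem:loadlock}.
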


\begin{proof}
The purpose of the calculation is to assign every physical term to exactly one of three accounts: the intended prefix-square penalty, the explicitly bounded residual, or the state-independent constant.  The calculation has three parts. The main-gap couplings provide the quadratic
terms in the prefix sums, the target fields provide the linear terms needed
to complete the squares, and $A$ supplies the state-independent correction
for the omitted diagonal terms and the constants $k^2$.

First split each coupling into its main-gap and residual portions. By
\eqref{eq:canonicalexpansion} and the definition \eqref{eq:Rremainder},
\[
 \Phi_{\rm can}(\sigma)
 =C_0+\sum_{v=1}^na_v^\star\sigma_v
 +\sum_{1\le u<v\le n}J_{uv}^{\rm main}\sigma_u\sigma_v
 +R(\sigma).
\]

For a fixed prefix $k$, recall that
$Y_k(\sigma)=\sum_vd_{v,k}\sigma_v$. Since $\sigma_v^2=1$,
\[
 Y_k(\sigma)^2
 =\sum_{v=1}^nd_{v,k}^2
 +2\sum_{1\le u<v\le n}d_{u,k}d_{v,k}\sigma_u\sigma_v.
\]
Solving for the cross-block sum and using \eqref{eq:Jmain} yields
\[
 \sum_{1\le u<v\le n}J_{uv}^{\rm main}\sigma_u\sigma_v
 =\frac18\sum_{k=1}^n\Delta_kY_k(\sigma)^2
 -\frac18\sum_{k=1}^n\Delta_k\sum_{v=1}^nd_{v,k}^2.
\]
The second term is the diagonal correction: $J_{uv}^{\rm main}$ sums only
over $u<v$, whereas $Y_k^2$ also contains
$d_{v,k}^2\sigma_v^2=d_{v,k}^2$.

Next use \eqref{eq:astar} and interchange the two finite sums:
\[
 \sum_{v=1}^na_v^\star\sigma_v
 =\frac14\sum_{k=1}^n\Delta_k k
   \sum_{v=1}^nd_{v,k}\sigma_v
 =\frac14\sum_{k=1}^n\Delta_k kY_k(\sigma).
\]
Substitution gives
\[
 \begin{aligned}
 \Phi_{\rm can}(\sigma)
 ={}&C_0-\frac18\sum_{k=1}^n\Delta_k\sum_{v=1}^nd_{v,k}^2\\
 &+\sum_{k=1}^n
   \left[\frac{\Delta_k}{8}Y_k(\sigma)^2
   +\frac{\Delta_k k}{4}Y_k(\sigma)\right]
 +R(\sigma).
 \end{aligned}
\]
Complete the square separately for each prefix:
\[
 \frac{\Delta_k}{8}Y_k^2+
 \frac{\Delta_k k}{4}Y_k
 =\frac{\Delta_k}{8}(Y_k+k)^2-
  \frac{\Delta_k}{8}k^2.
\]
Consequently, the state-independent terms combine exactly into
\eqref{eq:Aconstant}, and
\[
 \Phi_{\rm can}(\sigma)
 =A+\frac18\sum_{k=1}^n\Delta_k
     \bigl(Y_k(\sigma)+k\bigr)^2+R(\sigma).
\]
Finally, \eqref{eq:prefixrelation} gives
$Y_k(\sigma)+k=2(C_k(x)-k)$. Therefore
\[
 \frac18\sum_{k=1}^n\Delta_k(Y_k+k)^2
 =\sum_{k=1}^n\frac{\Delta_k}{2}(C_k(x)-k)^2
 =\mathcal P(x)
\]
by \eqref{eq:prefixpenalty}. This proves \eqref{eq:masteridentity}.
\end{proof}

The coefficient difference $J_{uv}-J_{uv}^{\rm main}$ is the profile integral over the union of the element and controller bands. Since $|F_v(t)|\le3$, one block pair contributes at most $9W/4$ in absolute value.  There are fewer than $n^2/2$ block pairs, so
\[
 \abs{\sum_{u<v}(J_{uv}-J_{uv}^{\rm main})\sigma_u\sigma_v}
 \le\frac98n^2W.
\]
Together with \eqref{eq:ebound}, this gives
\begin{equation}
 |R(\sigma)|\le\Gamma,
 \qquad
 \Gamma=\frac98n^2W+\frac18\sum_{v=1}^nD_v.
 \label{eq:Gamma}
\end{equation}
By \cref{lem:separation} and \eqref{eq:ebound},
\[
 \Gamma<18N^5T+N^2M.
\]
Let $\Delta_{\min}=\min_{1\le k\le n}\Delta_k$.  By \cref{lem:separation},
$\Delta_{\min}>GM=N^{60}$.  From the displayed bound on $\Gamma$,
\[
2\Gamma+1<36N^5T+2N^2M+1
=36N^{55}+2N^{42}+1.
\]
For $N\ge4$, this is smaller than $N^{60}/2$. Hence
\begin{equation}
 \frac{\Delta_{\min}}2>2\Gamma+1.
 \label{eq:gapdominance}
\end{equation}
This is the decisive scale separation: one unit of squared prefix error costs more than the largest possible favorable swing of the residual on both sides of the comparison, with an additional unit to accommodate the integer floor.

The decision bound of the constructed \AWET\ instance is
\begin{equation}
 B=\floor{A+\Gamma}.
 \label{eq:threshold}
\end{equation}
All quantities in \eqref{eq:threshold} are exactly computable rationals with denominators dividing $8$.  Actual schedule costs are integers because processing times, weights, and canonical completion times are integers.  Therefore the floor causes no loss in the yes-direction and creates a clean integer threshold.

\subsection{The ``if'' proof}

\begin{lemma}\label{lem:red2if}
If the \RPE\ instance is a yes-instance, then the constructed \AWET\ instance has a schedule of cost at most $B$.
\end{lemma}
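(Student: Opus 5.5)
Starting from a yes-certificate $x\in\{0,1\}^n$ of the \RPE\ instance with $C_k(x)=k$ for all $k$, I will set $\sigma_v=2x_v-1$ and build the coded assignment $c_i=\sigma_v\epsilon_i$ for $i\in\B_v$, with the anchor placed early. The early set $E$ then consists of the anchor together with, in each block, the jobs whose sign $\sigma_v\epsilon_i$ equals $+1$. It is nonempty because it contains the anchor. The candidate schedule is the due-date-aligned canonical schedule of \cref{prop:setobjective}. It is feasible, and its cost equals $\Phi(E)$, which in the paper's notation is the coded canonical value $\Phi_{\rm can}(\sigma)$.

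The second step bounds this cost through the exact master decomposition, \cref{lem:masterdecomposition}, which gives
\[
 \Phi_{\rm can}(\sigma)=A+\mathcal P(x)+R(\sigma).
\]
Because $C_k(x)=k$ for every $k$, each term of $\mathcal P(x)$ in \eqref{eq:prefixpenalty} vanishes, so $\mathcal P(x)=0$. By \eqref{eq:Gamma} the residual satisfies $|R(\sigma)|\le\Gamma$. Hence the cost is at most $A+\Gamma$. The schedule's cost is an integer, since all processing times and weights are integers and therefore so are all canonical completion times. An integer that is at most $A+\Gamma$ is at most $\floor{A+\Gamma}=B$, which proves the claim.

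The one point that needs care is applying \cref{lem:masterdecomposition}. The expansion \eqref{eq:canonicalexpansion} was derived for coded states with the anchor early and every block at load $\beta_v$. I need to confirm that the assignment above really lies in that regime. It does: \cref{lem:arithmetic} together with \eqref{eq:loadiffcanonical} gives $b_v=\beta_v$ for each coded block, and the anchor is early by construction. Beyond this check, no optimality argument (load lock, anchor ratio) is required in this direction, because we exhibit one explicit schedule rather than reason about an optimum.
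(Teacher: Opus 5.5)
Your proposal is correct and follows the paper's proof essentially step for step. You use the coded orientation $c_i=\sigma_v\epsilon_i$ with the anchor early, obtain feasibility from the canonical set objective, get $\mathcal P(x)=0$ from exact prefixes, apply $|R(\sigma)|\le\Gamma$ in the master identity, and use integrality of the schedule cost to pass from $A+\Gamma$ to $B=\lfloor A+\Gamma\rfloor$. Your check that the assignment lies in the regime covered by the master decomposition (anchor early, every block at load $\beta_v$) corresponds to the paper's remark that every block then has tardy load $\beta_v$.
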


\begin{proof}
Let $x\in\{0,1\}^n$ satisfy $C_k(x)=k$ for every $k$, and set $\sigma_v=2x_v-1$.

\par\medskip\noindent\textbf{Build the early--tardy assignment.}\quad
In block $v$, choose the coded orientation $c_i=\sigma_v\epsilon_i$, and put the anchor early.  Every block then has tardy load $\beta_v$.  By \cref{prop:setobjective}, the resulting nonempty early set has a feasible compact canonical schedule with integral completion times.

\par\medskip\noindent\textbf{Evaluate its prefix penalty.}\quad
Every prefix equation is exact, so each square in \eqref{eq:prefixpenalty} is zero and $\mathcal P(x)=0$.

\par\medskip\noindent\textbf{Use the residual bound and integer threshold.}\quad
The master identity and $|R(\sigma)|\le\Gamma$ give
\[
\Phi_{\rm can}(\sigma)=A+R(\sigma)\le A+\Gamma.
\]
The left side is an integer.  Hence
\[
\Phi_{\rm can}(\sigma)\le\lfloor A+\Gamma\rfloor=B.
\]
Thus the constructed scheduling instance is a yes-instance.
\end{proof}

\subsection{The ``only if'' proof}

\begin{lemma}\label{lem:red2onlyif}
If the constructed \AWET\ instance has a schedule of cost at most $B$, then the \RPE\ instance is a yes-instance.
\end{lemma}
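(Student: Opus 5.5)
The plan is to pass from an arbitrary schedule of cost at most $B$ to an optimal canonical one, decode it through the load lock, and then use the master identity together with the gap dominance \eqref{eq:gapdominance} to force every prefix error to zero.

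\textbf{Step 1: move to an optimal canonical schedule.} Suppose some feasible schedule has cost at most $B$. By \cref{lem:compact,prop:setobjective}, $\OPT=\min_{E}\Phi(E)\le B$, and some globally optimal schedule is canonical with early set $E^*$. The choice of $s_a$ in \eqref{eq:anchorlateratio} shows that every optimal canonical assignment has the anchor early: a tardy anchor alone costs more than $C^\star\ge\OPT$.

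\textbf{Step 2: decode the optimum into logical spins.} The assignment $E^*$ is optimal among canonical anchor-early assignments. \Cref{lem:loadlock} therefore gives $b_v=\beta_v$ for every $v$. Then \eqref{eq:loadiffcanonical} and \cref{lem:arithmetic} put every block in a coded state $c_i=\sigma_v\epsilon_i$, which determines $\sigma\in\{-1,+1\}^n$. Put $x_v=(1+\sigma_v)/2$.

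\textbf{Step 3: apply the master identity.} By \cref{lem:masterdecomposition} and \eqref{eq:Gamma},
\[
B\ge\OPT=\Phi_{\rm can}(\sigma)=A+\mathcal P(x)+R(\sigma)\ge A+\mathcal P(x)-\Gamma.
\]
Since $B\le A+\Gamma$, this yields $\mathcal P(x)\le2\Gamma$.

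\textbf{Step 4: force every prefix error to zero.} Suppose some $k$ has $C_k(x)\ne k$. This error is an integer, so $\mathcal P(x)\ge\Delta_k/2\ge\Delta_{\min}/2$. By \eqref{eq:gapdominance}, $\Delta_{\min}/2>2\Gamma+1$, which contradicts $\mathcal P(x)\le 2\Gamma$. Hence $C_k(x)=k$ for all $k$, and $x$ certifies that the \RPE\ instance is a yes-instance.

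\textbf{Main obstacle.} The delicate point is Step 2. \Cref{lem:loadlock} is stated only for optima among canonical assignments with the anchor early. I must check that $E^*$, being a global optimum, is in particular optimal within that restricted class. It is, because the class contains $E^*$ (its anchor is early by Step 1) and every member of the class is feasible, so its cost is at least $\OPT$.

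A second check concerns the master identity. It is evaluated with the canonical expansion \eqref{eq:canonicalexpansion}, and that expansion includes the anchor terms as part of $C_0$ and the fields. This is valid because on coded states the anchor's interactions depend only on the loads $b_v=\beta_v$, so they contribute constants.
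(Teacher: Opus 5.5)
Your proof is correct and is essentially the paper's own three-gate argument: pass to an optimal canonical schedule, force the anchor early via $s_a$, decode spins with \cref{lem:loadlock} and \cref{lem:arithmetic}, and then use the master identity, $|R|\le\Gamma$, and \eqref{eq:gapdominance} to exclude any nonzero prefix error. Your route through $\mathcal P(x)\le 2\Gamma$ is equivalent to the paper's chain $\Phi_{\rm can}>A+\Gamma+1>B$.

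One remark in your second check is inaccurate, though harmless because you can simply cite \cref{lem:masterdecomposition}. The anchor's interaction with block $v$ is not constant on coded states by itself: it contains $H\frac{L_v}{2}(Q_v-q_v)$ with $q_v=\frac12(Q_v-\sigma_vD_v)$. Only its sum with the $HL_v$ tardy terms is constant, by the same $q_v$ cancellation used in \cref{lem:loadlock}.
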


\begin{proof}
Assume some feasible schedule has cost at most $B$, and choose an optimal schedule.  Apply the structural lemma to make it compact and canonical without increasing cost.

\par\medskip\noindent\textbf{Gate 1: the anchor is early.}\quad
The schedule selected above is optimal and canonical.  By the choice of $s_a$ in \eqref{eq:anchorlateratio}, a tardy anchor in such a schedule would contribute its full self term, which exceeds the explicit feasible benchmark $C^\star$.  Hence this optimal canonical schedule has the anchor early.

\par\medskip\noindent\textbf{Gate 2: every block is coded.}\quad
With an early anchor, \cref{lem:loadlock} forces $b_v=\beta_v$ in every block.  By \cref{lem:arithmetic} and \eqref{eq:loadiffcanonical}, block $v$ has a unique logical spin $\sigma_v\in\{-1,+1\}$ and selection variable $x_v=(1+\sigma_v)/2$.

\par\medskip\noindent\textbf{Gate 3: every prefix equation is exact.}\quad
Suppose some prefix equation fails.  Since $C_k(x)-k$ is an integer, at least one squared error is at least $1$, and therefore
\[
\mathcal P(x)\ge\frac{\Delta_{\min}}2.
\]
The master identity, the lower residual bound $R(\sigma)\ge-\Gamma$, and \eqref{eq:gapdominance} imply
\[
\begin{aligned}
\Phi_{\rm can}(\sigma)
&\ge A+\frac{\Delta_{\min}}2-\Gamma\\
&>A+\Gamma+1\\
&>\lfloor A+\Gamma\rfloor=B,
\end{aligned}
\]
contradicting the assumed cost.  Thus $C_k(x)=k$ for all $k$, and the \RPE\ instance is a yes-instance.
\end{proof}

\begin{figure}[p]
\centering
\begin{tikzpicture}[
  node distance=2.00cm,
  font=\small,
  >=Latex,
  box/.style={draw,rounded corners,align=center,text width=0.40\textwidth,
              minimum height=0.92cm,fill=figurehard,inner sep=7pt},
  support/.style={font=\scriptsize,align=left,text width=0.40\textwidth,
                  draw=navy,rounded corners,fill=figuremixed,inner sep=4pt,
                  anchor=west},
  arr/.style={-{Latex[length=3mm]},very thick}
]
\node[box] (feasible) {a feasible schedule has cost at most $B$};
\node[box,below=of feasible] (canonical) {an optimal compact canonical schedule has cost at most $B$};
\node[box,below=of canonical] (anchor) {the anchor is early};
\node[box,below=of anchor] (loads) {$b_v=\beta_v$ for every block $v$};
\node[box,below=of loads] (coded) {each block has one logical spin $\sigma_v$ and $x_v=(1+\sigma_v)/2$};
\node[box,below=of coded] (prefix) {$C_k(x)=k$ for every prefix $k$};
\node[box,below=of prefix] (cover) {the selected subset-columns form an exact cover};

\draw[arr] (feasible) -- node[midway,xshift=9mm,support]
 {\textbf{Supporting result:} compact canonical schedule, \cref{lem:compact}.} (canonical);
\draw[arr] (canonical) -- node[midway,xshift=9mm,support]
 {\textbf{Supporting result:} anchor tardiness benchmark, \eqref{eq:anchorlateratio}.} (anchor);
\draw[arr] (anchor) -- node[midway,xshift=9mm,support]
 {\textbf{Supporting results:} global load lock, \cref{lem:loadlock}, using \cref{eq:lockquadratic,eq:minkernelpd}.} (loads);
\draw[arr] (loads) -- node[midway,xshift=9mm,support]
 {\textbf{Supporting results:} load/canonical equivalence \eqref{eq:loadiffcanonical} and \cref{lem:arithmetic}.} (coded);
\draw[arr] (coded) -- node[midway,xshift=9mm,support]
 {\textbf{Supporting results:} exact master decomposition \cref{lem:masterdecomposition}, residual bound \eqref{eq:Gamma}, gap dominance \eqref{eq:gapdominance}, and threshold \eqref{eq:threshold}.} (prefix);
\draw[arr] (prefix) -- node[midway,xshift=9mm,support]
 {\textbf{Supporting results:} prefix-difference identity \cref{lem:prefixdifference} and \cref{lem:red1onlyif}.} (cover);
\end{tikzpicture}
\caption{Expanded only-if implication chain.  The enlarged vertical gaps keep every supporting-result callout visually separate from the implication nodes.}
\label{fig:onlyif-gates}
\end{figure}
\clearpage

\begin{sectionexit}
Section~8 has established an exact reduction interface: every threshold-feasible schedule can be replaced without higher cost by a canonical schedule with an early anchor, one coded state per block, and zero prefix error, while every source solution constructs such a schedule.  Section~9 uses this interface only to audit encoding length and to verify that all output integers remain polynomially bounded even in unary.
\end{sectionexit}

\section{Polynomial and unary bounds}
\label{sec:unary}

\begin{lemma}\label{lem:polybound}
The transformation in \cref{sec:red2} can be performed in polynomial time,
and every integer in the resulting \AWET\ instance is bounded by $N^\nu$ for a
fixed constant $\nu$ independent of the source instance.
\end{lemma}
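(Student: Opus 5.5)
The plan is to walk through the construction of \cref{sec:red2} in the order it is built and bound every integer by an explicit power of $N$, propagating bounds layer by layer.

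\textbf{Processing data.} The codes satisfy $q=11110$ and $G=N^{20}$, $M=N^{40}$, $T=N^{50}$. Hence every $P_{vh}$ and every state processing time is below $2T\le N^{51}$. The quantities $\beta_v<5T$ and $D_v<7N^2M$ are already recorded in \cref{lem:fidentities} and \eqref{eq:ebound}.

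\textbf{Early ratios and weights.} We have $\lambda_v\le 2G$ and $L_v\le 4G$. Each kernel entry and each $f_v=\sum_uK_{vu}\beta_u$ is at most $n\cdot 4G\cdot 5T<N^{72}$. So every early ratio $r_i=f_v+\lambda_vp_i$ is below $N^{73}$, and every $w_i^E=p_ir_i$ is below $N^{124}$.

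\textbf{Field-correction layer.} Here $a_v^0$ is a sum of at most $(8n)^2$ terms $\alpha_{ij}\le p_ip_jr_i<N^{175}$, so $|a_v^0|<N^{178}$. The target $a_v^\star$ involves $\Delta_k\le\max r<N^{73}$, $k\le N$ and $d_{v,k}\le3$, so it is smaller still. Since $D_v\ge 2M>1$, we get $|\eta_v|\le 4(|a_v^0|+|a_v^\star|)+1<N^{180}$.

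\textbf{Anchor scale $H$.} Then $U$ in \eqref{eq:U} is a sum of $O(N^2)$ terms, each at most $N^{180}\cdot N^{102}$ or $N^{175}$, so $H<N^{285}$. This gives:
\begin{itemize}
\item state tardiness ratios $s_i=HL_v+\eta_v<N^{306}$ and weights $w_i^L<N^{357}$;
\item $p_a=H$, $r_a<N^{74}$, and $w_a^E<N^{359}$.
\end{itemize}

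\textbf{Benchmark and anchor tardiness.} The benchmark $C^\star$ is at most the total cost of a canonical schedule of horizon $d$. Each job's deviation is at most $d$, so $C^\star\le d\sum_j(w_j^E+w_j^L)$. Here $d=\sum_jp_j<9N\cdot\max\{2T,H\}<N^{290}$, and the weight sum is below $N^{362}$, so $C^\star<N^{652}$. Dividing by $H^2\ge1$ gives $s_a\le C^\star+2$, and then $w_a^L=Hs_a<N^{940}$.

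This is the one delicate spot, since the bound on $w_a^L$ multiplies $H$ by $C^\star$. I would try either to use $H^2\ge H$ to cancel one factor of $H$ inside $C^\star$, or simply to accept a larger fixed exponent $\nu$. The statement only requires some constant independent of the instance, so a loose exponent such as $\nu=1000$ is acceptable and the sharper $N^{650}$ audit is not needed. Finally $B=\lfloor A+\Gamma\rfloor$: the term $A$ is $C_0$ minus a quantity below $N^{80}$, and $|C_0|$ is controlled by the maximum schedule cost. So $|B|$ obeys the same bound.

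\textbf{Running time.} Every quantity above is produced by polynomially many ($O(n^2)$-ish, at most $O(m^2)$ with $m=8n+1$) additions, multiplications, comparisons and one rounding division, all on integers or rationals with denominator dividing $8$ and magnitude at most $N^{\nu}$. Computing the canonical schedule and its cost for $C^\star$ and $C_0$ uses \cref{prop:setobjective}, which is also polynomial in $m$ with bit-length $O(\nu\log N)$. Hence the transformation runs in polynomial time. Because the magnitudes are polynomial in $N$, the unary encoding has polynomial length, which is what \cref{thm:main} needs.
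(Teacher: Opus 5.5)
Your proposal is correct and follows the paper's route: it propagates explicit powers of $N$ through the construction in the order it is built, then argues polynomial-time computability. It differs only in using cruder estimates, all of which a larger fixed $\nu$ absorbs: you ignore the division by $D_v$ in $\eta_v$, you bound $C^\star$ by $d\sum_j(w_j^E+w_j^L)$ rather than by summing canonical pair coefficients, and a few intermediate exponents need an extra power of $N$ for small $N$ (e.g.\ $20nN^{70}<N^{72}$ fails for $N\le 18$). The one step you assert without justification, that $|C_0|$ is controlled by the maximum coded-schedule cost, is exactly what the paper proves by averaging \eqref{eq:canonicalexpansion} over all $\sigma\in\{-1,+1\}^n$: the linear and bilinear spin terms average to zero, so $0\le C_0=2^{-n}\sum_\sigma\Phi_{\rm can}(\sigma)\le\max_\sigma\Phi_{\rm can}(\sigma)$. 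That average is only a bound, and $C_0$ itself must be computed by collecting constant coefficients rather than by enumerating the $2^n$ states.
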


\begin{proof}
The point is not to optimize the exponent, but to verify the dependency chain:
each new scale is obtained from only polynomially many previously bounded
quantities. We give explicit coarse bounds. Since the source instances used in
the reduction have $N=n+1\ge4$, fixed numerical constants can be absorbed into
small extra powers of $N$.

\paragraph{Step 1: processing times and kernel data.}
The pair codes $Q_h$ and $q$ are fixed constants. Because
$\kappa(k)\le n<N$, $M=N^{40}$, and $T=N^{50}$, every base value $P_{vh}$ and
every state-job processing time satisfy
\[
 p_i<N^{51}.
\]
The bound $4T<\beta_v<5T$ from \cref{lem:fidentities} gives
$\beta_v<N^{52}$. Also
\[
 \lambda_v<N^{21},\qquad L_v<N^{21},\qquad K_{uv}<N^{21}.
\]
Therefore
\[
 f_v=\sum_{u=1}^nK_{vu}\beta_u
 <N\cdot N^{21}\cdot N^{52}=N^{74}.
\]
This is the first useful hierarchy: processing times are of order $N^{50}$,
whereas the additive kernel field $f_v$ is still only a fixed polynomial.

\paragraph{Step 2: early ratios, effective ratios, and early weights.}
Using \eqref{eq:earlyratio},
\[
 r_i=f_v+\lambda_vp_i<N^{75}.
\]
The explicit formulas for the effective atoms give the same bound
$|\rho_{vh}|<N^{75}$, and hence every gap $\Delta_k<N^{75}$. Each state-job
earliness weight consequently satisfies
\[
 w_i^E=p_ir_i<N^{126}.
\]
For later use, every early state-state pair coefficient obeys
\[
 \alpha_{ij}=p_ip_j\min\{r_i,r_j\}<N^{177}.
\]

\paragraph{Step 3: logical-field correction.}
For one fixed block, \eqref{eq:a0} contains fewer than $64N$ physical pair
terms. The preceding bound on $\alpha_{ij}$ therefore gives
\[
 |a_v^0|<N^{181}.
\]
Moreover, $d_{v,k}\le3$, $k<n+1=N$, and $\Delta_k<N^{75}$, so
\[
 |a_v^\star|<N^{78}.
\]
Equation \eqref{eq:Dv} gives the two-sided estimate
\[
 M=N^{40}<D_v<N^{43}.
\]
Nearest-integer rounding changes a number by at most $1/2$. Hence
\[
 \begin{aligned}
 |\eta_v|
 &\le \frac{4|a_v^0-a_v^\star|}{D_v}+\frac12\\
 &<N^{144}.
 \end{aligned}
\]
Thus the field-correction offsets are polynomially bounded even though they
are defined by a quotient: the denominator $D_v$ is itself at least
$N^{40}$.

\paragraph{Step 4: the load-lock scale and state-job tardiness data.}
There are $8n<8N<N^3$ state jobs and fewer than $N^6$ unordered state-job
pairs. In \eqref{eq:U}, the early pair sum is therefore less than $N^{183}$.
The terms involving $\eta_v$ have individual magnitude at most
\[
 |\eta_v|p_i^2<N^{144}N^{102}=N^{246}.
\]
After multiplying by the number of jobs or pairs, the largest contribution
to $U$ is less than $N^{252}$. Absorbing the three displayed sums and the
constant $1$ gives
\[
 U<N^{253},
 \qquad
 H=U+2\max_v|\eta_v|+2<N^{254}.
\]
It follows that the anchor processing time is $p_a=H<N^{254}$ and that every
state tardiness ratio and weight satisfy
\[
 s_i=HL_v+\eta_v<N^{276},
 \qquad
 w_i^L=p_is_i<N^{327}.
\]
The anchor early ratio is less than $N^{76}$, so
$w_a^E=p_ar_a<N^{330}$.

\paragraph{Step 5: the anchor tardiness weight and the decision threshold.}
To bound $C^\star$, use the canonical set objective rather than completion
times. There are fewer than $N^6$ state-state pairs. A tardy state-state pair
has cost coefficient at most
$p_ip_j\min\{s_i,s_j\}<N^{378}$, and an anchor-state early pair has coefficient
at most
\[
 p_ap_ir_i<N^{254}N^{51}N^{75}=N^{380}.
\]
Each state self-term has coefficient $p_i^2s_i<N^{378}$.  These bounds are independent of the coded signs.  There are fewer than $N^6$ state-state pairs, fewer than $N^3$ anchor-state pairs, and fewer than $N^3$ state self-terms.  Since the anchor is early, its tardiness self-term and tardiness pair terms are absent.  Consequently, every coded assignment with the anchor early satisfies the uniform bound
\[
 0\le\Phi_{\rm can}(\sigma)
 <N^{384}+N^{383}+N^{381}<N^{385}.
\]
In particular, the benchmark schedule satisfies $C^\star<N^{385}$.
Consequently,
\[
 s_a=1+\ceil{C^\star/H^2}<N^{386},
 \qquad
 w_a^L=Hs_a<N^{640}.
\]
The due date satisfies
\[
 d=p_a+\sum_{i\in\J_S}p_i<N^{255}.
\]
To bound $C_0$, average \eqref{eq:canonicalexpansion} over all sign vectors $\sigma\in\{-1,+1\}^n$.  Every linear term averages to zero, as does every quadratic term $\sigma_u\sigma_v$ with $u\ne v$.  Therefore
\[
 C_0=2^{-n}\sum_{\sigma\in\{-1,+1\}^n}\Phi_{\rm can}(\sigma),
 \qquad 0\le C_0<N^{385}.
\]
This averaging identity proves the bound; it is not an enumeration step in the reduction, since $C_0$ is computed directly by collecting the constant coefficients.  Since
$\Delta_k<N^{75}$ and $d_{v,k}\le3$, the correction in
\eqref{eq:Aconstant} is less than $N^{80}$, so $|A|<N^{386}$.
Furthermore, \eqref{eq:sepbounds} and \eqref{eq:ebound} give
$\Gamma<N^{58}$. Therefore the integer threshold
$B=\floor{A+\Gamma}$ has magnitude less than $N^{387}$.

We have thus obtained one completely explicit universal choice: every integer
written into the \AWET\ instance is smaller than $N^{650}$. The exponent is
deliberately loose; only its independence from the source instance matters.

\paragraph{Step 6: running time and unary size.}
The construction creates $8n+1=O(N)$ jobs. All displayed quantities are
computed using polynomially many additions, multiplications, comparisons,
minimum operations, and exact divisions or roundings of rationals whose
numerators and denominators have $O(\log N)$ bits. In particular, $U$ and
$C^\star$ can be evaluated by examining $O(n^2)$ job pairs, and
$\floor{A+\Gamma}$ can be computed exactly after multiplying by $8$, since
the relevant denominators divide $8$.

Hence the binary running time is polynomial in the source size. Under unary
encoding, the output contains $O(N)$ integers, each at most $N^{650}$, so its
total unary length is $O(N^{651})$. This is polynomial in $n$, which proves
the required strong reduction bound.
\end{proof}

\begin{proofcheckpoint}
The construction is numerically large but not pseudopolynomial.  ``Large'' means a high fixed power of $N$, not an exponent that depends on the source data.  Writing every output number in unary therefore still produces only polynomially many symbols.
\end{proofcheckpoint}

\begin{proof}[Proof of \cref{thm:main}]
By Gonzalez~\cite{Gonzalez1985}, \RXC\ is NP-complete. \Cref{lem:red1if,lem:red1onlyif} give a polynomial reduction from \RXC\ to \RPE. \Cref{lem:red2if,lem:red2onlyif} give a polynomial reduction from \RPE\ to \AWET. By \cref{lem:polybound}, every numerical value in the \AWET\ instance is polynomially bounded, so the reduction establishes strong NP-hardness in the standard sense of Garey and Johnson~\cite{GareyJohnson1979}. Membership in NP was noted after \cref{thm:main}. Hence \AWET\ is strongly NP-complete and, in particular, unary NP-hard.
\end{proof}

\section{Concluding remarks}

The reduction uses the asymmetry of the earliness and tardiness weights in two distinct ways. The early ratio order carries the exact-cover signal through independently positioned occurrence pairs, while the tardiness ratios and the early anchor jointly create a global positive-definite load lock. The controller pair closes each block's signed profile, preventing signal below the final element band. The field-correction offsets need not be exact: the polynomially large prefix gaps dominate the rounding error and all within-band interactions.

The proof can be summarized as four reusable design lessons.  First, convert schedule geometry into a set objective before introducing gadgets.  Second, give every local gadget a uniquely decodable arithmetic signature.  Third, separate logical signal intervals from residual intervals geometrically.  Fourth, use a positive-definite global penalty to enforce local feasibility simultaneously rather than trying to enforce each block with an isolated ad hoc penalty.

The result resolves the published strong-complexity question for positive-integer asymmetric weighted earliness--tardiness scheduling at the boundary due date $d=\sum_jp_j$~\cite{HoogeveenVandeVelde1997}.  It also shows that the difficulty is not caused by a large number of jobs: the construction uses only eight state jobs per source set and one global anchor.  The numerical exponents are deliberately loose; smaller scales may be possible without changing the proof architecture.  For complete numerical verification of both reduction directions, see the yes- and no-instances in \cref{app:worked-yes,app:worked-no}.

\begin{reusableidea}
A profile-and-gap compiler can turn cumulative combinatorial constraints into separated squared penalties.  Local arithmetic signatures identify the permitted states, a global positive-definite penalty enforces all local loads at once, and a residual budget converts approximate field correction into an exact decision gap.  This pattern is potentially useful whenever a pairwise minimum kernel can be represented by one-dimensional signed profiles.
\end{reusableidea}

\begin{finding}{Why Part II is next}
Part I has established exact intractability for unrestricted asymmetric data.  The remaining question is how much near-optimal structure survives despite that hardness.  Part II uses the same canonical objective but replaces scale separation by relaxation and rounding, producing a universal constant-factor guarantee.
\end{finding}

\clearpage

\part{General Approximation}
\setrunningunit{Part II: Approximation}
\thispagestyle{plain}
\label{part:approximation}
\begin{center}
{\Large\bfseries Anchored Semidefinite Relaxation and Deterministic Threshold Rounding\par}
\end{center}
\medskip
\begin{quote}\small
\noindent\textbf{Part synopsis.}
This Part proves a polynomial-time constant-factor approximation for unrestricted \AWET.  One early anchor is enumerated, a locally consistent semidefinite relaxation is solved, and jobs are rounded by a common marginal threshold.  Early pairs are paid by local joint consistency.  Tardy pairs and tardy self-costs form a positive-semidefinite minimum-kernel matrix, allowing a global covariance payment.  Threshold optimization gives $3+2\sqrt2+\varepsilon$.
\end{quote}

\begin{partposition}
\textbf{Established boundary.}  FPTAS results are known for symmetric or otherwise structured common-due-date settings, whereas unrestricted asymmetry remained outside those schemes.  \textbf{Result.}  For every fixed $\varepsilon>0$, unrestricted \AWET has a polynomial-time $(3+2\sqrt2+\varepsilon)$-approximation.  \textbf{Technique.}  An anchored local-consistency SDP is rounded by a deterministic marginal threshold; early pairs are paid locally, while the tardy self-cost completes a positive-semidefinite minimum kernel that pays all tardy pairs globally.
\end{partposition}

\begin{notationreset}
This Part uses $a_{ij}$ and $b_{ij}$ for early- and tardy-pair coefficients, $c_i$ for the tardy self-cost, $e_i,t_i$ for SDP marginals, and $e_{ij},t_{ij}$ for pseudo-joint masses.  The symbols $\mathcal A,\mathcal B,\mathcal U$ denote the three SDP objective accounts.  The threshold is $\theta=1-\delta$.
\end{notationreset}

\begin{proofcontract}
\begin{itemize}[leftmargin=2em,itemsep=1pt,topsep=2pt]
\item \textbf{Input:} an arbitrary positive-integer \AWET instance and a positive rational accuracy parameter $\varepsilon$.
\item \textbf{Relaxation:} enumerate an early anchor and solve one polynomial-size semidefinite program for each anchor.
\item \textbf{Rounding invariant:} the anchor remains early, early pairs are charged to pseudo-joint early mass, and tardy pairs are charged to the completed minimum kernel.
\item \textbf{Output:} the cheapest canonical schedule produced by the anchored threshold roundings.
\item \textbf{Precision obligation:} additive SDP error is absorbed into the requested $\varepsilon$ margin using only polynomially many bits.
\end{itemize}
\end{proofcontract}

\section*{Introduction and overview}
\phantomsection
\addcontentsline{toc}{section}{Introduction and overview}

This Part presents a fully specified approximation theorem for unrestricted \AWET.  The proof is constructive: it gives the SDP, the deterministic rounding rule, the schedule-recovery step, and the finite precision needed for polynomial running time.  The result complements earlier FPTAS work in symmetric or otherwise structured common-due-date models: here the two weight systems may be unrelated, so the guarantee is a universal constant rather than an approximation scheme~\cite{HallPosner1991,KovalyovKubiak1999,KellererRustogiStrusevich2020}.

\begin{howtoread}
The logical chain is
\begin{center}\small
canonical early--tardy partition $\longrightarrow$ anchored SDP
$\longrightarrow$ threshold rounding\\[3pt]
$\longrightarrow$ direct early-pair payment and PSD tardy-pair self-payment
$\longrightarrow$ constant ratio.
\end{center}
The main new inequality is \cref{AA:lem:selfpayment}.  Readers interested mainly in implementation may read \cref{AA:sec:model,AA:sec:sdp,AA:sec:algorithm} first.
\end{howtoread}

\begin{finding}{Algorithm at a glance}
For each possible early anchor $h$: (1) solve the anchored SDP; (2) read the early marginals $e_i$; (3) choose a rational threshold near $2-\sqrt2$ and put $i$ early when $e_i$ reaches it; (4) construct the canonical V-shaped schedule for that partition; and (5) evaluate its cost exactly.  Return the cheapest of the $n$ candidates.  Sections~14--19 prove why these steps are valid; Section~20 gives the implementation in full.
\end{finding}

\section{Problem definition and main theorem}
\label{AA:sec:model}

\begin{definition}[Asymmetric weighted earliness--tardiness scheduling]
An instance of \AWET consists of jobs $\J=\{1,\ldots,n\}$.  Job $i$ has positive integer processing time $p_i$, positive integer unit earliness weight $w_i^E$, and positive integer unit tardiness weight $w_i^L$.  All jobs are available at time zero and are processed nonpreemptively on one machine.  The common due date is
\begin{equation}
 d=P:=\sum_{i\in\J}p_i.
 \label{AA:eq:duedate}
\end{equation}
For completion time $C_i$, define
\[
 E_i=(d-C_i)^+,
 \qquad
 T_i=(C_i-d)^+.
\]
The objective is
\begin{equation}
 F=\sum_{i\in\J}\bigl(w_i^E E_i+w_i^L T_i\bigr).
 \label{AA:eq:objective}
\end{equation}
\end{definition}

Positive rational data can be scaled to integers without changing approximation ratios.  Define
\begin{equation}
 r_i=\frac{w_i^E}{p_i},
 \qquad
 s_i=\frac{w_i^L}{p_i}.
 \label{AA:eq:ratios}
\end{equation}

\begin{definition}[Approximation ratio]
A polynomial-time algorithm is a $C$-approximation for a minimization problem if it always returns a feasible solution of value at most $C\OPT$.
\end{definition}

\begin{theorem}[Main approximation theorem]
\label{AA:thm:main}
For every $\varepsilon>0$, unrestricted \AWET has a polynomial-time
\[
 \bigl(3+2\sqrt2+\varepsilon\bigr)
\]
-approximation.  When the accuracy parameter is supplied as a positive rational, the running time is polynomial in the input length and in $\log\!\bigl(1/\min\{\varepsilon,1\}\bigr)$.
\end{theorem}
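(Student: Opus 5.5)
The plan is to round an anchored semidefinite relaxation of the canonical set objective of \cref{prop:setobjective}.

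\textbf{Quadratic form of the objective.} Write $x_i=\1\{i\in E\}$ and $y_i=1-x_i$, and set
\[
a_{ij}=p_ip_j\min\{r_i,r_j\},\qquad b_{ij}=p_ip_j\min\{s_i,s_j\},\qquad c_i=p_i^2s_i .
\]
Then \eqref{eq:setobj} reads
\[
\Phi(E)=\sum_{i<j}a_{ij}x_ix_j+\tfrac12\bigl(y^\top Ky+c^\top y\bigr),
\]
where $K_{ij}=b_{ij}$ for $i\ne j$ and $K_{ii}=c_i$. This uses $y_i^2=y_i$, so that $\tfrac12(K_{ii}y_i+c_iy_i)=c_iy_i$. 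The key structural fact is that $K=D_p\,[\min\{s_i,s_j\}]\,D_p$, where $D_p$ is the diagonal matrix of processing times. Hence $K$ is a positive-semidefinite minimum kernel, exactly as in \eqref{eq:minkernelpd}, and it is also entrywise nonnegative.

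\textbf{The anchored relaxation.} For each guessed anchor $h$ (a job early in some optimum, which exists by \eqref{eq:setopt}), solve an SDP over a moment matrix $\begin{pmatrix}1&t^\top\\ t&Y\end{pmatrix}\succeq0$ with $Y_{ii}=t_i$ and $e_i=1-t_i$. It also carries pseudo-joint early masses $e_{ij}$, tied to $Y$ by $e_{ij}=1-t_i-t_j+Y_{ij}$, and the local consistency constraints $e_{ij}\ge0$, $e_{ij}\ge e_i+e_j-1$, $e_{ij}\le\min\{e_i,e_j\}$, $Y_{ij}\ge0$. The anchor is fixed by $e_h=1$. The objective is
\[
\mathcal A+\mathcal B+\mathcal U=\sum_{i<j}a_{ij}e_{ij}+\tfrac12\Bigl(\sum_{i\ne j}b_{ij}Y_{ij}\Bigr)+\Bigl(\sum_i c_it_i\Bigr),
\]
where the first sum is $\mathcal A$, the second is $\mathcal B$ and the third is $\mathcal U$. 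For the correct anchor, the integral optimum is feasible, so the SDP value is at most $\OPT$.

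\textbf{Threshold rounding and the two payments.} Put $i$ early iff $e_i\ge\tau$; the anchor stays early, so the early set is nonempty. I would then build the canonical schedule of \cref{prop:setobjective}.

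\emph{Early pairs.} If both jobs of a pair are early, then $e_{ij}\ge e_i+e_j-1\ge 2\tau-1$. So each rounded early pair is paid by $a_{ij}e_{ij}/(2\tau-1)$, giving a factor $1/(2\tau-1)$ on $\mathcal A$.

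\emph{Tardy side.} A tardy job has $t_i>1-\tau$, so the rounded vector satisfies $0\le y\le t/(1-\tau)$ entrywise. Because $K$ is entrywise nonnegative, $y^\top Ky\le(1-\tau)^{-2}t^\top Kt$. Because $K\succeq0$ and $Y\succeq tt^\top$ (Schur complement), $t^\top Kt\le K\bullet Y$. The diagonal of $K\bullet Y$ is $\sum c_it_i$, so it is exactly accounted for by $\mathcal U$; this completed-diagonal inequality is the self-payment lemma. Together with $c^\top y\le c^\top t/(1-\tau)$, the whole tardy cost is at most $(1-\tau)^{-2}(\mathcal B+\mathcal U)$.

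\emph{Choice of threshold.} Equating the two factors gives $(1-\tau)^2=2\tau-1$, that is $\tau^2-4\tau+2=0$. Hence $\tau=2-\sqrt2>\tfrac12$ and the common ratio is $1/(3-2\sqrt2)=3+2\sqrt2$. Taking the cheapest candidate over all anchors yields the exact-arithmetic guarantee.

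\textbf{Main obstacle: finite precision.} The SDP is solved only to additive error $\eta$, possibly with slightly violated constraints, and $\tau$ must be a rational near $2-\sqrt2$. My plan has three parts.
\begin{itemize}[leftmargin=2em,itemsep=1pt,topsep=2pt]
\item First, pick a rational $\tau$ with $\max\{1/(2\tau-1),(1-\tau)^{-2}\}\le3+2\sqrt2+\varepsilon/3$.
\item Second, repair the near-feasible solution into an exactly feasible rational one. I would do this by mixing it with a small multiple of a strictly feasible interior point, for instance the all-$\tfrac12$ product distribution with the anchor fixed. This keeps the objective loss at most $\eta\cdot\mathrm{poly}(\text{data})$, which is what the rational-realization lemma should provide.
\item Third, convert additive to multiplicative error using $\OPT\ge1$ for $n\ge2$. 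This holds because every partition either has an early pair or a tardy job, and all costs are integers; the case $n=1$ is trivial.
\end{itemize}
Choosing $\eta$ with $\log(1/\eta)$ polynomial in the input length and in $\log(1/\min\{\varepsilon,1\})$ then absorbs the loss into the remaining $\varepsilon/3$ slack. Ellipsoid or interior-point guarantees give the stated running time. I expect this step, and not the clean inequalities above, to need the most care, because the Schur-complement step $Y\succeq tt^\top$ must survive the repair exactly.
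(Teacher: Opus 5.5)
Your proposal is correct and follows the paper's proof essentially step for step. The shared steps are anchor enumeration, the locally consistent anchored SDP, threshold $2-\sqrt2$, local payment of early pairs, global payment of tardy pairs through the PSD minimum kernel and $Y\succeq tt^\top$, and the interior-point repair combined with $\OPT\ge1$; your $\{0,1\}$ moment-matrix form is equivalent to the paper's $\pm1$ Gram form via $z_i=(v_0-v_i)/2$, and keeping the diagonal inside $y^\top Ky$ gives a slightly smaller coefficient on $\mathcal U$ with the same final constant. For the precision step, note that the anchored moment matrix is singular ($t_h=0$ forces the whole $h$ row to vanish), so, as the paper does, you should delete the anchor row and build $Y_{ii}=t_i$ into the parametrization before weak optimization; only then is your all-$\tfrac12$ point genuinely relatively interior and the mixing repair exact.
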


Since $3+2\sqrt2\approx5.828427$, the theorem gives a universal constant ratio for the fully asymmetric problem.

\begin{corollary}[Convenient threshold]
\label{AA:cor:six}
The same method with threshold $\theta=7/12$ gives factor $6$ in the exact-SDP model and factor $6+\varepsilon$ in the ordinary finite-precision model.
\end{corollary}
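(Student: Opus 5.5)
The plan is to reuse the threshold-parametrized analysis behind \cref{AA:thm:main} and only change the final numerical optimization. I expect the early- and tardy-payment lemmas (Lemmas~II.12--II.16), before the threshold is optimized, to give for every threshold $\theta\in(\tfrac12,1)$ and every anchor $h$ a bound of the form
\[
 \Phi(E_\theta)\le \max\Bigl\{\frac{1}{2\theta-1},\ \frac{1}{(1-\theta)^2}\Bigr\}\,\SDP_h .
\]
Here $E_\theta=\{i:e_i\ge\theta\}\cup\{h\}$ is the rounded early set. The inequality combines the three accounts $\mathcal A,\mathcal B,\mathcal U$ as follows.
\begin{itemize}
\item \textbf{Early pairs.} If $i,j\in E_\theta$, local consistency gives $e_{ij}\ge e_i+e_j-1\ge 2\theta-1$. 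Hence each early pair coefficient $a_{ij}$ is paid by $a_{ij}e_{ij}/(2\theta-1)$ in $\mathcal A$.
\item \textbf{Tardy pairs and self-costs.} A tardy job has $t_i=1-e_i>1-\theta$. The positive-semidefinite minimum-kernel matrix of \cref{AA:lem:selfpayment} lets the rounded tardy quadratic form, evaluated on the indicator of $\J\setminus E_\theta$, be compared to the form evaluated on $t/(1-\theta)$, which is dominated entrywise. This pays $\mathcal B+\mathcal U$ with factor $(1-\theta)^{-2}$.
\item \textbf{Relaxation.} For the anchor $h$ lying in an optimal early set, $\SDP_h\le\OPT$ by \cref{prop:setobjective}, because the optimal partition is feasible for the anchored relaxation.
\end{itemize}
I would first state this parametric bound explicitly, as a lemma extracted from the proof of \cref{AA:thm:main}, since that theorem is just its specialization at $\theta=2-\sqrt2$. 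There the two terms balance: $2\theta-1=3-2\sqrt2=(1-\theta)^2$.

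Second, I substitute $\theta=7/12$. Then $2\theta-1=1/6$, so the early factor is $6$. Also $(1-\theta)^2=25/144$, so the tardy factor is $144/25=5.76<6$. The maximum is therefore $6$, and taking the cheapest candidate over all anchors gives cost at most $6\,\OPT$ in the exact-SDP model.

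Third, for the finite-precision claim, I invoke the rational-realization lemma (Lemma~II.20) exactly as in the proof of \cref{AA:thm:main}. The SDP is solved to additive accuracy polynomially small relative to a lower bound on $\OPT$; since the data are positive integers, $\OPT\ge1$. The threshold $7/12$ is already rational, so no threshold approximation is needed. The only new point is that the marginals $e_i$ are perturbed. I would handle this the same way the main proof does: apply the parametric bound to the approximate solution, which is itself a feasible point of a slightly relaxed program. Its objective exceeds $\SDP_h$ by at most $\varepsilon'\OPT$, giving $(6+\varepsilon)\OPT$.

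The main obstacle is checking that the parametric bound holds uniformly in $\theta$, in particular the tardy self-payment inequality with constant exactly $(1-\theta)^{-2}$, rather than only at the optimized threshold. I also need the rounding errors in $e_i$ near the cutoff not to break the local-consistency inequality $e_{ij}\ge e_i+e_j-1$. If the approximate SDP only satisfies that inequality up to an additive error $\varepsilon'$, I would lower the effective threshold margin slightly and absorb the loss into $\varepsilon$.
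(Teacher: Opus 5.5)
Your proposal is correct and is essentially the paper's argument: \cref{AA:thm:exactrounding} already gives $\Phi(E_\theta)\le C(\delta)L_h$ for every $\delta\in(0,1/2)$. At $\delta=5/12$ one reads off $6$, $144/25$, and $132/25<6$. The $(6+\varepsilon)$ claim then follows exactly as in \cref{AA:thm:bitmodel} with $\bar C=6$.

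Your tardy bullet yields $1/\delta^2$ rather than $2/\delta^2$ only because the indicator form and $\Tr(K^{\mathrm{tar}}T^{\rm SDP})=\mathcal U+2\mathcal B$ both count each tardy pair twice. The perturbation worry in your last paragraph is moot: \cref{AA:lem:rational-realization} returns an exactly feasible point of $\SDP_h$ itself, not of a relaxed program. Also, the floor $\OPT\ge1$ from \cref{AA:lem:floor} requires $n\ge2$, so $n=1$ must be handled directly.
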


\begin{intuition}
The early and tardy sides are deliberately treated differently.  There is no early self-cost, so early pairs are paid through a threshold greater than one half.  The tardy side does have a self-cost.  Those diagonal terms complete the tardy pair coefficients into a positive-semidefinite minimum-kernel matrix, which pays globally for all rounded tardy pairs.
\end{intuition}

\section{Canonical side assignment}
\label{AA:sec:canonical}

\begin{lemma}[Compact canonical schedule]
\label{AA:lem:compact}
There is an optimal schedule with no idle time between consecutive jobs, with at least one completion exactly at $d$, and with the following V-shaped orders: read outward from $d$, jobs completing no later than $d$ are in nonincreasing $r_i$ order, and tardy jobs are in nonincreasing $s_i$ order.
\end{lemma}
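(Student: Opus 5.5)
This lemma is the Part~II restatement of \cref{lem:compact}, and the instance model is identical: positive integer data, one machine, nonnegative start times, and $d=P=\sum_ip_i$. The plan is therefore to note that the proof of \cref{lem:compact} used no Part~I construction, only the definition of \AWET and the ratios $r_i,s_i$, and to redo its four steps in the present notation. Only the last condition is phrased differently here: it asks for the tardy order nonincreasing in $s_i$ read outward, which is exactly property~(iv) there.

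The argument runs in three steps.

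\textbf{Step 1: no internal idle time.} Existence of an optimum follows because, for each fixed sequence, the cost is a convex piecewise-linear function of the start times on a closed polyhedron. Each such function is bounded below by $0$, so its minimum is attained, and there are finitely many sequences. Among optimal schedules, choose one minimizing total internal idle time. Any gap lying entirely before $d$ is closed by shifting the preceding block right; every shifted job remains early and gets weakly cheaper. A gap entirely after $d$ is closed by the symmetric left shift of the following block. A gap straddling $d$ is closed by moving both adjacent blocks toward $d$. None of these moves increases cost, which contradicts minimality unless the schedule is already compact.

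\textbf{Step 2: a completion at $d$.} For a compact fixed sequence starting at $z\ge0$, each $C_i=z+\widehat C_i$. The cost is a sum of convex piecewise-linear functions of $z$, so a minimizer exists either at $z=0$ or at a breakpoint, and we may take an endpoint of any flat minimum interval. At $z=0$ the last job ends at $d$ because the span equals $P=d$. At a breakpoint, some $C_i=d$ by definition. In both cases, since the sequence has no gaps and some completion equals $d$, no job straddles $d$.

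\textbf{Step 3: the V-shaped orders.} These come from adjacent interchanges on each side, with the partition into early and tardy jobs held fixed. Swapping two adjacent early jobs $i$ and $j$, with $i$ farther from $d$, changes the cost only through the term $p_ip_j r_i$ versus $p_ip_j r_j$. So placing the larger ratio nearer $d$ is never worse. The tardy side is symmetric with $s$: the pair term is $p_ip_j s_{\text{farther}}$, so placing the larger $s$ nearer $d$ is never worse. Bubble-sorting on each side terminates and preserves compactness, the completion at $d$, and the side membership of every job. Note that swapping two early jobs keeps the last early completion at $d$.

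The only delicate point is Step~2 together with Step~3. One must check that the interchanges do not move the boundary: on the early side the block still ends at $d$, and on the tardy side it still starts at $d$. One must also allow ties and flat minima, which the ``endpoint of a flat interval'' and ``nonincreasing'' wordings handle. Otherwise the proof is a direct transcription of \cref{lem:compact}.
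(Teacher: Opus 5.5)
Your proposal is correct and follows the paper's proof of this lemma step for step. Both arguments close internal gaps by shifting toward $d$. Both then re-optimize the start offset $z$ of the compact sequence as a convex piecewise-linear function, so that $z=0$ or a breakpoint places a completion at $d$. Finally, both obtain the V-shaped orders by adjacent interchanges, comparing $p_ip_jr_i$ with $p_ip_jr_j$ and $p_ip_js_j$ with $p_ip_js_i$. Your extra remarks, namely existence of an optimum via finitely many sequences and the fact that interchanges keep the early block ending and the tardy block starting at $d$, are sound refinements the paper leaves implicit. To make the minimum-idle-time contradiction literal, shift every job on the side of a gap away from $d$, not only the adjacent contiguous block.
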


\begin{proof}
Choose an optimal schedule with minimum internal idle time.  An idle interval before $d$ can be shortened by shifting the block to its left toward $d$; every affected early completion moves toward its due date.  The symmetric operation removes an interval after $d$.  If an interval contains $d$, move both adjacent blocks toward $d$.  Thus an optimum can be chosen compact.

Fix a compact sequence.  Its length is $P=d$.  If the sequence begins at $z\ge0$, its cost is a convex piecewise-linear function of $z$.  A minimum may be chosen at $z=0$, when the final job completes at $d$, or at a breakpoint, when another completion equals $d$.

For adjacent early jobs $i,j$, with $j$ closer to $d$, the order-dependent contribution is $w_i^Ep_j=p_ip_jr_i$; reversing them gives $p_ip_jr_j$.  Thus the smaller ratio lies farther from $d$.  For tardy jobs, with $i$ closer to $d$ than $j$, the order-dependent term is $w_j^Lp_i=p_ip_js_j$, so the larger tardiness ratio lies closer to $d$.
\end{proof}

For $i\ne j$, define
\begin{equation}
 a_{ij}=p_ip_j\min\{r_i,r_j\},
 \label{AA:eq:aij}
\end{equation}
\begin{equation}
 b_{ij}=p_ip_j\min\{s_i,s_j\},
 \label{AA:eq:bij}
\end{equation}
and define the tardy self coefficient
\begin{equation}
 c_i=p_i^2s_i=p_iw_i^L.
 \label{AA:eq:ci}
\end{equation}
All these coefficients are positive integers, since
\[
 a_{ij}=\min\{w_i^Ep_j,p_iw_j^E\},
 \qquad
 b_{ij}=\min\{w_i^Lp_j,p_iw_j^L\}.
\]

\begin{proposition}[Canonical partition objective]
\label{AA:prop:canonical}
For every nonempty $E\subseteq\J$, let $T=\J\setminus E$.  The canonical schedule that orders $E$ by nondecreasing $r_i$ and ends it at $d$, then orders $T$ by nonincreasing $s_i$ and starts it at $d$, has cost
\begin{equation}
 \Phi(E)=
 \sum_{\{i,j\}\subseteq E}a_{ij}
 +\sum_{\{i,j\}\subseteq T}b_{ij}
 +\sum_{i\in T}c_i.
 \label{AA:eq:canonical}
\end{equation}
Every nonempty $E$ gives a feasible schedule of this form, and
\begin{equation}
 \OPT=\min_{\varnothing\ne E\subseteq\J}\Phi(E).
 \label{AA:eq:opt-set}
\end{equation}
\end{proposition}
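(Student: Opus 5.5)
The plan mirrors the proof of \cref{prop:setobjective} in Part I, now written with the coefficients $a_{ij}$, $b_{ij}$, $c_i$. There are three steps: feasibility of the canonical schedule for every nonempty $E$, exact evaluation of its cost as \eqref{AA:eq:canonical}, and the identity \eqref{AA:eq:opt-set}, using \cref{AA:lem:compact}.

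\textbf{Feasibility.} Fix a nonempty $E$ and put $T=\J\setminus E$. I place the early block, ordered by nondecreasing $r_i$, so that its last job completes at $d$. The tardy block, ordered by nonincreasing $s_i$, starts at $d$. By \eqref{AA:eq:duedate}, the first start time is $d-\sum_{i\in E}p_i=\sum_{i\in T}p_i\ge0$, so the schedule is feasible. It has no idle time, and no job straddles $d$.

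\textbf{Cost.} For an early job $j$, the earliness $E_j$ equals the total processing of the early jobs completing after $j$. Hence $w_j^EE_j$ is a sum over $i\in E$ placed later of $w_j^Ep_i=p_ip_jr_j$. Since the order is nondecreasing in $r$, the job farther from $d$ has the smaller ratio. Each unordered early pair therefore contributes exactly $p_ip_j\min\{r_i,r_j\}=a_{ij}$. There is no early self term, because a job's own processing ends at its completion.

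For a tardy job $j$, the tardiness $T_j$ equals $p_j$ plus the processing of the tardy jobs placed before it. The own part gives $w_j^Lp_j=c_j$. For a pair, the job closer to $d$ has the larger $s$, so the farther job $j$ pays $w_j^Lp_i=p_ip_js_j=p_ip_j\min\{s_i,s_j\}=b_{ij}$. Ties do not matter, because the min expression is symmetric. Summing these contributions gives \eqref{AA:eq:canonical}.

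\textbf{Optimality identity.} Let $V$ denote the minimum of $\Phi$ over nonempty $E$. Feasibility gives $\OPT\le V$. For the reverse inequality, take the optimal schedule supplied by \cref{AA:lem:compact}. It is compact, some job completes at $d$, and both sides are in V-shaped order. Let $E^*$ be the set of jobs completing no later than $d$; it is nonempty because some completion equals $d$. Compactness then means the early block ends at $d$ and the tardy block begins at $d$.

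Its side orders agree with the canonical orders up to ties. Ties among equal ratios leave the pair costs unchanged, so the cost calculation above applies verbatim and gives $\OPT=\Phi(E^*)\ge V$.

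\textbf{Expected obstacle.} The only delicate point is this last step. One must check that the structural schedule really has the exact due-date-aligned form, with no job straddling $d$ and the tie orders irrelevant. Compactness together with a completion at $d$ settles the first, and the symmetric min settles the second.

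Finally, the positivity and integrality of the coefficients follow from the displayed identities $a_{ij}=\min\{w_i^Ep_j,p_iw_j^E\}$ and $b_{ij}=\min\{w_i^Lp_j,p_iw_j^L\}$.
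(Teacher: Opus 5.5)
Your proposal is correct and follows essentially the same route as the paper: feasibility from $d-\sum_{i\in E}p_i=\sum_{i\in T}p_i\ge0$, pairwise evaluation of the early and tardy costs plus the tardy self-terms, and the two inequalities for $\OPT$ via the compact canonical lemma. Your extra details make explicit what the paper's shorter argument leaves implicit, in the same way as its Part I version of this proposition: $E^*$ is nonempty because some job completes at $d$, no job straddles $d$, and ties do not affect the minima.
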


\begin{proof}
The early block starts at
\[
 d-\sum_{i\in E}p_i=\sum_{i\in T}p_i\ge0,
\]
so the schedule is feasible.  For an early pair, the nondecreasing-$r$ order contributes $p_ip_j\min\{r_i,r_j\}=a_{ij}$.  For a tardy pair, the nonincreasing-$s$ order contributes $b_{ij}$.  Each tardy job contributes its own processing time to its delay, producing $c_i$.  This proves \eqref{AA:eq:canonical}.

By \cref{AA:lem:compact}, some optimal schedule has this canonical form for the nonempty set of jobs completing no later than $d$.  Conversely every nonempty set gives a feasible canonical schedule.  Therefore the minimum of \eqref{AA:eq:canonical} equals $\OPT$.
\end{proof}

\begin{howto}
Once $E$ has been selected, sort it by increasing $r_i$, begin it at $d-\sum_{i\in E}p_i$, and finish it at $d$.  Sort $T$ by decreasing $s_i$ and begin it at $d$.  Ties may be broken arbitrarily.  The cost can be evaluated exactly from \eqref{AA:eq:canonical}.
\end{howto}

\begin{proofcheckpoint}
All continuous-time scheduling choices have now disappeared.  The approximation task is exactly the binary minimization of \eqref{AA:eq:canonical} over nonempty $E$.
\end{proofcheckpoint}

\section{Why one early anchor is enumerated}
\label{AA:sec:anchor}

The rounding threshold will exceed $1/2$.  Without an extra condition, every early marginal could lie below that threshold, producing an inadmissible empty early set.  For every $h\in\J$, we therefore solve one relaxation with job $h$ forced early.

\begin{lemma}[Correct-anchor lower bound]
\label{AA:lem:correctanchor}
If $E^*$ is an optimal early set and $h\in E^*$, then the optimum of any valid relaxation with $h$ forced early is at most $\OPT$.
\end{lemma}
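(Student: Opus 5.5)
The plan is to exhibit the integral solution associated with $E^*$ as a feasible point of the anchored relaxation. Its objective value will be exactly $\Phi(E^*)=\OPT$. The lemma says ``any valid relaxation'', so we first fix what validity means. A relaxation with $h$ forced early is \emph{valid} if two conditions hold. First, for every nonempty early set $E$ containing $h$, the canonical zero--one encoding of $E$ is feasible. Second, its objective value at that encoding equals $\Phi(E)$ from \eqref{AA:eq:canonical}. The anchored SDP introduced below will be checked against this definition. The lemma itself only needs the two properties.

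\textbf{Step 1.} Invoke \cref{AA:prop:canonical}. Since $E^*$ is an optimal early set, it is nonempty and $\Phi(E^*)=\OPT$ by \eqref{AA:eq:opt-set}.

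\textbf{Step 2.} Build the integral point from $E^*$. Set $e_i=1$ and $t_i=0$ for $i\in E^*$, and $e_i=0$ and $t_i=1$ otherwise. Set $e_{ij}=e_ie_j$ and $t_{ij}=t_it_j$. If the relaxation is written in Gram/moment form, take the rank-one moment matrix $yy^\top$, where $y=(1,e_1,\ldots,e_n)$. This matrix is positive semidefinite, and every local-consistency constraint holds because the variables are genuine products of $0$--$1$ indicators. The anchoring constraint $e_h=1$ holds because $h\in E^*$.

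\textbf{Step 3.} Evaluate the objective at this point. The accounts reduce to $\mathcal A=\sum_{\{i,j\}\subseteq E^*}a_{ij}$, $\mathcal B=\sum_{\{i,j\}\subseteq T^*}b_{ij}$ and $\mathcal U=\sum_{i\in T^*}c_i$. Hence the relaxation value at this feasible point is $\Phi(E^*)=\OPT$, and the relaxation optimum, being a minimum, is at most $\OPT$.

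The main obstacle is not the inequality itself; it is making sure the specific anchored SDP really contains every integral anchored encoding. Concretely, each pseudo-joint constraint (nonnegativity, $e_{ij}\le e_i$, $e_{ij}+t_{ij}\le 1$-type consistency, and the semidefinite block) must be verified at the rank-one point. We would state this once as the definition of validity and verify it when the SDP is defined. The anchor choice matters only through $h\in E^*$, and such an $h$ exists because $E^*\neq\varnothing$. This is why enumerating all $n$ anchors guarantees that at least one relaxation lower-bounds $\OPT$.
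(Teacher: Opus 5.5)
Your argument is correct and matches the paper's: embed the incidence vector of $E^*$, which contains $h$, as a feasible anchored point. Its objective value is $\Phi(E^*)=\OPT$, so the relaxation minimum is at most $\OPT$.

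One small correction concerns the paper's unit-diagonal Gram formulation. There the integral embedding is $v_i=y_iv_0$ with $y_i=\pm1$, so $X=yy^\top$ with $y=(1,y_1,\ldots,y_n)\in\{\pm1\}^{n+1}$. It is not the outer product of the $0$--$1$ vector $(1,e_1,\ldots,e_n)$, which would violate $X_{kk}=1$ for every tardy job.
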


\begin{proof}
The incidence vector of $E^*$ is feasible for the anchored integral problem.  A relaxation cannot have larger optimum value.
\end{proof}

\begin{intuition}
The enumeration guesses only one certificate that the early side is nonempty, not the whole partition.  At least one of the $n$ guesses is compatible with the unknown optimum.
\end{intuition}

\section{The anchored semidefinite relaxation}
\label{AA:sec:sdp}

Use label $y_i=+1$ for early and $y_i=-1$ for tardy.  The SDP uses unit vectors $v_0,v_1,\ldots,v_n$ and sets
\begin{equation}
 m_i=\langle v_0,v_i\rangle,
 \qquad
 \rho_{ij}=\langle v_i,v_j\rangle.
 \label{AA:eq:m-rho}
\end{equation}
Define pseudo-marginals
\begin{equation}
 e_i=\frac{1+m_i}{2},
 \qquad
 t_i=\frac{1-m_i}{2}=1-e_i,
 \label{AA:eq:marginals}
\end{equation}
and pseudo-joint quantities
\begin{equation}
 e_{ij}=\frac{1+m_i+m_j+\rho_{ij}}4,
 \label{AA:eq:eij}
\end{equation}
\begin{equation}
 t_{ij}=\frac{1-m_i-m_j+\rho_{ij}}4.
 \label{AA:eq:tij}
\end{equation}
The two mixed local atoms are
\begin{equation}
 e_i-e_{ij}=\frac{1+m_i-m_j-\rho_{ij}}4,
 \qquad
 e_j-e_{ij}=\frac{1-m_i+m_j-\rho_{ij}}4.
 \label{AA:eq:mixedatoms}
\end{equation}
For integral vectors $v_i=y_iv_0$, these four quantities are the four ordinary joint-event indicators.

\begin{remark}[Probabilistic interpretation]
Suppose integral early--tardy partitions are combined with nonnegative weights that sum to one.  The weights may be viewed as probabilities of a random integral partition.  Then
\[
 m_i=\mathbb E[y_i],
 \qquad
 \rho_{ij}=\mathbb E[y_iy_j],
\]
and equations~\eqref{AA:eq:marginals}--\eqref{AA:eq:tij} have their ordinary probability meanings:
\[
 e_i=\Pr(i\in E),\qquad t_i=\Pr(i\in T),
 \qquad
 e_{ij}=\Pr(i,j\in E),\qquad t_{ij}=\Pr(i,j\in T).
\]
Indeed, for example,
\[
 \Pr(i\in E)=\mathbb E\!\left[\frac{1+y_i}{2}\right]
 =\frac{1+m_i}{2},
\]
and
\[
 \Pr(i,j\in E)
 =\mathbb E\!\left[\frac{(1+y_i)(1+y_j)}4\right]
 =\frac{1+m_i+m_j+\rho_{ij}}4.
\]
The tardy formulas follow in the same way from $(1-y_i)/2$.  A general feasible SDP point need not arise from a global probability distribution over integral partitions; this is why $e_i,t_i,e_{ij},t_{ij}$ are called pseudo-marginals and pseudo-joint quantities.  The probability interpretation will be used later to construct a transparent feasible point in the worked example; it does not make the rounding algorithm randomized.
\end{remark}

\begin{definition}[Anchored SDP]
\label{AA:def:sdp}
For anchor $h$, the relaxation $\SDP_h$ minimizes
\begin{equation}
 L_h=
 \sum_{i<j}a_{ij}e_{ij}
 +\sum_{i<j}b_{ij}t_{ij}
 +\sum_i c_it_i
 \label{AA:eq:sdpobjective}
\end{equation}
over unit-vector Gram matrices, subject to
\begin{align}
 e_{ij}&\ge0, & t_{ij}&\ge0,
 \label{AA:eq:local1}\\
 e_i-e_{ij}&\ge0, & e_j-e_{ij}&\ge0
 \label{AA:eq:local2}
\end{align}
for every $i<j$, and
\begin{equation}
 m_h=1.
 \label{AA:eq:anchor}
\end{equation}
Equivalently, use a matrix $X\succeq0$ with $X_{kk}=1$, $m_i=X_{0i}$, and $\rho_{ij}=X_{ij}$.
\end{definition}

Because $v_0$ and $v_h$ are unit vectors, $m_h=1$ implies $v_h=v_0$ and $e_h=1$.

\begin{proposition}[Validity]
\label{AA:prop:sdpvalid}
Every integral partition with $h$ early embeds in $\SDP_h$ with the same objective value.  Therefore, if $h$ belongs to an optimal early set,
\begin{equation}
 \SDP_h\le\OPT.
 \label{AA:eq:sdp-lower}
\end{equation}
\end{proposition}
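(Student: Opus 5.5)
The plan is to embed an integral partition directly as a rank-one Gram matrix and then check, term by term, that every constraint of $\SDP_h$ holds and that the objective reproduces $\Phi(E)$. Fix a nonempty early set $E\subseteq\J$ with $h\in E$, and put $T=\J\setminus E$. Choose any unit vector $v_0$ and set $v_i=y_iv_0$, where $y_i=+1$ for $i\in E$ and $y_i=-1$ for $i\in T$. Equivalently, take $X=yy^\top$ with $y_0=1$. This matrix is positive semidefinite and has unit diagonal.

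First I verify the constraints. We have $m_i=y_i$ and $\rho_{ij}=y_iy_j$. Substituting into \eqref{AA:eq:marginals}--\eqref{AA:eq:mixedatoms} gives the following indicator identities:
\[
e_i=\1\{i\in E\},\qquad t_i=\1\{i\in T\},\qquad e_{ij}=\1\{i,j\in E\},\qquad t_{ij}=\1\{i,j\in T\},
\]
and the mixed atoms become $e_i-e_{ij}=\1\{i\in E,\,j\in T\}$, with the symmetric statement for $e_j-e_{ij}$. Each identity comes from factoring, for example $(1+y_i)(1+y_j)/4$ and $(1+y_i)(1-y_j)/4$. All four quantities are $0$--$1$ valued, so the local constraints \eqref{AA:eq:local1}--\eqref{AA:eq:local2} hold. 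Because $h\in E$, we have $m_h=y_h=1$, which is the anchor constraint \eqref{AA:eq:anchor}.

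Next I check the objective. Substituting the indicators into \eqref{AA:eq:sdpobjective} gives
\[
L_h=\sum_{\{i,j\}\subseteq E}a_{ij}+\sum_{\{i,j\}\subseteq T}b_{ij}+\sum_{i\in T}c_i=\Phi(E)
\]
by \cref{AA:prop:canonical}. So every integral partition with $h$ early is feasible for $\SDP_h$ and has the same value there.

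For the lower bound, assume $h$ lies in an optimal early set $E^*$. By \eqref{AA:eq:opt-set}, $\Phi(E^*)=\OPT$, and $E^*$ is nonempty. The embedding of $E^*$ is feasible for $\SDP_h$, so minimality gives $\SDP_h\le\Phi(E^*)=\OPT$, which is exactly \cref{AA:lem:correctanchor}.

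No step is a real obstacle. The one point needing care is confirming that the mixed atoms factor as products of indicators, since this is what makes the local constraints \eqref{AA:eq:local2} hold on integral points.
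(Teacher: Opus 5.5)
Your proposal is correct and follows the paper's proof: it embeds the partition as $v_i=y_iv_0$, observes that all local quantities become $0$--$1$ joint-event indicators and that the objective reduces to $\Phi(E)$, and then applies \cref{AA:lem:correctanchor}. Your explicit factorizations, such as $(1+y_i)(1-y_j)/4$ for the mixed atoms, simply spell out the verification that the paper states in one line.
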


\begin{proof}
Set $v_i=y_iv_0$.  Then $m_i=y_i$, $\rho_{ij}=y_iy_j$, and the local quantities become the four $0$--$1$ joint-event indicators.  The SDP objective is exactly \eqref{AA:eq:canonical}.  Apply \cref{AA:lem:correctanchor}.
\end{proof}

\begin{howto}
Use one symmetric matrix variable $X$ of order $n+1$.  Add $X\succeq0$, $X_{kk}=1$, the four pairwise local inequalities, and $X_{0h}=1$.  The objective is linear in $X$.  The SDP has $O(n^2)$ constraints.
\end{howto}

\begin{remark}[Why the local inequalities matter]
The Gram constraint alone need not make every off-diagonal pseudo-joint quantity nonnegative.  The early proof uses $t_{ij}\ge0$, and the tardy proof treats $\sum b_{ij}t_{ij}$ as a nonnegative objective component.  Local consistency is therefore essential.
\end{remark}

\section{Deterministic threshold rounding}
\label{AA:sec:rounding}

Choose
\begin{equation}
 \theta\in\left(\frac12,1\right),
 \qquad
 \delta=1-\theta\in\left(0,\frac12\right),
 \label{AA:eq:theta-delta}
\end{equation}
and round
\begin{equation}
 E_\theta=\{i:e_i\ge\theta\},
 \qquad
 T_\theta=\J\setminus E_\theta.
 \label{AA:eq:rounding}
\end{equation}
The anchor satisfies $e_h=1$, so $E_\theta$ is nonempty.

Split the SDP objective into
\begin{equation}
 \mathcal A=\sum_{i<j}a_{ij}e_{ij},
 \qquad
 \mathcal B=\sum_{i<j}b_{ij}t_{ij},
 \qquad
 \mathcal U=\sum_i c_it_i.
 \label{AA:eq:ABC}
\end{equation}
Thus $L_h=\mathcal A+\mathcal B+\mathcal U$.

\begin{intuition}
Two rounded-early jobs have early marginals whose sum exceeds one, forcing positive pseudo-joint early mass.  A rounded-tardy job has tardy marginal at least $\delta$.  These two certificates drive the two different charging arguments.
\end{intuition}

\section{Charging the early side}
\label{AA:sec:early}

The local quantities satisfy
\begin{equation}
 e_{ij}=e_i+e_j-1+t_{ij}.
 \label{AA:eq:jointidentity}
\end{equation}

\begin{lemma}[Early-pair threshold bound]
\label{AA:lem:early}
\begin{equation}
 \sum_{\{i,j\}\subseteq E_\theta}a_{ij}
 \le
 \frac{\mathcal A}{2\theta-1}
 =\frac{\mathcal A}{1-2\delta}.
 \label{AA:eq:earlybound}
\end{equation}
\end{lemma}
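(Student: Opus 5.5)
The plan is a termwise charge: I will show that every rounded-early pair carries pseudo-joint early mass at least $2\theta-1$, and then use nonnegativity of the remaining terms of $\mathcal A$.

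First, fix a pair $\{i,j\}\subseteq E_\theta$. By the rounding rule \eqref{AA:eq:rounding}, $e_i\ge\theta$ and $e_j\ge\theta$. Substituting these into the joint identity \eqref{AA:eq:jointidentity} gives
\[
 e_{ij}=e_i+e_j-1+t_{ij}\ge 2\theta-1+t_{ij}\ge 2\theta-1,
\]
where the last step uses the local constraint $t_{ij}\ge0$ from \eqref{AA:eq:local1}. Since $\theta>1/2$ by \eqref{AA:eq:theta-delta}, the bound $2\theta-1=1-2\delta$ is strictly positive, so $1\le e_{ij}/(2\theta-1)$ for every rounded-early pair.

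Next, multiply by the coefficient $a_{ij}>0$ and sum over all pairs inside $E_\theta$:
\[
 \sum_{\{i,j\}\subseteq E_\theta}a_{ij}\le\frac{1}{2\theta-1}\sum_{\{i,j\}\subseteq E_\theta}a_{ij}e_{ij}.
\]
Every term $a_{ij}e_{ij}$ in $\mathcal A$ is nonnegative, because $a_{ij}>0$ and $e_{ij}\ge0$ by \eqref{AA:eq:local1}. Hence the partial sum over pairs inside $E_\theta$ is at most the full sum $\mathcal A$ from \eqref{AA:eq:ABC}, which proves \eqref{AA:eq:earlybound}.

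The identity \eqref{AA:eq:jointidentity} itself is direct algebra from \eqref{AA:eq:marginals}--\eqref{AA:eq:tij}:
\[
 e_{ij}-t_{ij}=\frac{2m_i+2m_j}{4}=e_i+e_j-1.
\]
The only real point of care is that the argument needs both local inequalities $t_{ij}\ge0$ and $e_{ij}\ge0$ of the anchored SDP. A Gram matrix alone would not guarantee either, which is exactly the role noted in the remark on local inequalities.
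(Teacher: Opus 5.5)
Your proposal is correct and follows essentially the same argument as the paper: use $e_{ij}=e_i+e_j-1+t_{ij}$ with $t_{ij}\ge0$ to get $e_{ij}\ge2\theta-1>0$ on rounded-early pairs, charge $a_{ij}\le a_{ij}e_{ij}/(2\theta-1)$, and enlarge to all pairs. You also make explicit the nonnegativity $e_{ij}\ge0$ that the paper's enlargement step uses implicitly.
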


\begin{proof}
If $i,j\in E_\theta$, then by \eqref{AA:eq:jointidentity} and $t_{ij}\ge0$,
\[
 e_{ij}\ge e_i+e_j-1\ge2\theta-1>0.
\]
Thus $a_{ij}\le a_{ij}e_{ij}/(2\theta-1)$.  Sum over rounded-early pairs and enlarge the right side to all pairs.
\end{proof}

\begin{howtoread}
This explains why the threshold must exceed $1/2$.  At exactly $1/2$, two jobs can each have marginal $1/2$ while their joint-early mass is zero.
\end{howtoread}

\section{Tardy minimum-kernel self-payment}
\label{AA:sec:tardy}

\subsection{The completed tardy kernel}

Define the full matrix
\begin{equation}
 K^{\mathrm{tar}}_{ij}=p_ip_j\min\{s_i,s_j\}
 \qquad(i,j\in\J).
 \label{AA:eq:KT}
\end{equation}
Its off-diagonal entries are $b_{ij}$ and its diagonal entries are
\[
 K^{\mathrm{tar}}_{ii}=p_i^2s_i=c_i.
\]
Here the superscript $\mathrm{tar}$ labels the tardy kernel; throughout the tutorial, vector and matrix transposes are written with $^\top$.

\begin{lemma}[Minimum-kernel PSD]
\label{AA:lem:minpsd}
The matrix $K^{\mathrm{tar}}$ is positive semidefinite and entrywise nonnegative.
\end{lemma}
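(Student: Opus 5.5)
Entrywise nonnegativity is immediate: every $p_i>0$ and $s_i>0$, so each entry $p_ip_j\min\{s_i,s_j\}$ is positive. The substance is positive semidefiniteness. I would factor $K^{\mathrm{tar}}=D M D$, where $D=\operatorname{diag}(p_1,\ldots,p_n)$ and $M_{ij}=\min\{s_i,s_j\}$. Since $D$ is diagonal, $y^\top K^{\mathrm{tar}} y=(Dy)^\top M(Dy)$ for every $y\in\R^n$. It therefore suffices to show that the minimum kernel $M$ is positive semidefinite.

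For $M$ I would reuse the mechanism already used for \eqref{eq:minkernelpd} in Part I, namely the identity
\[
\min\{a,b\}=\int_0^\infty \1\{t\le a\}\1\{t\le b\}\,dt,
\]
which holds for $a,b\ge0$. Applying it with $a=s_i$ and $b=s_j$ gives, for any $z\in\R^n$,
\[
z^\top M z=\int_0^\infty\Bigl(\sum_{i}z_i\1\{t\le s_i\}\Bigr)^2dt\ \ge0.
\]
Here the integrand is a finite step function, so exchanging the finite sum with the integral is trivial. If one prefers a discrete argument that avoids integrals, sort the jobs by decreasing $s_i$ and set $s_{(n+1)}=0$. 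Then $M=\sum_k (s_{(k)}-s_{(k+1)})z^{(k)}(z^{(k)})^\top$, where $z^{(k)}$ is the indicator vector of the $k$ jobs with the largest tardiness ratios. This expresses $M$ as a nonnegative combination of rank-one PSD matrices. Ties among the $s_i$ cause no difficulty, because they merely produce zero coefficients.

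Combining the two steps, $y^\top K^{\mathrm{tar}}y=\int_0^\infty\bigl(\sum_i p_iy_i\1\{t\le s_i\}\bigr)^2dt\ge0$. I expect no step to be a real obstacle. The only point that needs care is handling ties and non-distinct ratios in the rank-one decomposition, and the integral form sidesteps this entirely. I would present the integral form as the proof and mention the rank-one decomposition as the explicit certificate that the later rounding argument can use.
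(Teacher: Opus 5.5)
Your proof is correct and follows the paper's approach. The paper sets $q_i(u)=p_i\one\{u\le s_i\}$ and writes $K^{\mathrm{tar}}_{ij}=\int_0^\infty q_i(u)q_j(u)\,du$, which is exactly your combined identity with the diagonal factor $D$ absorbed into the integrand; your $DMD$ factorization and the rank-one decomposition (mirroring \eqref{eq:minkernelpd}) are harmless extras.
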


\begin{proof}
For $u\ge0$, define $q_i(u)=p_i\one\{u\le s_i\}$.  Then
\begin{equation}
 K^{\mathrm{tar}}_{ij}=\int_0^\infty q_i(u)q_j(u)\,du.
 \label{AA:eq:kernelintegral}
\end{equation}
Therefore, for every $x\in\R^n$,
\[
 x^\top K^{\mathrm{tar}} x
 =\int_0^\infty\left(\sum_i x_iq_i(u)\right)^2du\ge0.
\]
Entrywise nonnegativity is immediate.
\end{proof}

\begin{intuition}
At level $u$, only jobs with $s_i\ge u$ remain active.  The minimum kernel integrates the square of their weighted sum.  The tardy self-cost is precisely the diagonal needed to obtain this Gram matrix.
\end{intuition}

\subsection{Covariance domination}

Set
\begin{equation}
 z_i=\frac{v_0-v_i}{2}.
 \label{AA:eq:zi}
\end{equation}
Then
\begin{equation}
 \langle z_i,z_j\rangle=t_{ij},
 \qquad
 \|z_i\|^2=t_i,
 \qquad
 \langle z_i,v_0\rangle=t_i.
 \label{AA:eq:zidentities}
\end{equation}
Let $T^{\rm SDP}=(t_{ij})_{i,j}$, with $t_{ii}=t_i$, and let $t=(t_1,\ldots,t_n)^\top$.

\begin{lemma}[Covariance domination]
\label{AA:lem:covariance}
\begin{equation}
 T^{\rm SDP}-t t^\top\succeq0.
 \label{AA:eq:covariance}
\end{equation}
\end{lemma}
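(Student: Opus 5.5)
The plan is to realize $T^{\rm SDP}-tt^\top$ as a Gram matrix of projected vectors.  By \eqref{AA:eq:zidentities}, $T^{\rm SDP}$ is the Gram matrix of the vectors $z_1,\ldots,z_n$: the off-diagonal entries are $\langle z_i,z_j\rangle=t_{ij}$ and the diagonal entries are $\|z_i\|^2=t_i=t_{ii}$.  Also $t_i=\langle z_i,v_0\rangle$ and $\|v_0\|=1$.  This suggests splitting each $z_i$ into its component along $v_0$ and its component orthogonal to $v_0$.

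Concretely, define
\[
 w_i=z_i-\langle z_i,v_0\rangle v_0=z_i-t_iv_0 .
\]
Then I compute
\[
\langle w_i,w_j\rangle=\langle z_i,z_j\rangle-t_j\langle z_i,v_0\rangle-t_i\langle v_0,z_j\rangle+t_it_j\|v_0\|^2 .
\]
Using $\langle z_i,v_0\rangle=t_i$, $\langle v_0,z_j\rangle=t_j$ and $\|v_0\|^2=1$, the last three terms combine to $-t_it_j$, so $\langle w_i,w_j\rangle=t_{ij}-t_it_j$.  The same computation with $i=j$ gives $\|w_i\|^2=t_i-t_i^2$.  Hence $T^{\rm SDP}-tt^\top$ is exactly the Gram matrix of $w_1,\ldots,w_n$.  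For any $x\in\R^n$ this gives
\[
 x^\top\bigl(T^{\rm SDP}-tt^\top\bigr)x=\Bigl\|\sum_i x_iw_i\Bigr\|^2\ge0,
\]
which is \eqref{AA:eq:covariance}.

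An equivalent route is a Schur complement.  The Gram matrix of $(v_0,z_1,\ldots,z_n)$ is
\[
 \begin{pmatrix}1&t^\top\\ t&T^{\rm SDP}\end{pmatrix}\succeq0,
\]
and its Schur complement with respect to the unit $(1,1)$ entry is $T^{\rm SDP}-tt^\top$, which is therefore positive semidefinite.

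The only real obstacle is confirming the identities \eqref{AA:eq:zidentities} themselves.  The key one is
\[
 \langle z_i,z_j\rangle=\tfrac14\bigl(1-m_i-m_j+\rho_{ij}\bigr)=t_{ij},
\]
which uses that all $v_k$ are unit vectors.  The diagonal case $i=j$ gives $\tfrac14(2-2m_i)=t_i$.  Once these hold, the argument is immediate, and it does not need the local inequalities or the anchor constraint.
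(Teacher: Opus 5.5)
Your proposal is correct and matches the paper's proof: the paper also sets $u_i=z_i-t_iv_0$, uses the identities \eqref{AA:eq:zidentities} to get $\langle u_i,u_j\rangle=t_{ij}-t_it_j$, and concludes that $T^{\rm SDP}-tt^\top$ is a Gram matrix. Your Schur-complement remark is an equivalent reformulation, and your check of the identities, including the diagonal case, is accurate.
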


\begin{proof}
Let $u_i=z_i-t_iv_0$.  By \eqref{AA:eq:zidentities},
\[
 \langle u_i,u_j\rangle=t_{ij}-t_it_j.
\]
Thus $T^{\rm SDP}-t t^\top$ is the Gram matrix of the $u_i$.
\end{proof}

\begin{howtoread}
The matrix $T^{\rm SDP}-t t^\top$ behaves like a covariance matrix.  The SDP joint-tardy matrix dominates the rank-one product of its tardy marginals in positive-semidefinite order, even though entrywise domination need not hold.
\end{howtoread}

\subsection{The payment inequality}

For positive-semidefinite matrices $A,B$,
\begin{equation}
 \Tr(AB)\ge0,
 \label{AA:eq:tracepsd}
\end{equation}
because $\Tr(AB)=\Tr(A^{1/2}BA^{1/2})$.

\begin{lemma}[PSD tardy-pair self-payment]
\label{AA:lem:selfpayment}
\begin{equation}
 \sum_{\{i,j\}\subseteq T_\theta}b_{ij}
 \le
 \frac{\mathcal U+2\mathcal B}{2\delta^2}.
 \label{AA:eq:selfpayment}
\end{equation}
\end{lemma}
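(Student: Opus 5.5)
The plan is to combine the two PSD facts just established, \cref{AA:lem:minpsd} and \cref{AA:lem:covariance}, through the trace inequality \eqref{AA:eq:tracepsd}, and then use the rounding certificate $t_i\ge\delta$ for every $i\in T_\theta$.

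\textbf{Upper side.} Apply \eqref{AA:eq:tracepsd} with $A=K^{\mathrm{tar}}$, which is PSD by \cref{AA:lem:minpsd}, and $B=T^{\rm SDP}-tt^\top$, which is PSD by \cref{AA:lem:covariance}. This gives
\[
 \Tr\bigl(K^{\mathrm{tar}}T^{\rm SDP}\bigr)\ge \Tr\bigl(K^{\mathrm{tar}}tt^\top\bigr)=t^\top K^{\mathrm{tar}}t .
\]
Both matrices are symmetric, so I expand the left side entrywise. The diagonal entries are $K^{\mathrm{tar}}_{ii}=c_i$ and $t_{ii}=t_i$, and they give $\sum_ic_it_i=\mathcal U$. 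Each unordered off-diagonal pair appears twice with product $b_{ij}t_{ij}$, giving $2\mathcal B$. Hence
\[
 t^\top K^{\mathrm{tar}}t\le \mathcal U+2\mathcal B .
\]
The only bookkeeping to check is that $T^{\rm SDP}$ is defined with diagonal $t_i$, so that the self-costs land exactly in $\mathcal U$.

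\textbf{Lower side.} Every $t_i=1-e_i$ is nonnegative, since $m_i\in[-1,1]$, and \cref{AA:lem:minpsd} makes $K^{\mathrm{tar}}$ entrywise nonnegative. Therefore every term $K^{\mathrm{tar}}_{ij}t_it_j$ is nonnegative, and I may discard all terms except those with $i\ne j$ both in $T_\theta$. For $i\in T_\theta$ we have $e_i<\theta$, so $t_i>1-\theta=\delta$. This yields
\[
 t^\top K^{\mathrm{tar}}t\ \ge\ \sum_{\substack{i\ne j\\ i,j\in T_\theta}}b_{ij}t_it_j\ \ge\ 2\delta^2\sum_{\{i,j\}\subseteq T_\theta}b_{ij}.
\]
Chaining the upper and lower bounds and dividing by $2\delta^2>0$ gives \eqref{AA:eq:selfpayment}.

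\textbf{Main subtlety.} The step that needs care is the lower side. Entrywise, $t_{ij}$ need not dominate $t_it_j$, so a pair-by-pair charge is unavailable; the comparison has to go through the global PSD order. That is exactly why the diagonal self-costs $c_i$ must be included: they complete $b_{ij}$ into the Gram matrix $K^{\mathrm{tar}}$, and the trace pairing against a covariance-type matrix then pays for all rounded tardy pairs simultaneously. Once both matrices are known to be PSD, this step is immediate, and discarding terms is legitimate only because $K^{\mathrm{tar}}$ and $t$ are both nonnegative.
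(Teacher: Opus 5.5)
Your proposal is correct and follows the paper's proof essentially step for step: the trace inequality applied to $K^{\mathrm{tar}}$ and $T^{\rm SDP}-tt^\top$ gives $t^\top K^{\mathrm{tar}}t\le\mathcal U+2\mathcal B$, and entrywise nonnegativity together with $t_i>\delta$ on $T_\theta$ gives the matching lower bound $2\delta^2\sum_{\{i,j\}\subseteq T_\theta}b_{ij}$. Your explicit observation that $t_i\ge0$ (from $|m_i|\le1$) is needed to discard terms is a small detail the paper leaves implicit.
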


\begin{proof}
By \cref{AA:lem:minpsd,AA:lem:covariance} and \eqref{AA:eq:tracepsd},
\[
 \Tr\!\left(K^{\mathrm{tar}}\bigl(T^{\rm SDP}-t t^\top\bigr)\right)\ge0,
\]
so
\begin{equation}
 t^\top K^{\mathrm{tar}} t\le\Tr(K^{\mathrm{tar}} T^{\rm SDP}).
 \label{AA:eq:tracecompare}
\end{equation}
The right side equals
\begin{equation}
 \Tr(K^{\mathrm{tar}} T^{\rm SDP})
 =\sum_i c_it_i+2\sum_{i<j}b_{ij}t_{ij}
 =\mathcal U+2\mathcal B.
 \label{AA:eq:tracesdp}
\end{equation}
If $i\in T_\theta$, then $t_i=1-e_i>\delta$.  Since $K^{\mathrm{tar}}$ is entrywise nonnegative,
\begin{align}
 t^\top K^{\mathrm{tar}} t
 &\ge2\sum_{\{i,j\}\subseteq T_\theta}b_{ij}t_it_j
 \notag\\
 &\ge2\delta^2\sum_{\{i,j\}\subseteq T_\theta}b_{ij}.
 \label{AA:eq:trace-lower}
\end{align}
Combine \eqref{AA:eq:tracecompare}--\eqref{AA:eq:trace-lower}.
\end{proof}

\begin{proofcheckpoint}
The tardy self-cost does more than pay for individual tardy jobs.  It supplies the diagonal that turns the tardy pair matrix into the PSD kernel $K^{\mathrm{tar}}$.  The trace inequality then pays for all rounded tardy pairs globally, with no vertex-degree factor.
\end{proofcheckpoint}

\begin{lemma}[Tardy self-cost bound]
\label{AA:lem:unary}
\begin{equation}
 \sum_{i\in T_\theta}c_i
 \le\frac{\mathcal U}{\delta}.
 \label{AA:eq:unarybound}
\end{equation}
\end{lemma}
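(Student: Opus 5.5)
The bound follows from a termwise charging argument, the direct analogue of \cref{AA:lem:early} for the tardy diagonal. I will charge each rounded-tardy job's self-cost against its own contribution to the SDP account $\mathcal U=\sum_i c_it_i$.

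\textbf{Step 1: marginal bound.} Fix $i\in T_\theta$. By \eqref{AA:eq:rounding}, $e_i<\theta$, so by \eqref{AA:eq:marginals} and \eqref{AA:eq:theta-delta},
\[
 t_i=1-e_i>1-\theta=\delta>0 .
\]
This is the same inequality already used inside the proof of \cref{AA:lem:selfpayment}.

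\textbf{Step 2: termwise charging.} Since $c_i=p_i^2s_i>0$, Step~1 gives $c_i\le c_it_i/\delta$ for every $i\in T_\theta$. Summing over $T_\theta$ yields
\[
 \sum_{i\in T_\theta}c_i\le\frac1\delta\sum_{i\in T_\theta}c_it_i .
\]

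\textbf{Step 3: enlarge the sum.} Extend the right-hand sum to all jobs; this is valid because every omitted term $c_it_i$ is nonnegative. Nonnegativity holds since $c_i>0$ and $t_i=(1-m_i)/2\ge0$, with $|m_i|\le1$ for unit vectors. The extended sum is exactly $\mathcal U/\delta$ by \eqref{AA:eq:ABC}, which proves \eqref{AA:eq:unarybound}.

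There is no genuine obstacle. The only point needing care is the nonnegativity of $t_i$ for jobs outside $T_\theta$, and it follows from the unit-norm constraint rather than from the local inequalities \eqref{AA:eq:local1}--\eqref{AA:eq:local2}.
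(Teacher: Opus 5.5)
Your proposal is correct and follows the paper's proof: for $i\in T_\theta$ you get $t_i=1-e_i>\delta$, hence $c_i\le c_it_i/\delta$, and you sum over $T_\theta$. The one difference is that you state explicitly the final enlargement of $\sum_{i\in T_\theta}c_it_i$ to $\mathcal U$, justified by $t_i=(1-m_i)/2\ge0$ under the unit-vector constraint, whereas the paper leaves that step implicit.
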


\begin{proof}
For $i\in T_\theta$, $t_i>\delta$, so $c_i\le c_it_i/\delta$.  Sum over $T_\theta$.
\end{proof}

\section{Approximation ratio and threshold optimization}
\label{AA:sec:ratio}

Combining \cref{AA:lem:early,AA:lem:selfpayment,AA:lem:unary} gives
\begin{equation}
 \Phi(E_\theta)
 \le
 \frac{\mathcal A}{1-2\delta}
 +\frac{\mathcal B}{\delta^2}
 +\left(\frac1\delta+\frac1{2\delta^2}\right)\mathcal U.
 \label{AA:eq:combined}
\end{equation}
Because $0<\delta<1/2$,
\begin{equation}
 \frac1\delta+\frac1{2\delta^2}
 \le\frac1{\delta^2}.
 \label{AA:eq:ucompare}
\end{equation}
Define
\begin{equation}
 C(\delta)=
 \max\left\{
 \frac1{1-2\delta},
 \frac1{\delta^2}
 \right\}.
 \label{AA:eq:Cdelta}
\end{equation}
Then
\begin{equation}
 \Phi(E_\theta)\le C(\delta)L_h.
 \label{AA:eq:rounded-vs-sdp}
\end{equation}

\begin{theorem}[Exact-SDP rounding theorem]
\label{AA:thm:exactrounding}
For every anchor $h$, every feasible point of $\SDP_h$ with objective value $L_h$, and every $\delta\in(0,1/2)$, threshold rounding with $\theta=1-\delta$ returns a nonempty early set satisfying
\[
 \Phi(E_\theta)\le C(\delta)L_h.
\]
In particular, rounding an optimal feasible point gives
\[
 \Phi(E_\theta)\le C(\delta)\SDP_h.
\]
For an anchor in an optimal early set, this further implies
\[
 \Phi(E_\theta)\le C(\delta)\OPT.
\]
\end{theorem}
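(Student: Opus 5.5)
The plan is to assemble the three charging lemmas already proved and to check that the feasible point used plays the role of an arbitrary SDP point. The first step is nonemptiness. Feasibility forces $m_h=1$, and since $v_0,v_h$ are unit vectors this gives $e_h=1\ge\theta$. Hence $h\in E_\theta$, so $E_\theta$ is a legitimate nonempty early set and \cref{AA:prop:canonical} applies: the canonical schedule for $E_\theta$ is feasible with cost $\Phi(E_\theta)$ given by \eqref{AA:eq:canonical}.

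The second step is to bound the three accounts of $\Phi(E_\theta)$ separately.

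\begin{itemize}
\item The early pairs are bounded by \cref{AA:lem:early}. Its proof uses only $t_{ij}\ge0$ and the identity \eqref{AA:eq:jointidentity}, both of which hold at any feasible point.
\item The tardy pairs are bounded by \cref{AA:lem:selfpayment}. It uses \cref{AA:lem:minpsd}, which is data-only, and \cref{AA:lem:covariance}, which holds for any unit-vector Gram realization. Every feasible $X\succeq0$ with unit diagonal admits such a realization.
\item The tardy self-costs are bounded by \cref{AA:lem:unary}, which uses only $t_i=1-e_i>\delta$ on $T_\theta$.
\end{itemize}

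None of these lemmas uses optimality of the SDP point. Adding them gives \eqref{AA:eq:combined}, with $\mathcal A,\mathcal B,\mathcal U$ evaluated at the given point.

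The third step is the coefficient comparison. The $\mathcal B$ coefficient $1/\delta^2$ is at most $C(\delta)$ by definition, and the $\mathcal A$ coefficient $1/(1-2\delta)$ is likewise at most $C(\delta)$. For the $\mathcal U$ coefficient, \eqref{AA:eq:ucompare} follows from $1/\delta\le 1/(2\delta^2)$, which is equivalent to $\delta\le 1/2$. This gives $1/\delta+1/(2\delta^2)\le 1/\delta^2\le C(\delta)$. Since $\mathcal A,\mathcal B,\mathcal U\ge0$ (by $e_{ij},t_{ij}\ge0$, $t_i\ge0$, and the positive coefficients), each account may be enlarged to its coefficient $C(\delta)$, yielding $\Phi(E_\theta)\le C(\delta)(\mathcal A+\mathcal B+\mathcal U)=C(\delta)L_h$.

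The last two statements then follow by specialization. Taking an optimal feasible point gives $L_h=\SDP_h$. If $h$ lies in an optimal early set, \cref{AA:prop:sdpvalid} gives $\SDP_h\le\OPT$, and multiplying by $C(\delta)>0$ finishes the argument.

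The main obstacle is not computational. It is checking that each charging lemma was proved pointwise for every feasible point rather than at an optimum, and that nonnegativity of $\mathcal A,\mathcal B,\mathcal U$ is secured by the local constraints \eqref{AA:eq:local1}--\eqref{AA:eq:local2}. If $\SDP_h$ has an attained optimum this is routine. Otherwise, the ``in particular'' clause is read through feasible points whose value is arbitrarily close to $\SDP_h$.
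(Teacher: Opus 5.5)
Your proposal is correct and follows the paper's proof. Both use anchor-forced nonemptiness, then sum the three charging lemmas (each valid at every feasible point, not just at an optimum) to get \eqref{AA:eq:combined}, then apply the comparison \eqref{AA:eq:ucompare} together with nonnegativity of $\mathcal A,\mathcal B,\mathcal U$, and finally specialize through \cref{AA:prop:sdpvalid}.

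Your hedge about a possibly unattained optimum is unnecessary. The feasible region of $\SDP_h$ is nonempty and compact: unit-diagonal positive-semidefinite matrices, intersected with closed half-spaces and the anchor equation. So the linear objective attains $\SDP_h$, which is exactly how the paper settles that point.
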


\begin{proof}
The anchor makes the rounded early set nonempty, and the first inequality is \eqref{AA:eq:rounded-vs-sdp}.  The feasible region is compact, so an optimal feasible point exists; substituting $L_h=\SDP_h$ gives the second inequality.  The final inequality uses \eqref{AA:eq:sdp-lower}.
\end{proof}

The two terms in \eqref{AA:eq:Cdelta} are equal when
\begin{equation}
 \delta^2=1-2\delta.
 \label{AA:eq:balance}
\end{equation}
The root in $(0,1/2)$ is
\begin{equation}
 \delta^*=\sqrt2-1,
 \qquad
 \theta^*=2-\sqrt2.
 \label{AA:eq:optimal-threshold}
\end{equation}
At this value,
\begin{equation}
 C(\delta^*)=\frac1{(\sqrt2-1)^2}=3+2\sqrt2.
 \label{AA:eq:optimal-constant}
\end{equation}

\begin{intuition}
A smaller $\delta$ means a larger early threshold.  This improves the early denominator $1-2\delta$ but worsens the tardy denominator $\delta^2$.  The optimal threshold balances the two losses exactly.
\end{intuition}

\begin{corollary}[Optimized exact-SDP factor]
\label{AA:cor:optimized}
Enumerating all anchors and using $\theta^*=2-\sqrt2$ gives factor $3+2\sqrt2$ in the exact-SDP model.
\end{corollary}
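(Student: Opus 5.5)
The plan is to run the anchored algorithm once for every anchor $h\in\J$ and to take the best result. For each $h$, solve $\SDP_h$ of \cref{AA:def:sdp} to optimality, round with the threshold $\theta^*=2-\sqrt2$, form the canonical schedule of \cref{AA:prop:canonical} for the early set $E_{\theta^*}$, and evaluate $\Phi(E_{\theta^*})$ exactly from \eqref{AA:eq:canonical}. The output is the cheapest of the $n$ resulting schedules.

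\textbf{Feasibility.} Every candidate is a feasible schedule. The anchor has $e_h=1\ge\theta^*$, so it is rounded early, and hence every early set $E_{\theta^*}$ is nonempty.

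\textbf{Ratio.} Fix an optimal early set $E^*$ and choose an anchor $h\in E^*$. Apply \cref{AA:thm:exactrounding} with $\delta^*=\sqrt2-1$. This gives
\[
\Phi(E_{\theta^*})\le C(\delta^*)\,\SDP_h\le C(\delta^*)\,\OPT .
\]
The last inequality is \eqref{AA:eq:sdp-lower}, which applies because this $h$ lies in an optimal early set.

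\textbf{Constant.} By \eqref{AA:eq:optimal-constant}, $C(\delta^*)=3+2\sqrt2$. The best candidate is no worse than the one for this particular anchor, so the returned schedule has cost at most $(3+2\sqrt2)\OPT$.

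\textbf{Remaining points.} For every other anchor the candidate is still feasible, and we keep only the minimum, so those runs cannot hurt. The minimum in \cref{AA:thm:exactrounding} is attained because the feasible region is compact. The relaxation is well defined because every anchor SDP is feasible: the integral embedding with $h$ early gives a feasible point.

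\textbf{Main obstacle.} The only genuine point of care is that the corollary holds in the exact-SDP model. I will state explicitly that $\theta^*$ is irrational and that the SDP optimum is real-valued, so the statement is an existence-of-exact-solution claim. Finite-precision issues are deferred to the later bit-model bridge.
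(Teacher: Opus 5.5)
Your proposal is correct and follows the paper's proof. You choose an anchor from an optimal early set and apply the exact-SDP rounding theorem at $\delta^*=\sqrt2-1$, which gives $\Phi(E_{\theta^*})\le C(\delta^*)\SDP_h\le(3+2\sqrt2)\OPT$; returning the cheapest candidate over all anchors then cannot be worse. Your additional remarks on nonemptiness of the rounded early set, attainment of the SDP optimum by compactness, and feasibility of each $\SDP_h$ are consistent with what the paper establishes in the preceding results.
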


\begin{proof}
Choose an anchor from an optimal early set and apply \cref{AA:thm:exactrounding}.  Returning the cheapest result over all anchors cannot be worse.
\end{proof}

For the rational threshold $\theta=7/12$, $\delta=5/12$, and
\[
 \frac1{1-2\delta}=6,
 \qquad
 \frac1{\delta^2}=\frac{144}{25}<6,
 \qquad
 \frac1\delta+\frac1{2\delta^2}=\frac{132}{25}<6.
\]
This proves \cref{AA:cor:six} in the exact-SDP model.

\begin{howto}
Use $\theta=7/12$ for the simplest proof and implementation.  Use a rational approximation to $2-\sqrt2$ when the best constant is desired.
\end{howto}

\section{Finite precision and polynomial running time}
\label{AA:sec:precision}

\paragraph{Precision convention.}
The Turing-model algorithm receives positive rational accuracy parameters.  We may assume $0<\varepsilon\le1$ and $0<\alpha\le1$: if a requested tolerance exceeds $1$, replacing it by $1$ only strengthens the resulting guarantee.  The usual statement for an arbitrary real $\varepsilon>0$ follows by supplying any smaller positive rational.  Thus all logarithmic precision bounds below are nonnegative.

A numerical SDP solver normally returns a point that is feasible only up to tolerance, whereas the threshold proof uses exact local inequalities and an exact Gram constraint.  The goal of this section is not to demand unrealistic numerical precision.  It is to convert a certified approximate point into an exactly feasible rational point while spending only the additive objective tolerance already reserved in the approximation analysis.  Exact facial reduction handles the anchor, an explicit rational interior point supplies uniform slack, and a small convex mixture repairs all residuals simultaneously.

\paragraph{Weak-optimization interface used below.}
We use the standard rational weak-optimization theorem for a well-bounded rational spectrahedron in affine coordinates; see \citet{GrotschelLovaszSchrijver1988}, together with the semidefinite-programming formulations in \citet{Alizadeh1995,VandenbergheBoyd1996}.  Suppose the feasible region lies in a known ball of radius $R$ and contains a known relative ball of radius $r>0$.  Given positive rational feasibility and objective tolerances $\eta_{\rm f}$ and $\eta_{\rm o}$, weak optimization returns a rational point, in time polynomial in the rational encoding length, $\log(R/r)$, $\log(1/\eta_{\rm f})$, and $\log(1/\eta_{\rm o})$, such that the affine matrix has least eigenvalue at least $-\eta_{\rm f}$, every specified affine slack is at least $-\eta_{\rm f}$, and the objective is at most the true optimum plus $\eta_{\rm o}$.  If the quoted weak-membership theorem is formulated with a Euclidean distance tolerance, divide that tolerance by a rational upper bound on the operator norms of the affine matrix map, the slack functionals, and the objective.  Those norms have polynomial encoding length here, so this rescaling preserves polynomial time.  The next lemma verifies the required inner and outer bounds for the reduced anchored SDP and then repairs the weak output to exact feasibility.

\begin{lemma}[Positive integral floor]
\label{AA:lem:floor}
If $n\ge2$, then $\OPT\ge1$.
\end{lemma}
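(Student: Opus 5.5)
By \cref{AA:prop:canonical}, $\OPT=\min_{\varnothing\ne E\subseteq\J}\Phi(E)$. So it suffices to show that $\Phi(E)\ge1$ for every nonempty $E$ whenever $n\ge2$. The plan is to exhibit, for each nonempty $E$, a single nonnegative term of \eqref{AA:eq:canonical} that is a positive integer. All other terms are nonnegative, being built from positive $p_i$, $r_i$, $s_i$, so that one term is enough.

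I would split into two cases according to the tardy complement $T=\J\setminus E$.

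\emph{Case $T\neq\varnothing$.} Choose any $i\in T$. Its self coefficient is $c_i=p_iw_i^L$, a product of two positive integers, so $c_i\ge1$. It appears in \eqref{AA:eq:canonical}, hence $\Phi(E)\ge c_i\ge1$.

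\emph{Case $T=\varnothing$.} Then $E=\J$, and since $n\ge2$ there are distinct $i,j\in E$. The pair coefficient is $a_{ij}=\min\{w_i^Ep_j,\,p_iw_j^E\}$, which is the minimum of two positive integers, so $a_{ij}\ge1$. It appears in \eqref{AA:eq:canonical}, so again $\Phi(E)\ge1$.

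Neither case is a real obstacle. The only point needing care is the role of $n\ge2$: when $n=1$ the early set is $\J$ itself, the single job completes exactly at $d$, and $\OPT=0$. The second case is precisely where at least two jobs are needed to produce a pair term. I would also use the integer rewriting $a_{ij}=\min\{w_i^Ep_j,p_iw_j^E\}$ recorded after \eqref{AA:eq:ci} rather than arguing through the ratios $r_i$. That form makes integrality, and hence the bound $\ge1$, immediate.
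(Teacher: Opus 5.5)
Your proof is correct and is essentially the paper's argument: reduce to canonical partitions via \cref{AA:prop:canonical}, then bound $\Phi(E)$ below by a tardy self-term $c_i=p_iw_i^L\ge1$ when $T\ne\varnothing$, or by an early-pair term $a_{ij}=\min\{w_i^Ep_j,p_iw_j^E\}\ge1$ when $E=\J$ and $n\ge2$. Your remark that $n=1$ gives $\OPT=0$ correctly shows where the hypothesis $n\ge2$ is used.
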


\begin{proof}
If a feasible canonical partition has a tardy job, its objective contains a positive integer self-term $c_i$.  If all jobs are early, then $n\ge2$ and the objective contains a positive integer early-pair term $a_{ij}$.
\end{proof}

\begin{lemma}[Exact rational realization of an anchored SDP point]
\label{AA:lem:rational-realization}
Fix an anchor $h$ and a rational tolerance $0<\alpha\le1$.  For $n\ge2$, one can compute, in time polynomial in the binary input length and $\log(1/\alpha)$, a rational Gram matrix $X^{(h)}$ that satisfies every constraint of $\SDP_h$ exactly and whose objective value is at most
\[
 \SDP_h+\alpha.
\]
\end{lemma}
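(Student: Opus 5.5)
The plan is to reduce the anchored SDP to a full-dimensional spectrahedron in which the anchor constraint is eliminated exactly, to exhibit an explicit rational interior point with uniform slack, and then to repair a weak-optimization output by a small convex mixture with that interior point. Throughout, fix $h$ and write $\J'=\J\setminus\{h\}$.

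\textbf{Step 1: exact facial reduction of the anchor.} Since $X_{00}=X_{hh}=1$ and $X\succeq0$, the constraint $m_h=1$ forces $v_h=v_0$. Hence row and column $h$ of $X$ duplicate row and column $0$, so $X_{hj}=X_{0j}=m_j$. I will therefore optimize over the reduced matrix $Y$ indexed by $\{0\}\cup\J'$, with unit diagonal fixed and only the off-diagonal entries as affine coordinates, and recover $X$ by duplicating index $0$ into index $h$. Duplication preserves positive semidefiniteness.

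Substituting $m_h=1$ and $\rho_{hj}=m_j$ into the pairwise constraints involving $h$ gives four identities:
\[
 e_{hj}=e_j,\qquad t_{hj}=0,\qquad e_j-e_{hj}=0,\qquad e_h-e_{hj}=1-e_j=t_j .
\]
So the two constraints that are identically tight disappear, and the other two become the linear constraints $e_j\ge0$ and $t_j\ge0$ on $Y$. The objective likewise becomes an affine function of the entries of $Y$. The feasible set of the reduced problem is exactly the set of admissible $Y$, and the reduction is computed exactly over the rationals.

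\textbf{Step 2: inner and outer balls.} Take $Y^\circ=I$, that is, $m_i=0$ and $\rho_{ij}=0$. Then every remaining pairwise slack among $\J'$ equals $1/4$, every remaining anchor-derived slack equals $1/2$, and $\lambda_{\min}(Y^\circ)=1$. Perturbing any off-diagonal coordinate by at most $1/(8n)$ keeps all slacks at least $1/8$ and the least eigenvalue at least $1/2$. This gives a rational inner radius $r=\Omega(1/n^2)$ in Euclidean norm. All entries lie in $[-1,1]$, so the outer radius is $R=O(n)$. Hence $\log(R/r)=O(\log n)$, and the weak-optimization interface quoted before the lemma applies.

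\textbf{Step 3: weak solve and convex repair.} Let $S$ be the sum of all objective coefficients $a_{ij}$, $b_{ij}$ and $c_i$; every objective value of a point with entries bounded by a constant is at most $O(S)$. Call the weak optimizer with feasibility tolerance $\eta$ and objective tolerance $\alpha/2$. It returns a rational $\widetilde Y$ with $\lambda_{\min}\ge-\eta$, every slack at least $-\eta$, and objective at most $\SDP_h+\alpha/2$.

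Set $Y=(1-\gamma)\widetilde Y+\gamma I$ with $\gamma=4\eta$, assuming $\eta\le1/8$. The diagonal stays $1$. The least eigenvalue is at least $-(1-\gamma)\eta+\gamma\ge0$, and each slack is at least $-(1-\gamma)\eta+\gamma/4\ge0$, since slacks are affine and those of $I$ are at least $1/4$. The objective moves by at most $\gamma\,O(S)$. I then choose a rational $\eta\le\alpha/(c\,S)$ for a suitable constant $c$, so this change is at most $\alpha/2$. Since $\log S$ is polynomial in the input length, $\log(1/\eta)$ is polynomial in the input length and $\log(1/\alpha)$. Duplicating index $0$ into $h$ then gives the required rational $X^{(h)}$, exactly feasible with value at most $\SDP_h+\alpha$.

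\textbf{Main obstacle.} The main obstacle is Step 1. The original feasible region has empty interior, because $m_h=1$ together with the identically tight constraints $t_{hj}=0$ and $e_j-e_{hj}=0$ pins it to a face. A generic interior-point or repair argument therefore fails unless these equalities are eliminated symbolically first. I will check carefully that every constraint involving $h$ becomes either an identity or a constraint with strictly positive slack at $Y^\circ$. The hypothesis $n\ge2$ is used only so that the problem is nontrivial and, via \cref{AA:lem:floor}, so that the additive error $\alpha$ can later be converted into a relative error.
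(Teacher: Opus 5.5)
Your proposal is correct and takes essentially the same route as the paper: exact facial reduction of the anchor face, the identity matrix as an explicit rational interior point with uniform slack, a weak solve with objective tolerance $\alpha/2$, and a convex mixture with $I$ that repairs every residual while spending the other $\alpha/2$. The one detail you leave implicit is why the approximately feasible output $\widetilde Y$ has bounded entries, which your $O(S)$ objective estimate requires; the paper secures this with redundant box constraints, but it also follows directly from $\widetilde Y+\eta I\succeq0$ with unit diagonal, which gives $|\widetilde Y_{k\ell}|\le1+\eta$.
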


\begin{proof}
The proof first removes the anchor degeneracy, then obtains a near-optimal rational point, and finally repairs every feasibility residual without exceeding the reserved objective tolerance.  We give the feasibility repair explicitly.

\paragraph{Step 1: reduce the anchor face exactly.}
The Gram matrix $X$ is indexed by $\{0\}\cup\J$ and satisfies
\[
 X\succeq0,
 \qquad X_{00}=X_{hh}=1,
 \qquad X_{0h}=1.
\]
Writing $\mathbf e_k$ for the $k$th standard coordinate vector, consequently
\[
 (\mathbf e_0-\mathbf e_h)^\top X(\mathbf e_0-\mathbf e_h)=1+1-2=0.
\]
For a positive-semidefinite matrix, $z^\top X z=0$ implies $Xz=0$.  Hence rows and columns $0$ and $h$ are identical.  Delete the duplicate row and column $h$ and optimize over a reduced matrix
\[
 Y\in\mathbb S^n,
 \qquad \text{indexed by }\{0\}\cup(\J\setminus\{h\}).
\]
Conversely, duplicating row and column $0$ reconstructs the original anchored matrix.  We impose the unit diagonal identically by using only the off-diagonal entries of $Y$ as variables.

For an anchor pair $h,i$, duplication gives
\[
 e_{hi}=e_i,
 \qquad t_{hi}=0,
 \qquad e_i-e_{hi}=0,
 \qquad e_h-e_{hi}=t_i.
\]
The two zero relations are identities, and $e_i,t_i\ge0$ follow from $Y\succeq0$ and unit diagonal.  Thus only the local inequalities for pairs $i,j\ne h$ need be retained in the reduced formulation.

\paragraph{Step 2: exhibit a rational relative-interior point.}
Take
\[
 Y^\circ=I_n.
\]
For every pair $i,j\ne h$, one has $m_i=m_j=\rho_{ij}=0$, so all four local atoms equal $1/4$.  For an anchor pair, the two nontrivial marginals equal $1/2$.  Also $Y^\circ\succ0$ with smallest eigenvalue one.

For explicit boundedness, add the redundant rational inequalities
\[
 -1\le Y_{k\ell}\le1
 \qquad(k\ne\ell),
\]
which every positive-semidefinite unit-diagonal matrix already satisfies.  The point $I_n$ has unit slack in these inequalities.  Moreover, if $E$ is symmetric with zero diagonal and $\|E\|_F\le1/8$, then
\[
 \lambda_{\min}(I_n+E)\ge1-\|E\|_2\ge\frac78.
\]
Each nonanchor local atom changes by at most $3\|E\|_F/4\le3/32$, so it remains at least $5/32$; each anchor marginal changes by at most $\|E\|_F/2\le1/16$.  Hence, in the affine space of unit-diagonal symmetric matrices with the Frobenius norm, the reduced spectrahedron contains the relative ball of radius $1/8$ about $I_n$.  The box inequalities also give
\[
 \|Y-I_n\|_F\le\sqrt{n(n-1)}<n,
\]
so the same feasible region lies in the explicitly known outer ball of radius $n$ about $I_n$.

\paragraph{Step 3: obtain a certified weak solution.}
Put
\[
 W:=\sum_{i<j}(a_{ij}+b_{ij})+\sum_i c_i.
\]
Since $n\ge2$ and all coefficients are positive integers, $W\ge1$.  Define
\begin{equation}\label{AA:eq:gamma}
 \gamma:=\frac{\alpha}{6W},
 \qquad
 \tau:=\frac{\gamma}{8}.
\end{equation}
Because $0<\alpha\le1$, one has $0<\gamma\le1/6$.  Invoke the weak-optimization interface above on the reduced, explicitly bounded SDP, with the diagonal equations built into the parameterization, feasibility tolerance $\eta_{\rm f}=\tau$, and objective tolerance $\eta_{\rm o}=\alpha/2$.  It returns a rational matrix $\widetilde Y$ such that
\begin{align}
 \widetilde Y+\tau I_n&\succeq0,
 \label{AA:eq:weak-psd}\\
 s_\nu(\widetilde Y)&\ge-\tau
 \quad\text{for every retained local or box slack }s_\nu,
 \label{AA:eq:weak-slacks}\\
 \ell_h(\widetilde Y)&\le\SDP_h+\frac{\alpha}{2},
 \label{AA:eq:weak-objective}
\end{align}
where $\ell_h$ denotes the reduced anchored objective.  The relative inner radius $1/8$, outer radius $n$, and polynomial-bit coefficient norms established above are exactly the conditioning data required by the stated weak-optimization interface.

\paragraph{Step 4: mix with the interior point.}
Set
\[
 Y^r:=(1-\gamma)\widetilde Y+\gamma I_n.
\]
This matrix is rational and keeps the unit diagonal exactly.  From \eqref{AA:eq:weak-psd},
\[
 Y^r\succeq\bigl(\gamma-(1-\gamma)\tau\bigr)I_n
 \succeq(\gamma-\tau)I_n
 =\frac{7\gamma}{8}I_n\succ0.
\]
Every retained local inequality has slack at least $1/4$ at $I_n$, and therefore
\[
 s_\nu(Y^r)
 \ge-(1-\gamma)\tau+\frac\gamma4
 \ge\frac\gamma4-\tau
 =\frac\gamma8>0.
\]
The redundant box constraints are repaired by the same calculation.  Duplicating row and column $0$ now produces a rational matrix $X^{(h)}$ satisfying the original anchor equation, positive-semidefinite constraint, unit-diagonal equations, and all local inequalities exactly.

It remains to control the objective.  The approximate box constraints and $\tau\le1$ imply $|\widetilde Y_{k\ell}|\le2$.  Hence every pair pseudo-atom has absolute value at most $7/4$, every unary marginal $t_i$ has absolute value at most $3/2$, and
\[
 |\ell_h(\widetilde Y)|\le2W,
 \qquad
 0\le\ell_h(I_n)\le W.
\]
Using \eqref{AA:eq:weak-objective},
\begin{align*}
 \ell_h(Y^r)
 &\le \ell_h(\widetilde Y)
      +\gamma\left|\ell_h(I_n)-\ell_h(\widetilde Y)\right|\\
 &\le \SDP_h+\frac\alpha2+3\gamma W\\
 &\le \SDP_h+\alpha.
\end{align*}
By \eqref{AA:eq:gamma}, the last added term is exactly $3\gamma W=\alpha/2$.

\paragraph{Step 5: bit complexity and exact thresholding.}
The binary lengths of $W$, $\gamma$, and $\tau$ are polynomial in the input length and $\log(1/\alpha)$.  Thus the weak optimization and the rational convex combination use polynomially many bit operations.  The entries of $X^{(h)}$, and hence the marginals $e_i=(1+X^{(h)}_{0i})/2$, are exact rationals.  Comparison with a rational threshold $\theta$ is therefore exact, including the boundary case $e_i=\theta$.
\end{proof}

\begin{howtoread}
The weak solver is used only to approach the optimum.  Exact feasibility is restored afterward by an explicitly quantified convex mixture with the rational interior point $I_n$.  The mixture consumes only the prescribed additive tolerance, so every rounding inequality proved for feasible SDP points remains valid without an appeal to floating-point continuity.
\end{howtoread}

\paragraph{A rational near-optimal threshold.}
Let $0<\varepsilon\le1$, put
\[
 \eta:=\frac{\varepsilon}{512},
\]
and recall $\delta^*=\sqrt2-1$, the unique zero in $(0,1/2)$ of $g(x)=x^2+2x-1$.  Exact rational evaluation gives
\[
 g\!\left(\frac{53}{128}\right)=-\frac7{16384}<0,
 \qquad
 g\!\left(\frac{107}{256}\right)=\frac{697}{65536}>0.
\]
Thus $\delta^*$ lies in the dyadic interval $[53/128,107/256]\subset[0.4,0.42]$.  Bisect this interval using the sign of $g$ and exact rational arithmetic until its width is at most $2\eta$, and take the midpoint as $\delta$.  This requires $O(\bigl(\log(1/\varepsilon)\bigr)$ steps, keeps $\delta$ dyadic, and guarantees
\[
 |\delta-\delta^*|\le\eta.
\]
The numerator and denominator of $\delta$ consequently use $O(\bigl(\log(1/\varepsilon)\bigr)$ bits.  On $[0.4,0.42]$, for
\[
 f_1(x)=\frac1{1-2x},
 \qquad
 f_2(x)=\frac1{x^2},
\]
one has
\[
 |f_1'(x)|\le\frac{625}{8},
 \qquad
 |f_2'(x)|\le\frac{125}{4}.
\]
Since $f_1(\delta^*)=f_2(\delta^*)=3+2\sqrt2$, the mean-value theorem gives
\begin{align*}
 C(\delta)
 &\le C(\delta^*)+\frac{625}{8}|\delta-\delta^*|\\
 &\le3+2\sqrt2+\frac{625}{4096}\varepsilon\\
 &<3+2\sqrt2+\frac\varepsilon2.
\end{align*}
Hence both $\delta$ and $\theta=1-\delta$ are exact rationals of the required precision.

\begin{theorem}[Bit-model guarantee]
\label{AA:thm:bitmodel}
For every positive rational $\varepsilon$, rational threshold selection and polynomial-precision SDP optimization give a schedule of value at most
\[
 (3+2\sqrt2+\varepsilon)\OPT.
\]
After the precision normalization above, the running time is polynomial in the input length and $\log(1/\varepsilon)$.
\end{theorem}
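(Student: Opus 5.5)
The plan has four steps: dispose of tiny instances, run one exactly feasible near-optimal SDP per anchor, round each at a single rational threshold, and return the cheapest candidate.

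\emph{Trivial case.} If $n=1$, the only nonempty early set is $\J$. Its cost is $0=\OPT$, so we output it directly. Assume $n\ge2$ from now on.

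\emph{Parameters.} Without loss of generality $0<\varepsilon\le1$. First compute the dyadic $\delta$ by the bisection described above, so that $C(\delta)<3+2\sqrt2+\varepsilon/2$, and set $\theta=1-\delta$. Next choose the SDP tolerance $\alpha:=\varepsilon/(2\cdot 6)=\varepsilon/12$. Since $\delta\in[0.4,0.42]$, we have $C(\delta)\le 1/(1-2\delta)\le 1/0.16<7$, so $C(\delta)\alpha<\varepsilon/2$ with room to spare.

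\emph{Candidates.} For every anchor $h\in\J$, apply \cref{AA:lem:rational-realization} to get an exactly feasible rational point $X^{(h)}$ of $\SDP_h$ with objective value $L_h\le\SDP_h+\alpha$. Round it exactly at the rational threshold $\theta$, which yields a nonempty $E_\theta^{(h)}$ because $e_h=1$. Evaluate $\Phi(E_\theta^{(h)})$ exactly via \cref{AA:prop:canonical}, and output the canonical schedule with the smallest value.

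\emph{Bound.} Fix an anchor $h$ lying in some optimal early set $E^*$. The first inequality of \cref{AA:thm:exactrounding} applies to any feasible point, so $\Phi(E_\theta^{(h)})\le C(\delta)L_h$. Combining this with $L_h\le\SDP_h+\alpha$ and $\SDP_h\le\OPT$ from \eqref{AA:eq:sdp-lower} gives
\[
\Phi(E_\theta^{(h)})\le C(\delta)\OPT+C(\delta)\alpha<\Bigl(3+2\sqrt2+\frac\varepsilon2\Bigr)\OPT+\frac\varepsilon2 .
\]
By \cref{AA:lem:floor}, $\OPT\ge1$, so the additive $\varepsilon/2$ is at most $(\varepsilon/2)\OPT$, and the total is at most $(3+2\sqrt2+\varepsilon)\OPT$. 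The returned minimum over anchors is no larger than this candidate.

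\emph{Running time.} There are $n$ invocations of \cref{AA:lem:rational-realization}, each polynomial in the input length and $\log(1/\alpha)=O(\log(1/\varepsilon))$. The bisection takes $O(\log(1/\varepsilon))$ exact rational steps on numbers of $O(\log(1/\varepsilon))$ bits. Thresholding and evaluating $\Phi$ involve $O(n^2)$ rational operations on polynomial-length numbers. The total is therefore polynomial, as claimed.

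\emph{Main obstacle.} The delicate point is making sure \cref{AA:thm:exactrounding} may be applied to the repaired point rather than to an exact optimum. Its first inequality holds for every exactly feasible point, and \cref{AA:lem:rational-realization} guarantees exact feasibility, so it does apply. The additive slack is converted into a multiplicative one only through the integral floor $\OPT\ge1$, which is why $n=1$ has to be separated off first.
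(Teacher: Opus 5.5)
Your argument follows the paper's proof step for step. The shared steps are the $n=1$ case, the reduction to $\varepsilon\le1$, the dyadic $\delta$, one exactly feasible near-optimal point per anchor via \cref{AA:lem:rational-realization}, the feasible-point form of \cref{AA:thm:exactrounding}, $\SDP_h\le\OPT$ for an anchor in an optimal early set, and $\OPT\ge1$ to absorb the additive loss. The one difference is your choice of the SDP tolerance, and that is where the error sits. You take $\alpha=\varepsilon/12$ and claim $C(\delta)\alpha<\varepsilon/2$ ``with room to spare'' because $C(\delta)<7$. But $7\varepsilon/12>\varepsilon/2$. Even your intermediate bound $1/0.16=6.25$ gives only $C(\delta)\alpha\le 6.25\,\varepsilon/12$, which still exceeds $\varepsilon/2$. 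The step $C(\delta)\le 1/(1-2\delta)$ is also unjustified: the bisection midpoint may lie below $\delta^*=\sqrt2-1$, and there $1/\delta^2$ is the larger branch of the maximum. As written, your chain yields only $\bigl(3+2\sqrt2+\tfrac{\varepsilon}{2}+\tfrac{7\varepsilon}{12}\bigr)\OPT$, which exceeds the claimed bound.

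The repair is immediate. One option is to take $\alpha=\varepsilon/(2C(\delta))$, as the paper does. Since $\delta$ is dyadic, $C(\delta)$ is an exact rational with $O(\log(1/\varepsilon))$ bits, and then $C(\delta)\alpha=\varepsilon/2$ holds with no numerical estimate at all. The other option is to keep $\alpha=\varepsilon/12$ and prove $C(\delta)<6$ from the mean-value estimate already established before the theorem: $C(\delta)\le 3+2\sqrt2+\tfrac{625}{4096}\varepsilon<5.99$ for $\varepsilon\le1$. Alternatively, simply take $\alpha=\varepsilon/14$. With any of these changes your proof is complete and coincides with the paper's.
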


\begin{proof}
Replace $\varepsilon$ by $\min\{\varepsilon,1\}$ if necessary; the stronger guarantee obtained for the smaller parameter also satisfies the original request.  The case $n=1$ is solved exactly by completing the sole job at $d$.  Assume $n\ge2$.  Use the dyadic bisection construction above to obtain a rational $\delta\in(0,1/2)$ satisfying
\[
 C(\delta)<3+2\sqrt2+\frac\varepsilon2,
\]
and write $\bar C=C(\delta)$.  For each anchor, apply \cref{AA:lem:rational-realization} with
\[
 \alpha=\frac{\varepsilon}{2\bar C}.
\]
For an anchor in an optimal early set, \cref{AA:prop:sdpvalid} and the realization lemma give an exactly feasible rational point of value at most $\OPT+\alpha$.  Since the rounding inequalities apply to every feasible SDP point,
\[
 \Phi(E_\theta)
 \le\bar C(\OPT+\alpha)
 =\bar C\OPT+\frac\varepsilon2.
\]
By \cref{AA:lem:floor}, this is at most
\[
 \left(\bar C+\frac\varepsilon2\right)\OPT
 \le(3+2\sqrt2+\varepsilon)\OPT.
\]
The bisection construction gives $\delta$ and $\theta$ with $O(\log(1/\varepsilon))$ bits, and $\alpha=\varepsilon/(2\bar C)$ has the same polynomial precision.  The realization lemma then uses only polynomially many additional bits and operations.
\end{proof}

\begin{remark}[Formal versus floating-point implementation]
The theorem is a Turing-model statement: a rational weak-optimization routine produces $\widetilde Y$, and the displayed convex mixture produces an exactly feasible rational Gram matrix before thresholding.  A practical floating-point solver may implement the same plan with certified residual bounds or rational reconstruction, but unverified solver tolerances are not used in the proof.
\end{remark}

\begin{sectionexit}
Section~19 has converted weak numerical optimization into an exactly feasible rational SDP point with polynomial bit length and controlled additive objective loss.  Section~20 uses that result only through Lemma~II.20: threshold comparisons are exact rational comparisons, and every returned schedule is evaluated from the original integer objective.
\end{sectionexit}

\section{Algorithm and implementation details}
\label{AA:sec:algorithm}

\begin{center}
\fbox{\begin{minipage}{0.92\textwidth}
\textbf{Algorithm: Anchored-SDP threshold approximation for \AWET}

\medskip
\textbf{Input:} Jobs $(p_i,w_i^E,w_i^L)$ and a positive rational accuracy parameter $\varepsilon$.

\begin{enumerate}[label=\arabic*.,leftmargin=2.2em]
\item If $n=1$, complete the unique job at $d$ and stop.
\item Replace $\varepsilon$ by $\min\{\varepsilon,1\}$, and compute $r_i,s_i$ and $a_{ij},b_{ij},c_i$.
\item Use the dyadic bisection construction of Section~19 to choose rational $\delta$ with $C(\delta)<3+2\sqrt2+\varepsilon/2$, write $\bar C=C(\delta)$, and set $\theta=1-\delta$.
\item For every anchor $h\in\J$:
  \begin{enumerate}[label=(\alph*),leftmargin=2em]
  \item use \cref{AA:lem:rational-realization} with
  $\alpha=\varepsilon/(2\bar C)$ to compute an exactly feasible rational
  point of $\SDP_h$ (\ref{AA:eq:sdpobjective}-\ref{AA:eq:anchor}) whose value is at most $\SDP_h+\alpha$;
  \item set $E_h=\{i:e_i\ge\theta\}$ and $T_h=\J\setminus E_h$;
  \item schedule $E_h$ by increasing $r_i$ ending at $d$ and $T_h$ by decreasing $s_i$ beginning at $d$;
  \item evaluate the exact cost from \eqref{AA:eq:canonical}.
  \end{enumerate}
\item Return the cheapest of the $n$ schedules.
\end{enumerate}
\end{minipage}}
\end{center}

\begin{theorem}[Algorithmic guarantee]
\label{AA:thm:algorithm}
The algorithm is feasible, runs in polynomial time, and satisfies \cref{AA:thm:main}.
\end{theorem}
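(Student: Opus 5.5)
The proof splits into three checks: feasibility, running time, and the approximation guarantee. Each is assembled from results already established in this Part, so the plan is to cite them in the order the algorithm executes.

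\textbf{Feasibility.} For $n=1$, completing the sole job at $d$ is optimal with cost zero. For $n\ge2$, each anchor $h$ yields, by \cref{AA:lem:rational-realization}, an exactly feasible rational point of $\SDP_h$. That point has $X^{(h)}_{0h}=1$, hence $e_h=1\ge\theta$, so the rounded set $E_h$ is nonempty. \Cref{AA:prop:canonical} then says the canonical schedule built in step 4(c) is feasible, since the early block starts at $\sum_{i\in T_h}p_i\ge0$. Its cost equals $\Phi(E_h)$, so step 4(d) evaluates the true schedule cost exactly. The returned schedule is therefore feasible, and its reported value is correct.

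\textbf{Running time.} The dyadic bisection of Section~19 takes $O(\log(1/\varepsilon))$ steps on rationals of that bit size. The coefficients $a_{ij},b_{ij},c_i$ come from $O(n^2)$ integer operations. The number $\bar C=C(\delta)$ is an exact rational of polynomial size, so $\alpha=\varepsilon/(2\bar C)$ is as well, and $\log(1/\alpha)$ is polynomial in the input length and $\log(1/\varepsilon)$. \Cref{AA:lem:rational-realization} then runs in polynomial time for each of the $n$ anchors. Thresholding is an exact rational comparison. Sorting and the cost evaluation via \eqref{AA:eq:canonical} are polynomial. Summing over $n$ anchors keeps the total polynomial.

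\textbf{Approximation guarantee.} Fix an optimal early set $E^*$ and an anchor $h\in E^*$. By \cref{AA:prop:sdpvalid}, $\SDP_h\le\OPT$, so the realized point has objective $L_h\le\OPT+\alpha$. \Cref{AA:thm:exactrounding} applies to every feasible point, and gives $\Phi(E_h)\le\bar C L_h\le\bar C\OPT+\varepsilon/2$. \Cref{AA:lem:floor} gives $\OPT\ge1$ when $n\ge2$, which turns the additive term into a multiplicative one: $\Phi(E_h)\le(3+2\sqrt2+\varepsilon)\OPT$. The algorithm returns the cheapest schedule over all anchors, so its output is no worse than this one. This is exactly the argument of \cref{AA:thm:bitmodel} instantiated on the algorithm's steps.

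\textbf{Main obstacle.} The only delicate point is that the rounding analysis must apply to the \emph{repaired} rational point rather than to an idealized optimum. This is why \cref{AA:thm:exactrounding} is stated for arbitrary feasible points and why exact feasibility in \cref{AA:lem:rational-realization} matters. Once that is noted, no continuity argument is needed, and the remaining steps are bookkeeping.
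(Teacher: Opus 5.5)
Your proposal is correct and follows essentially the same route as the paper. Feasibility comes from the anchor keeping $E_h$ nonempty together with the canonical-partition proposition, and polynomial time from counting the $n$ anchored SDPs plus the exact rational steps. The ratio comes from choosing $h\in E^*$ and combining anchored-SDP validity with the bit-model argument. You only differ in unfolding that argument (rational realization, rounding for arbitrary feasible points, and the floor $\OPT\ge1$) where the paper simply cites the bit-model theorem.
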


\begin{proof}
Each rounded early set contains its anchor.  By \cref{AA:prop:canonical}, the two ratio sorts produce a feasible canonical schedule of the exact partition cost.  Let $E^*$ be optimal and choose $h\in E^*$.  This anchor is enumerated; its SDP optimum is at most $\OPT$ by \cref{AA:prop:sdpvalid}; and \cref{AA:thm:bitmodel} gives the desired ratio.  Returning the cheapest candidate cannot be worse.

Coefficient construction takes $O(n^2)$ arithmetic operations.  There are $n$ SDPs, each with a matrix of order $n+1$ and $O(n^2)$ linear constraints.  Standard weak SDP optimization, the two sorts, and exact candidate evaluation are polynomial.
\end{proof}

\begin{howto}
After the rational feasibility repair, only the first Gram-matrix row is needed: $e_i=(1+X_{0i})/2$.  Compare these exact rationals with $\theta$, sort the two sides, and evaluate the resulting schedule exactly.  Thus neither threshold membership nor the final candidate comparison depends on floating-point tie handling.
\end{howto}

\section{Worked example}
\label{AA:sec:example}

This example follows every line of the construction: instance data, the coefficients $a_{ij}$, $b_{ij}$, and $c_i$, a feasible anchored SDP moment point, all marginal and joint quantities, the three objective accounts $\mathcal A$, $\mathcal B$, and $\mathcal U$, threshold rounding, schedule recovery, and comparison with an optimal schedule.  The actual algorithm solves the anchored SDP.  For hand verification, this example instead chooses an explicit convex mixture of integral partitions, so every marginal and joint mass can be reproduced without a numerical solver.  The displayed point is pedagogical and is not asserted to be SDP-optimal.

\subsection*{Instance data and due date}

Take the following six jobs.
\begin{center}
\begingroup
\small
\setstretch{1.0}
\renewcommand{\arraystretch}{1.12}
\begin{tabular}{c r r r c c}
\toprule
job $i$ & $p_i$ & $w_i^E$ & $w_i^L$ & $r_i=w_i^E/p_i$ & $s_i=w_i^L/p_i$\\
\midrule
$1$ & $1$ & $1$  & $6$  & $1$ & $6$\\
$2$ & $2$ & $6$  & $4$  & $3$ & $2$\\
$3$ & $3$ & $6$  & $15$ & $2$ & $5$\\
$4$ & $4$ & $20$ & $4$  & $5$ & $1$\\
$5$ & $2$ & $8$  & $8$  & $4$ & $4$\\
$6$ & $5$ & $30$ & $15$ & $6$ & $3$\\
\bottomrule
\end{tabular}
\endgroup
\end{center}
The ratios in the last two columns use \eqref{AA:eq:ratios}.  For example,
$r_4=w_4^E/p_4=20/4=5$ and $s_4=w_4^L/p_4=4/4=1$.
The processing times are
\[
 (p_1,\ldots,p_6)=(1,2,3,4,2,5),
\]
so five different processing-time values occur.  Equation~\eqref{AA:eq:duedate} gives
\[
 d=P=1+2+3+4+2+5=17.
\]

\subsection*{The pair coefficients $a_{ij}$ and $b_{ij}$}

For every pair, the second column below substitutes the data into
\eqref{AA:eq:aij}, and the third column substitutes them into
\eqref{AA:eq:bij}.

\begingroup
\small
\setstretch{1.0}
\setlength{\tabcolsep}{5pt}
\renewcommand{\arraystretch}{1.08}
\begin{longtable}{c p{0.39\textwidth} p{0.39\textwidth}}
\toprule
pair & $a_{ij}=p_ip_j\min\{r_i,r_j\}$, from \eqref{AA:eq:aij}
     & $b_{ij}=p_ip_j\min\{s_i,s_j\}$, from \eqref{AA:eq:bij}\\
\midrule
\endfirsthead
\toprule
pair & $a_{ij}$ calculation & $b_{ij}$ calculation\\
\midrule
\endhead
$\{1,2\}$ & $1\cdot2\min\{1,3\}=2$ & $1\cdot2\min\{6,2\}=4$\\
$\{1,3\}$ & $1\cdot3\min\{1,2\}=3$ & $1\cdot3\min\{6,5\}=15$\\
$\{1,4\}$ & $1\cdot4\min\{1,5\}=4$ & $1\cdot4\min\{6,1\}=4$\\
$\{1,5\}$ & $1\cdot2\min\{1,4\}=2$ & $1\cdot2\min\{6,4\}=8$\\
$\{1,6\}$ & $1\cdot5\min\{1,6\}=5$ & $1\cdot5\min\{6,3\}=15$\\
$\{2,3\}$ & $2\cdot3\min\{3,2\}=12$ & $2\cdot3\min\{2,5\}=12$\\
$\{2,4\}$ & $2\cdot4\min\{3,5\}=24$ & $2\cdot4\min\{2,1\}=8$\\
$\{2,5\}$ & $2\cdot2\min\{3,4\}=12$ & $2\cdot2\min\{2,4\}=8$\\
$\{2,6\}$ & $2\cdot5\min\{3,6\}=30$ & $2\cdot5\min\{2,3\}=20$\\
$\{3,4\}$ & $3\cdot4\min\{2,5\}=24$ & $3\cdot4\min\{5,1\}=12$\\
$\{3,5\}$ & $3\cdot2\min\{2,4\}=12$ & $3\cdot2\min\{5,4\}=24$\\
$\{3,6\}$ & $3\cdot5\min\{2,6\}=30$ & $3\cdot5\min\{5,3\}=45$\\
$\{4,5\}$ & $4\cdot2\min\{5,4\}=32$ & $4\cdot2\min\{1,4\}=8$\\
$\{4,6\}$ & $4\cdot5\min\{5,6\}=100$ & $4\cdot5\min\{1,3\}=20$\\
$\{5,6\}$ & $2\cdot5\min\{4,6\}=40$ & $2\cdot5\min\{4,3\}=30$\\
\bottomrule
\end{longtable}
\endgroup

Equation~\eqref{AA:eq:ci} supplies one tardy self coefficient per job:
\begin{center}
\begingroup
\small
\setstretch{1.0}
\renewcommand{\arraystretch}{1.12}
\begin{tabular}{c c c}
\toprule
job & substitution into $c_i=p_i^2s_i=p_iw_i^L$ & $c_i$\\
\midrule
$1$ & $1^2\cdot6=1\cdot6$ & $6$\\
$2$ & $2^2\cdot2=2\cdot4$ & $8$\\
$3$ & $3^2\cdot5=3\cdot15$ & $45$\\
$4$ & $4^2\cdot1=4\cdot4$ & $16$\\
$5$ & $2^2\cdot4=2\cdot8$ & $16$\\
$6$ & $5^2\cdot3=5\cdot15$ & $75$\\
\bottomrule
\end{tabular}
\endgroup
\end{center}
Thus
\[
 c=(6,8,45,16,16,75).
\]

\subsection*{A feasible anchored SDP moment point}

Force job $1$ early.  Consider the following distribution over five integral early sets.  The numbers in the probability column are chosen convex-combination weights, not quantities calculated from the scheduling data.  Equivalently, for interpretation, one may imagine selecting one of the five integral partitions according to these probabilities.  The final column is obtained from the canonical partition formula \eqref{AA:eq:canonical}.
\begin{center}
\begingroup
\small
\setstretch{1.0}
\setlength{\tabcolsep}{4pt}
\renewcommand{\arraystretch}{1.12}
\begin{tabularx}{\textwidth}{c c >{\centering\arraybackslash}X >{\centering\arraybackslash}X c}
\toprule
scenario & probability & early set $E^{(q)}$ & tardy set $T^{(q)}$ & $\Phi(E^{(q)})$\\
\midrule
$S_1$ & $3/10$ & $\{1,2,3,5\}$ & $\{4,6\}$ & $154$\\
$S_2$ & $3/10$ & $\{1,2,5,6\}$ & $\{3,4\}$ & $164$\\
$S_3$ & $1/5$  & $\{1,2,6\}$   & $\{3,4,5\}$ & $158$\\
$S_4$ & $1/10$ & $\{1,3,5\}$   & $\{2,4,6\}$ & $164$\\
$S_5$ & $1/10$ & $\{1,3,4,5\}$ & $\{2,6\}$ & $180$\\
\bottomrule
\end{tabularx}
\endgroup
\end{center}
The probabilities sum to one, and job $1$ belongs to every early set.  Each row therefore gives an integral Gram point with $v_i=y_iv_0$ and anchor $m_1=1$.  The probability-weighted average of those Gram matrices is positive semidefinite and satisfies all local constraints.  Hence the distribution defines a feasible point of the SDP anchored at job $1$, as in \cref{AA:prop:sdpvalid}.

For a distribution over integral points, equations
\eqref{AA:eq:marginals}--\eqref{AA:eq:tij} have their ordinary probability meanings:
\[
 e_i=\Pr(i\in E),\qquad t_i=\Pr(i\in T),
\qquad
 e_{ij}=\Pr(i,j\in E),\qquad t_{ij}=\Pr(i,j\in T).
\]

\subsection*{The marginal quantities $e_i$ and $t_i$}

The early marginals are obtained by adding the probabilities of the scenarios in which the job is early.  The tardy marginal is then $t_i=1-e_i$, exactly as in \eqref{AA:eq:marginals}.
\begin{center}
\begingroup
\small
\setstretch{1.0}
\setlength{\tabcolsep}{5pt}
\renewcommand{\arraystretch}{1.13}
\begin{tabularx}{\textwidth}{c >{\raggedright\arraybackslash}X c c}
\toprule
job & calculation of $e_i=\Pr(i\in E)$ & $e_i$ & $t_i=1-e_i$\\
\midrule
$1$ & $3/10+3/10+1/5+1/10+1/10$ & $1$ & $0$\\
$2$ & $3/10+3/10+1/5$ & $4/5$ & $1/5$\\
$3$ & $3/10+1/10+1/10$ & $1/2$ & $1/2$\\
$4$ & $1/10$ & $1/10$ & $9/10$\\
$5$ & $3/10+3/10+1/10+1/10$ & $4/5$ & $1/5$\\
$6$ & $3/10+1/5$ & $1/2$ & $1/2$\\
\bottomrule
\end{tabularx}
\endgroup
\end{center}
Therefore
\[
 e=\left(1,\frac45,\frac12,\frac1{10},\frac45,\frac12\right),
 \qquad
 t=\left(0,\frac15,\frac12,\frac9{10},\frac15,\frac12\right).
\]

\subsection*{The joint quantities $e_{ij}$ and $t_{ij}$}

The next table performs the same probability addition for every pair.  By
\eqref{AA:eq:eij}, the middle column is $e_{ij}$; by
\eqref{AA:eq:tij}, the last column is $t_{ij}$.  For example, jobs $2$ and $6$ are both early in $S_2$ and $S_3$, so
$e_{26}=3/10+1/5=1/2$; they are both tardy in $S_4$ and $S_5$, so
$t_{26}=1/10+1/10=1/5$.

\begingroup
\small
\setstretch{1.0}
\setlength{\tabcolsep}{5pt}
\renewcommand{\arraystretch}{1.08}
\begin{longtable}{c p{0.39\textwidth} p{0.39\textwidth}}
\toprule
pair & $e_{ij}=\Pr(i,j\in E)$, from \eqref{AA:eq:eij}
     & $t_{ij}=\Pr(i,j\in T)$, from \eqref{AA:eq:tij}\\
\midrule
\endfirsthead
\toprule
pair & $e_{ij}$ calculation & $t_{ij}$ calculation\\
\midrule
\endhead
$\{1,2\}$ & $3/10+3/10+1/5=4/5$ & $0$\\
$\{1,3\}$ & $3/10+1/10+1/10=1/2$ & $0$\\
$\{1,4\}$ & $1/10$ & $0$\\
$\{1,5\}$ & $3/10+3/10+1/10+1/10=4/5$ & $0$\\
$\{1,6\}$ & $3/10+1/5=1/2$ & $0$\\
$\{2,3\}$ & $3/10$ & $0$\\
$\{2,4\}$ & $0$ & $1/10$\\
$\{2,5\}$ & $3/10+3/10=3/5$ & $0$\\
$\{2,6\}$ & $3/10+1/5=1/2$ & $1/10+1/10=1/5$\\
$\{3,4\}$ & $1/10$ & $3/10+1/5=1/2$\\
$\{3,5\}$ & $3/10+1/10+1/10=1/2$ & $1/5$\\
$\{3,6\}$ & $0$ & $0$\\
$\{4,5\}$ & $1/10$ & $1/5$\\
$\{4,6\}$ & $0$ & $3/10+1/10=2/5$\\
$\{5,6\}$ & $3/10$ & $0$\\
\bottomrule
\end{longtable}
\endgroup

\subsection*{The SDP accounts $\mathcal A$, $\mathcal B$, $\mathcal U$, and $L$}

Equation~\eqref{AA:eq:ABC} defines the early-pair account.  Substituting every nonzero $e_{ij}$ from the preceding table gives
\[
\begin{aligned}
\mathcal A
&=2\left(\frac45\right)
 +3\left(\frac12\right)
 +4\left(\frac1{10}\right)
 +2\left(\frac45\right)
 +5\left(\frac12\right)\\
&\quad
 +12\left(\frac3{10}\right)
 +12\left(\frac35\right)
 +30\left(\frac12\right)\\
&\quad
 +24\left(\frac1{10}\right)
 +12\left(\frac12\right)
 +32\left(\frac1{10}\right)
 +40\left(\frac3{10}\right)\\
&=57.
\end{aligned}
\]
The omitted pair terms have $e_{ij}=0$.

The tardy-pair account in the same equation uses every nonzero $t_{ij}$:
\[
\begin{aligned}
\mathcal B
&=8\left(\frac1{10}\right)
 +20\left(\frac15\right)
 +12\left(\frac12\right)
 +24\left(\frac15\right)\\
&\quad
 +8\left(\frac15\right)
 +20\left(\frac25\right)\\
&=\frac{126}{5}=25.2.
\end{aligned}
\]
These six terms correspond respectively to pairs
$\{2,4\}$, $\{2,6\}$, $\{3,4\}$, $\{3,5\}$,
$\{4,5\}$, and $\{4,6\}$.

Finally, the unary account in \eqref{AA:eq:ABC} is
\[
\begin{aligned}
\mathcal U
&=6(0)
 +8\left(\frac15\right)
 +45\left(\frac12\right)
 +16\left(\frac9{10}\right)
 +16\left(\frac15\right)
 +75\left(\frac12\right)\\
&=\frac{396}{5}=79.2.
\end{aligned}
\]
For this feasible point, write $L=L_1$.  Equation~\eqref{AA:eq:sdpobjective}, or equivalently the account split immediately following \eqref{AA:eq:ABC}, gives
\[
 L=\mathcal A+\mathcal B+\mathcal U
  =57+\frac{126}{5}+\frac{396}{5}
  =\frac{807}{5}=161.4.
\]
There is also a useful independent check.  Because this point is a probability mixture of integral partitions, its SDP objective is their expected canonical cost:
\[
\begin{aligned}
L
&=\frac3{10}(154)+\frac3{10}(164)+\frac15(158)
  +\frac1{10}(164)+\frac1{10}(180)\\
&=\frac{807}{5}.
\end{aligned}
\]

\begin{caution}{Pedagogical point versus algorithmic output}
The threshold rule below is the algorithmic rule, but it is being applied to the auditable convex-mixture point just constructed, not to an SDP optimum returned by a solver.  The comparison with the true scheduling optimum therefore illustrates the rounding mechanics rather than the worst-case certificate of the implemented algorithm.
\end{caution}

\subsection*{Threshold rounding}

Equation~\eqref{AA:eq:optimal-threshold} gives
\[
 \theta^*=2-\sqrt2\approx0.585786.
\]
Applying the rule \eqref{AA:eq:rounding} to the six early marginals gives
\[
\begin{array}{c|cccccc}
 i&1&2&3&4&5&6\\ \hline
 e_i&1&0.8&0.5&0.1&0.8&0.5\\
 e_i\ge\theta^*?&\text{yes}&\text{yes}&\text{no}&\text{no}&\text{yes}&\text{no}
\end{array}
\]
Hence
\[
 E_{\theta^*}=\{1,2,5\},
 \qquad
 T_{\theta^*}=\{3,4,6\}.
\]

\subsection*{Exact cost of the rounded canonical schedule}

First evaluate the partition directly from \eqref{AA:eq:canonical}.  The early-pair part is
\[
 a_{12}+a_{15}+a_{25}=2+2+12=16.
\]
The tardy-pair part is
\[
 b_{34}+b_{36}+b_{46}=12+45+20=77.
\]
The tardy self part is
\[
 c_3+c_4+c_6=45+16+75=136.
\]
Therefore the exact canonical cost is
\[
 \Phi(E_{\theta^*})=16+77+136=229.
\]

Now recover the actual schedule, as prescribed after
\eqref{AA:eq:opt-set}.  The early jobs are sorted by increasing ratio,
\[
 1\ (r_1=1),\qquad 2\ (r_2=3),\qquad 5\ (r_5=4),
\]
and their block starts at
\[
 d-(p_1+p_2+p_5)=17-(1+2+2)=12.
\]
The tardy jobs are sorted by decreasing ratio,
\[
 3\ (s_3=5),\qquad 6\ (s_6=3),\qquad 4\ (s_4=1),
\]
and their block starts at $d=17$.  The next table evaluates every job from the original objective \eqref{AA:eq:objective}.

\begin{center}
\begingroup
\small
\setstretch{1.0}
\setlength{\tabcolsep}{3pt}
\renewcommand{\arraystretch}{1.12}
\begin{tabular}{c c c c c c c}
\toprule
position & job & interval & $C_i$ & deviation & applicable weight & cost\\
\midrule
$1$ & $1$ & $[12,13]$ & $13$ & $E_1=17-13=4$ & $w_1^E=1$ & $1(4)=4$\\
$2$ & $2$ & $[13,15]$ & $15$ & $E_2=17-15=2$ & $w_2^E=6$ & $6(2)=12$\\
$3$ & $5$ & $[15,17]$ & $17$ & $E_5=0$ & $w_5^E=8$ & $8(0)=0$\\
$4$ & $3$ & $[17,20]$ & $20$ & $T_3=20-17=3$ & $w_3^L=15$ & $15(3)=45$\\
$5$ & $6$ & $[20,25]$ & $25$ & $T_6=25-17=8$ & $w_6^L=15$ & $15(8)=120$\\
$6$ & $4$ & $[25,29]$ & $29$ & $T_4=29-17=12$ & $w_4^L=4$ & $4(12)=48$\\
\midrule
\multicolumn{6}{r}{total} & $4+12+0+45+120+48=229$\\
\bottomrule
\end{tabular}
\endgroup
\end{center}
Thus the job-by-job cost from \eqref{AA:eq:objective} agrees exactly with the pair-and-self calculation from \eqref{AA:eq:canonical}.

\subsection*{The optimal solution and its cost}

Equation~\eqref{AA:eq:opt-set} permits exact enumeration because there are only $2^6-1=63$ nonempty early sets.  Evaluating \eqref{AA:eq:canonical} for every set and grouping the results by $|E|$ gives the following minimum in each cardinality class.
\begin{center}
\begingroup
\small
\setstretch{1.0}
\renewcommand{\arraystretch}{1.12}
\begin{tabular}{c c c}
\toprule
$|E|$ & minimizing early set within this class & minimum $\Phi(E)$\\
\midrule
$1$ & $\{6\}$ & $194$\\
$2$ & $\{3,6\}$ & $116$\\
$3$ & $\{1,3,6\}$ & $102$\\
$4$ & $\{1,2,3,6\}$ & $122$\\
$5$ & $\{1,2,3,5,6\}$ & $164$\\
$6$ & $\{1,2,3,4,5,6\}$ & $332$\\
\bottomrule
\end{tabular}
\endgroup
\end{center}
Since these six classes partition all nonempty early sets, the unique optimum is
\[
 E^*=\{1,3,6\},
 \qquad
 T^*=\{2,4,5\},
 \qquad
 \OPT=102.
\]
Its cost from \eqref{AA:eq:canonical} is
\[
\begin{aligned}
\Phi(E^*)
&=\underbrace{(a_{13}+a_{16}+a_{36})}_{\text{early pairs}}
 +\underbrace{(b_{24}+b_{25}+b_{45})}_{\text{tardy pairs}}
 +\underbrace{(c_2+c_4+c_5)}_{\text{tardy self terms}}\\
&=(3+5+30)+(8+8+8)+(8+16+16)\\
&=38+24+40=102.
\end{aligned}
\]

For completeness, the optimal canonical schedule can also be checked job by job.  The early order is $1,3,6$ because $1<2<6$ are their $r$-values, and its start time is
\[
 17-(p_1+p_3+p_6)=17-(1+3+5)=8.
\]
The tardy order is $5,2,4$ because $4>2>1$ are their $s$-values.
\begin{center}
\begingroup
\small
\setstretch{1.0}
\setlength{\tabcolsep}{3pt}
\renewcommand{\arraystretch}{1.12}
\begin{tabular}{c c c c c c c}
\toprule
position & job & interval & $C_i$ & deviation & applicable weight & cost\\
\midrule
$1$ & $1$ & $[8,9]$ & $9$ & $E_1=17-9=8$ & $w_1^E=1$ & $1(8)=8$\\
$2$ & $3$ & $[9,12]$ & $12$ & $E_3=17-12=5$ & $w_3^E=6$ & $6(5)=30$\\
$3$ & $6$ & $[12,17]$ & $17$ & $E_6=0$ & $w_6^E=30$ & $30(0)=0$\\
$4$ & $5$ & $[17,19]$ & $19$ & $T_5=19-17=2$ & $w_5^L=8$ & $8(2)=16$\\
$5$ & $2$ & $[19,21]$ & $21$ & $T_2=21-17=4$ & $w_2^L=4$ & $4(4)=16$\\
$6$ & $4$ & $[21,25]$ & $25$ & $T_4=25-17=8$ & $w_4^L=4$ & $4(8)=32$\\
\midrule
\multicolumn{6}{r}{total} & $8+30+0+16+16+32=102$\\
\bottomrule
\end{tabular}
\endgroup
\end{center}
The rounded schedule therefore has additive gap $229-102=127$ and comparison ratio
\[
 \frac{\Phi(E_{\theta^*})}{\OPT}
 =\frac{229}{102}
 \approx2.2451 < 3 + 2 \sqrt{2}.
\]

\begin{howtoread}
The example makes the global tardy-side argument visible.  Jobs $3$ and $6$ are both rounded tardy even though their pseudo-joint quantity is $t_{36}=0$, while their actual canonical pair cost is $b_{36}=45$.  That cost cannot be paid by an entrywise comparison with $t_{36}$.  It is covered only by the global PSD inequality, which combines the pair account $\mathcal B$ with the diagonal self account $\mathcal U$.
\end{howtoread}

\section{Why the proof works}
\label{AA:sec:interpretation}

\subsection{Early pairs use local consistency}
The early estimate needs no PSD property of the early kernel.  It uses only $e_{ij}=e_i+e_j-1+t_{ij}$ and $t_{ij}\ge0$.  The threshold replaces the missing early diagonal.

\subsection{Tardy pairs use the kernel diagonal}
The tardy self-costs are exactly the diagonal of $K^{\mathrm{tar}}$.  They complete the off-diagonal tardy pair matrix into a minimum-kernel Gram matrix.  This structure is stronger than a generic weighted binary constraint system.

\subsection{The rounding is deterministic}
No random hyperplane is needed.  A common marginal threshold avoids the linear-versus-quadratic mismatch that can make root-relative hyperplane rounding poor on unary tardiness terms.

\subsection{The anchor is a feasibility device}
The proof uses the anchor only to ensure $E_\theta\ne\varnothing$.  Enumerating all jobs guarantees one anchor in the optimal early set.

\begin{proofcheckpoint}
\[
\boxed{
\begin{minipage}{0.86\linewidth}
\centering
local joint consistency pays for early pairs\\[2pt]
$+$\\[-2pt]
minimum-kernel PSD with its diagonal pays for tardy pairs
\end{minipage}}
\]
Neither ingredient alone is sufficient for unrestricted asymmetric \AWET.
\end{proofcheckpoint}

\section{Concluding remarks}

The canonical side-assignment formulation of \AWET has two unrelated minimum kernels and an asymmetric diagonal term.  The correct approximation proof uses that asymmetry rather than trying to remove it.  A threshold above one half pays for early pairs through local pseudo-probability consistency.  The tardy self-cost completes the tardy minimum kernel into a positive-semidefinite matrix, and the SDP covariance matrix pays globally for all rounded tardy pairs.  Balancing the two threshold losses gives $3+2\sqrt2$.

The resulting algorithm is direct: enumerate an early anchor, solve a polynomial-size SDP, threshold the early marginals, and build the canonical V-shaped schedule.  This establishes a polynomial-time constant-factor approximation for unrestricted \AWET.

\begin{reusableidea}
When pair costs form a positive-semidefinite kernel only after diagonal completion, a unary or self-cost term can be an asset rather than a nuisance.  Combine that diagonal with a covariance domination inequality to pay all rounded pairs globally, and use a separate local-consistency argument on any side lacking a natural diagonal.
\end{reusableidea}

\begin{finding}{Why Part III is next}
Part II has established a universal approximation guarantee without restricting either ratio order.  It does not classify exact solvability on highly structured order patterns.  Part III moves to the maximally opposed strict order, where a completed load square gives weak hardness and a small two-resource state space gives pseudopolynomial solvability.
\end{finding}

\clearpage

\part{Antithetical Ordering}
\thispagestyle{plain}
\label{part:antithetical}
\setrunningunit{Part III: Antithetical}
\begin{center}
{\Large\bfseries Weak NP-Completeness at Exact Reversal\par}
\end{center}
\medskip
\begin{quote}\small
\noindent\textbf{Part synopsis.}
This Part classifies the strict antithetical ratio-order class.  A positional tagging lemma converts \PARTITION into an exact half-load question.  Affine opposite ratio systems and one anchor then produce a dominant completed square in the selected processing load, while an explicitly bounded cubic residual cannot erase a unit load gap.  The same reversed order yields a direct two-resource pseudopolynomial recurrence.  Exact reversal is also a separable permutation, so the weak hardness and pseudopolynomial algorithm proved here are compatible with the separating-tree FPTAS in Part~IV rather than in tension with it.
\end{quote}

\begin{partposition}
\textbf{Starting boundary.}  The unrestricted problem is strongly NP-complete, but that construction does not classify the exact-reversal subclass.  \textbf{Result.}  Strict antithetical \AWET is weakly NP-complete and has an exact pseudopolynomial algorithm.  \textbf{Technique.}  Binary positional tags create an exact half-load target; an anchor converts the dominant scheduling contribution into a completed load square, and a two-resource recurrence exploits the reversed order.  
\end{partposition}

\setshortsectionhead{Antithetical theorem}
\section{Introduction to the antithetical theorem}
\par\smallskip
The antithetical class orders the two Smith-ratio systems in opposite directions.  Unlike the agreeable class, it does not contain the symmetric Hall--Posner hard core, so a separate reduction is required.  The construction below starts from \PARTITION, encodes the source choice as an exact half-load condition, and then makes the dominant scheduling cost a convex square in that selected load.  A single anchor supplies the required linear field and is itself forced to the early side.

Exact reversal is simultaneously hard and highly structured.  Its ratio permutation is the reverse permutation, which is separable after the fixed strict refinements used here.  Consequently, the weak hardness and pseudopolynomial recurrence in this Part and the separating-tree FPTAS in Part~IV describe complementary exact and approximation properties of the same order extreme.

The canonical side-assignment formula is the only scheduling structural fact needed, and it is recorded first for completeness.

An \AWET\ instance consists of jobs $j\in\J=\{1,\ldots,n\}$ with positive integer processing time
$p_j$, earliness weight $w_j^E$, and tardiness weight $w_j^L$.  All jobs have the common due date
\[
 d=P:=\sum_{j\in\J}p_j.
\]
For completion time $C_j$, the objective is
\[
 F=\sum_{j\in\J}\left(w_j^E(d-C_j)^+ + w_j^L(C_j-d)^+\right).
\]
Put
\[
 r_j=\frac{w_j^E}{p_j},
 \qquad
 s_j=\frac{w_j^L}{p_j}.
\]

\begin{definition}[Strict antithetical ratio order]
An instance is \emph{strict antithetical} if its jobs can be indexed so that
\[
 r_1<r_2<\cdots<r_n
 \qquad\text{and}\qquad
 s_1>s_2>\cdots>s_n.
\]
Thus the two strict Smith-ratio orders are exact reversals.
\end{definition}

The standard canonical-schedule theorem converts the problem into a binary side assignment.  The
following form is the only scheduling fact used in the reductions.

\begin{proposition}[Canonical early-set objective]
\label{ANTI:prop:canonical}
For every nonempty $E\subseteq\J$, put $T=\J\setminus E$.  The compact canonical schedule that orders $E$ chronologically by nondecreasing $r_i$, ends its early block at $d$, orders $T$ chronologically by nonincreasing $s_i$, and begins its tardy block at $d$ has cost
\begin{equation}
\label{ANTI:eq:canonical}
 \Phi(E)=
 \sum_{\{i,j\}\subseteq E}p_ip_j\min\{r_i,r_j\}
 +\sum_{\{i,j\}\subseteq T}p_ip_j\min\{s_i,s_j\}
 +\sum_{i\in T}p_i^2s_i.
\end{equation}
Every nonempty $E$ gives a feasible canonical schedule of this form, and
\[
 \OPT=\min_{\varnothing\ne E\subseteq\J}\Phi(E).
\]
\end{proposition}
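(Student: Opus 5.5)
This statement is the Part~III restatement of \cref{prop:setobjective} (equivalently \cref{AA:prop:canonical}), so I would simply re-run the same three-step argument, which does not use the antithetical hypothesis at all.

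\textbf{Step 1: feasibility.} Fix a nonempty $E$ and put $T=\J\setminus E$. The early block ends at $d$, so it starts at
\[
d-\sum_{i\in E}p_i=\sum_{i\in T}p_i\ge0,
\]
using $d=P$. The tardy block then occupies $[d,\,d+\sum_{i\in T}p_i]$. Hence the schedule is feasible and compact.

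\textbf{Step 2: cost identity.} For an early job $j$, its earliness $d-C_j$ is the total processing of the early jobs completing after $j$, i.e.\ those closer to $d$. Summing $w_j^E(d-C_j)$ over $j\in E$ groups into unordered pairs $\{i,j\}\subseteq E$. Each pair contributes $p_ip_j$ times the early ratio of the job farther from $d$. Chronologically nondecreasing $r$ puts the smaller ratio farther out, giving $p_ip_j\min\{r_i,r_j\}$; under ties either choice gives the same value.

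For a tardy job $j$, the tardiness $C_j-d$ equals $p_j$ plus the processing of the tardy jobs preceding it. The own-processing part gives the self term $w_j^Lp_j=p_j^2s_j$. For a pair, the preceding job's processing is multiplied by the later job's ratio. Nonincreasing $s$ makes the later job have the smaller $s$, so the pair contributes $p_ip_j\min\{s_i,s_j\}$. Together these give \eqref{ANTI:eq:canonical}.

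\textbf{Step 3: optimality.} Every such schedule is feasible, so $\OPT\le\min_E\Phi(E)$. For the reverse inequality, invoke \cref{lem:compact}. It yields an optimal schedule with no internal idle time, some completion exactly at $d$, and the two V-shaped ratio orders. Let $E^*$ be the set of jobs completing no later than $d$; it is nonempty because some job completes at $d$. Since that job completes at $d$ and there is no idle time, no job straddles $d$. The early block therefore ends at $d$ and the tardy block begins there, and by Step 2 its cost is $\Phi(E^*)\ge\min_E\Phi(E)$.

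\textbf{Main obstacle.} The only delicate point is Step 3: the optimum supplied by \cref{lem:compact} must really be the due-date-aligned schedule of $E^*$. I would handle this by observing that any tie reorderings leave the pair costs unchanged, so its cost equals $\Phi(E^*)$ exactly. Everything else is bookkeeping identical to Part~I.
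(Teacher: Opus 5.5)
Your proposal is correct and follows essentially the same route as the paper: feasibility from $d=P$, pairwise bookkeeping in which the canonical order places the smaller ratio farther from $d$, and the reverse inequality obtained from the compact canonical lemma. The paper cites the identical Part~II version, \cref{AA:lem:compact}; your explicit remark that tie reorderings leave the pair coefficients unchanged is a harmless detail that the paper leaves implicit.
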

\begin{proof}
Fix a nonempty early set $E$ and let $T=\J\setminus E$.  Its canonical early block starts at
\[
 d-\sum_{i\in E}p_i=\sum_{i\in T}p_i\ge0,
\]
ends at $d$, and is followed immediately by the canonical tardy block.  Thus the displayed construction is feasible.

Read outward from the due date, the adjacent-exchange argument orders the early jobs by nonincreasing $r_j=w_j^E/p_j$; equivalently, their chronological order before $d$ is nondecreasing in $r_j$.  The tardy jobs are read both chronologically and outward from $d$, and are ordered by nonincreasing $s_j=w_j^L/p_j$.

To verify the pair coefficients, let early job $i$ lie farther from $d$ than early job $j$.  The processing of $j$ contributes $w_i^Ep_j=p_ip_jr_i$ to the earliness of $i$, and the exchange order places the smaller of $r_i,r_j$ farther from $d$.  Hence the pair contributes $p_ip_j\min\{r_i,r_j\}$.  Similarly, if tardy job $i$ is closer to $d$ than tardy job $j$, the processing of $i$ contributes $p_iw_j^L=p_ip_js_j$ to the tardiness of $j$, and the exchange order places the larger tardiness ratio closer to $d$.  The tardy pair accordingly contributes $p_ip_j\min\{s_i,s_j\}$.  Each tardy job also contributes the self term $p_i^2s_i$.  Summing these terms proves \eqref{ANTI:eq:canonical}.

Because every nonempty $E$ supplies a feasible canonical schedule, $\OPT\le\min_{\varnothing\ne E\subseteq\J}\Phi(E)$.  Conversely, by \cref{AA:lem:compact}, a global optimum can be chosen compact, with a nonempty early/on-time arm and the two canonical ratio orders.  If $E^*$ is its early set, the preceding calculation gives its cost as $\Phi(E^*)$.  Therefore $\OPT\ge\min_{\varnothing\ne E\subseteq\J}\Phi(E)$, proving equality.
\end{proof}

All quantities in \eqref{ANTI:eq:canonical} are integers when the weights and processing times are
integers, since
\[
 p_ip_j\min\{r_i,r_j\}
 =\min\{w_i^Ep_j,p_iw_j^E\},
\]
and similarly on the tardy side.  Hence whenever one canonical objective value is strictly larger
than another, it exceeds it by at least one.  This unit integrality fact is used in the separation below.

\begin{proofcheckpoint}
From this point onward, optimization and the reduction threshold can be analyzed through canonical binary side assignments: every assignment gives a feasible canonical schedule, and some global optimum is canonical.  The reduction therefore needs only to control the algebra of the canonical objective.
\end{proofcheckpoint}

\begin{notationreset}
The source instance consists of positive integers $b_1,\ldots,b_m$ with total $2B$.  Binary positional tags produce strictly increasing values $a_1,\ldots,a_N$ with total
$Q=\sum_{i=1}^N a_i$.  For an early-side vector $x\in\{0,1\}^N$, the selected tagged load is
\[
 S(x)=\sum_{i=1}^N a_ix_i.
\]
The later construction sets the residual budget $U=3Q^3$, the dominant scale $M=8Q^3$, and the common-due-date scale $D=M+Q$.  The symbols $C$ and $K$ denote, respectively, the all-variable-tardy baseline with the anchor early and the decision threshold.  These quantities are previewed here so that the proof architecture can be read before their formal construction.
\end{notationreset}

\setshortsectionhead{Hardness architecture}
\section{Antithetical hardness: proof architecture}
\label{ANTI:sec:architecture}
\begin{proofcontract}
\begin{itemize}[leftmargin=2em,itemsep=1pt,topsep=2pt]
\item \textbf{Input:} a binary \PARTITION instance with total sum $2B$.
\item \textbf{Output:} a positive-integer \AWET\ instance with strict reversed ratio orders and threshold $K$.
\item \textbf{Forward invariant:} a subset of sum $B$ becomes a tagged load $Q/2$ and eliminates the dominant square term.
\item \textbf{Reverse invariant:} if any schedule has cost at most $K$, a global optimum can be chosen canonical; that canonical optimum keeps the anchor early and has tagged load exactly $Q/2$.
\item \textbf{Size obligation:} the tag, scale, weights, and threshold have polynomial binary encoding length.
\end{itemize}
\end{proofcontract}
\setshortsectionhead{Reduction roadmap}
\section{Reduction roadmap}
\label{ANTI:sec:antireduction}

The antithetical case does not contain the symmetric hard core, so a separate reduction is needed.
The construction has four layers.

\begin{center}
\begin{tikzpicture}[
  node distance=1.05cm and 1.0cm,
  box/.style={draw=navy,rounded corners,fill=figuremixed,align=center,minimum height=1.05cm,text width=3.0cm},
  arr/.style={-{Latex[length=2.4mm]},thick,draw=navy}
]
\node[box] (p) {\PARTITION\\$\sum b_i=2B$};
\node[box,right=of p] (tag) {Distinct tagged loads\\half sum $Q/2$};
\node[box,right=of tag] (jobs) {Strict opposite ratios\\plus one anchor};
\node[box,right=of jobs] (gap) {Equal-load square\\plus bounded residual};
\draw[arr] (p) -- (tag);
\draw[arr] (tag) -- (jobs);
\draw[arr] (jobs) -- (gap);
\end{tikzpicture}
\end{center}

\begin{enumerate}[label=\textbf{Step \arabic*.},leftmargin=5em]
\item Binary positional tags force every half-sum subset to select exactly one number from each source
      pair, while the low-order part carries the original partition equation.
\item Each tagged number becomes a processing time.  Early ratios increase affinely with processing
      time and tardiness ratios decrease affinely, producing a strict antithetical chain.
\item A single final anchor supplies the linear field needed to make the dominant assignment cost
      $DM(S^2-QS)$.
\item The remaining cubic expression has absolute value at most $3Q^3$.  Taking $M=8Q^3$ leaves a
      strict gap between $S=Q/2$ and every other integer load.
\end{enumerate}

\begin{finding}{Scale ledger}
The four scales should be remembered by role rather than by magnitude alone.
\begin{center}
\begin{tikzpicture}[
  node distance=0.55cm,
  scaleBox/.style={draw=navy,rounded corners,fill=figuremixed,align=left,text width=11.3cm,inner sep=6pt},
  arr/.style={-{Latex[length=2.1mm]},thick,draw=navy}
]
\node[scaleBox] (q) {$Q=\sum_i a_i$: total tagged load; the target early load is $Q/2$.};
\node[scaleBox,below=of q] (u) {$U=3Q^3$: uniform absolute bound on the identity-dependent residual $\Psi(x)$.};
\node[scaleBox,below=of u] (m) {$M=8Q^3$: dominant square scale; the separation margin is $M-2U=2Q^3>0$.};
\node[scaleBox,below=of m] (d) {$D=M+Q$: common multiplier, total processing time, and due date of the constructed instance.};
\draw[arr] (q) -- (u);
\draw[arr] (u) -- (m);
\draw[arr] (m) -- (d);
\end{tikzpicture}
\end{center}
The proof later uses the ledger only through $|\Psi|\le U$, the unit load gap, and $M>2U$.
\end{finding}

The reduction is developed below from source equivalence through the cost identity, anchor proof, gap separation, and encoding-length audit.

\setshortsectionhead{Half-sum compiler}
\section{A distinct half-sum compiler}
\label{ANTI:sec:tagging}

Begin with the standard \PARTITION problem on positive integers $b_1,\ldots,b_m$~\cite{GareyJohnson1979}.  If the total sum is odd, output the following fixed strict-antithetical no-instance of \AWET: two jobs
\[
 (p_1,w_1^E,w_1^L)=(1,1,2),
 \qquad
 (p_2,w_2^E,w_2^L)=(2,4,2),
\]
with due date $d=3$ and decision bound $K=1$.  Its ratios satisfy $r_1=1<2=r_2$ and $s_1=2>1=s_2$.  The three nonempty early sets have canonical costs $4$, $2$, and $2$, respectively, so this fixed target instance is a no-instance.

It therefore remains only to describe an even-total source instance.  Write
\[
 \sum_{i=1}^m b_i=2B,
\]
and ask whether some subcollection sums to $B$.  Put
\[
 \kappa=2B+1,
 \qquad
 L_i=\kappa 2^{i-1},
\]
and create the two numbers
\begin{equation}
\label{ANTI:eq:tags}
 a_{2i-1}=L_i,
 \qquad
 a_{2i}=L_i+b_i
 \qquad(i=1,\ldots,m).
\end{equation}
Let $N=2m$ and $Q=\sum_{j=1}^Na_j$.

\begin{arithmeticguide}
For any candidate subset of the tagged numbers, let $m_i\in\{0,1,2\}$ denote the number selected from the pair $\{a_{2i-1},a_{2i}\}$.  The positional argument uses the strict superincreasing inequality
\[
 2^{k-1}>\sum_{i=1}^{k-1}2^{i-1}.
\]
It does \emph{not} appeal to uniqueness of an arbitrary base-two representation with digits in
$\{0,1,2\}$.  What matters is more specific: the target coefficient at every position is $1$, while
each deviation $m_i-1$ lies in $\{-1,0,1\}$.  Consequently, the largest nonzero deviation cannot be
cancelled by all lower positions together.
\end{arithmeticguide}

\begin{lemma}[Binary pair tagging]
\label{ANTI:lem:tagging}
The numbers in \eqref{ANTI:eq:tags} are strictly increasing, $Q$ is even, and the following are equivalent:
\begin{enumerate}[label=(\roman*)]
\item some subset of $\{b_1,\ldots,b_m\}$ sums to $B$;
\item some subset of $\{a_1,\ldots,a_N\}$ sums to $Q/2$.
\end{enumerate}
Moreover, every half-sum subset chooses exactly one member of each pair
$\{a_{2i-1},a_{2i}\}$.
\end{lemma}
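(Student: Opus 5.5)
The plan is to verify the three easy structural claims first and then prove the equivalence through the positional argument recorded in the arithmetic guide.

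\emph{Monotonicity and parity.} Within a pair we have $a_{2i-1}=L_i<L_i+b_i=a_{2i}$ because $b_i\ge1$. Across pairs we need $a_{2i}=L_i+b_i<2L_i=L_{i+1}=a_{2i+1}$, which follows from $b_i\le 2B<\kappa\le L_i$. Parity comes from the closed form
\[
 Q=\sum_{i=1}^m(2L_i+b_i)=2\kappa(2^m-1)+2B .
\]
This is even, and $Q/2=\kappa(2^m-1)+B$.

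\emph{Direction (i)$\Rightarrow$(ii).} Given $I\subseteq\{1,\ldots,m\}$ with $\sum_{i\in I}b_i=B$, select $a_{2i}$ for $i\in I$ and $a_{2i-1}$ for $i\notin I$. This takes exactly one element from each pair. The selected sum is $\sum_i L_i+\sum_{i\in I}b_i=\kappa(2^m-1)+B=Q/2$.

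\emph{Direction (ii)$\Rightarrow$(i) and the one-per-pair claim.} Take any subset with sum $Q/2$ and let $m_i\in\{0,1,2\}$ be the number of elements chosen from pair $i$. Let $\beta_i\in\{0,b_i,2b_i\}$ be the sum of the $b$-parts chosen from that pair. The chosen $b$-parts are a subcollection of the $b_i$, so $0\le\sum_i\beta_i\le 2B$. Writing the subset sum as $\kappa\sum_i m_i2^{i-1}+\sum_i\beta_i$ and subtracting $Q/2$ gives
\[
 \kappa\sum_{i=1}^m(m_i-1)2^{i-1}=B-\sum_i\beta_i .
\]
The right side lies in $[-B,B]$, so its absolute value is less than $\kappa$. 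The left side is a multiple of $\kappa$, so both sides vanish. Hence $\sum_i(m_i-1)2^{i-1}=0$ with coefficients $m_i-1\in\{-1,0,1\}$.

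Now suppose some deviation is nonzero and let $k$ be the largest index with $m_k\ne1$. The superincreasing inequality $2^{k-1}>\sum_{i<k}2^{i-1}$ gives $\bigl|\sum_i(m_i-1)2^{i-1}\bigr|\ge 2^{k-1}-\sum_{i<k}2^{i-1}\ge1$, a contradiction. So every $m_i=1$, which is the ``moreover'' statement. Because each pair contributes exactly one element, $\beta_i\in\{0,b_i\}$. The vanishing right-hand side then says $\sum_{i\in I}b_i=B$, where $I$ is the set of pairs whose larger member was chosen.

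\emph{Main obstacle.} The only delicate step is separating the high-order and low-order parts. This needs the bound $|B-\sum\beta_i|<\kappa$, which in turn uses the uniform estimate $\sum_i\beta_i\le 2B$ that holds even when a pair contributes $2b_i$; the choice $\kappa=2B+1$ is exactly what makes the bound work. Once that is in place, the signed-digit cancellation argument finishes the proof.
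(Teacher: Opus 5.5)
Your proof is correct and follows the paper's argument essentially step for step. It has the same closed form for $Q$, the same separation of the $\kappa$-multiple from the low-order part using $\kappa=2B+1$, the same largest-nonzero-deviation superincreasing contradiction, the same one-per-pair construction for the converse, and the same cross-pair bound $b_i<\kappa\le L_i$.

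One correction: $\beta_i$ can never equal $2b_i$. Taking both members of pair $i$ contributes $2L_i+b_i$, so $\beta_i=z_ib_i\in\{0,b_i\}$ always, as in the paper. This is exactly why your bound $\sum_i\beta_i\le 2B$ holds; it would fail if a pair could contribute $2b_i$, so the phrase ``even when a pair contributes $2b_i$'' should be dropped.
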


\begin{proof}
The total tagged sum is
\begin{align*}
 Q
 &=\sum_{i=1}^m\bigl(L_i+(L_i+b_i)\bigr)\\
 &=2\sum_{i=1}^mL_i+\sum_{i=1}^m b_i\\
 &=2\sum_{i=1}^mL_i+2B.
\end{align*}
Hence $Q$ is even and
\begin{equation}
\label{ANTI:eq:half-total}
 \frac Q2=\sum_{i=1}^mL_i+B.
\end{equation}

Consider an arbitrary subset of the $2m$ tagged numbers.  For pair $i$, let
$m_i\in\{0,1,2\}$ be the number of selected tags and let $z_i\in\{0,1\}$ indicate whether the second tag $L_i+b_i$ is selected.  Its sum is therefore
\begin{align}
 \sum_{i=1}^m m_iL_i+\sum_{i=1}^m z_ib_i
 &=\kappa\sum_{i=1}^m m_i2^{i-1}+\sum_{i=1}^m z_ib_i.
 \label{ANTI:eq:tagged-subset-sum}
\end{align}
Suppose this sum equals $Q/2$.  Substituting \eqref{ANTI:eq:half-total} and $L_i=\kappa2^{i-1}$ gives
\begin{equation}
 \kappa\sum_{i=1}^m m_i2^{i-1}+\sum_{i=1}^m z_ib_i
 =\kappa\sum_{i=1}^m2^{i-1}+B.
 \label{ANTI:eq:tagged-equality}
\end{equation}
Both $B$ and $\sum_i z_ib_i$ lie in $[0,2B]=[0,\kappa-1]$.  Reducing
\eqref{ANTI:eq:tagged-equality} modulo $\kappa$ therefore gives not merely a congruence but the exact equality
\begin{equation}
 \sum_{i=1}^m z_ib_i=B.
 \label{ANTI:eq:decoded-partition}
\end{equation}
Subtract \eqref{ANTI:eq:decoded-partition} from \eqref{ANTI:eq:tagged-equality} and divide by $\kappa$:
\begin{align*}
 \sum_{i=1}^m m_i2^{i-1}
 &=\sum_{i=1}^m2^{i-1},\\
 \sum_{i=1}^m (m_i-1)2^{i-1}
 &=0.
\end{align*}
Suppose, for contradiction, that some $m_i\ne1$, and let $k$ be the largest such index.  Then
$m_k-1\in\{-1,1\}$, and the preceding equality can be written line by line as
\begin{align*}
 0
 &=\sum_{i=1}^m(m_i-1)2^{i-1}\\
 &=(m_k-1)2^{k-1}+\sum_{i=1}^{k-1}(m_i-1)2^{i-1}.
\end{align*}
Moving the lower-position sum to the other side and taking absolute values gives
\begin{align*}
 2^{k-1}
 &=\abs{(m_k-1)2^{k-1}}\\
 &=\abs{\sum_{i=1}^{k-1}(m_i-1)2^{i-1}}\\
 &\le \sum_{i=1}^{k-1}\abs{m_i-1}2^{i-1}\\
 &\le \sum_{i=1}^{k-1}2^{i-1}\\
 &=2^{k-1}-1,
\end{align*}
a contradiction.  Hence $m_i=1$ for every $i$.  Every half-sum subset therefore chooses exactly one
tag from each pair, and \eqref{ANTI:eq:decoded-partition} decodes a source subset of sum $B$.

Conversely, suppose $I\subseteq\{1,\ldots,m\}$ satisfies $\sum_{i\in I}b_i=B$.  Select $L_i+b_i$ when $i\in I$ and select $L_i$ otherwise.  Exactly one tag is chosen from each pair, and its sum is
\[
 \sum_{i=1}^mL_i+\sum_{i\in I}b_i
 =\sum_{i=1}^mL_i+B
 =\frac Q2.
\]
This proves the equivalence.

Finally, $L_i<L_i+b_i$ because $b_i>0$.  For $i=1,\ldots,m-1$, one also has $b_i<\kappa$, so
\[
 L_i+b_i
 <\kappa2^{i-1}+\kappa
 \le\kappa2^i
 =L_{i+1}.
\]
The within-pair inequalities and these cross-pair inequalities together show that the tagged sequence is strictly increasing.
\end{proof}

\begin{example}[A small yes-instance]
\label{ANTI:ex:tagging}
For $(b_1,b_2,b_3)=(1,2,3)$, one has $B=3$, $\kappa=7$, and
\[
 (a_1,\ldots,a_6)=(7,8,14,16,28,31).
\]
The total is $Q=104$ and the half-total is $52$.  The source solution $1+2=3$ becomes
\[
 8+16+28=52,
\]
whereas the source solution $3=3$ becomes
\[
 7+14+31=52.
\]
Each tagged solution visibly selects one number from every pair.
\end{example}

\setshortsectionhead{Scheduling instance}
\section{The strict antithetical scheduling instance}
\label{ANTI:sec:construction}

Start with the strictly increasing tagged values
\[
 0<a_1<a_2<\cdots<a_N,
 \qquad
 Q=\sum_{i=1}^Na_i,
\]
where $Q$ is even.  Define
\begin{equation}
\label{ANTI:eq:scales}
 U=3Q^3,
 \qquad
 M=8Q^3,
 \qquad
 D=M+Q.
\end{equation}
For every variable job $i=1,\ldots,N$, set
\begin{align}
 p_i&=a_i,
 \label{ANTI:eq:pi}\\
 r_i&=MQ+Da_i,
 \label{ANTI:eq:ri}\\
 s_i&=M(Q+2M)-Da_i,
 \label{ANTI:eq:si}\\
 w_i^E&=p_ir_i,
 \qquad
 w_i^L=p_is_i.
 \label{ANTI:eq:wi}
\end{align}
Add one anchor job $g$ with
\begin{align}
 p_g&=M,
 \label{ANTI:eq:pg}\\
 r_g&=MQ+D(Q+1),
 \label{ANTI:eq:rg}\\
 s_g&=DM,
 \label{ANTI:eq:sg}\\
 w_g^E&=p_gr_g,
 \qquad
 w_g^L=p_gs_g.
 \label{ANTI:eq:wg}
\end{align}
The common due date is especially simple:
\[
 d=\sum_{i=1}^Np_i+p_g=Q+M=D.
\]

\begin{table}[ht]
\centering
\caption{The two job types in the antithetical reduction.}
\label{ANTI:tab:jobs}
\small
\begin{tabularx}{\textwidth}{>{\raggedright\arraybackslash}p{0.10\textwidth} >{\centering\arraybackslash}p{0.13\textwidth} >{\raggedright\arraybackslash}p{0.17\textwidth} >{\raggedright\arraybackslash}p{0.24\textwidth} >{\raggedright\arraybackslash}X}
\toprule
Job & Processing time & Early ratio & Tardiness ratio & Role \\
\midrule
Variable $i$ & $a_i$ & $MQ+Da_i$ & $M(Q+2M)-Da_i$ & carries the tagged subset load \\
Anchor $g$ & $M$ & $MQ+D(Q+1)$ & $DM$ & forces the side and creates the dominant linear field \\
\bottomrule
\end{tabularx}
\end{table}

\begin{lemma}[Strict antithetical order and positive data]
\label{ANTI:lem:order}
All constructed values are positive integers, all processing times are pairwise distinct, and
\[
 r_1<r_2<\cdots<r_N<r_g,
 \qquad
 s_1>s_2>\cdots>s_N>s_g.
\]
\end{lemma}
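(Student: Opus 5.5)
The plan is to verify each claim directly from the definitions \eqref{ANTI:eq:scales}--\eqref{ANTI:eq:wg}, using only that $0<a_1<\cdots<a_N$ are positive integers with $Q=\sum_i a_i$. The steps are integrality and positivity, then distinct processing times, then the two monotone chains.

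\textbf{Integrality and positivity.} First, $Q$, $U=3Q^3$, $M=8Q^3$ and $D=M+Q$ are positive integers because $Q\ge a_1\ge1$. Every $p_i=a_i$ and $p_g=M$ is therefore a positive integer. The early ratios $r_i=MQ+Da_i$ and $r_g=MQ+D(Q+1)$ are sums of positive integers.

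For the tardiness ratios I will use $a_i<Q$ whenever $N\ge2$, and $a_i\le Q$ in any case. This gives
\[
 s_i=MQ+2M^2-Da_i\ge MQ+2M^2-(M+Q)Q=2M^2-Q^2>0,
\]
since $M=8Q^3\ge Q$. Also $s_g=DM>0$. All weights are products $p\cdot r$ or $p\cdot s$ of positive integers, so they are positive integers as well.

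\textbf{Distinct processing times.} The tags are strictly increasing, so the $a_i$ are pairwise distinct. The anchor time differs from all of them because $M=8Q^3>Q\ge a_i$ for every $i$.

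\textbf{The two chains.} Both $r_i$ and $s_i$ are affine in $a_i$, with slope $+D$ and $-D$ respectively, and $D>0$. Strict monotonicity of $a_1<\cdots<a_N$ therefore gives $r_1<\cdots<r_N$ and $s_1>\cdots>s_N$ at once. For the anchor on the early side, $r_g-r_N=D(Q+1-a_N)\ge D>0$, since $a_N\le Q$.

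\textbf{Main obstacle.} The only step needing real scale comparison is $s_N>s_g$:
\[
 s_N-s_g=MQ+2M^2-Da_N-DM.
\]
Substituting $D=M+Q$ gives
\[
 s_N-s_g=MQ+2M^2-(M+Q)a_N-M^2-MQ=M^2-(M+Q)a_N\ge M^2-(M+Q)Q.
\]
This is positive because $M^2=64Q^6>8Q^4+Q^2=(M+Q)Q$ for every $Q\ge1$. I expect this to be the only inequality that needs the specific size $M=8Q^3$; the rest follows from the signs of the affine coefficients.
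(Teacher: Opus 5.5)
Your proposal is correct and follows essentially the same route as the paper: the variable chains come from the affine slopes $\pm D$, while the anchor comparison, the positivity of $s_N$, and $s_N>s_g$ all come from $a_N\le Q$. The only cosmetic difference is the last step, where the paper bounds $M^2-MQ-Q^2>0$ using $M>2Q$, and you instead compare $64Q^6$ with $8Q^4+Q^2$.
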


\begin{proof}
The tagged $a_i$ are strictly increasing, and $M>Q>a_N$.  Therefore
\[
 r_{i+1}-r_i=D(a_{i+1}-a_i)>0,
 \qquad
 s_i-s_{i+1}=D(a_{i+1}-a_i)>0.
\]
Also $r_g>MQ+DQ\ge r_N$.  For positivity,
\[
 s_N\ge M(Q+2M)-DQ=2M^2-Q^2>0.
\]
Finally,
\[
 s_N-s_g
 \ge M(Q+2M)-DQ-DM
 =M^2-MQ-Q^2>0,
\]
because $M > 2Q$.  The anchor processing time $M$ exceeds every $a_i$, so all processing times are
distinct as well.
\end{proof}

\setshortsectionhead{Equal-load square}
\section{The exact equal-load square}
\label{ANTI:sec:identity}

Assume temporarily that the anchor is early.  Let $x_i=1$ if variable job $i$ is early and $x_i=0$
if it is tardy, and define
\[
 S(x)=\sum_{i=1}^Na_ix_i,
 \qquad
 P_i=\sum_{h=1}^ia_h.
\]
Let $C$ be the cost with the anchor early and every variable tardy:
\begin{equation}
\label{ANTI:eq:C}
 C=\sum_{1\le i<j\le N}a_ia_js_j+\sum_{i=1}^Na_i^2s_i.
\end{equation}
Define the residual
\begin{align}
 \Psi(x)
 &:=\sum_{1\le i<j\le N}a_ia_j(a_i-a_j)x_ix_j
 \label{ANTI:eq:psi-pair}\\
 &\quad+\sum_{i=1}^N
 a_i\left(a_iP_i+\sum_{j>i}a_j^2\right)x_i.
 \label{ANTI:eq:psi-linear}
\end{align}

\begin{lemma}[Exact cost identity]
\label{ANTI:lem:identity}
For every $x\in\bits^N$ with the anchor early,
\begin{equation}
\label{ANTI:eq:identity}
 \Phi(x)=C+DM\bigl(S(x)^2-QS(x)\bigr)+D\Psi(x).
\end{equation}
\end{lemma}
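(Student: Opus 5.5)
The identity \eqref{ANTI:eq:identity} is an exact polynomial identity in the $0$--$1$ vector $x$, so the plan is to expand $\Phi$ through \cref{ANTI:prop:canonical} and match coefficients of the constant term, each $x_i$, and each $x_ix_j$ ($i<j$) separately.

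\textbf{Writing down $\Phi(x)$.} With the anchor early, the early set is $\{g\}\cup\{i:x_i=1\}$. By \cref{ANTI:lem:order}, $r_1<\cdots<r_N<r_g$ and $s_1>\cdots>s_N$, so every minimum in \eqref{ANTI:eq:canonical} is determined.
\begin{itemize}
\item An early variable pair $i<j$ costs $a_ia_jr_i$.
\item An anchor--variable early pair costs $Ma_ir_i$.
\item A tardy variable pair $i<j$ costs $a_ia_js_j$.
\item A tardy self term costs $a_i^2s_i$.
\end{itemize}
The anchor contributes no tardy term. Hence
\[
 \Phi(x)=\sum_{i<j}a_ia_jr_ix_ix_j+\sum_iMa_ir_ix_i
 +\sum_{i<j}a_ia_js_j(1-x_i)(1-x_j)+\sum_ia_i^2s_i(1-x_i).
\]
Setting $x=0$ gives exactly $C$ from \eqref{ANTI:eq:C}, which settles the constant term.

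\textbf{Quadratic coefficients.} For $i<j$, the coefficient of $x_ix_j$ is
\[
 a_ia_j(r_i+s_j)=a_ia_j\bigl(2MQ+2M^2+D(a_i-a_j)\bigr)=2DM\,a_ia_j+D\,a_ia_j(a_i-a_j),
\]
using $D=M+Q$. The second part is precisely the pair term \eqref{ANTI:eq:psi-pair} of $D\Psi$. For the first part, since $x_i^2=x_i$ we have $S(x)^2=\sum_ia_i^2x_i+2\sum_{i<j}a_ia_jx_ix_j$, so
\[
 2DM\sum_{i<j}a_ia_jx_ix_j=DM\,S(x)^2-DM\sum_ia_i^2x_i .
\]
The correction $-DMa_i^2x_i$ is carried into the linear bookkeeping.

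\textbf{Linear coefficients.} The tardy part contributes to $x_i$ the coefficient
\[
 -a_i\Bigl(\sum_{j>i}a_js_j+\sum_{h<i}a_hs_i+a_is_i\Bigr)=-a_i\Bigl(\sum_{j>i}a_js_j+P_is_i\Bigr).
\]
Substitute $s=M(Q+2M)-Da$ and use $\sum_{j>i}a_j+P_i=Q$. This becomes
\[
 -a_iMQ(Q+2M)+Da_i\Bigl(a_iP_i+\sum_{j>i}a_j^2\Bigr),
\]
whose second part is exactly the linear term \eqref{ANTI:eq:psi-linear} of $D\Psi$. Add the anchor term $Ma_ir_i=M^2Qa_i+MDa_i^2$ and the correction $-MDa_i^2$ from the quadratic step. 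The $a_i^2$ terms cancel, and what remains is
\[
 a_iMQ\bigl(M-Q-2M\bigr)=-DMQ\,a_i .
\]
Summed over $i$, this gives $-DMQ\,S(x)$. Together with the quadratic step, this yields \eqref{ANTI:eq:identity}.

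\textbf{Main obstacle.} The only delicate point is the linear bookkeeping. One must take the tardy-pair minimum as $s_j$ (the later index), invoke the identity $\sum_{j>i}a_j+P_i=Q$, and check that the anchor's $MDa_i^2$ term is exactly what cancels the diagonal correction from completing the square. This cancellation is forced by the choices $p_g=M$ and $r_i=MQ+Da_i$, and I would verify it first.
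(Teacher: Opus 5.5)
Your proposal is correct and follows essentially the paper's route. Both expand the canonical objective as a quadratic in the side bits, identify the pair coefficient $a_ia_j(r_i+s_j)=2DM\,a_ia_j+D\,a_ia_j(a_i-a_j)$, simplify the linear coefficient using $P_i+\sum_{j>i}a_j=Q$, and regroup into $DM(S^2-QS)+D\Psi$. The only difference is bookkeeping: you cancel the anchor's $MDa_i^2$ against the diagonal correction from $S(x)^2$, whereas the paper keeps it inside $h_i=DM(a_i^2-Qa_i)+D\,a_i\bigl(a_iP_i+\sum_{j>i}a_j^2\bigr)$ and then completes $S^2$. This difference is immaterial.
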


\begin{proof}[Proof of \cref{ANTI:lem:identity}]
The calculation has three explicit purposes: write the canonical objective as a quadratic polynomial in the side bits, isolate its linear and pair coefficients by finite differences, and regroup the dominant terms into a function of the single aggregate load $S(x)$.  The residual $\Psi(x)$ will then contain every dependence on job identity and position.

Assume that the anchor job $g$ is early.  Recall that
\[
Q=\sum_{i=1}^{N}a_i,
\qquad
D=M+Q,
\]
and that variable job $i$ has
\[
p_i=a_i,
\qquad
r_i=MQ+Da_i,
\qquad
s_i=M(Q+2M)-Da_i.
\]
The anchor has processing time $p_g=M$ and early ratio
\[
r_g=MQ+D(Q+1)>r_i
\qquad (i=1,\ldots,N).
\]
Moreover, for $i<j$,
\[
r_i<r_j
\qquad\text{and}\qquad
s_i>s_j.
\]
Let $x_i=1$ when variable job $i$ is early and $x_i=0$ when it is tardy, and put
\[
S(x)=\sum_{i=1}^{N}a_ix_i,
\qquad
P_i=\sum_{h=1}^{i}a_h.
\]
The all-variable-tardy baseline, with the anchor still early, is
\begin{equation}
C=
\sum_{1\le i<j\le N}a_ia_js_j
+
\sum_{i=1}^{N}a_i^2s_i.
\label{ANTI:eq:III6-C}
\end{equation}
The residual appearing in the statement of the lemma is
\begin{equation}
\Psi(x)=
\sum_{1\le i<j\le N}a_ia_j(a_i-a_j)x_ix_j
+
\sum_{i=1}^{N}
 a_i\left(a_iP_i+\sum_{j>i}a_j^2\right)x_i.
\label{ANTI:eq:III6-Psi}
\end{equation}

We first write the full canonical cost as a function of the side vector $x$.  If variable job $i$ is early, its early pair with the anchor contributes
\[
p_gp_i\min\{r_g,r_i\}=Ma_ir_i.
\]
For $i<j$, two early variable jobs contribute $a_ia_jr_i$, because $r_i<r_j$, whereas two tardy variable jobs contribute $a_ia_js_j$, because $s_i>s_j$.  A tardy variable job $i$ also contributes its self term $a_i^2s_i$.  Hence
\begin{align}
\Phi(x)
={}&\sum_{i=1}^{N}Ma_ir_i x_i
 +\sum_{1\le i<j\le N}a_ia_jr_i x_ix_j \notag\\
&+\sum_{1\le i<j\le N}a_ia_js_j(1-x_i)(1-x_j)
 +\sum_{i=1}^{N}a_i^2s_i(1-x_i).
\label{ANTI:eq:III6-Phi}
\end{align}
Setting $x=\mathbf 0$ in \eqref{ANTI:eq:III6-Phi} gives exactly \eqref{ANTI:eq:III6-C}, so
\[
\Phi(\mathbf 0)=C.
\]

We next define the coefficients that will be used in the quadratic expansion.  Let $\mathbf e_i$ denote the binary vector whose only nonzero coordinate is coordinate $i$.  Define
\begin{equation}
h_i:=\Phi(\mathbf e_i)-\Phi(\mathbf 0),
\label{ANTI:eq:III6-hi-def}
\end{equation}
and, for $i<j$, define the two-variable interaction coefficient
\begin{equation}
q_{ij}:=
\Phi(\mathbf e_i+\mathbf e_j)
-\Phi(\mathbf e_i)
-\Phi(\mathbf e_j)
+\Phi(\mathbf 0).
\label{ANTI:eq:III6-qij-def}
\end{equation}
We now verify the coefficient representation directly.  Subtracting \eqref{ANTI:eq:III6-C} from \eqref{ANTI:eq:III6-Phi} and expanding $(1-x_i)(1-x_j)=1-x_i-x_j+x_ix_j$ gives
\begin{align*}
\Phi(x)-C
={}&\sum_{i=1}^{N}Ma_ir_ix_i
 +\sum_{1\le i<j\le N}a_ia_jr_ix_ix_j \\
&+\sum_{1\le i<j\le N}a_ia_js_j
 -\sum_{1\le i<j\le N}a_ia_js_jx_i
 -\sum_{1\le i<j\le N}a_ia_js_jx_j \\
&+\sum_{1\le i<j\le N}a_ia_js_jx_ix_j
 +\sum_{i=1}^{N}a_i^2s_i
 -\sum_{i=1}^{N}a_i^2s_ix_i \\
&-\sum_{1\le i<j\le N}a_ia_js_j
 -\sum_{i=1}^{N}a_i^2s_i.
\end{align*}
The two displayed pairs of constant sums cancel.  Hence
\begin{align*}
\Phi(x)-C
={}&\sum_{i=1}^{N}Ma_ir_ix_i
 -\sum_{i=1}^{N}a_i^2s_ix_i \\
&-\sum_{1\le i<j\le N}a_ia_js_jx_i
 -\sum_{1\le i<j\le N}a_ia_js_jx_j \\
&+\sum_{1\le i<j\le N}a_ia_j(r_i+s_j)x_ix_j.
\end{align*}
Grouping all linear terms that multiply a fixed $x_i$ yields
\begin{align*}
\Phi(x)-C
={}&\sum_{i=1}^{N}
\left(
 Ma_ir_i-a_i^2s_i
 -a_i\sum_{j=i+1}^{N}a_js_j
 -a_is_i\sum_{j=1}^{i-1}a_j
\right)x_i \\
&+\sum_{1\le i<j\le N}a_ia_j(r_i+s_j)x_ix_j.
\end{align*}
For a quadratic polynomial with zero constant term, evaluation at $\mathbf e_i$ isolates the coefficient of $x_i$, while the two-coordinate finite difference in \eqref{ANTI:eq:III6-qij-def} isolates the coefficient of $x_ix_j$.  Therefore the definitions \eqref{ANTI:eq:III6-hi-def}--\eqref{ANTI:eq:III6-qij-def} give the exact expansion
\begin{equation}
\Phi(x)-C
=
\sum_{i=1}^{N}h_ix_i
+
\sum_{1\le i<j\le N}q_{ij}x_ix_j.
\label{ANTI:eq:III6-quadratic-expansion}
\end{equation}
The following calculations recover these two displayed coefficients term by term and then simplify them using the constructed ratios.

\medskip
\noindent\emph{Calculation of $q_{ij}$.}
Fix $i<j$.  In the four values appearing in \eqref{ANTI:eq:III6-qij-def}, every term not involving both $i$ and $j$ cancels.  The pair $\{i,j\}$ itself has the following costs:
\[
\begin{array}{c|c}
\text{side vector for jobs $i,j$} & \text{cost of pair $\{i,j\}$}\\ \hline
(0,0) & a_ia_js_j\\
(1,0) & 0\\
(0,1) & 0\\
(1,1) & a_ia_jr_i.
\end{array}
\]
The mixed cases cost zero because the two jobs lie on opposite sides of the due date.  Substituting these four pair costs into \eqref{ANTI:eq:III6-qij-def} gives
\begin{equation}
q_{ij}=a_ia_j(r_i+s_j).
\label{ANTI:eq:III6-qij-basic}
\end{equation}
Using the definitions of $r_i$ and $s_j$ and then $D=M+Q$,
\begin{align}
q_{ij}
&=a_ia_j\bigl(MQ+Da_i+M(Q+2M)-Da_j\bigr) \notag\\
&=a_ia_j\bigl(2MQ+2M^2+D(a_i-a_j)\bigr) \notag\\
&=a_ia_j\bigl(2M(M+Q)+D(a_i-a_j)\bigr) \notag\\
&=D\,a_ia_j(2M+a_i-a_j) \notag\\
&=2DM\,a_ia_j+D\,a_ia_j(a_i-a_j).
\label{ANTI:eq:III6-qij-expanded}
\end{align}

\medskip
\noindent\emph{Calculation of $h_i$.}
Fix $i$.  By definition \eqref{ANTI:eq:III6-hi-def}, $h_i$ is the exact change in cost when job $i$ alone is changed from tardy to early, while the anchor remains early and every other variable job remains tardy.  The affected terms are as follows.
\[
\begin{array}{@{}lccc@{}}
\text{affected term}
& \text{cost at $\mathbf 0$}
& \text{cost at $\mathbf e_i$}
& \text{change}\\ \hline
\text{anchor--$i$ pair}
&0&Ma_ir_i&+Ma_ir_i\\[2pt]
\text{tardy self term of $i$}
&a_i^2s_i&0&-a_i^2s_i\\[2pt]
\text{pair $\{j,i\}$, $j<i$}
&a_ja_is_i&0&-a_ja_is_i\\[2pt]
\text{pair $\{i,j\}$, $j>i$}
&a_ia_js_j&0&-a_ia_js_j.
\end{array}
\]
For $j<i$, the all-tardy pair cost is $a_ja_i\min\{s_j,s_i\}=a_ja_is_i$, since $s_j>s_i$.  For $j>i$, it is $a_ia_j\min\{s_i,s_j\}=a_ia_js_j$, since $s_i>s_j$.  After job $i$ becomes early, each such variable pair is split across the two sides and therefore has zero pair cost.  Every term not listed in the table is unchanged and cancels in \eqref{ANTI:eq:III6-hi-def}.  Summing the displayed changes gives
\begin{equation}
h_i
=Ma_ir_i-a_i^2s_i
 -\sum_{j<i}a_ja_is_i
 -\sum_{j>i}a_ia_js_j.
\label{ANTI:eq:III6-hi-change}
\end{equation}
Because
\[
a_i^2s_i+\sum_{j<i}a_ja_is_i
=a_is_i\left(a_i+\sum_{j<i}a_j\right)
=a_iP_is_i,
\]
dividing \eqref{ANTI:eq:III6-hi-change} by $a_i>0$ yields
\begin{equation}
\frac{h_i}{a_i}
=Mr_i-P_is_i-\sum_{j>i}a_js_j.
\label{ANTI:eq:III6-hi-divided}
\end{equation}
We now substitute the constructed ratios into every term of \eqref{ANTI:eq:III6-hi-divided}:
\begin{align}
\frac{h_i}{a_i}
={}&M(MQ+Da_i)
-P_i\bigl(M(Q+2M)-Da_i\bigr) \notag\\
&-\sum_{j>i}a_j\bigl(M(Q+2M)-Da_j\bigr) \notag\\
={}&M^2Q+MDa_i
-M(Q+2M)P_i+Da_iP_i \notag\\
&-M(Q+2M)\sum_{j>i}a_j
+D\sum_{j>i}a_j^2 \notag\\
={}&M^2Q+MDa_i
-M(Q+2M)\left(P_i+\sum_{j>i}a_j\right) \notag\\
&+D\left(a_iP_i+\sum_{j>i}a_j^2\right).
\label{ANTI:eq:III6-hi-substitution}
\end{align}
Since $P_i$ contains precisely $a_1,\ldots,a_i$ and $Q=\sum_{j=1}^{N}a_j$,
\begin{equation}
P_i+\sum_{j>i}a_j=Q.
\label{ANTI:eq:III6-prefix-total}
\end{equation}
Applying \eqref{ANTI:eq:III6-prefix-total} to \eqref{ANTI:eq:III6-hi-substitution} gives
\begin{align}
\frac{h_i}{a_i}
={}&M^2Q+MDa_i-MQ(Q+2M)
+D\left(a_iP_i+\sum_{j>i}a_j^2\right).
\label{ANTI:eq:III6-hi-after-total}
\end{align}
The scalar part in \eqref{ANTI:eq:III6-hi-after-total} simplifies line by line as
\[
M^2Q-MQ(Q+2M)
=M^2Q-MQ^2-2M^2Q
=-MQ(M+Q)
=-MQD.
\]
Therefore
\begin{align}
\frac{h_i}{a_i}
&=MDa_i-MQD
+D\left(a_iP_i+\sum_{j>i}a_j^2\right) \notag\\
&=DM(a_i-Q)
+D\left(a_iP_i+\sum_{j>i}a_j^2\right).
\label{ANTI:eq:III6-hi-simplified}
\end{align}
Multiplying \eqref{ANTI:eq:III6-hi-simplified} by $a_i$ gives the required explicit linear coefficient:
\begin{equation}
h_i
=DM(a_i^2-Qa_i)
+D\,a_i\left(a_iP_i+\sum_{j>i}a_j^2\right).
\label{ANTI:eq:III6-hi-explicit}
\end{equation}

Finally, substitute \eqref{ANTI:eq:III6-qij-expanded} and \eqref{ANTI:eq:III6-hi-explicit} into the exact coefficient expansion \eqref{ANTI:eq:III6-quadratic-expansion}.  Grouping the terms multiplied by $DM$ separately from those multiplied only by $D$ gives
\begin{align}
\Phi(x)-C
={}&DM\left[
\sum_{i=1}^{N}(a_i^2-Qa_i)x_i
+2\sum_{1\le i<j\le N}a_ia_jx_ix_j
\right] \notag\\
&+D\left[
\sum_{1\le i<j\le N}a_ia_j(a_i-a_j)x_ix_j
+\sum_{i=1}^{N}a_i\left(a_iP_i+\sum_{j>i}a_j^2\right)x_i
\right].
\label{ANTI:eq:III6-grouped}
\end{align}
Because $x_i\in\{0,1\}$, one has $x_i^2=x_i$, and hence
\begin{align}
S(x)^2
&=\left(\sum_{i=1}^{N}a_ix_i\right)^2 \notag\\
&=\sum_{i=1}^{N}a_i^2x_i^2
+2\sum_{1\le i<j\le N}a_ia_jx_ix_j \notag\\
&=\sum_{i=1}^{N}a_i^2x_i
+2\sum_{1\le i<j\le N}a_ia_jx_ix_j.
\label{ANTI:eq:III6-S-square}
\end{align}
Since $S(x)=\sum_i a_ix_i$, the first bracket in \eqref{ANTI:eq:III6-grouped} is exactly
\[
S(x)^2-QS(x).
\]
By definition \eqref{ANTI:eq:III6-Psi}, the second bracket in \eqref{ANTI:eq:III6-grouped} is exactly $\Psi(x)$.  Consequently,
\[
\Phi(x)
=C+DM\bigl(S(x)^2-QS(x)\bigr)+D\Psi(x),
\]
which proves the lemma.
\end{proof}

\paragraph{How to read the identity.}
The scheduling construction does not merely approximate a partition objective.  The first term after
$C$ is an exact convex quadratic in the total selected processing load.  All dependence on the
identities and order positions of the selected jobs is isolated in the explicitly displayed residual
$\Psi$.  This separation makes it possible to compare a unit integer load gap with one uniform
worst-case residual bound.

\begin{lemma}[Uniform residual bound]
\label{ANTI:lem:residual}
For every $x\in\bits^N$,
\[
 |\Psi(x)|\le 3Q^3=U.
\]
\end{lemma}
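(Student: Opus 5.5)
The plan is to bound the two parts of $\Psi(x)$ in \eqref{ANTI:eq:psi-pair}--\eqref{ANTI:eq:psi-linear} separately by the triangle inequality, using only $0<a_i$ and $\sum_i a_i=Q$. Since $x\in\bits^N$, every $|x_i|,|x_ix_j|\le1$, so
\[
 |\Psi(x)|\le \sum_{1\le i<j\le N}a_ia_j|a_i-a_j|
 +\sum_{i=1}^N a_i\Bigl(a_iP_i+\sum_{j>i}a_j^2\Bigr).
\]
No cancellation between terms is needed; the bound is uniform in $x$.

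For the pair part, I use $|a_i-a_j|\le \max\{a_i,a_j\}\le Q$. This gives
\[
\sum_{i<j}a_ia_j|a_i-a_j|\le Q\sum_{i<j}a_ia_j\le \tfrac12 Q\cdot Q^2=\tfrac12Q^3,
\]
because $2\sum_{i<j}a_ia_j\le(\sum a_i)^2=Q^2$.

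For the linear part, I treat the two summands separately.
\begin{itemize}
\item Since $P_i\le Q$, we have $\sum_i a_i^2P_i\le Q\sum_i a_i^2\le Q\cdot Q^2=Q^3$.
\item Since $\sum_{j>i}a_j^2\le \sum_j a_j^2\le Q^2$, we have $\sum_i a_i\sum_{j>i}a_j^2\le Q\cdot Q^2=Q^3$.
\end{itemize}
Adding the pieces gives $|\Psi(x)|\le \tfrac12Q^3+Q^3+Q^3<3Q^3=U$, which proves the bound with some slack.

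The only step needing care is sign bookkeeping. The pair coefficients $a_ia_j(a_i-a_j)$ are negative for $i<j$, because the tagged values are strictly increasing by \cref{ANTI:lem:tagging}. Taking absolute values before summing handles this. Every other step is an elementary consequence of $\sum_i a_i^2\le(\sum_i a_i)^2$ for positive numbers, so I expect no real obstacle.
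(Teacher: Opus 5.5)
Your proposal is correct and follows essentially the paper's proof: apply the triangle inequality to the pair part and the two linear summands separately, and bound each using $P_i\le Q$ and $\sum_j a_j^2\le Q^2$. The only difference is in the pair term. You bound it by $\tfrac12Q^3$ via $|a_i-a_j|\le Q$ and $\sum_{i<j}a_ia_j\le Q^2/2$, whereas the paper uses $a_j-a_i\le a_j$ and $\sum_{i<j}a_ia_j^2\le Q\sum_j a_j^2\le Q^3$; either choice stays within the stated budget $3Q^3$.
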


\begin{proof}
Because $a_i<a_j$ for $i<j$,
\begin{align*}
 \left|\sum_{i<j}a_ia_j(a_i-a_j)x_ix_j\right|
 &\le\sum_{i<j}a_ia_j(a_j-a_i)\\
 &\le\sum_{i<j}a_ia_j^2
 \le Q\sum_j a_j^2
 \le Q^3.
\end{align*}
For the two positive parts of \eqref{ANTI:eq:psi-linear},
\[
 \sum_i a_i^2P_ix_i\le Q\sum_i a_i^2\le Q^3,
\]
and
\[
 \sum_i a_ix_i\sum_{j>i}a_j^2
 \le Q\sum_j a_j^2\le Q^3.
\]
The triangle inequality gives the claim.
\end{proof}

Completing the square uses the exact algebraic identity
\[
 S^2-QS=\left(S-\frac Q2\right)^2-\frac{Q^2}{4}.
\]
Multiplying by $DM$ gives the completed-square identity:
\begin{equation}
\label{ANTI:eq:complete-square}
 DM\bigl(S^2-QS\bigr)
 =DM\left(S-\frac Q2\right)^2-\frac{DMQ^2}{4}.
\end{equation}
Since $Q$ is even and $S$ is an integer, every non-half load has squared distance at least one.

\begin{sectionexit}
We have established the exact decomposition
$\Phi=C-DMQ^2/4+DM(S-Q/2)^2+D\Psi$, together with the uniform bound $|\Psi|\le U$.  The next section uses this result only through the unit integer load gap and the inequality $M>2U$ to force the anchor, separate yes- and no-instances, and define the decision threshold.
\end{sectionexit}

\setshortsectionhead{Anchor and separation}
\section{Forcing the anchor and separating yes from no}
\label{ANTI:sec:gap}

Define the integer decision threshold
\begin{equation}
\label{ANTI:eq:threshold}
 K=C-\frac{DMQ^2}{4}+DU.
\end{equation}
The next lemma rules out the only case not covered by \cref{ANTI:lem:identity}.

\begin{lemma}[The anchor is forced early]
\label{ANTI:lem:anchor}
Every canonical assignment of cost at most $K$ places the anchor in the early set.
\end{lemma}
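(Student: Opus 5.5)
The plan is a direct size comparison.  If a canonical assignment puts the anchor $g$ in the tardy set, then by \cref{ANTI:prop:canonical} its cost contains the tardy self term
\[
 p_g^2s_g=M^2\cdot DM=DM^3 .
\]
Every other term of \eqref{ANTI:eq:canonical} is nonnegative, so such an assignment costs at least $DM^3$.  It then suffices to show $K<DM^3$.  Since $-DMQ^2/4\le0$, it is enough to prove
\[
 C+DU<DM^3 .
\]

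First I bound the baseline $C$ from \eqref{ANTI:eq:C}.  Every variable tardiness ratio satisfies $s_j\le M(Q+2M)$, and $Q<M$ gives $M(Q+2M)<3M^2$.  The pair and self sums together are at most $\bigl(\sum_{i<j}a_ia_j+\sum_ia_i^2\bigr)\max_j s_j\le Q^2\cdot3M^2$.  Hence
\[
 C<3Q^2M^2 .
\]
Next, $DU=3Q^3D$.

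On the other side, $DM^3$ dominates both quantities by a wide margin.  Using $M=8Q^3$ and $D>M$, one has $DM^3>M^4=8Q^3M^3$.  This exceeds $3Q^2M^2$ because $M>1$.  It also exceeds $3Q^3D$, since $DM^3\ge 3Q^3D$ reduces to $M^3\ge 3Q^3$, which is immediate.  To be safe with the strict inequality I will split $DM^3$ into two halves, absorbing $C$ into one and $DU$ into the other.  Each half still dominates because $M^3/2\ge 3Q^3$ and $DM^3/2>M^4/2=4Q^3M^3>3Q^2M^2$.

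Combining, every canonical assignment with $g$ tardy has cost $\ge DM^3>C+DU\ge K$.  Contrapositively, every canonical assignment of cost at most $K$ has the anchor early.  Nonemptiness of the early set and integrality of $K$ play no role here.  The only real obstacle is bookkeeping the crude bound on $C$ correctly: the tardy pair coefficients are $a_ia_js_j$ with the minimum ratio, so bounding them by the maximum $s_1$ is legitimate.  All remaining inequalities follow from $Q\ge2$ and $M=8Q^3>2Q$.
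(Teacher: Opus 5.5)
Your proof is correct and follows the paper's route: both arguments bound the baseline by $C<3M^2Q^2$ using $s_i<3M^2$ and $\sum_{i<j}a_ia_j+\sum_ia_i^2\le Q^2$, drop the nonpositive term $-DMQ^2/4$, and compare the result with the tardy-anchor self-cost $p_g^2s_g=DM^3$. The difference is only bookkeeping: the paper uses $D<2M$ to get $K<\tfrac{15}{32}M^3<M^3<DM^3$, and it also records $K>0$, which the lemma does not need. You instead keep the factor $D$ on both sides and split $DM^3$ into two halves.
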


\begin{proof}
We first show that the threshold is positive.  By \cref{ANTI:lem:order}, every variable tardiness ratio exceeds $s_g=DM$.  Hence each term in the baseline $C$ is strictly larger than the same processing product multiplied by $DM$, and
\begin{align*}
 C
 &>DM\left(\sum_{i<j}a_ia_j+\sum_i a_i^2\right)\\
 &=\frac{DM}{2}\left(Q^2+\sum_i a_i^2\right)\\
 &\ge\frac{DMQ^2}{2}.
\end{align*}
Consequently,
\begin{align*}
 K
 &=C-\frac{DMQ^2}{4}+DU\\
 &>\frac{DMQ^2}{4}+DU\\
 &>0.
\end{align*}

We next derive an upper bound on $K$.  Since $Q<M$ and $Da_i>0$,
\[
 s_i=M(Q+2M)-Da_i<M(Q+2M)<3M^2.
\]
Also
\[
 \sum_{i<j}a_ia_j+\sum_i a_i^2
 =\frac12\left(Q^2+\sum_i a_i^2\right)
 \le Q^2.
\]
Thus
\begin{equation}
 C<3M^2Q^2.
 \label{ANTI:eq:C-upper-expanded}
\end{equation}
Because $D=M+Q<2M$ and $U=3Q^3$,
\begin{align*}
 K
 &<C+DU\\
 &<3M^2Q^2+6MQ^3.
\end{align*}
Now $M=8Q^3$ and $Q\ge1$, so
\[
 Q^2\le Q^3=\frac M8,
 \qquad
 6MQ^3=\frac34M^2.
\]
Substitution gives
\begin{align*}
 K
 &<\frac38M^3+\frac34M^2\\
 &\le\frac38M^3+\frac3{32}M^3
   \qquad(M\ge8)\\
 &=\frac{15}{32}M^3\\
 &<M^3.
\end{align*}

However, if the anchor is tardy, its tardy self-cost alone is
\begin{align*}
 p_g^2s_g
 &=M^2(DM)\\
 &=DM^3\\
 &>M^3\\
 &>K.
\end{align*}
Therefore every canonical assignment of cost at most $K$ places the anchor early.
\end{proof}

\begin{theorem}[Correctness of the antithetical reduction]
\label{ANTI:thm:antihard}
The source \PARTITION instance is a yes-instance if and only if the constructed strict-antithetical
\AWET\ instance has a schedule of cost at most $K$.
\end{theorem}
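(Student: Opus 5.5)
The argument has two directions and uses only \cref{ANTI:lem:tagging,ANTI:lem:identity,ANTI:lem:residual,ANTI:lem:anchor}, the completed-square identity \eqref{ANTI:eq:complete-square}, and \cref{ANTI:prop:canonical}. Throughout, for the anchor early, I will write
\[
 \Phi(x)=C-\frac{DMQ^2}{4}+DM\Bigl(S(x)-\frac Q2\Bigr)^2+D\Psi(x).
\]
This follows by substituting \eqref{ANTI:eq:complete-square} into \eqref{ANTI:eq:identity}.

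\emph{Forward direction.} Suppose the \PARTITION instance is a yes-instance.
\begin{enumerate}[label=(\roman*)]
\item \Cref{ANTI:lem:tagging} supplies a subset of the tagged values summing to $Q/2$.
\item Put exactly those variable jobs early, together with the anchor $g$. The early set is nonempty because it contains $g$.
\item \Cref{ANTI:prop:canonical} then gives a feasible canonical schedule of cost $\Phi(x)$.
\item Since $S(x)=Q/2$, the square term vanishes. By \cref{ANTI:lem:residual}, $D\Psi(x)\le DU$.
\item Hence $\Phi(x)\le C-DMQ^2/4+DU=K$.
\end{enumerate}
Before using $K$ as a decision bound, I will check that it is an integer. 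This holds because $Q$ is even, so $Q^2/4$ is an integer, and $C$, $D$, $M$, $U$ are integers.

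\emph{Reverse direction.} Suppose some schedule has cost at most $K$.
\begin{enumerate}[label=(\roman*)]
\item By \cref{ANTI:prop:canonical}, $\OPT=\min_{E\neq\varnothing}\Phi(E)$, so some canonical assignment $E$ has $\Phi(E)\le K$.
\item \Cref{ANTI:lem:anchor} forces the anchor into $E$. Therefore the identity above applies with $x$ the indicator vector of the variable jobs in $E$.
\item Suppose $S(x)\ne Q/2$. Since $Q$ is even and $S(x)$ is an integer, $(S-Q/2)^2\ge1$.
\item Using $\Psi\ge -U$, this gives $\Phi(x)\ge C-DMQ^2/4+DM-DU$.
\item This exceeds $K=C-DMQ^2/4+DU$ exactly when $M>2U$. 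That inequality holds because $M-2U=8Q^3-6Q^3=2Q^3>0$, which is the contradiction.
\item Hence $S(x)=Q/2$, and \cref{ANTI:lem:tagging} decodes a source subset summing to $B$.
\end{enumerate}

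\emph{Odd-total case.} This is handled separately. When the \PARTITION total is odd, the source is trivially a no-instance. The fixed output instance is also a no-instance, since its three nonempty early sets cost $4$, $2$, $2$, all exceeding $K=1$. I will verify these three costs directly from \eqref{ANTI:eq:canonical}.

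The only delicate point is in the reverse direction: identity \eqref{ANTI:eq:identity} was derived under the assumption that the anchor is early. Step (ii) therefore has to invoke \cref{ANTI:lem:anchor} before the identity is used, and that lemma is applied to a canonical assignment, which step (i) supplies.
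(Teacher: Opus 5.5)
Your proposal is correct and follows the paper's proof essentially step for step. The forward direction uses binary tagging, the completed-square form of the identity with a vanishing square term, and $\Psi\le U$; the reverse direction passes to a canonical minimizer, forces the anchor early with the anchor lemma, and uses the unit load gap with $M-2U=2Q^3>0$, while your explicit handling of the odd-total case and of the integrality of $K$ is something the paper settles in the construction and the bit-size lemma instead of inside this proof.
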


\begin{proof}
Suppose first that the source is feasible.  By \cref{ANTI:lem:tagging}, there is a tagged side vector $x$ with
\[
 S(x)=\frac Q2.
\]
Place the anchor early and use the canonical schedule for this side assignment.  Combining the exact identity with the completed square gives
\begin{align*}
 \Phi(x)
 &=C+DM\left(S(x)-\frac Q2\right)^2
   -\frac{DMQ^2}{4}+D\Psi(x)\\
 &=C-\frac{DMQ^2}{4}+D\Psi(x).
\end{align*}
By \cref{ANTI:lem:residual}, $\Psi(x)\le U$, so
\begin{align*}
 \Phi(x)
 &\le C-\frac{DMQ^2}{4}+DU\\
 &=K.
\end{align*}
Thus a source partition produces a threshold-feasible schedule.

Conversely, suppose the constructed instance has a schedule of cost at most $K$.  Then its global optimum is at most $K$.  By \cref{AA:lem:compact}, choose an optimal schedule in compact canonical form; by \cref{ANTI:prop:canonical}, its cost is $\Phi(x)\le K$ for the corresponding variable-side vector $x$.  The anchor-forcing lemma \cref{ANTI:lem:anchor} places the anchor early.  The identity and the residual lower bound $\Psi(x)\ge-U$ therefore give
\begin{align*}
 \Phi(x)
 &=C-\frac{DMQ^2}{4}
   +DM\left(S(x)-\frac Q2\right)^2
   +D\Psi(x)\\
 &\ge C-\frac{DMQ^2}{4}
   +DM\left(S(x)-\frac Q2\right)^2
   -DU.
\end{align*}
If $S(x)\ne Q/2$, then $S(x)$ and $Q/2$ are integers, so
\[
 \left|S(x)-\frac Q2\right|\ge1
 \qquad\text{and}\qquad
 \left(S(x)-\frac Q2\right)^2\ge1.
\]
Hence
\begin{align*}
 \Phi(x)
 &\ge C-\frac{DMQ^2}{4}+DM-DU\\
 &=\left(C-\frac{DMQ^2}{4}+DU\right)+DM-2DU\\
 &=K+D(M-2U).
\end{align*}
Finally,
\[
 M-2U=8Q^3-6Q^3=2Q^3>0,
\]
so the last bound is strictly larger than $K$, a contradiction.  Therefore the chosen canonical optimum satisfies $S(x)=Q/2$.  The binary tagging lemma then selects exactly one tag from each pair and decodes a source subset of sum $B$.
\end{proof}

\begin{lemma}[Binary encoding length of the antithetical reduction]
\label{ANTI:lem:bitsize}
Let $L_{\rm src}$ be the binary input length of the source \PARTITION instance.  The constructed \AWET\ instance and threshold have size polynomial in $L_{\rm src}$ and can be computed in polynomial time.
\end{lemma}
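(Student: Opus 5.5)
The plan is to bound the bit length of every constructed quantity in terms of $\log Q$ and $N$, and then show that $\log Q$ and $N$ are themselves polynomial in $L_{\rm src}$. The order of work follows the construction.

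\textbf{Tags.} First, $N=2m\le 2L_{\rm src}$, and $\kappa=2B+1$ has $O(L_{\rm src})$ bits. Each $L_i=\kappa2^{i-1}$ has at most $O(L_{\rm src})+m$ bits. Since $a_{2i}=L_i+b_i<\kappa2^i$, we get $Q<N\kappa 2^m$, so $\log Q=O(L_{\rm src})$. All tags, together with $Q$, are computed by $O(m)$ shifts and additions on numbers of this length.

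\textbf{Scales and job data.} The scales $U=3Q^3$, $M=8Q^3$ and $D=M+Q$ each have $O(\log Q)$ bits. The ratios $r_i,s_i,r_g,s_g$ in \eqref{ANTI:eq:ri}--\eqref{ANTI:eq:sg} are products and sums of a constant number of such quantities, so they are integers of $O(\log Q)$ bits; \cref{ANTI:lem:order} already confirms they are positive. The weights $w_i^E=p_ir_i$ and $w_i^L=p_is_i$ add only another $O(\log Q)$ bits. The due date is $d=D$.

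\textbf{Threshold.} To bound $K$, I will use $C<3M^2Q^2$ from \eqref{ANTI:eq:C-upper-expanded} and $0<K<M^3$ from the proof of \cref{ANTI:lem:anchor}, which gives $O(\log Q)$ bits. I will also check that $K$ is an integer: $Q$ is even, so $Q^2/4$ is an integer, and hence $DMQ^2/4$ is one. The value $C$ is computed from \eqref{ANTI:eq:ANTI:eq:III6-C} as the sum of $O(N^2)$ products. The odd-total branch outputs a fixed constant-size instance, which is trivially polynomial.

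\textbf{Expected obstacle.} The only subtle point is that $Q$ can be exponential in $m$ because of the $2^{i-1}$ factors. The argument must therefore track bit length rather than magnitude. This is also why the result is a weak hardness statement: the numbers are not unary-polynomial. Once $\log Q=O(L_{\rm src})$ is established, the remaining steps are routine arithmetic on $O(N)$ numbers of polynomial length, so the total output size and running time are polynomial, for example $O(N^2\cdot\mathrm{poly}(L_{\rm src}))$.
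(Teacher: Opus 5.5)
Your proposal is correct and follows essentially the same route as the paper's proof. You bound the tags and $Q$ by $O(m+\log(B+1))=O(L_{\rm src})$ bits, then note that every scale, ratio, weight, and the threshold is a fixed-degree polynomial expression in $Q$, computable with polynomially many integer operations; your explicit use of $0<K<M^3$ and your treatment of the odd-total branch are small additions that the paper leaves implicit.

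One quibble concerns your closing aside. The \emph{weak} classification rests on the pseudopolynomial recurrence of \cref{ANTI:thm:pseudopoly}. Exponentially large reduction numbers only mean that this particular reduction cannot certify strong hardness.
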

\begin{proof}
There are $N=2m$ variable jobs and one anchor.  From \eqref{ANTI:eq:tags},
\[
 a_{2i-1}=\kappa2^{i-1},\qquad a_{2i}=\kappa2^{i-1}+b_i,
\]
so every tagged number, and hence $Q=\sum_i a_i$, has
$O(m+\log(B+1))=O(L_{\rm src})$ bits.  For $Q\ge1$,
\[
 M=8Q^3,\qquad U=3Q^3,\qquad D=M+Q\le9Q^3.
\]
A variable job satisfies
\[
 p_i\le Q,\qquad r_i=O(Q^4),\qquad s_i=O(Q^6),
\]
and therefore
\[
 w_i^E=O(Q^5),\qquad w_i^L=O(Q^7).
\]
For the anchor,
\[
 p_g=O(Q^3),\qquad w_g^E=O(Q^7),\qquad w_g^L=O(Q^9).
\]
The baseline $C$ is a sum of $O(m^2)$ products of integers having polynomially many bits.  The threshold
\[
 K=C-\frac{DMQ^2}{4}+DU
\]
has the same property; it is integral because $Q$ is even.  All displayed quantities are obtained by polynomially many integer operations on polynomial-bit operands.  Hence the instance map is a polynomial-time binary reduction.
\end{proof}

\begin{theorem}[Pseudopolynomial algorithm for strict antithetical order]
\label{ANTI:thm:pseudopoly}
Let jobs be indexed so that $r_1<\cdots<r_n$ and $s_1>\cdots>s_n$.  Put
\[
 W_E=\sum_{i=1}^n w_i^E,
 \qquad
 P=\sum_{i=1}^n p_i.
\]
Then the optimal value of \AWET can be computed exactly in
\[
 O\bigl(n(W_E+1)(P+1)\bigr)
\]
arithmetic operations and $O((W_E+1)(P+1))$ rolling memory.  An optimal early set and canonical schedule can be recovered by storing one predecessor per reachable state, using $O\bigl(n(W_E+1)(P+1)\bigr)$ memory, or by a standard recomputation scheme.
\end{theorem}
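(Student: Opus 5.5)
The plan is a forward dynamic program over the jobs in index order $1,\ldots,n$, built directly on the canonical objective of \cref{ANTI:prop:canonical}. Under strict antithetical order, every minimum in \eqref{ANTI:eq:canonical} is fixed by the indices. For $i<j$, an early pair contributes $p_ip_jr_i=p_jw_i^E$, because $r_i<r_j$. A tardy pair contributes $p_ip_js_j=p_iw_j^L$, because $s_i>s_j$. Thus, when job $j$ is processed after jobs $1,\ldots,j-1$ have been assigned, each pair cost is charged to the later job $j$. The charge depends on only two aggregates of the earlier assignment:
\[
 \omega=\sum_{i<j,\ i\in E}w_i^E,
 \qquad
 \pi=\sum_{i<j,\ i\in E}p_i .
\]
The earlier tardy load needed for the tardy charge is $P_{j-1}-\pi$, where $P_{j-1}=\sum_{h<j}p_h$.

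Concretely, I would define $F_j(\omega,\pi)$ as the minimum partial canonical cost over assignments of jobs $1,\ldots,j$ whose early set has total earliness weight $\omega$ and total processing $\pi$. The domain is $0\le\omega\le W_E$ and $0\le\pi\le P$. Set $F_0(0,0)=0$ and all other entries to $+\infty$. The transitions are
\[
 F_j(\omega+w_j^E,\pi+p_j)\leftarrow F_{j-1}(\omega,\pi)+p_j\,\omega
 \quad\text{(job $j$ early)},
\]
\[
 F_j(\omega,\pi)\leftarrow F_{j-1}(\omega,\pi)+w_j^L\bigl(P_{j-1}-\pi\bigr)+p_jw_j^L
 \quad\text{(job $j$ tardy)},
\]
where the last term is the self term $p_j^2s_j$. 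The answer is $\min\{F_n(\omega,\pi):\pi\ge1\}$; since every $p_j>0$, the condition $\pi\ge1$ is exactly nonemptiness of $E$.

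The key correctness step is an induction on $j$. It shows that the accumulated charges along any assignment equal the restriction of \eqref{ANTI:eq:canonical} to jobs $1,\ldots,j$: each unordered pair is charged exactly once, at its larger index, with the value computed above, and each tardy self term is charged once. The charge for job $j$ depends on the past only through $(\omega,\pi)$. Taking minima is therefore valid, which is the standard principle of optimality for additively separable costs. Combining this with $\OPT=\min_{E\ne\varnothing}\Phi(E)$ from \cref{ANTI:prop:canonical} gives exactness.

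For complexity, each layer has $(W_E+1)(P+1)$ states and each state has two $O(1)$ transitions, so the total is $O(n(W_E+1)(P+1))$ operations. Only layers $j-1$ and $j$ are needed at any time, which gives the rolling-memory bound. For recovery, I would store one predecessor bit per state of each layer, using $O(n(W_E+1)(P+1))$ memory, and backtrack from the optimal final state. The resulting early set is then sorted as in \cref{ANTI:prop:canonical}. Alternatively, one can use a Hirschberg-style recomputation.

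The main obstacle is the pair-charging identity. I would verify explicitly that the strict orders make $\min\{r_i,r_j\}$ and $\min\{s_i,s_j\}$ index-determined. I would also check that the earlier tardy load is exactly $P_{j-1}-\pi$, which is why one resource is not enough: $\omega$ alone does not determine $\pi$. Everything else is routine bookkeeping.
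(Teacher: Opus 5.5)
Your proposal is correct and matches the paper's proof: a left-to-right dynamic program over the index order that charges each early pair $p_jw_i^E$ and each tardy pair $p_iw_j^L$ to the later index, with the tardy self-cost added at insertion, $O(1)$ transitions per state, and rolling layers. The only cosmetic difference is that you store the early processing load $\pi$ and recover the tardy load as $P_{j-1}-\pi$, whereas the paper stores the tardy load $T$ directly (and tests nonemptiness by $A>0$ instead of $\pi\ge1$); at each layer the two state spaces are in bijection, so the bounds and the correctness argument are identical.
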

\begin{proof}
For $i<j$, the early ratio minimum is $r_i$ and the tardiness ratio minimum is $s_j$.  Thus the canonical objective for side bits $x_i\in\{0,1\}$ is
\begin{align*}
 \Phi(x)
 ={}&\sum_{i<j}w_i^Ep_jx_ix_j
 +\sum_{i<j}p_iw_j^L(1-x_i)(1-x_j)\\
 &+\sum_i p_iw_i^L(1-x_i).
\end{align*}
Process jobs in the index order.  After jobs $1,\ldots,j$ have been assigned, retain
\[
 A=\sum_{\substack{i\le j\\x_i=1}}w_i^E,
 \qquad
 T=\sum_{\substack{i\le j\\x_i=0}}p_i,
\]
and let $G_j(A,T)$ be the least cost of all terms whose later endpoint is at most $j$.  Initialize
\[
 G_0(0,0)=0
\]
and set all other entries to $+\infty$.

Suppose a state $(A,T)$ is present before job $j$ is inserted.  If job $j$ is early, every earlier early job $i$ contributes $w_i^Ep_j$, so the exact increment is
\[
 p_jA.
\]
The transition is
\[
 G_j(A+w_j^E,T)
 \leftarrow
 \min\bigl\{G_j(A+w_j^E,T),\;G_{j-1}(A,T)+p_jA\bigr\}.
\]
If job $j$ is tardy, every earlier tardy job contributes $p_iw_j^L$, totaling $Tw_j^L$, and job $j$ also contributes its self-cost $p_jw_j^L$.  The tardy transition is
\[
 G_j(A,T+p_j)
 \leftarrow
 \min\bigl\{G_j(A,T+p_j),\;G_{j-1}(A,T)+w_j^L(T+p_j)\bigr\}.
\]
Each early pair, tardy pair, and tardy self-term is charged exactly once, when its later-indexed job is inserted.  Conversely, every side assignment follows one recurrence path.  Because all earliness weights are positive, the nonempty-early condition is exactly $A>0$ at the terminal layer.  Therefore
\[
 \OPT=\min_{A>0,\,T}G_n(A,T).
\]
There are at most $(W_E+1)(P+1)$ states per layer and two constant-time transitions per state, proving the stated value-computation bounds.  Storing the minimizing predecessor for each reachable state permits direct reconstruction of an optimal side vector; alternatively, rolling tables can be recomputed during a backtracking pass.
\end{proof}

\begin{corollary}[Strict-antithetical NP-completeness and pseudopolynomial solvability]
\label{ANTI:cor:weaknp}
The decision version of positive-integer \AWET\ is NP-complete and pseudopolynomially solvable even when the early- and tardiness-ratio orders are both strict and are exact reversals of one another.  In the standard terminology for numerical scheduling problems, the restriction is therefore weakly NP-complete.
\end{corollary}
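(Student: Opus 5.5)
The corollary combines results already proved in this Part, so the plan has three components: membership in NP, NP-hardness on the restricted class, and the pseudopolynomial algorithm. The adjective ``weakly'' then follows from the definitions.

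\emph{Membership in NP.} The certificate is a nonempty early set $E$. By \cref{ANTI:prop:canonical}, its canonical cost $\Phi(E)$ is computed by polynomially many integer additions and multiplications, using the identities $p_ip_j\min\{r_i,r_j\}=\min\{w_i^Ep_j,p_iw_j^E\}$ and the tardy analogue. If any schedule has cost at most the bound, then some canonical optimum also does. This is exactly the argument given after \cref{thm:main}, restricted to the subclass, and the verifier can also check strict antithetical order by cross-multiplication.

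\emph{NP-hardness.} The source problem is \PARTITION, which is NP-complete in binary encoding.
\begin{itemize}[leftmargin=2em,itemsep=1pt,topsep=2pt]
\item If the source total is odd, the reduction outputs the fixed two-job no-instance. That instance is strict antithetical and has no schedule of cost at most $K=1$.
\item Otherwise it applies the tagging of \cref{ANTI:lem:tagging} and the construction of \cref{ANTI:sec:construction}.
\item \Cref{ANTI:lem:order} certifies that the output has positive integer data and strictly reversed ratio orders, so it lies in the restricted class.
\item \Cref{ANTI:thm:antihard} gives yes/no equivalence with the source.
\item \Cref{ANTI:lem:bitsize} gives polynomial binary size and polynomial running time.
\end{itemize}
Together these form a polynomial-time many-one reduction from \PARTITION to the restricted decision problem, which is therefore NP-complete.

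\emph{Pseudopolynomial solvability and weak hardness.} \Cref{ANTI:thm:pseudopoly} decides any strict antithetical instance: compute $\OPT$ and compare it with the bound, in $O(n(W_E+1)(P+1))$ operations. This count is polynomial in the input size and the magnitudes of the numbers, that is, polynomial in unary length. The indexing the theorem needs is obtained by sorting on $r_j$ in polynomial time.

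In Garey--Johnson terminology, a problem that is NP-complete and pseudopolynomially solvable is not strongly NP-complete unless P$=$NP; this is the meaning of ``weakly NP-complete.'' The only point that needs care is the even-total restriction in the reduction, and the fixed no-instance for odd totals already handles it. No step is a real obstacle, since every ingredient has been established in this Part.
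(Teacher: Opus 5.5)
Your proposal is correct and follows the paper's proof. NP-hardness comes from the \PARTITION reduction through the correctness theorem and the encoding-length lemma, NP membership from a canonical early-set certificate, and pseudopolynomial solvability from the two-resource recurrence, which together give the weak classification. Your explicit treatment of the odd-total branch and your appeal to \cref{ANTI:lem:order}, to confirm that the output lies in the strict antithetical class, are details the paper's proof leaves implicit rather than a different route.
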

\begin{proof}
The correctness theorem and \cref{ANTI:lem:bitsize} give a polynomial-time binary reduction from \PARTITION, so the restricted problem is NP-hard.  It belongs to NP because an early set and its canonical orders form a polynomially checkable certificate whose cost is computable by exact integer arithmetic.  The value algorithm in \cref{ANTI:thm:pseudopoly} is pseudopolynomial.  Thus the restricted decision problem is NP-complete and pseudopolynomially solvable, which is the usual weak-NP-completeness classification.  In particular, it is not strongly NP-complete unless $\mathrm P=\mathrm{NP}$.
\end{proof}

\begin{corollary}[FPTAS for strict antithetical order]
	Positive-integer \AWET\ restricted to strict antithetical ratio orders
	admits an FPTAS.
\end{corollary}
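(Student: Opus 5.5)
This corollary can be obtained in two ways.  The short route invokes Part~IV.  Under strict antithetical order the indexing by increasing $r$ is the unique (hence fixed) total refinement of the early order.  The tardy order is then the reverse permutation, which is a separable permutation: it is the iterated skew sum of singletons, and its inversion graph is complete, hence a cograph.  So the FPTAS for fixed total refinements with separable ratio permutation (Theorem~IV.11) applies directly.

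Since Part~IV appears only later in the paper, I would also give a self-contained proof by trimming the two-resource recurrence of \cref{ANTI:thm:pseudopoly}.

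\textbf{Step 1: state space.}  Process jobs in index order, with state $(A,T,G)$: early weight, tardy processing, and accumulated cost.  The transitions are $G\mapsto G+p_jA$ for an early job and $G\mapsto G+w_j^L(T+p_j)$ for a tardy job.  All increments are nonnegative and monotone nondecreasing in $A$ and $T$.  Also $A$ and $T$ only increase, and future cost is a nondecreasing function of the current $(A,T)$.

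\textbf{Step 2: geometric trimming.}  Fix $\lambda=1+\varepsilon/(2n)$.  After each layer, partition states into boxes by $\lfloor\log_\lambda A\rfloor$, $\lfloor\log_\lambda T\rfloor$ and $\lfloor\log_\lambda G\rfloor$, with zero coordinates kept as separate classes.  In each box keep one representative, namely the state with maximal $A$ and maximal $T$ within a factor; more carefully, keep the dominating representative.

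\textbf{Step 3: invariant.}  By induction on $j$, every exact state $(A,T,G)$ has a kept state $(A',T',G')$ with $A'\le\lambda^jA$, $T'\le\lambda^jT$ and $G'\le\lambda^jG$.  This uses that each transition is bilinear with nonnegative coefficients, so multiplying inputs by $\lambda^{j-1}$ and trimming once more inflates outputs by at most $\lambda^{j}$.  The difficulty is that trimming may pick a representative with larger $A$, which upsets the monotone-future argument.  I would handle this by proving upper bounds only, which is all that is needed since costs are increasing in $A$ and $T$.  Two-sided closeness is not required as long as the retained representative is within factor $\lambda$ above in every coordinate.  This is exactly the "coordinated geometric trimming" with a representative invariant.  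At the end, $\lambda^n\le e^{\varepsilon/2}\le1+\varepsilon$ for $\varepsilon\le1$.  The nonempty-early condition is kept by trimming the $A=0$ class separately.

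\textbf{Step 4: running time and main obstacle.}  The number of boxes per layer is $O\big((\log_\lambda(W_EP\cdot\mathrm{cost}))^3\big)=O\big((n/\varepsilon)^3L^3\big)$, which is polynomial in the input length $L$ and $1/\varepsilon$.  Recovering the schedule uses predecessor pointers.  The main obstacle is verifying the Step~3 invariant rigorously: because future increments scale with $A$ and $T$, error factors compound multiplicatively across layers.  I expect bounding the total compounding by $\lambda^n$ to be the delicate part, using that each increment is linear in one stored coordinate.
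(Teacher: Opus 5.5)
Your short route is exactly the paper's proof. The strict inequalities make both refinements unique, so the ratio permutation is $n(n-1)\cdots1$; this is an iterated skew sum of singletons and hence separable, so \cref{CT:thm:main} applies directly.

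Your self-contained alternative, trimming the layered $(A,T,G)$ recurrence of \cref{ANTI:thm:pseudopoly}, is also correct, and the step you flag as delicate is immediate. Each increment is a fixed job constant times a single stored coordinate. The transition is therefore linear, not bilinear, in the state, which is why the inflation is $\lambda^{j}$ rather than the $\lambda^{2h}$ of Part~IV. Calling it bilinear would by your own reasoning give $\lambda^{2(j-1)}$. No ``dominating'' representative is needed either: box membership alone puts every coordinate of any retained state within a factor $\lambda$ of a discarded one.

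That route gives a leaner three-coordinate algorithm with two transitions per retained state. The paper's one-line reduction costs nothing once Part~IV is available and covers every separable ratio permutation at once.
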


\begin{proof}
	Under the strict inequalities \(r_1<\cdots<r_n\) and
	\(s_1>\cdots>s_n\), the two ratio orders are unique, and the induced
	ratio permutation is \(n(n-1)\cdots1\). This is the iterated skew sum
	of singleton permutations, hence is separable. Therefore the
	separating-tree FPTAS of \cref{CT:thm:main} applies.
\end{proof}

\begin{sectionexit}
The reduction is now complete: the constructed instance has a schedule of cost at most $K$ exactly when a compact canonical optimum has the anchor early and tagged load $Q/2$; binary tagging decodes that load to a source partition, and the same strict order admits the two-resource pseudopolynomial recurrence.  The worked examples use only the tag equivalence, the completed-square identity, and the certified margin $D(M-2U)>0$ to instantiate both directions numerically.
\end{sectionexit}

\clearpage
\setshortsectionhead{Worked examples}
\section{Two worked examples}
\begin{howtoread}
\textbf{Conceptual route.}  Compare the tagged half-load calculation and the closest non-half loads in the two examples.  \textbf{Audit route.}  Verify every job parameter, threshold component, and finite-state cost table.  The first route explains the square-gap mechanism; the second reproduces the full numerical certificate.
\end{howtoread}
\label{ANTI:sec:worked-examples}

The two source instances have four numbers each.  The first is a yes-instance of \PARTITION; the second changes only the final source number and is a no-instance.  Each source instance produces eight tagged variable jobs and one anchor job.  The comparison table predicts the distinction that the detailed calculations will verify.

\begin{center}
\footnotesize
\setlength{\tabcolsep}{3pt}
\begin{tabularx}{0.97\textwidth}{@{}>{\raggedright\arraybackslash}p{0.075\textwidth}>{\raggedright\arraybackslash}p{0.145\textwidth}>{\raggedright\arraybackslash}p{0.12\textwidth}>{\raggedright\arraybackslash}p{0.145\textwidth}>{\raggedright\arraybackslash}p{0.14\textwidth}>{\raggedright\arraybackslash}X@{}}
\toprule
case & source data & half-load target & attainable tagged load & least square displacement & threshold outcome\\
\midrule
yes & $(1,2,5,8)$, $B=8$ & $Q/2=263$ & $263$ & $(S-Q/2)^2=0$ & two canonical schedules have cost below $K$\\
no & $(1,2,5,10)$, $B=9$ & $Q/2=294$ & closest are $293,295$ & $(S-Q/2)^2\ge1$ & every schedule has cost above $K$\\
\bottomrule
\end{tabularx}
\end{center}

All arithmetic below is exact.

\setshortsubsectionhead{Yes instance}
\subsection{A ``yes'' instance}
\label{ANTI:worked-yes}

The source instance is
\[
 (b_1,b_2,b_3,b_4)=(1,2,5,8),
 \qquad \sum_i b_i=16=2B,
 \qquad B=8.
\]
It has two complementary-looking witnesses: $b_4=8$ and $b_1+b_2+b_3=1+2+5=8$.

\begin{howtoread}
The purpose of the example is to verify both directions of the reduction, not merely to exhibit one cheap schedule.  The ``if'' calculation starts with the source subset $\{1,2,3\}$, carries it through the binary tags, constructs the corresponding early set, and evaluates its exact canonical cost.  The ``only if'' calculation starts hypothetically from an arbitrary schedule of cost at most $K$, passes to a compact canonical global optimum, forces that optimum's anchor early and its selected tagged load to $Q/2$, and then decodes the original partition witness pair by pair.
\end{howtoread}

\paragraph{The source \PARTITION instance}
The exhaustive source-level subset audit is small enough to display.  It makes clear that the half-total $8$ is attained exactly by $\{4\}$ and $\{1,2,3\}$.
\begin{center}
\begin{tabular}{c r@{\qquad}c r}
\toprule
Subset & Sum & Subset & Sum\\
\midrule
$\varnothing$&0 & $\{4\}$&8\\
$\{1\}$&1 & $\{1,4\}$&9\\
$\{2\}$&2 & $\{2,4\}$&10\\
$\{1,2\}$&3 & $\{1,2,4\}$&11\\
$\{3\}$&5 & $\{3,4\}$&13\\
$\{1,3\}$&6 & $\{1,3,4\}$&14\\
$\{2,3\}$&7 & $\{2,3,4\}$&15\\
$\{1,2,3\}$&\textbf{8} & $\{1,2,3,4\}$&16\\
\bottomrule
\end{tabular}
\end{center}

\paragraph{Binary tagging and the exact half load}
Here $\kappa=2B+1=17$ and
\[
 (L_1,L_2,L_3,L_4)=(17,34,68,136).
\]
The four tagged pairs are therefore
\[
 \{17,18\},\qquad \{34,36\},\qquad \{68,73\},\qquad \{136,144\}.
\]
Thus
\[
 (a_1,\ldots,a_8)=(17,18,34,36,68,73,136,144),
 \qquad Q=526,
 \qquad \frac Q2=263.
\]
The source witness $\{1,2,3\}$ says ``take the second member'' of pairs 1, 2, and 3, and ``take the first member'' of pair 4.  Hence
\[
 x=(0,1,0,1,0,1,1,0),
 \qquad S(x)=18+36+73+136=263=Q/2.
\]
The other source witness $\{4\}$ gives the complementary tagged selection
\[
 x'=(1,0,1,0,1,0,0,1),
 \qquad S(x')=17+34+68+144=263.
\]
This concretely illustrates the pair-tagging lemma: every source choice is recorded only in the low-order increments $b_i$, whereas the positional terms $L_i$ force exactly one tag from each pair.

\paragraph{Numerical scales and the nine-job instance}
The scale values are
\[
 U=3Q^3=436594728,
 \qquad M=8Q^3=1164252608,
 \qquad D=M+Q=1164253134.
\]
The common due date is $d=D=1164253134$.  The table below lists every job parameter needed to reproduce the instance.  The displayed ratios are integers; the actual weights are $w_i^E=p_ir_i$ and $w_i^L=p_is_i$.

\begin{landscape}
\scriptsize
\setlength{\tabcolsep}{3pt}
\tablecert{\textbf{Purpose.}  The table lists the nine job parameters needed to reproduce the antithetical yes-instance.  \textbf{Verification rule.}  For each variable job, substitute $p_i=a_i$ into the affine ratio formulas and multiply $p_ir_i$ and $p_is_i$ to recover the two weights; for the anchor, use the four displayed anchor definitions.}
\begin{longtable}{c r r r r r}
\caption{Nine-job yes-instance: processing times, ratios, and physical weights.}\label{ANTI:tab:yes-jobs}\\
\toprule
Job & $p$ & $r=w^E/p$ & $s=w^L/p$ & $w^E$ & $w^L$\\
\midrule
\endfirsthead
\multicolumn{6}{c}{\tablename\ \thetable\ (continued)}\\
\toprule
Job & $p$ & $r$ & $s$ & $w^E$ & $w^L$\\
\midrule
\endhead
1&17&632189175086&2710968863074171858&10747215976462&46086470672260921586\\
2&18&633353428220&2710968861909918724&11400361707960&48797439514378537032\\
3&34&651981478364&2710968843281868580&22167370264376&92172940671583531720\\
4&36&654309984632&2710968840953362312&23555159446752&97594878274321043232\\
5&68&691566084920&2710968803697262024&47026493774560&184345878651413817632\\
6&73&697387350590&2710968797875996354&50909276593070&197900722244947733842\\
7&136&770735298032&2710968724528048912&104820000532352&368691746535814652032\\
8&144&780049323104&2710968715214023840&112327102526976&390379494990819432960\\
$g$&1164252608&1225958273426&1355484747631673472&1427325117135397595008&1578126652534397663180414976\\
\bottomrule
\end{longtable}
\end{landscape}

The strict antithetical inequalities can now be inspected numerically:
\[
 r_1<r_2<\cdots<r_8<r_g,
 \qquad
 s_1>s_2>\cdots>s_8>s_g.
\]
For example, $r_2-r_1=D(18-17)=1164253134>0$, while $s_1-s_2$ is the same positive quantity.  The same affine difference calculation applies to every adjacent pair.

\paragraph{Baseline, square term, residual allowance, and threshold}
For this instance the all-variable-tardy baseline with the anchor early is
\[
 C=445854051730378609479774.
\]
Also
\[
 DM=1355484747631673472,
 \qquad
 \frac{DMQ^2}{4}=93757524508935222384768,
\]
and
\[
 DU=508306780361877552.
\]
Consequently
\[
 K=C-\frac{DMQ^2}{4}+DU
 =352097035528223748972558.
\]
For the witness $x=(0,1,0,1,0,1,1,0)$, direct substitution in the residual formula gives
\[
 \Psi(x)=15322886.
\]
Because $S(x)=Q/2$, the completed-square term is at its exact minimum, so
\begin{align*}
 \Phi(x)
 &=C-\frac{DMQ^2}{4}+D\Psi(x)\\
 &=352096545061161434519730.
\end{align*}
Thus
\[
 K-\Phi(x)=490467062314452828>0.
\]
The witness schedule is therefore not merely feasible; it lies strictly below the decision threshold.

\paragraph{The ``if'' case: source witness $\Rightarrow$ threshold-feasible schedule}
We now trace the forward implication without suppressing any logical step.
\begin{enumerate}[label=\textbf{\arabic*.},leftmargin=4em]
\item The source subset $\{1,2,3\}$ has sum $1+2+5=8=B$.
\item For each selected source item choose $L_i+b_i$, and for each unselected item choose $L_i$.  This gives $18,36,73,136$.
\item Their tagged sum is $263=Q/2$.  The associated binary side vector is $x=(0,1,0,1,0,1,1,0)$.
\item Put the anchor $g$ on the early arm and put precisely the four tagged jobs selected by $x$ on the early arm.  Put the remaining four variable jobs on the tardy arm.  Proposition~\ref{ANTI:prop:canonical} supplies the compact canonical schedule for this side assignment.
\item The exact identity gives $\Phi=C+DM(S^2-QS)+D\Psi$.  Since $S=263$, the dominant term equals $-DMQ^2/4$.
\item The exact residual is $15322886<U=436594728$.  The inequality $D\Psi<DU$ and the numerical evaluation above gives $\Phi=352096545061161434519730<K$.
\end{enumerate}
This is the theorem's forward implication instantiated completely.

\paragraph{The ``only if'' case: threshold-feasible schedule $\Rightarrow$ source witness}
Now begin at the target end.  Suppose some schedule for this nine-job instance has cost at most $K$.
\begin{enumerate}[label=\textbf{\arabic*.},leftmargin=4em]
\item Its global optimum is at most $K$.  Choose a compact canonical optimal schedule, whose cost is therefore also at most $K$.
\item The anchor-forcing lemma applies to this canonical optimum because the anchor self-cost when tardy already exceeds $K$.  Hence $g$ must be early.
\item With the anchor early, the exact identity applies.  If $S\ne263$, integrality gives $|S-263|\ge1$, so the completed-square term increases by at least $DM$.
\item The residual can improve by at most $DU$ relative to zero and by at most $2DU$ when compared against the threshold allowance.  Every non-half load therefore satisfies
\[
 \Phi\ge K+D(M-2U).
\]
Here
\[
 D(M-2U)=338871186907918368>0,
\]
contradicting $\Phi\le K$.  Hence $S=263$.
\item The tagging lemma now forces one selected number from each pair.  Write $z_i=1$ when the second tag $L_i+b_i$ is selected.  Reducing the equality $S=Q/2$ modulo $\kappa=17$ gives
\[
 z_1(1)+z_2(2)+z_3(5)+z_4(8)=8.
\]
\item The source audit above shows that this equation has exactly two $0$--$1$ solutions: $(z_1,z_2,z_3,z_4)=(1,1,1,0)$ and $(0,0,0,1)$.  The chosen canonical optimum therefore decodes to the source partition $\{1,2,3\}$ or $\{4\}$.
\end{enumerate}
The reverse implication therefore recovers a genuine source witness rather than merely a numerically balanced tagged load.

\paragraph{An exhaustive half-load audit for the yes-instance}
There are $2^8=256$ variable-side vectors.  The binary-tagging lemma says that any vector with $S=263$ must choose exactly one tag from each pair, reducing the search to the $2^4=16$ source patterns already listed.  Exactly two of those patterns have low-order sum $8$.  Hence there are exactly two half-load vectors, the two vectors $x,x'$ displayed above.  Direct evaluation gives
\[
 \Phi(x)=352096545061161434519730,
\]
while for $x'=(1,0,1,0,1,0,0,1)$,
\[
 \Psi(x')=15680566,
 \qquad
 \Phi(x')=352096545477591495488850<K.
\]
Every other variable-side vector has $|S-263|\ge1$ and is excluded by the certified square-gap argument.  This completes both the constructive and the decoding audit.

\clearpage
\setshortsubsectionhead{No instance}
\subsection{A similar ``no'' instance}
\label{ANTI:worked-no}

Change only the last source number from $8$ to $10$:
\[
 (b_1,b_2,b_3,b_4)=(1,2,5,10),
 \qquad \sum_i b_i=18=2B,
 \qquad B=9.
\]
The first three numbers sum to only $8$, while the fourth number alone already exceeds $9$.  Thus the instance is intuitively close to the preceding one but crosses the partition boundary.

\begin{howtoread}
The forward direction is now demonstrated by showing precisely where the construction of a threshold-feasible schedule fails: there is no source subset of sum $B$, hence no tagged half load.  The reverse direction begins with a hypothetical target schedule of cost at most $K$, passes to a compact canonical global optimum, and derives the impossible source equation $z_1+2z_2+5z_3+10z_4=9$.
\end{howtoread}

\paragraph{The modified source instance and exhaustive subset audit}
The sixteen source subset sums are
\begin{center}
\begin{tabular}{c r@{\qquad}c r}
\toprule
Subset & Sum & Subset & Sum\\
\midrule
$\varnothing$&0 & $\{4\}$&10\\
$\{1\}$&1 & $\{1,4\}$&11\\
$\{2\}$&2 & $\{2,4\}$&12\\
$\{1,2\}$&3 & $\{1,2,4\}$&13\\
$\{3\}$&5 & $\{3,4\}$&15\\
$\{1,3\}$&6 & $\{1,3,4\}$&16\\
$\{2,3\}$&7 & $\{2,3,4\}$&17\\
$\{1,2,3\}$&8 & $\{1,2,3,4\}$&18\\
\bottomrule
\end{tabular}
\end{center}
No entry equals $B=9$.  Hence this is a no-instance of \PARTITION.

\paragraph{Binary tagging and the missing half load}
Now $\kappa=19$ and
\[
 (L_1,L_2,L_3,L_4)=(19,38,76,152).
\]
The tagged pairs are
\[
 \{19,20\},\qquad \{38,40\},\qquad \{76,81\},\qquad \{152,162\},
\]
so
\[
 (a_1,\ldots,a_8)=(19,20,38,40,76,81,152,162),
 \qquad Q=588,
 \qquad Q/2=294.
\]
If a tagged subset had sum $294$, the tagging lemma would force exactly one number per pair and, modulo $19$, would give
\[
 z_1+2z_2+5z_3+10z_4=9.
\]
The source table proves that no such binary vector $z$ exists.  Therefore no tagged subset has load $294$.

The two closest pair-respecting choices come from source sums $8$ and $10$.  They have tagged loads
\[
 293=294-1
 \quad\text{and}\quad
 295=294+1,
\]
respectively.  Explicitly,
\[
 x^-=(0,1,0,1,0,1,1,0),\qquad S(x^-)=20+40+81+152=293,
\]
and
\[
 x^+=(1,0,1,0,1,0,0,1),\qquad S(x^+)=19+38+76+162=295.
\]
Thus the dominant square is forced at least one unit away from its minimum.

\paragraph{Numerical scales and the nine-job no-instance}
The new scales are
\[
 U=609892416,
 \qquad M=1626379776,
 \qquad D=1626380364,
\]
and the common due date is $d=D=1626380364$.

\begin{landscape}
\scriptsize
\setlength{\tabcolsep}{3pt}
\tablecert{\textbf{Purpose.}  The table lists the nine job parameters needed to reproduce the antithetical no-instance.  \textbf{Verification rule.}  Recompute each ratio from the hatted scale formulas, multiply by the displayed processing time to recover each weight, and use adjacent differences to check strict antithetical order.}
\begin{longtable}{c r r r r r}
\caption{Nine-job no-instance: processing times, ratios, and physical weights.}\label{ANTI:tab:no-jobs}\\
\toprule
Job & $p$ & $r=w^E/p$ & $s=w^L/p$ & $w^E$ & $w^L$\\
\midrule
\endfirsthead
\multicolumn{6}{c}{\tablename\ \thetable\ (continued)}\\
\toprule
Job & $p$ & $r$ & $s$ & $w^E$ & $w^L$\\
\midrule
\endhead
1&19&987212535204&5290223276973701724&18757038168876&100514242262500332756\\
2&20&988838915568&5290223275347321360&19776778311360&105804465506946427200\\
3&38&1018113762120&5290223246072474808&38688322960560&201028483350754042704\\
4&40&1021366522848&5290223242819714080&40854660913920&211608929712788563200\\
5&76&1079916215952&5290223184270020976&82073632412352&402056962004521594176\\
6&81&1088048117772&5290223176138119156&88131897539532&428508077267187651636\\
7&152&1203521123616&5290223060665113312&182935210789632&804113905221097223424\\
8&162&1219784927256&5290223044401309672&197605158215472&857016133193012166864\\
$g$&1626379776&1914249342684&2645112132093118464&3113296417162551158784&4301956876888488418621784064\\
\bottomrule
\end{longtable}
\end{landscape}

Again the data visibly satisfy strict antithetical order.  The baseline and threshold components are
\[
 C=1087759799531923395684660,
\]
\begin{align*}
 DM&=2645112132093118464,\\
 \frac{DMQ^2}{4}&=228632912249600787554304,\\
 DU&=991917049534919424.
\end{align*}
and therefore
\[
 K=859127879199372143049780.
\]
The universal certified separation is
\[
 D(M-2U)=661278033023279616>0.
\]

\paragraph{The ``if'' route: exactly where the forward construction fails}
The forward proof of the theorem begins with a source subset summing to $B=9$.  The exhaustive source table shows that there is none.  Equivalently, the pair-respecting tagged loads nearest $Q/2=294$ are $293$ and $295$, not $294$.  Therefore the forward recipe cannot produce a vector $x$ with $S=Q/2$, and the dominant square cannot attain its minimum.

This failure is quantitative.  For the closest lower-load vector $x^-$,
\[
 \Psi(x^-)=21387022,
\]
so the exact identity gives
\[
 \Phi(x^-)=859129567177887326484828
 =K+1687978515183435048>K.
\]
For the closest upper-load vector $x^+$,
\[
 \Psi(x^+)=22087552,
\]
and
\[
 \Phi(x^+)=859129568317215562877748
 =K+1689117843419827968>K.
\]
Thus even the two load-nearest canonical candidates miss the threshold by more than $1.68\times10^{18}$.

\paragraph{The ``only if'' route: a hypothetical target witness is impossible}
Suppose, for contradiction, that some schedule for the no-instance has cost at most $K$.
\begin{enumerate}[label=\textbf{\arabic*.},leftmargin=4em]
\item Its global optimum is at most $K$.  Choose a compact canonical optimal schedule, whose cost is therefore also at most $K$.
\item The anchor-forcing lemma implies that $g$ is early in this canonical optimum.
\item Since the anchor is early, the exact identity and completed-square equation apply.
\item If $S\ne294$, integrality gives $|S-294|\ge1$.  The theorem's gap calculation then yields
\[
 \Phi\ge K+D(M-2U)=K+661278033023279616>K,
\]
a contradiction.  Hence the hypothetical canonical optimum would have to satisfy $S=294$.
\item By binary tagging, a half-load subset must choose exactly one member of each tag pair.  Let $z_i$ indicate use of the second member.  Reduction modulo $\kappa=19$ yields
\[
 z_1+2z_2+5z_3+10z_4=9.
\]
\item But the exhaustive sixteen-row source audit contains no subset sum $9$.  This is impossible.
\end{enumerate}
Therefore the target instance has no schedule of cost at most $K$.

\paragraph{Finite-state audit of the closest canonical assignments}
For completeness, all $2^8=256$ variable-side vectors were evaluated from the exact identity.  The table below lists the closest load levels and, for each displayed level, the best exact canonical cost found there.  The first row is the global minimum among anchor-early assignments.
\begin{center}
\small
\begin{tabular}{r r r r}
\toprule
$S$ & $S-Q/2$ & best $\Psi$ shown & $\Phi-K$\\
\midrule
293&$-1$&21387022&1687978515183435048\\
295&$+1$&22087552&1689117843419827968\\
292&$-2$&21341770&9623241314498558712\\
296&$+2$&22130722&9624524450539497240\\
291&$-3$&21273742&22848691335560748840\\
297&$+3$&22196980&22850192871715247472\\
290&$-4$&21228412&41364402536390677968\\
298&$+4$&22240072&41366047880349722208\\
\bottomrule
\end{tabular}
\end{center}
The numerical audit is stronger than the proof requires: the theoretical lower gap already excludes every non-half load uniformly, whereas the enumeration shows that the actual best no-instance cost exceeds $K$ by substantially more.

\paragraph{Side-by-side conclusion}
The two examples differ only in replacing the last source item $8$ by $10$.  In the yes-instance, the low-order equation has solutions $1+2+5=8$ and $8=8$, producing two tagged half-load assignments and two schedules below threshold.  In the no-instance, the low-order target becomes $9$; the first three source items stop at $8$ and the last starts at $10$.  Consequently the nearest tagged loads are $Q/2\pm1$, the square term pays at least one full unit of $DM$, and the residual is too small to erase the gap.  This is precisely the finite-arithmetic ``if and only if'' mechanism of the antithetical reduction.

\begin{reusableidea}
A hard binary choice can sometimes be isolated by making one aggregate quantity pay a dominant convex square while proving a uniform bound on every identity-dependent residual.  The reduction then needs only an integral unit gap and a scale choice larger than twice the residual budget; the detailed form of the residual may remain complicated.
\end{reusableidea}

\begin{finding}{Why Part IV is next}
Part III has classified exact reversal as weakly NP-complete and pseudopolynomially solvable.  It leaves open how far the algorithmic structure extends beyond one reverse permutation.  Part IV replaces the single load state by a four-coordinate separating-tree interface and uses recursive direct/skew order compatibility to obtain an FPTAS for a larger class.
\end{finding}

\clearpage

\part{Separating-Tree Analysis}
\thispagestyle{plain}
\label{part:cotree}
\setrunningunit{Part IV: Separating Trees}
\begin{center}
{\Large\bfseries An FPTAS for Separable Ratio Permutations\par}
\end{center}
\medskip
\begin{quote}\small
\noindent\textbf{Part synopsis.}
For fixed total refinements of the early- and tardiness-ratio orders, suppose the induced ratio permutation is separable.  A separating tree then recursively divides the jobs by direct and skew sums.  Every tree node determines a precise block of jobs, and a side assignment on that block is summarized by four nonnegative coordinates: early processing, tardy processing, early weight, and tardy weight.  The cost of pairs crossing two child blocks is an exact nonnegative bilinear form.  This gives an exact pseudopolynomial dynamic program for the original scheduling problem.  Keeping one minimum-cost candidate per coordinated geometric box converts that recurrence into an FPTAS.
\end{quote}

\begin{partposition}
\textbf{Starting boundary.}  The canonical side-assignment objective is quadratic, but a separable ratio permutation fixes the orientation of every pair crossing a direct or skew decomposition node.  \textbf{Result.}  For fixed total ratio-order refinements whose ratio permutation is separable, \AWET admits an FPTAS.  \textbf{Technique.}  An ordered separating tree compresses every future cross interaction into a four-coordinate nonnegative additive interface, and coordinated geometric trimming preserves one cheapest representative per box. 
\end{partposition}

\begin{notationreset}
The symbols $\prec_E$ and $\prec_L$ denote fixed total orders refining nondecreasing early and
tardiness ratios, and $\pi$ is their ratio permutation.  For a node $M$ of an ordered separating tree,
$J(M)$ denotes exactly the set of leaves below $M$; this set is called the \emph{separating-tree block}
of $M$.  For a side assignment on $J(M)$, the signature is $z_M=(e_M,t_M,u_M,v_M)$, where the four
entries are defined explicitly in \cref{CT:def:signature}.  The root height is $H$, the trimming ratio
is $\lambda=1+\varepsilon/(4H)$, and zero is always stored in a singleton bucket.
\end{notationreset}

\setshortsectionhead{Proof map}
\section{Introduction and proof map}

The argument has two layers.  First, an exact separating-tree dynamic program enumerates every side
assignment through a four-coordinate interface and returns an optimal schedule.  Its number of
numerical states is pseudopolynomial.  Second, coordinated geometric trimming stores only one
representative per four-dimensional box.  Nonnegativity and bilinearity ensure that the resulting
multiplicative loss can be charged by separating-tree height.

\begin{proofcontract}
\begin{itemize}[leftmargin=2em,itemsep=1pt,topsep=2pt]
\item \textbf{Input class:} positive-integer \AWET with $d=\sum_jp_j$, together with two fixed total
      ratio-order refinements whose ratio permutation is separable.
\item \textbf{Exact layer:} define a local canonical cost and a four-coordinate table at every
      separating-tree node; prove that the root recurrence equals the optimum of the original
      scheduling problem.
\item \textbf{Approximation layer:} replace the exact table by one least-cost candidate per geometric
      box while preserving zero coordinates exactly.
\item \textbf{Guarantee:} coordinate inflation is at most $\lambda$ per level and cost inflation at
      most $\lambda^2$ per level, giving $\lambda^{2H}\le1+\varepsilon$.
\item \textbf{Scope:} no distinct-ratio, balanced-tree, bounded-degree, or bounded-height assumption
      is imposed beyond $H\le n-1$ for a binary separating tree.
\end{itemize}
\end{proofcontract}

\setshortsectionhead{Ratio orders and separability}
\section{Ratio-order costs and separable permutations}
\label{CT:sec:orders}
Let
\[
 r_j=\frac{w_j^E}{p_j},
 \qquad
 s_j=\frac{w_j^L}{p_j}.
\]
Ratio comparisons are made exactly by cross multiplication.  Ties do not change any pair coefficient
in the canonical objective, but a permutation requires total orders.  We therefore fix one total order
$\prec_E$ refining nondecreasing $r_j$ and one total order $\prec_L$ refining nondecreasing $s_j$.
A deterministic job-index tie break is sufficient.

\begin{definition}[Ratio permutation and direct/skew sums]
\label{CT:def:orders}
List the jobs as
\[
 j_1\prec_E j_2\prec_E\cdots\prec_E j_n.
\]
For each position $k$, let $\pi(k)$ be the position of job $j_k$ in the order $\prec_L$.  The
permutation $\pi$ is the \emph{ratio permutation} of the chosen refinements.

Write $S_m$ for the set of permutations of $\{1,\ldots,m\}$.  For $\alpha\in S_p$ and
$\beta\in S_q$, their \emph{direct sum}
$\alpha\oplus\beta\in S_{p+q}$ is defined by
\[
 (\alpha\oplus\beta)(k)=
 \begin{cases}
  \alpha(k),&1\le k\le p,\\
  p+\beta(k-p),&p<k\le p+q,
 \end{cases}
\]
and their \emph{skew sum} $\alpha\ominus\beta\in S_{p+q}$ is defined by
\[
 (\alpha\ominus\beta)(k)=
 \begin{cases}
  q+\alpha(k),&1\le k\le p,\\
  \beta(k-p),&p<k\le p+q.
 \end{cases}
\]
Thus, in a direct sum, every entry of the left permutation is smaller than every entry of the right
permutation; in a skew sum, every entry of the left permutation is larger than every entry of the
right permutation.
\end{definition}

The ratio permutation depends on the chosen refinements when ratios tie.  This causes no objective
ambiguity: interchanging equal-ratio jobs leaves the relevant minimum coefficient unchanged.  The
results below apply to the fixed refinements actually used by the algorithm; they do not require every
possible tie refinement to have the same decomposition.

\begin{definition}[Separable permutation, separable instance, and separating tree]
\label{CT:def:compatible}
A \emph{separating tree} for a permutation $\pi\in S_n$ is a rooted plane binary tree whose leaves,
from left to right, are labeled
\[
 \pi(1),\pi(2),\ldots,\pi(n).
\]
For every tree node, the leaves below that node occupy consecutive leaf positions and their labels
form a consecutive set of integers.  Every internal node has one of two labels:
\begin{itemize}[leftmargin=2em,itemsep=1pt,topsep=2pt]
\item it is \emph{positive} or \emph{direct}, written $M=A\oplus B$, when every label below its left
      child $A$ is smaller than every label below its right child $B$;
\item it is \emph{negative} or \emph{skew}, written $M=A\ominus B$, when every label below its left
      child $A$ is larger than every label below its right child $B$.
\end{itemize}
A permutation is \emph{separable} when it has a separating tree.  Equivalently, it can be constructed
recursively from the singleton permutation $1$ by direct and skew sums.  An \AWET instance with fixed
refinements $(\prec_E,\prec_L)$ is \emph{separable} when its ratio permutation $\pi$ is separable.
\end{definition}

The leaves are the jobs in $\prec_E$ order, while their labels are the corresponding ranks in
$\prec_L$.  For example,
\[
 2143=(1\ominus1)\oplus(1\ominus1).
\]
Its separating tree has a positive root, two negative children, and leaves $2,1,4,3$ from left to
right:
\begin{center}
\begin{tikzpicture}[
 level distance=9mm,
 level 1/.style={sibling distance=34mm},
 level 2/.style={sibling distance=14mm},
 inode/.style={draw=navy,circle,fill=figurealgorithm,minimum size=6mm,inner sep=1pt},
 lnode/.style={draw=auditgray,circle,fill=figuregray,minimum size=6mm,inner sep=1pt},
 edge from parent/.style={draw=navy}
]
\node[inode] {$+$}
 child {node[inode] {$-$}
   child {node[lnode] {$2$}}
   child {node[lnode] {$1$}}}
 child {node[inode] {$-$}
   child {node[lnode] {$4$}}
   child {node[lnode] {$3$}}};
\end{tikzpicture}
\end{center}
A separating tree need not be unique: associativity permits rotations among adjacent internal nodes
with the same sign.  Such rotations do not change the direct/skew relations used by the algorithm.

The strict antithetical ordering studied in Part~\ref{part:antithetical} has ratio permutation
$n(n-1)\cdots1$, an iterated skew sum of singleton permutations.  It is therefore separable, and the
FPTAS in \cref{CT:thm:main} applies to that class in addition to the exact pseudopolynomial algorithm
of Part~\ref{part:antithetical}.

Separable permutations can be recognized in linear time by a stack scan, and a separating tree can be
recovered from the same computation~\cite{BoseBussLubiw1998}.  This preprocessing is therefore no
bottleneck for the dynamic programs below.

For comparison with the graph terminology, the \emph{inversion graph} $G_\times$ joins two jobs when
their relative orders in $\prec_E$ and $\prec_L$ disagree.  A permutation is separable if and only if
its inversion graph is a cograph~\cite{BoseBussLubiw1998,CorneilPerlStewart1985}.  At a positive node,
the two child graphs are combined by disjoint union; at a negative node, they are combined by join.
Consequently, a suitably $\prec_E$-ordered binary refinement of the graph-theoretic cotree is
equivalent to a separating tree: the children are ordered by their left-to-right intervals in
$\prec_E$, disjoint-union nodes become positive/direct nodes, and join nodes become negative/skew
nodes.  The scheduling development below uses only the permutation and its separating tree.

\begin{definition}[Separating-tree block]
\label{CT:def:block}
For a separating-tree node $M$, let $J(M)$ be the set of leaf jobs below $M$.  We call $J(M)$ the
\emph{separating-tree block} of $M$.  It is a contiguous interval in $\prec_E$, and the consecutive
label condition in the separating tree makes it a contiguous interval in $\prec_L$ as well.
\end{definition}

\begin{finding}{Exact assumptions for the FPTAS}
The algorithm assumes only the standard positive-integer \AWET model with $d=\sum_jp_j$, fixed total
refinements of the two ratio orders, and separability of the resulting ratio permutation.  A
separating tree need not be supplied: it can be recognized and recovered in linear time.  Equal ratios
are allowed, and the binary separating tree may be completely unbalanced.  Separability is a
sufficient structural hypothesis for this algorithm, not a proved necessary condition for the
existence of an FPTAS by some other method.
\end{finding}

Put
\[
 P=\sum_{j\in\J}p_j,
 \qquad
 W_E=\sum_{j\in\J}w_j^E,
 \qquad
 W_L=\sum_{j\in\J}w_j^L.
\]

\setshortsectionhead{Four-coordinate signatures}
\section{Local side assignments and the four signature coordinates}
Fix a separating-tree node $M$.  A local side assignment is a subset $E_M\subseteq J(M)$ of jobs assigned to
the early/on-time side; its tardy complement is
\[
 T_M=J(M)\setminus E_M.
\]

\begin{definition}[Four-coordinate signature]
\label{CT:def:signature}
For the local side assignment $(E_M,T_M)$, define
\begin{align}
 e_M
 &=\sum_{j\in E_M}p_j,
 &&\text{early processing load},
 \label{CT:eq:eM}\\
 t_M
 &=\sum_{j\in T_M}p_j,
 &&\text{tardy processing load},
 \label{CT:eq:tM}\\
 u_M
 &=\sum_{j\in E_M}w_j^E,
 &&\text{earliness weight carried by early jobs},
 \label{CT:eq:uM}\\
 v_M
 &=\sum_{j\in T_M}w_j^L,
 &&\text{tardiness weight carried by tardy jobs}.
 \label{CT:eq:vM}
\end{align}
The signature is
\begin{equation}
 z_M=z_M(E_M):=(e_M,t_M,u_M,v_M)\in\mathbb Z_{\ge0}^4.
 \label{CT:eq:signature}
\end{equation}
Although $e_M+t_M=\sum_{j\in J(M)}p_j$, both processing coordinates are retained.  This makes every
coordinate nonnegative and additive, represents zero tardy load exactly, and avoids subtraction in
both the exact recurrence and the trimming proof.
\end{definition}

The entries $u_M$ and $v_M$ are not the total early and tardy weights of all jobs in the block.  They
are side-specific sums: $u_M$ includes $w_j^E$ only for jobs currently assigned early, whereas $v_M$
includes $w_j^L$ only for jobs currently assigned tardy.

For a local assignment $E_M$, define its \emph{internal canonical cost} by restricting the global
canonical objective to jobs and pairs lying wholly in $J(M)$:
\begin{align}
 \phi_M(E_M)
 :={}&
 \sum_{\{i,j\}\subseteq E_M}p_ip_j\min\{r_i,r_j\}
 \label{CT:eq:localcost-first}\\
 &+\sum_{\{i,j\}\subseteq T_M}p_ip_j\min\{s_i,s_j\}
 \notag\\
 &+\sum_{i\in T_M}p_iw_i^L.
 \label{CT:eq:localcost}
\end{align}
At a leaf containing job $j$, the two possible states are therefore
\begin{align}
 z_j^E
 &=(p_j,0,w_j^E,0),
 &\phi_j(\{j\})&=0,
 \label{CT:eq:leafE}\\
 z_j^{\mathrm{tar}}
 &=(0,p_j,0,w_j^L),
 &\phi_j(\varnothing)&=p_jw_j^L.
 \label{CT:eq:leafT}
\end{align}

\setshortsectionhead{Bilinear convolution}
\section{Cross costs and exact bilinear convolution}
Let $M$ have left child $A$ and right child $B$.  A \emph{cross pair for the split $(A,B)$} is an
unordered two-element set $\{i,j\}$ with $i\in J(A)$ and $j\in J(B)$.  The \emph{cross cost}
is the total canonical-objective contribution of those cross pairs $\{i,j\}$ for which $i$ and
$j$ are assigned to the same side.  For generic signatures
\[
 z_A=(e_A,t_A,u_A,v_A),
 \qquad
 z_B=(e_B,t_B,u_B,v_B),
\]
define
\begin{align}
 g_\oplus(z_A,z_B)
 &=u_Ae_B+v_At_B,
 \label{CT:eq:direct}\\
 g_\ominus(z_A,z_B)
 &=u_Ae_B+t_Av_B.
 \label{CT:eq:skew}
\end{align}
The first formula is used at a direct node and the second at a skew node.  The two cases are summarized below.

\begin{center}
\small
\begin{tabularx}{0.98\textwidth}{@{}p{0.12\textwidth}p{0.17\textwidth}p{0.18\textwidth}p{0.17\textwidth}X@{}}
\toprule
node & early cross relation & early term & tardy cross relation & tardy term and total\\
\midrule
direct $A\oplus B$ & $A\prec_E B$ & $u_Ae_B$ & $A\prec_L B$ & $v_At_B$; hence $g_\oplus=u_Ae_B+v_At_B$\\
skew $A\ominus B$ & $A\prec_E B$ & $u_Ae_B$ & $B\prec_L A$ & $t_Av_B$; hence $g_\ominus=u_Ae_B+t_Av_B$\\
\bottomrule
\end{tabularx}
\end{center}

\begin{lemma}[Exact nonnegative bilinear convolution]
\label{CT:lem:bilin}
Let $M=A\oplus B$ or $M=A\ominus B$.  For every side assignment on $J(M)$,
\begin{equation}
 z_M=z_A+z_B
 \label{CT:eq:additive-signature}
\end{equation}
and
\begin{equation}
 \phi_M(E_M)
 =\phi_A(E_A)+\phi_B(E_B)+g_\circ(z_A,z_B),
 \label{CT:eq:exact-convolution}
\end{equation}
where $\circ=\oplus$ at a direct node and $\circ=\ominus$ at a skew node.  Every term in
$g_\circ$ is nonnegative and is the product of one coordinate from each child.
\end{lemma}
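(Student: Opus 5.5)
The plan is to prove the two identities separately. Additivity \eqref{CT:eq:additive-signature} is immediate: $J(M)$ is the disjoint union of $J(A)$ and $J(B)$, and the local assignment restricts to $E_A=E_M\cap J(A)$ and $E_B=E_M\cap J(B)$. Each of the four coordinates in \cref{CT:def:signature} is a sum over jobs on a fixed side, so each splits into the contributions from $A$ and from $B$.

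For \eqref{CT:eq:exact-convolution}, partition the terms of $\phi_M$ in \eqref{CT:eq:localcost} into three groups:
\begin{itemize}
\item pairs and tardy self-terms lying wholly in $J(A)$, which sum to $\phi_A(E_A)$;
\item those lying wholly in $J(B)$, which sum to $\phi_B(E_B)$;
\item cross pairs $\{i,j\}$ with $i\in J(A)$ and $j\in J(B)$.
\end{itemize}
Self-terms are never cross terms. A cross pair split across the two sides contributes nothing, so the third group consists of early cross pairs ($i,j$ both early) and tardy cross pairs ($i,j$ both tardy).

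\textbf{Early cross pairs.} By \cref{CT:def:block}, the leaves of $A$ precede those of $B$ in the plane order, which is the $\prec_E$ order. So $i\prec_E j$, and because $\prec_E$ refines nondecreasing $r$, we get $r_i\le r_j$. Hence $\min\{r_i,r_j\}=r_i$, and each early cross pair contributes $p_ip_jr_i=w_i^Ep_j$. Summing over $i\in E_A$ and $j\in E_B$ factors this as $\bigl(\sum_{i\in E_A}w_i^E\bigr)\bigl(\sum_{j\in E_B}p_j\bigr)=u_Ae_B$. This holds at both node types.

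\textbf{Tardy cross pairs.} At a direct node every label under $A$ is smaller than every label under $B$, so $i\prec_L j$ and $s_i\le s_j$. The minimum is $s_i$, each tardy cross pair contributes $p_ip_js_i=w_i^Lp_j$, and the sum is $v_At_B$. At a skew node the labels are reversed, so $j\prec_L i$ and $s_j\le s_i$. Each pair contributes $p_iw_j^L$, and the sum is $t_Av_B$. Together with the early term, this gives exactly $g_\oplus$ and $g_\ominus$ as in \eqref{CT:eq:direct}--\eqref{CT:eq:skew}.

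The main point needing care is ties. When $r_i=r_j$ or $s_i=s_j$, the minimum equals either ratio, so choosing the one dictated by the refinement is still exact; this is why the refinements being total and refining the ratio orders is all that is used. One should also check that "label order" really means $\prec_L$ rank, which holds by the definition of $\pi$ in \cref{CT:def:orders}.

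Nonnegativity follows because all four coordinates are sums of positive integers or zero. The bilinear form, with one factor from each child, is visible from the formulas.
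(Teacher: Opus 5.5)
Your proposal is correct and follows the paper's proof essentially step for step. It obtains coordinatewise additivity from the disjoint split $J(M)=J(A)\cup J(B)$, factors the early cross pairs as $u_Ae_B$ because $A$ precedes $B$ in $\prec_E$, and splits on the direct/skew $\prec_L$ order to get $v_At_B$ or $t_Av_B$, with within-child pairs and self-terms absorbed into $\phi_A$ and $\phi_B$; your explicit remarks on ties and nonnegativity are harmless additions that the paper leaves implicit.
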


\begin{proof}
Because $J(A)$ and $J(B)$ are disjoint, each signature coordinate splits into its two child sums.  For
example,
\begin{align*}
 e_M
 &=\sum_{j\in E_M}p_j\\
 &=\sum_{j\in E_A}p_j+\sum_{j\in E_B}p_j\\
 &=e_A+e_B.
\end{align*}
The same calculation gives $t_M=t_A+t_B$, $u_M=u_A+u_B$, and $v_M=v_A+v_B$, proving
\eqref{CT:eq:additive-signature}.

At either node type, every job of $A$ precedes every job of $B$ in the nondecreasing early-ratio order.
Thus $r_i\le r_j$ for $i\in J(A)$ and $j\in J(B)$.  The cross contribution of pairs that are both
early assigned is
\begin{align*}
 &\sum_{i\in E_A}\sum_{j\in E_B}p_ip_j\min\{r_i,r_j\}\\
 &\qquad=\sum_{i\in E_A}\sum_{j\in E_B}p_ip_jr_i\\
 &\qquad=\sum_{i\in E_A}\sum_{j\in E_B}w_i^Ep_j\\
 &\qquad=\left(\sum_{i\in E_A}w_i^E\right)
          \left(\sum_{j\in E_B}p_j\right)\\
 &\qquad=u_Ae_B.
\end{align*}

Suppose first that $M=A\oplus B$.  Then every job of $A$ also precedes every job of $B$ in the
nondecreasing tardiness-ratio order, so $s_i\le s_j$ for $i\in J(A)$ and $j\in J(B)$.  Hence the
cross contribution of pairs that are both tardy is
\begin{align*}
 &\sum_{i\in T_A}\sum_{j\in T_B}p_ip_j\min\{s_i,s_j\}\\
 &\qquad=\sum_{i\in T_A}\sum_{j\in T_B}p_ip_js_i\\
 &\qquad=\sum_{i\in T_A}\sum_{j\in T_B}w_i^Lp_j\\
 &\qquad=\left(\sum_{i\in T_A}w_i^L\right)
          \left(\sum_{j\in T_B}p_j\right)\\
 &\qquad=v_At_B.
\end{align*}
Adding the early and tardy cross contributions gives $g_\oplus(z_A,z_B)$.

Now suppose that $M=A\ominus B$.  Every job of $B$ precedes every job of $A$ in the nondecreasing
tardiness-ratio order, so $s_j\le s_i$ for $i\in J(A)$ and $j\in J(B)$.  The tardy cross contribution
is then
\begin{align*}
 &\sum_{i\in T_A}\sum_{j\in T_B}p_ip_j\min\{s_i,s_j\}\\
 &\qquad=\sum_{i\in T_A}\sum_{j\in T_B}p_ip_js_j\\
 &\qquad=\sum_{i\in T_A}\sum_{j\in T_B}p_iw_j^L\\
 &\qquad=\left(\sum_{i\in T_A}p_i\right)
          \left(\sum_{j\in T_B}w_j^L\right)\\
 &\qquad=t_Av_B.
\end{align*}
Adding the early cross term gives $g_\ominus(z_A,z_B)$.

Every tardy self-term belongs to a leaf, every within-child pair belongs to the corresponding child
cost, and every cross pair is charged at the unique lowest separating-tree node at which its two
jobs lie in different child blocks.  Thus no term is omitted or counted twice, and
\eqref{CT:eq:exact-convolution} follows.
\end{proof}

\setshortsectionhead{Exact dynamic program}
\section{The exact pseudopolynomial dynamic program}
\label{CT:sec:exactdp}
The exact recurrence is stated before trimming so that both its state space and its connection to the
original scheduling problem are explicit.

\begin{definition}[Exact separating-tree table]
\label{CT:def:exactdp}
For each node $M$ and integer vector $z\in\mathbb Z_{\ge0}^4$, define
\begin{equation}
 \mathcal D_M(z)
 :=\min\bigl\{\phi_M(E_M):E_M\subseteq J(M),\ z_M(E_M)=z\bigr\}.
 \label{CT:eq:exact-table}
\end{equation}
If no assignment has signature $z$, set $\mathcal D_M(z)=+\infty$.  With every finite entry, store a
backpointer to an assignment attaining it.
\end{definition}

At a leaf $j$, the only finite entries are
\begin{align}
 \mathcal D_j(p_j,0,w_j^E,0)
 &=0,
 \label{CT:eq:exact-leafE}\\
 \mathcal D_j(0,p_j,0,w_j^L)
 &=p_jw_j^L.
 \label{CT:eq:exact-leafT}
\end{align}
At an internal node $M=A\circ B$, where $\circ\in\{\oplus,\ominus\}$, use
\begin{align}
 \mathcal D_M(z)
 =\min\bigl\{
 &\mathcal D_A(z_A)+\mathcal D_B(z_B)+g_\circ(z_A,z_B):
 \label{CT:eq:exact-DP-first}\\
 &z_A,z_B\in\mathbb Z_{\ge0}^4,
 \quad z_A+z_B=z,
 \quad \mathcal D_A(z_A),\mathcal D_B(z_B)<+\infty
 \bigr\}.
 \label{CT:eq:exact-DP}
\end{align}

\begin{theorem}[Exact separating-tree dynamic program]
\label{CT:thm:exactdp}
For every separating-tree node $M$ and every signature $z$,
\begin{equation}
 \mathcal D_M(z)
 =\min\bigl\{\phi_M(E_M):E_M\subseteq J(M),\ z_M(E_M)=z\bigr\},
 \label{CT:eq:exact-induction}
\end{equation}
where the minimum of the empty set is $+\infty$.  Whenever $\mathcal D_M(z)<+\infty$, the stored
backpointers reconstruct an assignment attaining this value.  If $R$ is the root, then the optimum
value of the original scheduling instance is
\begin{equation}
 \OPT
 =\min\bigl\{\mathcal D_R(z):z=(e,t,u,v)\in\mathbb Z_{\ge0}^4,\ e>0\bigr\}.
 \label{CT:eq:root-opt}
\end{equation}
Backtracking from a minimizing root state gives an optimal early set and hence an optimal canonical
schedule.  The recurrence is pseudopolynomial.
\end{theorem}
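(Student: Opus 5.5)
The plan is a structural induction on the separating tree, followed by a root identification through the canonical objective.

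\textbf{Leaves.} At a leaf $j$ there are exactly two local assignments, $\{j\}$ and $\varnothing$. Their signatures and internal costs are \eqref{CT:eq:leafE}--\eqref{CT:eq:leafT}. These two signatures differ, since one has $e=p_j>0$ and the other has $t=p_j>0$. So \eqref{CT:eq:exact-leafE}--\eqref{CT:eq:exact-leafT} agree with \eqref{CT:eq:exact-induction}, and every other $z$ has value $+\infty$ on both sides.

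\textbf{Internal nodes.} Let $M=A\circ B$. Because $J(A)$ and $J(B)$ partition $J(M)$, local assignments $E_M$ correspond bijectively to pairs $(E_A,E_B)$ via $E_A=E_M\cap J(A)$ and $E_B=E_M\cap J(B)$. By \cref{CT:lem:bilin}, $z_M=z_A+z_B$ and $\phi_M=\phi_A+\phi_B+g_\circ(z_A,z_B)$.

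For ``$\ge$'', take any $E_M$ with $z_M(E_M)=z$ and set $z_A=z_A(E_A)$, $z_B=z_B(E_B)$. The induction hypothesis gives $\mathcal D_A(z_A)\le\phi_A(E_A)$ and $\mathcal D_B(z_B)\le\phi_B(E_B)$. Since $g_\circ$ depends only on the signatures, $\phi_M(E_M)$ is at least the candidate value in \eqref{CT:eq:exact-DP} for the split $(z_A,z_B)$.

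For ``$\le$'', take any finite candidate split $(z_A,z_B)$. The backpointers give assignments $E_A,E_B$ attaining $\mathcal D_A(z_A)$ and $\mathcal D_B(z_B)$. Their union $E_M$ has signature $z$ and, by the lemma, cost exactly equal to the candidate value. This also shows that storing the minimizing split together with the children's backpointers reconstructs an optimal local assignment.

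The one delicate point in the whole argument is that the cross cost depends on the children's assignments only through their signatures. That is exactly what \cref{CT:lem:bilin} provides, so no optimal-substructure exchange argument is needed.

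\textbf{Root.} At the root $J(R)=\J$, and $\phi_R$ is precisely the canonical objective $\Phi$ of \cref{prop:setobjective}. Since $p_j>0$, the condition $e>0$ holds exactly when $E\ne\varnothing$. Hence the right side of \eqref{CT:eq:root-opt} equals $\min_{\varnothing\ne E}\Phi(E)=\OPT$ by \eqref{eq:setopt}. Backtracking from a minimizing root state yields an optimal early set $E$, and the canonical construction of \cref{prop:setobjective} turns it into an optimal schedule.

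\textbf{Pseudopolynomiality.} Every finite state at any node lies in the box $[0,P]\times[0,P]\times[0,W_E]\times[0,W_L]$, so each table has at most $(P+1)^2(W_E+1)(W_L+1)$ entries. Combining two child tables by enumerating all pairs of finite states costs the square of that bound per node, with constant work for each $g_\circ$ evaluation. There are $2n-1$ nodes, so the total running time is polynomial in $n$, $P$, $W_E$, and $W_L$, which is pseudopolynomial.
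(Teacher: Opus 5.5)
Your proposal is correct and follows essentially the same route as the paper. Both use structural induction on the separating tree, obtain the two inequalities at an internal node from the exact bilinear convolution lemma (restriction to child assignments for one direction, union of backpointer-attained child assignments for the other), identify the root through $e>0 \iff E\neq\varnothing$ and the canonical set objective, and bound the table by $(P+1)^2(W_E+1)(W_L+1)$ states per node.
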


\begin{proof}
We prove the following exact induction assertion $\mathcal I(M)$ for every separating-tree node $M$:
for every signature $z$, equality \eqref{CT:eq:exact-induction} holds, and every finite table entry has
a stored assignment attaining it.

At a leaf, \eqref{CT:eq:exact-leafE} and \eqref{CT:eq:exact-leafT} list the only two side assignments,
so $\mathcal I(M)$ holds.

Let $M=A\circ B$ be internal and assume $\mathcal I(A)$ and $\mathcal I(B)$.  Fix an arbitrary
assignment $E_M$.  It restricts uniquely to
\[
 E_A=E_M\cap J(A),
 \qquad
 E_B=E_M\cap J(B).
\]
Write $z_A=z_A(E_A)$ and $z_B=z_B(E_B)$.  By signature additivity,
\[
 z_M(E_M)=z_A+z_B.
\]
By the child induction assertions and \cref{CT:lem:bilin},
\begin{align*}
 \phi_M(E_M)
 &=\phi_A(E_A)+\phi_B(E_B)+g_\circ(z_A,z_B)\\
 &\ge \mathcal D_A(z_A)+\mathcal D_B(z_B)+g_\circ(z_A,z_B)\\
 &\ge \mathcal D_M(z_M(E_M)).
\end{align*}
The first inequality can be strict: the restrictions of an arbitrary parent assignment need not be
minimum-cost child assignments among all assignments having signatures $z_A$ and $z_B$.  Thus the
recurrence value is no larger than the cost of any assignment with the requested parent signature.

Conversely, let $z$ be any signature with $\mathcal D_M(z)<+\infty$, and take a pair
$(z_A,z_B)$ attaining the minimum in \eqref{CT:eq:exact-DP}.  By $\mathcal I(A)$ and
$\mathcal I(B)$, the stored child backpointers yield assignments $E_A$ and $E_B$
of costs $\mathcal D_A(z_A)$ and $\mathcal D_B(z_B)$.  Their union
\[
 E_M=E_A\cup E_B
\]
has signature $z_A+z_B=z$, and \cref{CT:lem:bilin} gives
\begin{align*}
 \phi_M(E_M)
 &=\mathcal D_A(z_A)+\mathcal D_B(z_B)+g_\circ(z_A,z_B)\\
 &=\mathcal D_M(z).
\end{align*}
Therefore the recurrence value is attainable and is exactly optimal for that signature.  For any
signature $z'$ with $\mathcal D_M(z')=+\infty$, the first half shows that no assignment can have
signature $z'$, because any such assignment would generate a finite recurrence candidate.  Storing
the two child backpointers proves the attainability part of $\mathcal I(M)$.

At the root, positive processing times imply
\[
 e_R>0
 \quad\Longleftrightarrow\quad
 E_R\ne\varnothing.
\]
By \cref{prop:setobjective}, every nonempty $E_R$ defines a feasible due-date-aligned canonical
schedule of cost $\phi_R(E_R)$, and the global optimum is the minimum of this expression over all
nonempty early sets.  Hence minimizing the exact root table over $e_R>0$ proves
\eqref{CT:eq:root-opt}.  Backpointers recover the side assignment.  Scheduling the early jobs in
nondecreasing $r_j$ order so that the early block ends at $d$, and scheduling the tardy jobs in reverse
$\prec_L$ order (equivalently, nonincreasing $s_j$ order) starting at $d$, then recovers an optimal
schedule for the original problem.

It remains to bound the exact state space.  Every reachable coordinate satisfies
\[
 0\le e,t\le P,
 \qquad
 0\le u\le W_E,
 \qquad
 0\le v\le W_L.
\]
Consequently, every node has at most
\begin{equation}
 S:=(P+1)^2(W_E+1)(W_L+1)
 \label{CT:eq:exact-state-bound}
\end{equation}
finite signatures.  A direct implementation scans at most $S^2$ child-state pairs at each of
$O(n)$ internal nodes, for at most $O(nS^2)$ exact-arithmetic operations.  All table costs and
backpointers have polynomial binary encoding length.  The running time is therefore polynomial in
$n,P,W_E,W_L$ and hence pseudopolynomial in the original binary input.
\end{proof}

\begin{howto}
The exact recurrence is already a self-contained algorithm for numerically small instances.  Construct the
ordered separating tree, fill the two leaf states, combine child tables by \eqref{CT:eq:exact-DP}, choose the
least root entry with $e>0$, and follow backpointers to recover the early/tardy assignment and its
canonical schedule.
\end{howto}

\setshortsectionhead{Geometric trimming}
\section{Geometric trimming of the exact recurrence}
\label{CT:sec:trimming}
A one-job instance is solved directly.  Hence assume the binary separating tree has root height $H\ge1$, where
leaves have height $0$.  Fix a rational accuracy parameter $0<\varepsilon\le1$ and put
\begin{equation}
 \lambda=1+\frac{\varepsilon}{4H}.
 \label{CT:eq:lambda}
\end{equation}

\begin{definition}[Geometric buckets and boxes]
\label{CT:def:boxes}
Zero has its own bucket, denoted $\bot$.  For an integer $x>0$, define
\[
 \beta_\lambda(x)=\floor{\log_\lambda x}.
\]
For a signature $z=(e,t,u,v)$, its box is
\begin{equation}
 \operatorname{box}_\lambda(z)
 =\bigl(\beta_\lambda(e),\beta_\lambda(t),
        \beta_\lambda(u),\beta_\lambda(v)\bigr),
 \label{CT:eq:box}
\end{equation}
where $\beta_\lambda(0)=\bot$.
\end{definition}

The bucket index need not be computed with floating-point logarithms.  Because $\varepsilon$ is
rational, write $\lambda=a/b>1$ in lowest terms; the integers $a$ and $b$ have polynomial binary
length.  For an integer coordinate $x>0$, the index $q=\beta_\lambda(x)$ is characterized exactly by
\[
 a^q\le xb^q,
 \qquad
 a^{q+1}>xb^{q+1}.
\]
A doubling search followed by binary search finds $q$ using exact integer comparisons, with powers
computed by repeated squaring.  For a coordinate bounded by $U$, the running-time estimate below gives
$q=O((H/\varepsilon)\log U)$.  The integers compared have bit length
$O(q(\log a+\log b)+\log U)$, which is polynomial in the binary input length and $1/\varepsilon$.
Thus box membership is computable in the ordinary Turing model without real-number arithmetic.

\paragraph{Trimmed recurrence.}
At a leaf, retain both exact states.  At an internal node $M=A\circ B$, generate one parent candidate
from every pair of retained child candidates.  Compute its signature and exact cost by
\begin{align*}
 z_M&=z_A+z_B,\\
 c_M&=c_A+c_B+g_\circ(z_A,z_B).
\end{align*}
Group the generated candidates by $\operatorname{box}_\lambda(z_M)$.  In every nonempty box, retain a
candidate of minimum exact cost, together with its child backpointers.  At the root, return the
minimum-cost retained candidate with $e>0$ and reconstruct its schedule.

\begin{lemma}[Same-box replacement]
\label{CT:lem:samebox}
Let candidate $(z',c')$ be discarded in favor of retained candidate $(\widehat z,\widehat c)$ from the
same box.  Then
\begin{align}
 \widehat c&\le c',
 \label{CT:eq:samebox-cost}\\
 \widehat z_\ell&=0
 \quad\Longleftrightarrow\quad
 z'_\ell=0,
 \label{CT:eq:samebox-zero}\\
 \widehat z_\ell&\le\lambda z'_\ell
 \qquad\text{for every coordinate }\ell.
 \label{CT:eq:samebox-coordinate}
\end{align}
\end{lemma}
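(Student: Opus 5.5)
The three claims follow directly from the retention rule and from the definition of the bucket map $\beta_\lambda$. The plan is to verify them one at a time, coordinate by coordinate, using only \cref{CT:def:boxes} and the trimmed recurrence.

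\textbf{Cost claim.} Inequality \eqref{CT:eq:samebox-cost} is immediate from the trimmed recurrence. In every nonempty box it retains a candidate of minimum exact cost among all generated candidates in that box. Since $(z',c')$ lies in the same box as $(\widehat z,\widehat c)$ and was among those generated candidates, we get $\widehat c\le c'$.

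\textbf{Zero claim.} For \eqref{CT:eq:samebox-zero}, use that $\operatorname{box}_\lambda(\widehat z)=\operatorname{box}_\lambda(z')$ means $\beta_\lambda(\widehat z_\ell)=\beta_\lambda(z'_\ell)$ for each of the four coordinates $\ell$. By definition $\beta_\lambda(0)=\bot$, and $\beta_\lambda(x)$ is an integer whenever $x>0$. So one coordinate equals zero exactly when its bucket is $\bot$, and it follows that $\widehat z_\ell=0$ if and only if $z'_\ell=0$. This also settles \eqref{CT:eq:samebox-coordinate} in the zero case, since $0\le\lambda\cdot0$.

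\textbf{Coordinate claim.} Now take a coordinate $\ell$ with $\widehat z_\ell,z'_\ell>0$, and let $q$ be their common index $\floor{\log_\lambda x}$. The floor characterization gives
\[
 \lambda^q\le \widehat z_\ell<\lambda^{q+1},
 \qquad
 \lambda^q\le z'_\ell<\lambda^{q+1}.
\]
Combining these, $\widehat z_\ell<\lambda^{q+1}=\lambda\cdot\lambda^q\le\lambda z'_\ell$, which proves \eqref{CT:eq:samebox-coordinate}, in fact with strict inequality. Because the signatures are integers, the exact integer characterization $a^q\le xb^q<a^{q+1}/b\cdot b^{q}$, given after \cref{CT:def:boxes}, coincides with the real logarithmic one. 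So there is no gap between the computational bucket and the mathematical bucket.

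None of these steps is a real obstacle. The only subtle point is the zero bucket: the multiplicative bound alone could not keep zero coordinates exact, and it is the singleton bucket $\bot$ that guarantees this.
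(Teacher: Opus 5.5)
Your proof is correct and follows the paper's own argument essentially verbatim: minimality of the retained cost within the box gives $\widehat c\le c'$, the singleton bucket $\bot$ gives the zero pattern, and the common half-open interval $[\lambda^q,\lambda^{q+1})$ gives $\widehat z_\ell<\lambda z'_\ell$. Only your side remark on the exact integer test is garbled. The correct form is $a^q\le xb^q$ together with $xb^{q+1}<a^{q+1}$, and it holds for any positive $x$, not only integers. The remark carries no weight in the proof.
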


\begin{proof}
The retained candidate minimizes exact cost in the box, proving \eqref{CT:eq:samebox-cost}.  The zero
bucket contains only zero, proving \eqref{CT:eq:samebox-zero}.  If a coordinate is positive and has
bucket index $q$, both coordinate values lie in
\[
 [\lambda^q,\lambda^{q+1}),
\]
so the retained value is strictly smaller than $\lambda$ times the discarded value.  This proves
\eqref{CT:eq:samebox-coordinate}.
\end{proof}

\begin{lemma}[Representative invariant]
\label{CT:lem:rep}
Let $M$ have height $h(M)$.  For every exact state $(z,c)$ represented in $\mathcal D_M$, the trimmed
table contains a retained state $(\widehat z,\widehat c)$ such that
\begin{align}
 \widehat z_\ell=0
 &\quad\Longleftrightarrow\quad
 z_\ell=0,
 \label{CT:eq:rep-zero}\\
 \widehat z_\ell
 &\le\lambda^{h(M)}z_\ell
 \qquad\text{for every coordinate }\ell,
 \label{CT:eq:rep-coordinate}\\
 \widehat c
 &\le\lambda^{2h(M)}c.
 \label{CT:eq:rep-cost}
\end{align}
\end{lemma}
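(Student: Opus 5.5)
The plan is induction on the height $h(M)$, with the three properties \eqref{CT:eq:rep-zero}--\eqref{CT:eq:rep-cost} carried together. At a leaf, $h=0$ and both exact states are retained unchanged, so the representative is the state itself. Every inequality then holds with factor $1$.

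For an internal node $M=A\circ B$, take an exact state $(z,c)$ of $\mathcal D_M$. By \cref{CT:thm:exactdp} and \cref{CT:lem:bilin} it comes from child states $(z_A,c_A)$ and $(z_B,c_B)$ with
\[
 z=z_A+z_B,\qquad c=c_A+c_B+g_\circ(z_A,z_B).
\]
It is enough to treat exact states realized by assignments, taking $c$ to be the cost of such an assignment. Both children have height at most $h(M)-1$, so the induction hypothesis gives retained child states $(\widehat z_A,\widehat c_A)$ and $(\widehat z_B,\widehat c_B)$. These satisfy zero-preservation, coordinatewise inflation by at most $\lambda^{h(M)-1}$, and cost inflation by at most $\lambda^{2(h(M)-1)}$. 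The trimmed recurrence pairs these two states and generates the candidate
\[
 \widetilde z=\widehat z_A+\widehat z_B,\qquad \widetilde c=\widehat c_A+\widehat c_B+g_\circ(\widehat z_A,\widehat z_B).
\]
Coordinates are nonnegative and add, so $\widetilde z_\ell=0$ exactly when both child coordinates are zero, which holds exactly when $z_\ell=0$. We also get $\widetilde z_\ell\le\lambda^{h-1}z_\ell$.

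The key step is the cost bound for this candidate. Each term of $g_\circ$ is a product of one nonnegative coordinate from each child, so
\[
 g_\circ(\widehat z_A,\widehat z_B)\le\lambda^{2(h-1)}g_\circ(z_A,z_B).
\]
Combining this with the child cost bounds gives $\widetilde c\le\lambda^{2(h-1)}c$.

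The candidate may itself be discarded in favour of another candidate $(\widehat z,\widehat c)$ in its box. \cref{CT:lem:samebox} then supplies zero-equivalence, an extra factor $\lambda$ per coordinate (giving $\lambda^{h}$), and $\widehat c\le\widetilde c\le\lambda^{2(h-1)}c\le\lambda^{2h}c$. So the cost allowance actually has slack; the factor $\lambda^2$ per level is needed only because bilinear terms see two inflated coordinates.

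The main obstacle is that heights are defined as the maximum over children, so the two children may have different heights. I would handle this by noting that the bounds at the smaller height imply the same bounds with $h(M)-1$, since $\lambda\ge1$. A second subtlety is that exact states whose cost exceeds the table minimum must also be covered. Stating the invariant for every assignment-realized pair $(z,c)$, and not only for the table minima, makes the induction clean.
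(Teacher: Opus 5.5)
Your proof is correct and follows the paper's argument essentially step by step: induction on height, pairing the retained child representatives, using nonnegative bilinearity (with $h(A),h(B)\le h-1$) to get $\widetilde c\le\lambda^{2(h-1)}c$, and then paying one extra factor $\lambda$ on coordinates through same-box replacement while preserving the zero pattern. The only difference is minor: the paper takes a minimizing pair $(z_A,z_B)$ in the exact recurrence, so the child states are already table minima $c_A=\mathcal D_A(z_A)$, $c_B=\mathcal D_B(z_B)$ and no strengthened invariant is needed. Your version over all assignment-realized pairs is equally valid and slightly more general.
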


\begin{proof}
The induction controls two quantities in the order needed by the recurrence.  First it bounds coordinate inflation before the parent is trimmed; then nonnegative bilinearity converts those coordinate bounds into a cost bound.  The singleton zero bucket is tracked separately so that root feasibility is never lost.

We use induction on $h(M)$.  A leaf has height $0$, and both leaf states are retained exactly, so all
three claims hold with equality.

Let $M=A\circ B$ have height $h\ge1$, and fix a finite exact entry $(z,c)$ of
$\mathcal D_M$, so $c=\mathcal D_M(z)$.  Choose child signatures $z_A,z_B$ attaining the minimum in
\eqref{CT:eq:exact-DP}, and put
\[
 c_A=\mathcal D_A(z_A),
 \qquad
 c_B=\mathcal D_B(z_B).
\]
Then
\[
 z=z_A+z_B,
 \qquad
 c=c_A+c_B+g_\circ(z_A,z_B).
\]
By induction, choose retained child representatives $(\widehat z_A,\widehat c_A)$ and
$(\widehat z_B,\widehat c_B)$.  Before trimming at $M$, combine them into
\begin{align}
 \widetilde z
 &=\widehat z_A+\widehat z_B,
 \label{CT:eq:tilde-z}\\
 \widetilde c
 &=\widehat c_A+\widehat c_B
   +g_\circ(\widehat z_A,\widehat z_B).
 \label{CT:eq:tilde-c}
\end{align}

Because $h(A),h(B)\le h-1$, coordinate additivity gives, for every coordinate $\ell$,
\begin{align*}
 \widetilde z_\ell
 &=\widehat z_{A,\ell}+\widehat z_{B,\ell}\\
 &\le\lambda^{h(A)}z_{A,\ell}
      +\lambda^{h(B)}z_{B,\ell}\\
 &\le\lambda^{h-1}\bigl(z_{A,\ell}+z_{B,\ell}\bigr)\\
 &=\lambda^{h-1}z_\ell.
\end{align*}

Every term in $g_\circ$ is a product of one nonnegative child coordinate from $A$ and one from
$B$.  For any such term,
\begin{align*}
 \widehat z_{A,q}\widehat z_{B,r}
 &\le\lambda^{h(A)}z_{A,q}\,\lambda^{h(B)}z_{B,r}\\
 &=\lambda^{h(A)+h(B)}z_{A,q}z_{B,r}\\
 &\le\lambda^{2(h-1)}z_{A,q}z_{B,r}.
\end{align*}
All three quantities $c_A$, $c_B$, and $g_\circ(z_A,z_B)$ are nonnegative.  Summing the
termwise bounds and using the child cost bounds therefore yields
\begin{align*}
 \widetilde c
 &=\widehat c_A+\widehat c_B
   +g_\circ(\widehat z_A,\widehat z_B)\\
 &\le\lambda^{2(h-1)}c_A
      +\lambda^{2(h-1)}c_B
      +\lambda^{2(h-1)}g_\circ(z_A,z_B)\\
 &=\lambda^{2(h-1)}
   \bigl(c_A+c_B+g_\circ(z_A,z_B)\bigr)\\
 &=\lambda^{2(h-1)}c.
\end{align*}

Let $(\widehat z,\widehat c)$ be the state retained in the box containing
$(\widetilde z,\widetilde c)$.  By \cref{CT:lem:samebox},
\begin{align*}
 \widehat z_\ell
 &\le\lambda\widetilde z_\ell\\
 &\le\lambda^h z_\ell,
\end{align*}
and
\begin{align*}
 \widehat c
 &\le\widetilde c\\
 &\le\lambda^{2(h-1)}c\\
 &\le\lambda^{2h}c.
\end{align*}
A coordinate sum is zero exactly when both child coordinates are zero.  The induction hypothesis
preserves those child zero patterns, and the singleton zero bucket preserves the parent zero pattern.
This proves \eqref{CT:eq:rep-zero}--\eqref{CT:eq:rep-cost}.
\end{proof}

\begin{theorem}[FPTAS for separable \AWET]
\label{CT:thm:main}
Fix total orders $\prec_E$ and $\prec_L$ refining nondecreasing early and tardiness ratios.  If the
resulting ratio permutation $\pi$ is separable, equivalently if a separating tree exists, then for
every rational $\varepsilon>0$ the trimmed separating-tree algorithm returns a feasible schedule of
cost at most
\[
 (1+\varepsilon)\OPT.
\]
Its running time is polynomial in the binary input length and $1/\varepsilon$.
\end{theorem}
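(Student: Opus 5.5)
The proof has three parts: correctness via the representative invariant, exact feasibility at the root, and a polynomial bound on table sizes. First, the trivial cases. If $n=1$ the single job completes at $d$. If $\varepsilon>1$, replace it by $1$, since a stronger guarantee suffices. Otherwise build the ordered separating tree in linear time from $\pi$ by the recognition algorithm of~\cite{BoseBussLubiw1998}. Its height satisfies $1\le H\le n-1$. Then run the trimmed recurrence with $\lambda=1+\varepsilon/(4H)$.

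For correctness, take an optimal nonempty early set $E^*$ with root signature $z^*=(e^*,t^*,u^*,v^*)$. By \cref{CT:thm:exactdp}, the exact root table has $\mathcal D_R(z^*)=\OPT$ with $e^*>0$. Apply \cref{CT:lem:rep} at the root, whose height is $H$. It yields a retained state $(\widehat z,\widehat c)$ with $\widehat c\le\lambda^{2H}\OPT$. The zero-pattern clause \eqref{CT:eq:rep-zero} gives $\widehat e>0$, so the returned candidate is admissible. Every retained candidate carries backpointers to an actual side assignment, and its stored cost is the exact value $\phi_R$ by \cref{CT:lem:bilin}. Hence the output is a feasible canonical schedule, by \cref{prop:setobjective}, whose cost is at most $\lambda^{2H}\OPT$. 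It then remains to show $\lambda^{2H}=(1+\varepsilon/(4H))^{2H}\le e^{\varepsilon/2}\le1+\varepsilon$ for $0<\varepsilon\le1$. This follows from $e^x\le1+2x$ on $[0,1/2]$, or from $e^{x}\le 1+x+x^2$ for $x\le1$.

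The running time is the main obstacle, because the number of retained boxes must be polynomial even though coordinates are pseudopolynomially large. Each coordinate is bounded by $U:=\max\{P,W_E,W_L\}$. So each coordinate falls into at most $1+\lfloor\log_\lambda U\rfloor+1$ buckets, counting $\bot$. Using $\ln\lambda\ge \varepsilon/(8H)$, which holds for $\varepsilon/(4H)\le1$, this is $O((H/\varepsilon)\log U)$ buckets. A node therefore retains at most $\Lambda:=O\bigl(((H/\varepsilon)\log U)^4\bigr)$ candidates, which is polynomial in $n$, $1/\varepsilon$ and the bit length. At each of the $O(n)$ internal nodes we generate at most $\Lambda^2$ parent candidates. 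For each we perform additions and the bilinear evaluation on integers of polynomial bit length, since costs are bounded by $\Phi$ values of at most $O(\log(nUP))$-bit size. We then compute the box by the exact integer doubling and binary search described after \cref{CT:def:boxes}. Grouping by box uses sorting or a dictionary. The total is $O(n\Lambda^2)$ polynomial-time operations, which is polynomial in the input length and $1/\varepsilon$.

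Finally, I would note that schedule reconstruction uses the two ratio sorts, as in \cref{CT:thm:exactdp}, in polynomial time. That completes the FPTAS claim.
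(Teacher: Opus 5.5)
Your proposal is correct and follows the paper's proof essentially step for step. The steps are the same: reduce to $0<\varepsilon\le1$, use the representative invariant and its zero-pattern clause to obtain a feasible root state of cost at most $\lambda^{2H}\OPT\le e^{\varepsilon/2}\OPT\le(1+\varepsilon)\OPT$, and count $O((H/\varepsilon)\log U)$ buckets per coordinate via $\ln\lambda\ge\varepsilon/(8H)$ and $H\le n-1$. The only differences are cosmetic: you justify $e^{\varepsilon/2}\le1+\varepsilon$ through $e^{x}\le1+2x$ on $[0,1/2]$ rather than by monotonicity of $\log(1+\varepsilon)-\varepsilon/2$, and you use one bound $U=\max\{P,W_E,W_L\}$ where the paper keeps separate counts $B_P,B_E,B_L$.
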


\begin{proof}
If $\varepsilon>1$, run the construction with accuracy parameter $1$.  Its factor $2$ is at most
$1+\varepsilon$.  Hence it remains to consider $0<\varepsilon\le1$.

Let $R$ be the root, of height $H$, and let $(z^*,\OPT)$ be an optimal exact root state from
\cref{CT:thm:exactdp}.  Because the early set is nonempty, the first coordinate of $z^*$ is positive.
By \cref{CT:lem:rep}, the trimmed root table contains a representative
$(\widehat z,\widehat c)$ with the same zero pattern.  Its first coordinate is therefore also positive,
so the representative is feasible.  Its cost satisfies
\begin{align*}
 \widehat c
 &\le\lambda^{2H}\OPT\\
 &=\left(1+\frac{\varepsilon}{4H}\right)^{2H}\OPT\\
 &\le \exp(\varepsilon/2)\OPT\\
 &\le(1+\varepsilon)\OPT.
\end{align*}
For completeness, the last inequality follows because
\[
 f(\varepsilon)=\log(1+\varepsilon)-\frac{\varepsilon}{2}
\]
has $f(0)=0$ and
\[
 f'(\varepsilon)
 =\frac{1}{1+\varepsilon}-\frac12
 =\frac{1-\varepsilon}{2(1+\varepsilon)}
 \ge0
 \qquad(0\le\varepsilon\le1).
\]
The algorithm returns the cheapest feasible retained root state, so its cost is no larger than
$\widehat c$.

We next prove the fully polynomial bound.  For a positive coordinate bounded by $U$, the number of
positive buckets is at most $1+\ceil{\log_\lambda U}$; one additional bucket stores zero.  Define
\begin{align*}
 B_P
 &=2+\ceil{\log_\lambda\max\{1,P\}},\\
 B_E
 &=2+\ceil{\log_\lambda\max\{1,W_E\}},\\
 B_L
 &=2+\ceil{\log_\lambda\max\{1,W_L\}}.
\end{align*}
Every node retains at most
\begin{equation}
 B:=B_P^2B_EB_L
 \label{CT:eq:trimmed-state-bound}
\end{equation}
boxes.  Put $x=\varepsilon/(4H)$.  Since $0<x\le1/4$,
\[
 \log(1+x)\ge\frac{x}{2}.
\]
Consequently, for $U\ge1$,
\begin{align*}
 \log_\lambda U
 &=\frac{\log U}{\log(1+x)}\\
 &\le\frac{2\log U}{x}\\
 &=\frac{8H}{\varepsilon}\log U.
\end{align*}
Because $H\le n-1$, the bound $B$ is polynomial in $n$, $1/\varepsilon$, and the binary lengths of
$P,W_E,W_L$.

The ratio orders are obtained by exact sorting.  From the resulting permutation, separability can be
recognized and an ordered separating tree recovered in linear time~\cite{BoseBussLubiw1998}.
After binarization there are $O(n)$ internal nodes.  Each node examines at most $B^2$ pairs of retained
child states and performs constant-dimensional exact arithmetic on polynomial-bit integers and
rationals.  Exact bucket comparisons are polynomial as noted after \cref{CT:def:boxes}.  Therefore the
total running time is polynomial in the input length and $1/\varepsilon$.  Backpointers and the fixed
canonical orders reconstruct the returned schedule.  The one-job case was handled directly.
\end{proof}

\begin{sectionexit}
The exact separating-tree recurrence has now been converted into an FPTAS: every exact state has a retained representative with the same zero pattern, coordinate inflation at most $\lambda^h$, and cost inflation at most $\lambda^{2h}$; the root choice then gives $\lambda^{2H}\le1+\varepsilon$.  The worked example uses only the two cross forms, the same-box replacement rule, and root backtracking to show where one locally cheaper representative loses globally.
\end{sectionexit}

\setshortsectionhead{Worked example}
\section{Worked example}
This example uses six jobs, exactly four distinct processing times, an ordered separating tree containing both direct and skew nodes, and genuine geometric trimming at an internal node.  Most importantly, the trimmed dynamic program does \emph{not} return an exact optimum for the displayed accuracy parameter.  The example therefore shows both why trimming is useful and how its controlled information loss can affect the final schedule.

\setshortsubsectionhead{Instance and orders}
\subsection{Instance and ratio orders}

Let the six jobs have the following data.

\begin{center}
\begin{tabular}{c r r r r r}
\toprule
job $j$ & $p_j$ & $w_j^E$ & $w_j^L$ & $r_j=w_j^E/p_j$ & $s_j=w_j^L/p_j$\\
\midrule
1 & 1 & 140 & 315  & 140 & 315\\
2 & 1 & 141 & 321  & 141 & 321\\
3 & 1 & 145 & 322  & 145 & 322\\
4 & 2 & 292 & 200  & 146 & 100\\
5 & 3 & 441 & 600  & 147 & 200\\
6 & 4 & 592 & 1200 & 148 & 300\\
\bottomrule
\end{tabular}
\end{center}

Thus
\[
(p_1,\ldots,p_6)=(1,1,1,2,3,4),
\]
so the instance has exactly four distinct processing-time values, namely $1,2,3,4$.  The nonrestrictive common due date is
\[
d=P=1+1+1+2+3+4=12.
\]

The nondecreasing early-ratio order is
\[
1\prec_E2\prec_E3\prec_E4\prec_E5\prec_E6,
\]
whereas the nondecreasing tardiness-ratio order is
\[
4\prec_L5\prec_L6\prec_L1\prec_L2\prec_L3.
\]
Therefore the ratio permutation, written in the $\prec_E$ indexing, is
\[
\pi=(4,5,6,1,2,3)=123\ominus123.
\]
The displayed skew-sum decomposition proves that the permutation is separable.  Equivalently, its
inversion graph is the cograph $K_{3,3}$.  Use the ordered binary separating tree
\[
A=(1\oplus2)\oplus3,
\qquad
B=(4\oplus5)\oplus6,
\qquad
R=A\ominus B.
\]
Here and in the figure, the leaf labels $1,\ldots,6$ are job identifiers.  Under the formal convention
of \cref{CT:def:compatible}, their permutation labels---the corresponding $\prec_L$ ranks read from
left to right in $\prec_E$ order---are $4,5,6,1,2,3$.  The separating-tree height is $H=3$.

\begin{figure}[htbp]
\centering
\begin{tikzpicture}[
  level 1/.style={sibling distance=72mm,level distance=12mm},
  level 2/.style={sibling distance=34mm,level distance=12mm},
  level 3/.style={sibling distance=18mm,level distance=12mm},
  nd/.style={draw=navy,rounded corners,fill=figurealgorithm,minimum width=11mm,minimum height=7mm,align=center},
  leaf/.style={draw=auditgray,circle,fill=figuregray,minimum size=6mm,inner sep=1pt},
  note/.style={draw=checkpointamber,rounded corners,fill=figureopen,align=left,text width=5.2cm,inner sep=5pt},
  edge from parent/.style={draw=navy,thick}
]
\node[nd] (R) {$R:\ominus$}
 child {node[nd] (A) {$A:\oplus$}
   child {node[nd] (A12) {$\oplus$}
     child {node[leaf] {1}}
     child {node[leaf] {2}}}
   child {node[leaf] {3}}}
 child {node[nd] (B) {$B:\oplus$}
   child {node[nd] {$\oplus$}
     child {node[leaf] {4}}
     child {node[leaf] {5}}}
   child {node[leaf] {6}}};
\node[note,below=24mm of A12,text width=10.2cm,align=center] (trim) {At block $A$, states $E_A=\{2\}$ and $E_A=\{3\}$ enter the same box.  Local trimming keeps $\{3\}$ because $951<952$, but the global optimum requires the discarded state $\{2\}$.};
\draw[-{Latex[length=2mm]},thick,draw=checkpointamber] (trim.north) -- (A.south west);
\end{tikzpicture}
\caption{Job-labeled ordered separating tree for the six-job example.  Direct nodes form the two three-job blocks, the root is skew, and the annotation marks the left-block trimming collision that removes the globally optimal path.}
\label{CT:fig:worked-cotree}
\end{figure}

\setshortsubsectionhead{Trimming parameter}
\subsection{Trimming parameter}

All calculations below use the four-coordinate signature from \cref{CT:def:signature} and the two cross-cost formulas in \eqref{CT:eq:direct}--\eqref{CT:eq:skew}.  Every required coordinate and cross term is instantiated where it is used.

Choose
\[
\varepsilon=0.50.
\]
Since $H=3$,
\[
\lambda
=1+\frac{\varepsilon}{4H}
=1+\frac{0.50}{12}
=\frac{25}{24}
\approx1.0416667.
\]
For a positive integer $x$, put
\[
\beta(x)=\left\lfloor\log_{25/24}x\right\rfloor,
\]
and give zero its own bucket $\bot$.  A signature is assigned the four-dimensional box
\[
\operatorname{box}(e,t,u,v)=(\beta(e),\beta(t),\beta(u),\beta(v)).
\]
The bucket values used in the important left-subtree calculation are
\[
\begin{array}{c|rrrrrrrrrrrrrrrr}
x&1&2&3&140&141&145&281&285&286&315&321&322&426&636&637&643\\ \hline
\beta(x)&0&16&26&121&121&121&138&138&138&140&141&141&148&158&158&158
\end{array}
\]
and $\beta(958)=168$.

\setshortsubsectionhead{Left block A}
\subsection{The left block \texorpdfstring{$A$}{A}: trimming actually deletes states}

First combine jobs 1 and 2 at the direct node $1\oplus2$.  The four exact candidates are

\begin{center}
\begin{tabular}{c c c c}
\toprule
early set in $\{1,2\}$ & signature $(e,t,u,v)$ & exact cost & box\\
\midrule
$\varnothing$ & $(0,2,0,636)$ & $951$ & $(\bot,16,\bot,158)$\\
$\{1\}$       & $(1,1,140,321)$ & $321$ & $(0,0,121,141)$\\
$\{2\}$       & $(1,1,141,315)$ & $315$ & $(0,0,121,140)$\\
$\{1,2\}$     & $(2,0,281,0)$   & $140$ & $(16,\bot,138,\bot)$\\
\bottomrule
\end{tabular}
\end{center}

All four boxes are distinct, so no trimming occurs yet.

Now combine this table with job 3 at the direct node $A=(1\oplus2)\oplus3$.  Before trimming there are eight exact candidates:

\begin{center}
\begin{tabular}{c c r c}
\toprule
early set $E_A$ & signature $(e_A,t_A,u_A,v_A)$ & cost $c_A$ & box\\
\midrule
$\varnothing$ & $(0,3,0,958)$   & 1909 & $(\bot,26,\bot,168)$\\
$\{1\}$       & $(1,2,140,643)$ & 964  & $(0,16,121,158)$\\
$\{2\}$       & $(1,2,141,637)$ & 952  & $(0,16,121,158)$\\
$\{3\}$       & $(1,2,145,636)$ & 951  & $(0,16,121,158)$\\
$\{1,2\}$     & $(2,1,281,322)$ & 462  & $(16,0,138,141)$\\
$\{1,3\}$     & $(2,1,285,321)$ & 461  & $(16,0,138,141)$\\
$\{2,3\}$     & $(2,1,286,315)$ & 456  & $(16,0,138,140)$\\
$\{1,2,3\}$   & $(3,0,426,0)$   & 421  & $(26,\bot,148,\bot)$\\
\bottomrule
\end{tabular}
\end{center}

This is the first genuinely important trimming step.  The three single-early candidates
\[
E_A=\{1\},\qquad E_A=\{2\},\qquad E_A=\{3\}
\]
all lie in the same box $(0,16,121,158)$.  Their exact local costs are $964,952,951$, respectively.  The trimmed recurrence therefore keeps only
\[
E_A=\{3\},
\qquad
z_A=(1,2,145,636),
\qquad
c_A=951,
\]
and discards the candidate
\[
E_A=\{2\},
\qquad
z_A=(1,2,141,637),
\qquad
c_A=952.
\]
Notice the tradeoff: the retained state is locally cheaper by only $1$, but its early-weight coordinate is larger by $4$.

A second collision occurs between $E_A=\{1,2\}$ and $E_A=\{1,3\}$.  Both lie in box $(16,0,138,141)$, so the latter, with cost $461$, replaces the former, with cost $462$.  Thus the eight exact candidates at $A$ are compressed to five retained candidates.

The first collision is the one that will matter globally: the discarded state $E_A=\{2\}$ is part of the exact optimum of the six-job instance.

\setshortsubsectionhead{Right block B}
\subsection{The right block \texorpdfstring{$B$}{B}}

For $B=(4\oplus5)\oplus6$, all eight exact side assignments survive trimming for this value of $\lambda$.  The state that will be paired with the best left-side candidates is
\[
E_B=\{5,6\},
\qquad
T_B=\{4\}.
\]
Its signature is
\[
z_B=(7,2,1033,200),
\]
because
\[
e_B=3+4=7,
\qquad
t_B=2,
\qquad
u_B=441+592=1033,
\qquad
v_B=200.
\]
Its exact internal cost is
\[
\begin{aligned}
c_B
&=p_5p_6\min\{r_5,r_6\}+p_4w_4^L\\
&=(3)(4)(147)+(2)(200)\\
&=1764+400\\
&=2164.
\end{aligned}
\]

\setshortsubsectionhead{Root convolution}
\subsection{Root convolution and the returned solution}

All jobs of $A=\{1,2,3\}$ precede all jobs of $B=\{4,5,6\}$ in $\prec_E$, whereas all jobs of $B$ precede all jobs of $A$ in $\prec_L$.  Thus the root relation is $R=A\ominus B$, so the root is skew and
\[
g_{\ominus}(z_A,z_B)=u_Ae_B+t_Av_B.
\]
Using the retained left state $E_A=\{3\}$ and the right state $E_B=\{5,6\}$ gives
\[
\begin{aligned}
g_{\ominus}
&=(145)(7)+(2)(200)\\
&=1015+400\\
&=1415.
\end{aligned}
\]
Hence the corresponding root cost is
\[
\begin{aligned}
c_R
&=c_A+c_B+g_{\ominus}(z_A,z_B)\\
&=951+2164+1415\\
&=4530.
\end{aligned}
\]
The resulting early set is therefore
\[
\widehat E=\{3,5,6\}.
\]

The five retained states at $A$ and the eight retained states at $B$ generate $5\cdot8=40$ root candidates.  Root trimming compresses these to $35$ boxes.  The least-cost retained feasible root candidate is the one above, of cost $4530$, so the trimmed dynamic program returns $\widehat E=\{3,5,6\}$.

\setshortsubsectionhead{Trimming loss}
\subsection{What was lost by trimming}

Now reconsider the discarded left state
\[
E_A^*=\{2\},
\qquad
z_A^*=(1,2,141,637),
\qquad
c_A^*=952.
\]
Combine it with the \emph{same} right state $E_B=\{5,6\}$.  The skew cross cost would have been
\[
\begin{aligned}
g_{\ominus}(z_A^*,z_B)
&=(141)(7)+(2)(200)\\
&=987+400\\
&=1387.
\end{aligned}
\]
Thus this discarded path has total cost
\[
\begin{aligned}
c_R^*
&=952+2164+1387\\
&=4503.
\end{aligned}
\]
This calculation isolates the trimming effect exactly.  Replacing $E_A^*=\{2\}$ by $\{3\}$ saves
\[
952-951=1
\]
in local cost, but increases the root cross term by
\[
(145-141)e_B=4\cdot7=28.
\]
The net loss is therefore
\[
28-1=27,
\]
which is exactly
\[
4530-4503=27.
\]

\setshortsubsectionhead{Exact optimum}
\subsection{Exact optimum for comparison}

A direct enumeration of all $2^6-1=63$ nonempty early sets gives the five smallest exact objective values

\begin{center}
\begin{tabular}{c r}
\toprule
early set & exact cost\\
\midrule
$\{2,5,6\}$ & 4503\\
$\{1,5,6\}$ & 4508\\
$\{3,5,6\}$ & 4530\\
$\{5,6\}$   & 4673\\
$\{1,2,5,6\}$ & 4793\\
\bottomrule
\end{tabular}
\end{center}

Hence the exact optimum is unique and equals
\[
\OPT=4503,
\qquad
E^*=\{2,5,6\}.
\]
The trimmed algorithm returns
\[
\widehat\Phi=4530>4503=\OPT.
\]
Thus this six-job example is genuinely approximate rather than accidentally exact.

\setshortsubsectionhead{Direct cost checks}
\subsection{Canonical schedules and direct cost checks}

The common due date is $d=12$.

For the exact optimum $E^*=\{2,5,6\}$, the early processing load is $1+3+4=8$, so the early block starts at time $12-8=4$.  The early jobs are sequenced in nondecreasing $r_j$ order:
\[
2,5,6.
\]
Their completion times are $5,8,12$.  The tardy jobs are $\{1,3,4\}$ and are sequenced in nonincreasing $s_j$ order:
\[
3,1,4,
\]
with completion times $13,14,16$.  Therefore
\[
\begin{aligned}
\Phi(E^*)
&=141(12-5)+441(12-8)+592(12-12)\\
&\quad +322(13-12)+315(14-12)+200(16-12)\\
&=987+1764+0+322+630+800\\
&=4503.
\end{aligned}
\]

For the returned set $\widehat E=\{3,5,6\}$, the early block also has load $8$ and starts at time $4$.  The canonical early order is
\[
3,5,6,
\]
with completion times $5,8,12$.  The tardy jobs $\{1,2,4\}$ are ordered
\[
2,1,4,
\]
with completion times $13,14,16$.  Thus
\[
\begin{aligned}
\Phi(\widehat E)
&=145(12-5)+441(12-8)+592(12-12)\\
&\quad +321(13-12)+315(14-12)+200(16-12)\\
&=1015+1764+0+321+630+800\\
&=4530.
\end{aligned}
\]
This independently confirms the root-DP calculations.

\setshortsubsectionhead{Approximation audit}
\subsection{Approximation-factor audit and effect of tighter trimming}

For the displayed choice $\varepsilon=0.50$ and $H=3$,
\[
\lambda^{2H}
=\left(\frac{25}{24}\right)^6
\approx1.2775344
<1.50=1+\varepsilon.
\]
The theorem therefore guarantees a returned cost at most $1.5\OPT$.  The actual ratio in this example is much better:
\[
\frac{4530}{4503}
\approx1.005996.
\]

The example also illustrates how the accuracy parameter controls information loss.  If the same instance is run with
\[
\varepsilon=0.40,
\qquad
\lambda=1+\frac{0.40}{12}=\frac{31}{30},
\]
the important single-early states in block $A$ no longer collapse in the same way; the exact-optimal state $E_A=\{2\}$ survives, and the trimmed recurrence returns the exact optimum $4503$.  Thus the $\varepsilon=0.50$ run visibly demonstrates trimming loss, whereas a modestly tighter geometric grid restores the optimum on this instance.

\setshortsubsectionhead{Example lessons}
\subsection{What this example demonstrates}

This six-job instance separates the essential ideas of the separating-tree FPTAS.  The exact recurrence first generates distinct interface states.  Geometric trimming then identifies states whose four coordinates are close enough to occupy one box and keeps only the cheapest local representative.  Here that local choice is not globally exact: the state $E_A=\{3\}$ is cheaper inside block $A$ than $E_A=\{2\}$ by $1$, but its larger early-weight coordinate creates $28$ additional units of skew-root interaction.  The final error is therefore $27$, while remaining far inside the proved approximation bound.

The example also shows why the four-coordinate signature matters.  The trimmed recurrence is not merely remembering the number or processing load of early jobs.  It must retain enough information about both processing and side-specific weights to price future direct or skew interactions.  Geometric trimming compresses that information rather than deleting it indiscriminately, and the approximation theorem controls precisely the resulting loss.

\section*{Concluding synthesis}
\markright{Concluding synthesis}
The separating-tree result combines an exact decomposition with a controlled loss of numerical resolution.  The decomposition is structural: direct and skew nodes determine the orientation of every cross pair.  The approximation is numerical: one minimum-cost representative is retained in each four-dimensional geometric box.  Because the interface is additive, nonnegative, and bilinear, coordinate inflation compounds by at most $\lambda$ per level and cost inflation by at most $\lambda^2$ per level.

\begin{reusableidea}
When a recursive decomposition makes every cross interaction a nonnegative bilinear form of a small additive signature, approximate the interface rather than the underlying assignments.  Keeping one cheapest representative per coordinated geometric box preserves reconstructibility and supports a height-based multiplicative induction.
\end{reusableidea}

\begin{finding}{Why Part V is next}
Part IV has extended the structured algorithmic regime from exact reversal to every fixed refinement pair whose ratio permutation is separable.  What remains is to compare exact hardness, unrestricted approximation, weak hardness, and structured approximation without conflating their meanings.  Part V synthesizes the four certificates, identifies their transferable lessons, and states the next technical milestones.
\end{finding}

\clearpage

\part{Global Synthesis}
\thispagestyle{plain}
\label{part:synthesis}
\setrunningunit{Part V: Synthesis}
\begin{center}
{\Large\bfseries Exact Complexity, Approximation, and Order Structure\par}
\end{center}
\medskip
\begin{quote}\small
\noindent\textbf{Part synopsis.}
This concluding Part integrates the four theorem chains.  It separates exact from approximation classifications, explains why weak NP-completeness and an FPTAS coexist at exact reversal, compares the certificates and interfaces used in the proofs, and converts the remaining boundaries into named technical problems with concrete first milestones.
\end{quote}

\begin{partposition}
\textbf{Established in Parts I--IV.}  General positive-integer \AWET at the boundary due date is strongly NP-complete but has a universal constant-factor approximation; strict antithetical instances are weakly NP-complete and pseudopolynomially solvable; and fixed refinement pairs with separable ratio permutation admit an FPTAS.  \textbf{Result of this Part.}  The four classifications are placed in one landscape without treating exact and approximation complexity as interchangeable.  \textbf{Technique.}  The synthesis compares each structural obstacle with its certificate or interface, its loss or gap control, and its algorithmic consequence.  
\end{partposition}

\section{What the four technical Parts establish}
\label{sec:synthesis-results}
Part~\ref{part:intractability} settles the published strong-complexity question for positive-integer asymmetric weighted earliness--tardiness scheduling at $d=\sum_jp_j$.  Its theorem is strong NP-completeness.  Its reusable technique is a compiler that separates a prefix-square signal, an anchor-coupled minimum-kernel load lock, and a bounded residual while keeping every output number polynomially bounded.

Part~\ref{part:approximation} shows that exact intractability does not preclude a universal performance guarantee.  Its theorem is a polynomial-time $(3+2\sqrt2+\varepsilon)$-approximation for unrestricted \AWET.  Its reusable technique is asymmetric payment: local pseudo-probability consistency pays the early pairs, while the tardy self-cost completes a positive-semidefinite minimum kernel whose covariance inequality pays the tardy pairs globally.

Part~\ref{part:antithetical} classifies the exact-reversal extreme.  Its theorem is weak NP-completeness together with an exact pseudopolynomial recurrence.  Its reusable technique is a dominant completed square in one selected load, with every identity-dependent interaction confined to a uniformly bounded residual.  The same reversed order is separable, so the FPTAS of Part~\ref{part:cotree} also applies after the fixed strict refinements.

For fixed total ratio-order refinements, Part~\ref{part:cotree} treats instances whose ratio permutation is separable.  Its theorem is an FPTAS.  Its reusable technique is an ordered separating tree whose direct and skew nodes expose a four-coordinate nonnegative additive interface; the exact pseudopolynomial recurrence is then compressed by coordinated geometric trimming.

\section{Combined landscape}
The conclusions are summarized in \cref{fig:short-landscape}.  The horizontal direction changes the order promise; the vertical direction changes the required quality of solution.  No arrow should be read as an automatic transfer between exact and approximation complexity.

\begin{figure}[htbp]
\centering
\resizebox{0.98\textwidth}{!}{%
\begin{tikzpicture}[
  >=Latex,
  box/.style={draw=navy,rounded corners=3pt,very thick,align=left,text width=6.0cm,inner sep=9pt,font=\small},
  hard/.style={box,fill=figurehard},
  alg/.style={box,fill=figurealgorithm},
  mixed/.style={box,fill=figuremixed},
  open/.style={box,fill=figureopen},
  arrow/.style={->,thick,draw=navy}
]
\node[hard] (generalexact) at (0,0) {\textbf{Unrestricted exact problem}\\[2pt]
Strongly NP-complete for positive-integer data with $d=\sum_jp_j$ (Part~I).};
\node[alg] (generalapprox) at (8.0,0) {\textbf{Unrestricted approximation}\\[2pt]
Polynomial-time $(3+2\sqrt2+\varepsilon)$-approximation for every $\varepsilon>0$ (Part~II).};
\node[mixed] (antiexact) at (0,-4.5) {\textbf{Strict antithetical order}\\[2pt]
Weakly NP-complete and exactly solvable in pseudopolynomial time (Part~III).};
\node[alg] (cotreeapprox) at (8.0,-4.5) {\textbf{Fixed separable refinements}\\[2pt]
FPTAS by exact separating-tree convolution and coordinated geometric trimming (Part~IV).};
\node[open,text width=12.6cm] (openq) at (4.0,-8.35) {\textbf{Open general and structural boundaries.}\\[2pt]
The results do not decide whether unrestricted \AWET has a PTAS, whether a better universal constant is possible, or how far the FPTAS extends beyond separable ratio permutations.};
\draw[arrow] (generalexact) -- (generalapprox);
\draw[arrow] (antiexact) -- (cotreeapprox);
\draw[arrow] (generalexact) -- (antiexact);
\draw[arrow] (generalapprox) -- (cotreeapprox);
\draw[arrow] (antiexact) -- (openq.west);
\draw[arrow] (cotreeapprox) -- (openq.east);
\end{tikzpicture}%
}
\caption{Combined exact-complexity and approximation landscape.  Red marks exact intractability, green marks approximation algorithms, blue marks a mixed weak-hardness/pseudopolynomial regime, and amber marks unresolved boundaries.}
\label{fig:short-landscape}
\end{figure}

The antithetical subclass illustrates why the categories must remain distinct.  Its decision problem is NP-complete, so a polynomial exact algorithm is unlikely.  Its pseudopolynomial recurrence makes that hardness weak, and its reverse permutation is separable, so an FPTAS is available.  Weak NP-hardness, pseudopolynomial solvability, and an FPTAS are therefore mutually consistent, as in other knapsack-type regimes.

\section{Synthesis by obstacle, certificate, and control}
\label{sec:synthesis-table}
The following table compares the proof architectures rather than merely restating the theorems.

\begin{center}
\footnotesize
\setlength{\tabcolsep}{2.5pt}
\begin{tabularx}{0.98\textwidth}{@{}>{\raggedright\arraybackslash}p{0.045\textwidth}>{\raggedright\arraybackslash}p{0.18\textwidth}>{\raggedright\arraybackslash}p{0.205\textwidth}>{\raggedright\arraybackslash}p{0.19\textwidth}>{\raggedright\arraybackslash}X@{}}
\toprule
Part & structural obstacle & certificate or interface & loss or gap control & consequence\\
\midrule
I & many local exact-cover conditions must be encoded by nonnegative scheduling costs & effective-atom prefix profiles plus an anchor-coupled minimum-kernel load lock & main gaps dominate two residual swings and an integer unit: $\Delta_{\min}/2>2\Gamma+1$ & strong NP-completeness with polynomially bounded integer data\\
II & rounded tardy pairs may have zero pseudo-joint mass, so entrywise payment fails & anchored local SDP and the diagonal-completed tardy minimum kernel & threshold denominators are balanced; finite-precision loss is absorbed in the requested $\varepsilon$ & universal $(3+2\sqrt2+\varepsilon)$-approximation\\
III & exact reversal destroys the Part I compiler but leaves a global load choice & tagged half-load square plus residual $\Psi$ & the unit load gap pays $DM$, while both residual swings cost at most $2DU$ and $M>2U$ & weak NP-completeness and a two-resource pseudopolynomial algorithm\\
IV & future cross-block cost must be summarized without retaining every side assignment & four-coordinate nonnegative additive signature on an ordered separating tree & one factor $\lambda$ per coordinate level and at most $\lambda^2$ per cost level, with $\lambda^{2H}\le1+\varepsilon$ & exact pseudopolynomial recurrence and FPTAS\\
\bottomrule
\end{tabularx}
\end{center}

Two broad design choices recur.  A \emph{global certificate} is appropriate when one aggregate condition can dominate all unwanted interactions: the load lock of Part~I, the covariance inequality of Part~II, and the completed square of Part~III have this character.  A \emph{bounded constructive interface} is appropriate when the instance decomposes recursively: Part~IV stores exactly the information needed to price interactions with the unprocessed part of the tree.

This distinction is transferable beyond the present model.  In another two-sided sequencing or quadratic partition problem, the first audit should ask whether a global norm, kernel, or convex aggregate can certify feasibility or cost.  If not, the next question is whether an order or graph decomposition makes every cross interaction factor through a small nonnegative signature.  Only after those structural questions should one choose between scale-separated hardness, relaxation and rounding, pseudopolynomial enumeration, or interface trimming.

\setshortsectionhead{Implications}
\section{Methodological, modeling, and implementation implications}
\label{sec:model-implications}
\paragraph{Methodological implication.}
Canonicalization is the common entry point.  It removes continuous placement and local sequencing choices before any gadget, relaxation, or dynamic program is introduced.  This prevents proof machinery from paying repeatedly for decisions that an exchange argument already determines.

\paragraph{Model-structure implication.}
The ratio orders are informative data objects in their own right.  Two instances with similar processing times and weights can have different algorithmic structure if their early and tardy ratios induce different inversion graphs.  This observation is structural rather than managerial: it identifies which mathematical representation should be inspected before an algorithm is selected.

\paragraph{Implementation implication.}
A solver can use the following diagnostic workflow.
\begin{enumerate}[leftmargin=2.2em,itemsep=2pt]
\item Compute the two ratio orders exactly by cross multiplication and fix deterministic tie refinements.
\item Construct the ratio permutation and test whether it is separable.
\item If it is, use the separating-tree exact recurrence for small numerical ranges or the separating-tree FPTAS for general binary data.
\item Otherwise, use the unrestricted anchored-SDP approximation, while exact enumeration or the antithetical recurrence remains available when the order pattern or numerical range permits it.
\end{enumerate}
This workflow does not claim that nonseparable instances lack stronger algorithms.  It uses only the guarantees established in the tutorial and leaves room for future structural tests.

\paragraph{Hardness-design implication.}
Parts~I and III show two different uses of magnitude.  The strong reduction uses a hierarchy of polynomial scales to isolate many simultaneous signals; the antithetical reduction uses one dominant convex square and one residual budget.  The first architecture supports strong hardness, whereas the second remains compatible with a pseudopolynomial target algorithm.  The numerical form of a reduction should therefore be aligned with the exact complexity classification it seeks to prove.

\section{Named open problems and first milestones}
\label{sec:open-problems}
\paragraph{Open Problem A: the unrestricted PTAS boundary.}
Does unrestricted \AWET admit a PTAS, or can a no-PTAS theorem be proved by an approximation-preserving reduction whose relative gap survives the positive objective baseline?  A concrete first milestone is to prove or refute a bounded-core decomposition after conditioning on one early anchor, with a residual class that can be spliced at $O(\varepsilon)$ relative cost.

\paragraph{Open Problem B: a better universal approximation ratio.}
Can the factor $3+2\sqrt2$ be improved?  The current proof pays the early and tardy sides separately.  A first milestone is a relaxation and rounding inequality that couples the early local-consistency account with the tardy minimum-kernel account, thereby replacing the maximum of two threshold losses by one joint bound.

\paragraph{Open Problem C: structured classes beyond separable permutations.}
Which nonseparable ratio permutations still admit an FPTAS?  A first milestone is to handle one bounded-size prime node in a substitution decomposition while retaining separating-tree trimming around it.  A successful result would clarify whether the natural parameter is prime-node size, distance to separability, or another interface-width measure.

\paragraph{Open Problem D: practical acceleration with certificates.}
Can the $n$ anchored SDPs and the theoretically possible separating-tree boxes be reduced substantially in practice without weakening the guarantees?  A first milestone is to derive valid anchor-screening lower bounds and combine them with output-sensitive dictionaries that store only reachable signature boxes.  Warm starts and sparse Gram representations can then be assessed on top of those certified reductions.

\section{Final perspective}
This tutorial gives a deliberately balanced picture.  General positive-integer \AWET is strongly NP-complete, yet it has a universal constant-factor approximation.  Exact reversal remains NP-complete, but only weakly, and its order structure supports both a pseudopolynomial exact algorithm and an FPTAS through the broader separating-tree theorem.  The decisive theme is not simply whether the two ratio orders agree.  It is how much of the canonical quadratic interaction can be controlled by a dominant certificate or compressed into a bounded constructive interface.

The four theorem chains therefore support one final methodological claim: expose the canonical objective first, then match the proof device to the structure that remains.  Hardness requires a gap that dominates every residual; unrestricted approximation requires a global relaxation certificate; weakly hard order patterns may expose small pseudopolynomial states; and recursively decomposable orders may permit approximation of interfaces rather than assignments.
\clearpage
\appendix
\makeatletter
\@addtoreset{theorem}{section}
\makeatother
\setcounter{theorem}{0}
\renewcommand{\thetheorem}{\thesection.\arabic{theorem}}
\renewcommand{\thelemma}{\thetheorem}
\renewcommand{\theproposition}{\thetheorem}
\renewcommand{\thecorollary}{\thetheorem}
\renewcommand{\theclaim}{\thetheorem}
\renewcommand{\thedefinition}{\thetheorem}
\renewcommand{\theproblem}{\thetheorem}
\renewcommand{\theexample}{\thetheorem}
\renewcommand{\theremark}{\thetheorem}

\clearpage
\setrunningunit{Appendix A: Objective Algebra}
\setshortsectionhead{Canonical objective}
\section{Algebra of the canonical objective}
\label{app:algebra}

This appendix certifies the two coefficient formulas consumed by the Part~I master decomposition.  The early indicator expansion creates the baseline fields and couplings, while a common tardiness offset changes only one block's field on the coded manifold.  Keeping the expansions separate identifies the destination of every constant, linear, and quadratic term.

\subsection{Baseline early field}

For an early pair $i,j$ in the same block $v$,
\[
 \1\{i,j\in E\}
 =\frac14(1+\sigma_v\epsilon_i)(1+\sigma_v\epsilon_j)
 =\frac14\bigl(1+\epsilon_i\epsilon_j+\sigma_v(\epsilon_i+\epsilon_j)\bigr).
\]
For $i\in\B_u$ and $j\in\B_v$, $u\ne v$,
\[
 \1\{i,j\in E\}
 =\frac14\bigl(1+\sigma_u\epsilon_i+\sigma_v\epsilon_j
 +\sigma_u\sigma_v\epsilon_i\epsilon_j\bigr).
\]
Collecting the coefficients of $\sigma_v$ gives \eqref{eq:a0}; collecting the coefficients of $\sigma_u\sigma_v$ and applying \cref{lem:compression} gives \eqref{eq:profilekernel}.  The constant terms are retained inside $C_0$.  Hence every term in the physical early objective has a declared destination: constant, field, or coupling.

\subsection{Effect of a common tardiness offset}

On the coded state space, the tardy load of block $v$ is $\beta_v$. Therefore every cross-block tardy contribution is independent of $\sigma$. Within block $v$, the contribution of the offset $\eta_v$ is
\[
 \eta_v\left(
 \sum_{\{i,j\}\subseteq T\cap\B_v}p_ip_j
 +\sum_{i\in T\cap\B_v}p_i^2
 \right)
 =\frac{\eta_v}{2}(\beta_v^2+q_v),
\]
where
\[
 q_v=\sum_{i\in\B_v}p_i^2\frac{1-\sigma_v\epsilon_i}{2}
 =\frac12(Q_v-\sigma_vD_v).
\]
Thus the offset changes the logical field by $-\eta_vD_v/4$, as used in \eqref{eq:ev}.

\clearpage
\setrunningunit{Appendix B: Examples}
\section{Two worked examples}\label{app:worked}

This appendix provides reproducibility certificates for the Part~I reduction, not merely illustrative examples.  The yes-instance verifies the no-carry arithmetic, pair synchronization, strict ratio separation, load lock, master identity, decision threshold, and forward decoding chain.  A degree-preserving exchange then produces a no-instance and verifies the corresponding reverse contradiction.  Printing the full integer data makes every claimed inequality and schedule cost independently auditable without hidden computation.

\begin{howtoread}
\textbf{Theorem-verification lane.}  Read the source and prefix instances, the scale and gap summaries through Section~\ref{app:worked-loadlock}, the master-identity and threshold summary in Section~\ref{app:worked-threshold}, and the forward and reverse conclusions.  The landscape job catalogues and full schedule tables may be skipped without breaking the proof narrative.  \textbf{Full arithmetic-audit lane.}  Follow every defining recurrence and use each ``Purpose and verification'' statement to reproduce the displayed integer tables and all 64 coded-state evaluations.
\end{howtoread}

\begin{arithmeticguide}
Long integers are never meant to be compared visually as undifferentiated strings.  The certificate map and each table lead-in name a defining expression or subtraction.  Verify one row from that expression, then use only the property consumed later: equality, sign, strict order, stated lower bound, or final difference.  Full digit strings are retained as exact evidence; the defining expressions are the audit interface.
\end{arithmeticguide}

\begin{table}[htbp]
\centering
\small
\caption{Certificate map for Appendix~\ref{app:worked}.  The defining-expression column is the preferred audit route for the long integers.}
\label{tab:certificate-map}
\begin{tabularx}{0.98\textwidth}{@{}p{0.19\textwidth}p{0.25\textwidth}p{0.25\textwidth}X@{}}
\toprule
certificate & defining expression & claim verified & where consumed\\
\midrule
\cref{tab:worked-P,tab:no-P} & $P_{vh}=T+M\kappa(k_{vh})+Q_h$ and $\beta_v=\sum_hP_{vh}+3$ & integral bases and equal balanced loads & two-state synchronization and the global load lock\\
\cref{tab:worked-kernel,tab:no-kernel} & $f=K\beta$ and $f_v-f_{v+1}=2\sum_{u\le v}\beta_u$ & minimum-kernel fields and block order & early ratios and load-lock centering\\
\cref{tab:worked-atoms,tab:worked-bands,tab:no-bands} & $\rho=f_v+\lambda_v(2P+c)$; subtract adjacent endpoints & effective atoms, strict bands, main gaps, and total width & pair compression, scale separation, and $\Gamma$\\
\cref{tab:worked-couplings,tab:worked-fields,tab:no-couplings,tab:no-fields} & profile integrals and $4e_v=4a_v^0-\eta_vD_v-4a_v^\star$ & main/residual couplings and bounded field correction & exact master decomposition and residual bound\\
\cref{tab:worked-job-ratios,tab:worked-job-weights,tab:no-job-ratios,tab:no-job-weights} & $w_j^E=p_jr_j$, $w_j^L=p_js_j$ & the physical positive-integer \AWET instances & feasibility and exact input reproducibility\\
\cref{tab:worked-schedule,tab:worked-state-audit,tab:no-state-audit} & canonical completion times and $A+\mathcal P+R-B$ & yes schedule below threshold and every no state above threshold & forward and reverse correctness certificates\\
\bottomrule
\end{tabularx}
\end{table}

\subsection{A ``yes'' instance}\label{app:worked-yes}

This first example specializes the reduction to a six-element yes-instance.  No scale is compressed and no floating-point approximation is used.  The resulting scheduling instance has 48 state jobs and one anchor job.  Because the proof uses deliberately separated powers of $N$, its integers are long; the line-breaking typewriter format preserves every digit.

\begin{howtoread}
\textbf{Theorem-verification lane.}  Track the exact cover, prefix equations, two-state block property, band-gap certificate, load-lock slack, master threshold, and final schedule.  \textbf{Full arithmetic-audit lane.}  Reproduce every row from the formulas named in \cref{tab:certificate-map}.  In either lane, do not compare long integers visually; audit the defining recurrence or subtraction and retain only the equality, sign, or bound used later.
\end{howtoread}

\subsubsection{The restricted exact-cover instance}\label{app:worked-source}

Let \(X=\{e_1,e_2,e_3,e_4,e_5,e_6\}\).  The six sets are
\[
\begin{aligned}
S_1&=\{e_1,e_2,e_3\}, &
S_2&=\{e_1,e_2,e_5\}, &
S_3&=\{e_2,e_4,e_6\},\\
S_4&=\{e_1,e_3,e_6\}, &
S_5&=\{e_4,e_5,e_6\}, &
S_6&=\{e_3,e_4,e_5\}.
\end{aligned}
\]
With elements in the rows and subsets in the columns, its incidence matrix is
\begin{equation}\label{eq:worked-incidence}
A^{\mathrm{inc}}=
\begin{pmatrix}
1&1&0&1&0&0\\
1&1&1&0&0&0\\
1&0&0&1&0&1\\
0&0&1&0&1&1\\
0&1&0&0&1&1\\
0&0&1&1&1&0
\end{pmatrix}.
\end{equation}
Every element row and every subset column contains three ones, so this is a valid three-uniform, three-regular \RXC\ instance.  Since an exact cover of six elements by 3-sets must contain exactly two subset-columns, it is enough to inspect pairwise intersections.  The intersection-cardinality matrix is
\[
\begin{pmatrix}
3&2&1&2&0&1\\
2&3&1&1&1&1\\
1&1&3&1&2&1\\
2&1&1&3&1&1\\
0&1&2&1&3&2\\
1&1&1&1&2&3
\end{pmatrix}.
\]
Its only off-diagonal zero is the pair \((S_1,S_5)\).  Moreover, the corresponding incidence columns satisfy
\[
 \begin{pmatrix}1\\1\\1\\0\\0\\0\end{pmatrix}
 +
 \begin{pmatrix}0\\0\\0\\1\\1\\1\end{pmatrix}
 =
 \begin{pmatrix}1\\1\\1\\1\\1\\1\end{pmatrix}.
\]
Thus the subset-columns $S_1$ and $S_5$ form not merely an exact cover, but the unique exact cover.  The uniqueness is not required by the reduction, but it makes the final 64-state audit especially transparent: exactly one binary state should have zero prefix penalty.

\begin{figure}[ht]
\centering
\begin{tikzpicture}[font=\small]
\matrix (M) [matrix of nodes,
  nodes={draw,minimum width=8mm,minimum height=7mm,anchor=center},
  column 1/.style={nodes={fill=figurealgorithm}},
  column 5/.style={nodes={fill=figurealgorithm}},
  column sep=-\pgflinewidth,row sep=-\pgflinewidth] {
1&1&0&1&0&0\\
1&1&1&0&0&0\\
1&0&0&1&0&1\\
0&0&1&0&1&1\\
0&1&0&0&1&1\\
0&0&1&1&1&0\\
};
\foreach \i in {1,...,6}{
  \node[left=4mm of M-\i-1] {$e_{\i}$};
  \node[above=3mm of M-1-\i] {$S_{\i}$};
}
\node[draw,rounded corners,fill=figurealgorithm,below=8mm of M] {selected exact-cover columns: $S_1$ and $S_5$};
\end{tikzpicture}
\caption{The incidence matrix with elements in rows and subsets in columns.  The highlighted subset-columns sum rowwise to the all-ones vector.}
\label{fig:worked-incidence}
\end{figure}

\subsubsection{The associated prefix-exactness instance}\label{app:worked-rpe}

For each set \(S_v\), the prefix datum is \(d_{v,k}=|S_v\cap\{e_1,\ldots,e_k\}|\).  The full prefix matrix is shown in \cref{tab:worked-prefix}.  The exact-cover vector and spin vector are
\begin{equation}\label{eq:worked-x-sigma}
 x=(1,0,0,0,1,0),
 \qquad
 \sgnvec=2x-\mathbf 1=(+1,-1,-1,-1,+1,-1).
\end{equation}
\begin{table}[ht]
\centering
\caption{Prefix data with subset-columns preserved.  Row $k$ gives the cumulative counts through element row $e_k$; the last column evaluates \(C_k(x)=\sum_vd_{v,k}x_v\) at \(x=(1,0,0,0,1,0)\).}
\label{tab:worked-prefix}
\begin{tabular}{c c c c c c c c}
\toprule
Prefix & $S_1$ & $S_2$ & $S_3$ & $S_4$ & $S_5$ & $S_6$ & $C_k(x)$\\
\midrule
$k=1$ & 1 & 1 & 0 & 1 & 0 & 0 & 1 \\
$k=2$ & 2 & 2 & 1 & 1 & 0 & 0 & 2 \\
$k=3$ & 3 & 2 & 1 & 2 & 0 & 1 & 3 \\
$k=4$ & 3 & 2 & 2 & 2 & 1 & 2 & 4 \\
$k=5$ & 3 & 3 & 2 & 2 & 2 & 3 & 5 \\
$k=6$ & 3 & 3 & 3 & 3 & 3 & 3 & 6 \\
\bottomrule
\end{tabular}
\end{table}

The six equations are, explicitly,
\begin{align*}
C_1(x)&=1\cdot x_1+1\cdot x_2+0\cdot x_3+1\cdot x_4+0\cdot x_5+0\cdot x_6=1,\\
C_2(x)&=2x_1+2x_2+x_3+x_4=2,\\
C_3(x)&=3x_1+2x_2+x_3+2x_4+x_6=3,\\
C_4(x)&=3x_1+2x_2+2x_3+2x_4+x_5+2x_6=4,\\
C_5(x)&=3x_1+3x_2+2x_3+2x_4+2x_5+3x_6=5,\\
C_6(x)&=3(x_1+x_2+x_3+x_4+x_5+x_6)=6.
\end{align*}
Substitution of \eqref{eq:worked-x-sigma} gives \((C_1,\ldots,C_6)=(1,2,3,4,5,6)\).  Conversely, successive differences recover the element-cover equations:
\[
\begin{array}{lll}
C_1-C_0=x_1+x_2+x_4=1,&
C_2-C_1=x_1+x_2+x_3=1,&
C_3-C_2=x_1+x_4+x_6=1,\\
C_4-C_3=x_3+x_5+x_6=1,&
C_5-C_4=x_2+x_5+x_6=1,&
C_6-C_5=x_3+x_4+x_5=1.
\end{array}
\]
These are exactly the six element-row equations of \eqref{eq:worked-incidence}.  For instance, the $S_1$ column of the prefix table is $(1,2,3,3,3,3)^\top$ because the cumulative sum down incidence column $S_1$ increases in rows $e_1,e_2,e_3$ and then remains constant.  This illustrates concretely why successive prefix differences recover the original incidence rows.

\subsubsection{Numerical scales and base processing values}\label{app:worked-scales}

Here \(n=6\) and \(N=n+1=7\).  Therefore
\[
Q_0=10,\qquad Q_1=100,\qquad Q_2=1000,\qquad Q_3=10000,\qquad q=11110.
\]
\[
\begin{aligned}
G=7^{20}&=\displaynum{79792266297612001},\\
M=7^{40}&=\displaynum{6366805760909027985741435139224001},\\
T=7^{50}&=\displaynum{1798465042647412146620280340569649349251249}.
\end{aligned}
\]
The level map is
\[
(\kappa(1),\ldots,\kappa(6))=(6,5,4,3,2,1).
\]
The scale relationships are especially easy to see symbolically:
\[
M=G^2=7^{40},
\qquad
T=7^{10}M=7^{50},
\qquad
GM=7^{60}.
\]
Thus $T$ dominates all block-level $M$ terms, while the ratio-space gap $GM$ dominates block-field variation on the order of $N^2T=7^{52}$.
The common controller base is
\[
P_{v0}=T+10=\displaynum{1798465042647412146620280340569649349251259}
\qquad(v=1,\ldots,6).
\]
All other base processing values and block loads are displayed in \cref{tab:worked-P}.  For example, \(S_1=\{e_1,e_2,e_3\}\), so
\begin{align*}
P_{11}&=T+6M+100,\\
P_{12}&=T+5M+1000,\\
P_{13}&=T+4M+10000,\\
\beta_1&=P_{10}+P_{11}+P_{12}+P_{13}+3.
\end{align*}
Substitution gives the first row of \cref{tab:worked-P}.  The three coefficients $6,5,4$ are not arbitrary: they are the reversed element levels of $e_1,e_2,e_3$.  The pair codes $100,1000,10000$ identify occurrence positions $h=1,2,3$ even when two sets contain the same element.
\begin{landscape}
\scriptsize
\let\smallnum\numstr
\setlength{\tabcolsep}{1.5pt}
\setlength{\LTleft}{0pt}
\setlength{\LTright}{0pt}
\tablecert{\textbf{Purpose.}  The table certifies the integral base values and target load of every coded block.  \textbf{Verification rule.}  Form each $P_{vh}=T+M\kappa(k_{vh})+Q_h$, then add $P_{v0}+P_{v1}+P_{v2}+P_{v3}+3$; the two orientations must both sum to the displayed $\beta_v$.}
\begin{longtable}{c c c p{0.16\textheight} p{0.16\textheight} p{0.16\textheight} p{0.16\textheight} p{0.18\textheight}}
\caption{Exact base processing values and balanced block loads.}\label{tab:worked-P}\\
\toprule
$v$ & $S_v$ & $(\kappa_1,\kappa_2,\kappa_3)$ & $P_{v0}$ & $P_{v1}$ & $P_{v2}$ & $P_{v3}$ & $\beta_v$ \\
\midrule
\endfirsthead
\multicolumn{8}{c}{\tablename\ \thetable\ (continued)}\\
\toprule
$v$ & $S_v$ & $(\kappa_1,\kappa_2,\kappa_3)$ & $P_{v0}$ & $P_{v1}$ & $P_{v2}$ & $P_{v3}$ & $\beta_v$ \\
\midrule
\endhead
\midrule
\multicolumn{8}{r}{Continued on next page}\\
\endfoot
\bottomrule
\endlastfoot
1 & $\{1,2,3\}$ & $(6,5,4)$ & \smallnum{1798465042647412146620280340569649349251259} & \smallnum{1798465080848246712074448255018260184595355} & \smallnum{1798465074481440951165420269276825045372254} & \smallnum{1798465068114635190256392283535389906157253} & \smallnum{7193860266091735000116541148400124485376124} \\
2 & $\{1,2,5\}$ & $(6,5,2)$ & \smallnum{1798465042647412146620280340569649349251259} & \smallnum{1798465080848246712074448255018260184595355} & \smallnum{1798465074481440951165420269276825045372254} & \smallnum{1798465055381023668438336312052519627709251} & \smallnum{7193860253358123478298485176917254206928122} \\
3 & $\{2,4,6\}$ & $(5,3,1)$ & \smallnum{1798465042647412146620280340569649349251259} & \smallnum{1798465074481440951165420269276825045371354} & \smallnum{1798465061747829429347364297793954766924252} & \smallnum{1798465049014217907529308326311084488485250} & \smallnum{7193860227890900434662373233951513650032118} \\
4 & $\{1,3,6\}$ & $(6,4,1)$ & \smallnum{1798465042647412146620280340569649349251259} & \smallnum{1798465080848246712074448255018260184595355} & \smallnum{1798465068114635190256392283535389906148253} & \smallnum{1798465049014217907529308326311084488485250} & \smallnum{7193860240624511956480429205434383928480120} \\
5 & $\{4,5,6\}$ & $(3,2,1)$ & \smallnum{1798465042647412146620280340569649349251259} & \smallnum{1798465061747829429347364297793954766923352} & \smallnum{1798465055381023668438336312052519627700251} & \smallnum{1798465049014217907529308326311084488485250} & \smallnum{7193860208790483151935289276727208232360115} \\
6 & $\{3,4,5\}$ & $(4,3,2)$ & \smallnum{1798465042647412146620280340569649349251259} & \smallnum{1798465068114635190256392283535389906147353} & \smallnum{1798465061747829429347364297793954766924252} & \smallnum{1798465055381023668438336312052519627709251} & \smallnum{7193860227890900434662373233951513650032118} \\
\end{longtable}
\end{landscape}

\begin{arithmeticguide}
To audit a row of \cref{tab:worked-P}, first form each $P_{vh}$ from $T+M\kappa(k_{vh})+Q_h$ and then add the four bases plus $3$ to obtain $\beta_v$.  Rows 3 and 6 have the same $\beta$ because their three reversed levels have the same sum $5+3+1=4+3+2=9$; the fixed pair-code total is identical in every block.
\end{arithmeticguide}

\subsubsection{The 48 state jobs and arithmetic synchronization}\label{app:worked-blocks}

Block \(v\) contains the eight jobs specified in \cref{tab:block}.  To make the arithmetic visible, consider block 1.  In the \(\sigma_1=+1\) orientation its signed processing sum is
\begin{align*}
&-(P_{10}+3)+P_{10}
 +(P_{11}+1)-P_{11}
 +(P_{12}+1)-P_{12}
 +(P_{13}+1)-P_{13}\\
&\hspace{5em}=-3+1+1+1=0.
\end{align*}
The complementary \(\sigma_1=-1\) orientation also has sum zero.  For an arbitrary orientation, the decomposition \eqref{eq:signedload} becomes
\[
2T U+2M Z+20u_0+200u_1+2000u_2+20000u_3+R,
\qquad |R|\le6.
\]
The symbols $U$ and $Z$ have exactly the meanings used in the general proof: $U$ is the total pair imbalance and $Z$ is the reversed-element-level imbalance.
The exact inequalities are
\[
2T>6nM+2q+6,
\qquad
2M>2q+6,
\qquad
20>6.
\]
Thus $U=Z=0$, then $(u_0,u_1,u_2,u_3)=(0,0,0,0)$, and finally the residual equation forces the displayed state or its complement.  A direct exhaustive enumeration of all $2^8=256$ sign vectors in each of the six blocks gives the same conclusion.  In the job order
\[
(A_{v0},B_{v0},A_{v1},B_{v1},A_{v2},B_{v2},A_{v3},B_{v3}),
\]
the only zero-load vectors are
\[
(-,+,+,-,+,-,+,-)
\quad\text{and}\quad
(+,-,-,+,-,+,-,+).
\]
Hence all six blocks have exactly the required two balanced states.

\subsubsection{Kernel data, early ratios, and effective atoms}\label{app:worked-atoms}

The six values of \(\lambda_v=G+7-v\), the loads \(\beta_v\), and the fields \(f_v=\sum_u\min\{L_v,L_u\}\beta_u\) are in \cref{tab:worked-kernel}.  As one sample calculation, since \(L_1\) is the largest kernel level,
\[
 f_1=\sum_{u=1}^{6}L_u\beta_u.
\]
The displayed exact value agrees with direct multiplication and summation.  A useful local check is
\[
f_1-f_2=2\beta_1
=\readnum{14387720532183470000233082296800248970752248,}
\]
which is the $v=1$ case of \cref{lem:fidentities}.  Every state-job early ratio is then $r_i=f_v+\lambda_vp_i$.
\begin{landscape}
\scriptsize
\let\smallnum\numstr
\setlength{\tabcolsep}{1.5pt}
\setlength{\LTleft}{0pt}
\setlength{\LTright}{0pt}
\tablecert{\textbf{Purpose.}  The table certifies the block loads, kernel levels, and affine fields used by the load lock.  \textbf{Verification rule.}  Compute one row from $f=K\beta$, then audit successive rows by $f_v-f_{v+1}=2\sum_{u\le v}\beta_u$.}
\begin{longtable}{c p{0.19\textheight} p{0.13\textheight} p{0.13\textheight} p{0.30\textheight}}
\caption{Block loads and minimum-kernel data.}\label{tab:worked-kernel}\\
\toprule
$v$ & $\beta_v$ & $\lambda_v$ & $L_v=2\lambda_v$ & $f_v$ \\
\midrule
\endfirsthead
\multicolumn{5}{c}{\tablename\ \thetable\ (continued)}\\
\toprule
$v$ & $\beta_v$ & $\lambda_v$ & $L_v=2\lambda_v$ & $f_v$ \\
\midrule
\endhead
\midrule
\multicolumn{5}{r}{Continued on next page}\\
\endfoot
\bottomrule
\endlastfoot
1 & \smallnum{7193860266091735000116541148400124485376124} & \smallnum{79792266297612007} & \smallnum{159584532595224014} & \smallnum{6888172941284439601043621980215987825344444649279254568462502} \\
2 & \smallnum{7193860253358123478298485176917254206928122} & \smallnum{79792266297612006} & \smallnum{159584532595224012} & \smallnum{6888172941284439586655901448032517825111362352479005597710254} \\
3 & \smallnum{7193860227890900434662373233951513650032118} & \smallnum{79792266297612005} & \smallnum{159584532595224010} & \smallnum{6888172941284439557880460409132800868281309701844248213101762} \\
4 & \smallnum{7193860240624511956480429205434383928480120} & \smallnum{79792266297612004} & \smallnum{159584532595224008} & \smallnum{6888172941284439514717298914451283042126510583306463528429034} \\
5 & \smallnum{7193860208790483151935289276727208232360115} & \smallnum{79792266297612003} & \smallnum{159584532595224006} & \smallnum{6888172941284439457166416938520741303010853053899910986796066} \\
6 & \smallnum{7193860227890900434662373233951513650032118} & \smallnum{79792266297612002} & \smallnum{159584532595224004} & \smallnum{6888172941284439385227814545009233260024616971038941980442868} \\
\end{longtable}
\end{landscape}

\begin{arithmeticguide}
Each row of \cref{tab:worked-kernel} is generated from the previous one by subtracting twice the cumulative block load: $f_v-f_{v+1}=2\sum_{u\le v}\beta_u$.  This recurrence is much easier to audit than recomputing every long dot product independently.
\end{arithmeticguide}

The 24 compressed atoms are shown in \cref{tab:worked-atoms}.  For example,
\[
\rho_{11}=f_1+\lambda_1(2P_{11}+1)
=\mednum{7175180150600438820071889486974548885544592536491609034929479.}
\]
The controller atom in every block has mass $-3$; every occurrence atom has mass $+1$.  For an occurrence row, one checks the effective ratio by $f_v+\lambda_v(2P_{vh}+1)$; for a controller row, by $f_v+\lambda_v(2P_{v0}+3)$.  The expression $2P$ plus the small constant is the difference-of-squares calculation from the theory, not an independently chosen coordinate.
\begin{landscape}
\scriptsize
\let\smallnum\numstr
\setlength{\tabcolsep}{1.5pt}
\setlength{\LTleft}{0pt}
\setlength{\LTright}{0pt}
\tablecert{\textbf{Purpose.}  The table fixes every effective-atom mass, element label, and ratio used in the profile calculation.  \textbf{Verification rule.}  Use mass $+1$ and $\rho=f_v+\lambda_v(2P_{vh}+1)$ for occurrences, and mass $-3$ with $\rho=f_v+\lambda_v(2P_{v0}+3)$ for controllers.}
\begin{longtable}{c c c c c p{0.54\textheight}}
\caption{All effective atoms in the worked instance.}\label{tab:worked-atoms}\\
\toprule
block & pair & kind & element & mass & effective ratio $\rho$ \\
\midrule
\endfirsthead
\multicolumn{6}{c}{\tablename\ \thetable\ (continued)}\\
\toprule
block & pair & kind & element & mass & effective ratio $\rho$ \\
\midrule
\endhead
\midrule
\multicolumn{6}{r}{Continued on next page}\\
\endfoot
\bottomrule
\endlastfoot
1 & 0 & controller & -- & -3 & \smallnum{7175180144504176491196408644698275728267965338815663737832149} \\
1 & 1 & occurrence & $e_{1}$ & 1 & \smallnum{7175180150600438820071889486974548885544592536491609034929479} \\
1 & 2 & occurrence & $e_{2}$ & 1 & \smallnum{7175180149584395098592642679928503359331967330128854016182065} \\
1 & 3 & occurrence & $e_{3}$ & 1 & \smallnum{7175180148568351377113395872882457833120634758480120311948051} \\
2 & 0 & controller & -- & -3 & \smallnum{7175180144504176473211758027219981434794322360876116068577380} \\
2 & 1 & occurrence & $e_{1}$ & 1 & \smallnum{7175180150600438802087238793094585461162613729654839694986520} \\
2 & 2 & occurrence & $e_{2}$ & 1 & \smallnum{7175180149584395080607991998782151456768044494774954954685308} \\
2 & 3 & occurrence & $e_{5}$ & 1 & \smallnum{7175180146536263916170251615844849443585342172690650645057272} \\
3 & 0 & controller & -- & -3 & \smallnum{7175180144504176440839386903025440184723709029102059985466367} \\
3 & 1 & occurrence & $e_{2}$ & 1 & \smallnum{7175180149584395048235620810919552597607007679507211777723307} \\
3 & 2 & occurrence & $e_{4}$ & 1 & \smallnum{7175180147552307605277127247761907632453837526633847152404287} \\
3 & 3 & occurrence & $e_{6}$ & 1 & \smallnum{7175180145520220162318633684604262667301960008474503841566267} \\
4 & 0 & controller & -- & -3 & \smallnum{7175180144504176394079295323049098065328349229424976602291118} \\
4 & 1 & occurrence & $e_{1}$ & 1 & \smallnum{7175180150600438722954775936120363829879968940409256887323878} \\
4 & 2 & occurrence & $e_{3}$ & 1 & \smallnum{7175180148568351279996282398429941908362910730501632818899062} \\
4 & 3 & occurrence & $e_{6}$ & 1 & \smallnum{7175180145520220115558542091894309026088544237314550179923038} \\
5 & 0 & controller & -- & -3 & \smallnum{7175180144504176332931483261823732032972131018879125362155629} \\
5 & 1 & occurrence & $e_{4}$ & 1 & \smallnum{7175180147552307497369223530158530349793780061434355156796181} \\
5 & 2 & occurrence & $e_{5}$ & 1 & \smallnum{7175180146536263775889976774046930910853378741003081251833575} \\
5 & 3 & occurrence & $e_{6}$ & 1 & \smallnum{7175180145520220054410730017935331471914270055285828661319569} \\
6 & 0 & controller & -- & -3 & \smallnum{7175180144504176257395950783017399696745334254878857657299910} \\
6 & 1 & occurrence & $e_{3}$ & 1 & \smallnum{7175180148568351143312937756529351365235304358013215944716282} \\
6 & 2 & occurrence & $e_{4}$ & 1 & \smallnum{7175180147552307421833691013151363448112959009064812318199878} \\
6 & 3 & occurrence & $e_{5}$ & 1 & \smallnum{7175180146536263700354444269773375530991906294830430006115874} \\
\end{longtable}
\end{landscape}

\begin{howtoread}
In \cref{tab:worked-atoms}, compare only rows carrying the same element to form one band.  For example, $e_1$ occurs in blocks 1, 2, and 4; the maximum of their three ratios is the upper endpoint of $\mathcal R_1$, and the minimum is its lower endpoint.  The controller rows are grouped in $\mathcal R_0$.
\end{howtoread}

Taking minima and maxima within each element level gives \cref{tab:worked-bands}.  The bands are strictly ordered
\[
\mathcal R_1>\mathcal R_2>\mathcal R_3>\mathcal R_4>
\mathcal R_5>\mathcal R_6>\mathcal R_0.
\]
The narrowest main gap is \(\Delta_4\).
\begin{landscape}
\scriptsize
\let\smallnum\numstr
\setlength{\tabcolsep}{1.5pt}
\setlength{\LTleft}{0pt}
\setlength{\LTright}{0pt}
\tablecert{\textbf{Purpose.}  The table certifies the within-band widths and separating main gaps used in the scale-separation lemma.  \textbf{Verification rule.}  Take minima and maxima over atoms with the same element label, subtract endpoints to obtain widths and the ``gap below,'' and use only the sign and stated lower bound later.}
\begin{longtable}{c p{0.24\textheight} p{0.24\textheight} p{0.18\textheight} p{0.18\textheight}}
\caption{Effective bands, widths, and exact gaps below the bands.}\label{tab:worked-bands}\\
\toprule
band & lower endpoint & upper endpoint & width & gap below \\
\midrule
\endfirsthead
\multicolumn{5}{c}{\tablename\ \thetable\ (continued)}\\
\toprule
band & lower endpoint & upper endpoint & width & gap below \\
\midrule
\endhead
\midrule
\multicolumn{5}{r}{Continued on next page}\\
\endfoot
\bottomrule
\endlastfoot
$\mathcal R_{1}$ & \smallnum{7175180150600438722954775936120363829879968940409256887323878} & \smallnum{7175180150600438820071889486974548885544592536491609034929479} & \smallnum{97117113550854185055664623596082352147605601} & \smallnum{1016043624362133256191860470548001610280402871141813} \\
$\mathcal R_{2}$ & \smallnum{7175180149584395048235620810919552597607007679507211777723307} & \smallnum{7175180149584395098592642679928503359331967330128854016182065} & \smallnum{50357021869008950761724959650621642238458758} & \smallnum{1016043671122224938037094764486372921027091465775256} \\
$\mathcal R_{3}$ & \smallnum{7175180148568351143312937756529351365235304358013215944716282} & \smallnum{7175180148568351377113395872882457833120634758480120311948051} & \smallnum{233800458116353106467885330400466904367231769} & \smallnum{1016043538035810508767443732781466831379368792311995} \\
$\mathcal R_{4}$ & \smallnum{7175180147552307421833691013151363448112959009064812318199878} & \smallnum{7175180147552307605277127247761907632453837526633847152404287} & \smallnum{183443436234610544184340878517569034834204409} & \smallnum{1016043505663439397306514004527616836374161673142606} \\
$\mathcal R_{5}$ & \smallnum{7175180146536263700354444269773375530991906294830430006115874} & \smallnum{7175180146536263916170251615844849443585342172690650645057272} & \smallnum{215815807346071473912593435877860220638941398} & \smallnum{1016043538035810585169112863689946286355926164549607} \\
$\mathcal R_{6}$ & \smallnum{7175180145520220054410730017935331471914270055285828661319569} & \smallnum{7175180145520220162318633684604262667301960008474503841566267} & \smallnum{107907903666668931195387689953188675180246698} & \smallnum{1016043563214321373237055743646304716470164923487420} \\
$\mathcal R_{0}$ & \smallnum{7175180144504176257395950783017399696745334254878857657299910} & \smallnum{7175180144504176491196408644698275728267965338815663737832149} & \smallnum{233800457861680876031522631083936806080532239} & -- \\
\end{longtable}
\end{landscape}

The total within-band width and minimum gap are
\[
W=\readnum{1122242198645248067609119549079725635487220872,}
\qquad
\Delta_{\min}=\Delta_4=\readnum{1016043505663439397306514004527616836374161673142606.}
\]
For example, the first gap is computed as
\[
\Delta_1=\underline\rho_1-\overline\rho_2.
\]
The table's ``gap below'' column performs this subtraction for every band.  The important comparison is not the number of digits but that every gap is roughly six orders of magnitude longer than the sum of all within-band widths in this particular instance.

\subsubsection{Cross-block couplings and logical-field correction}\label{app:worked-fields}

For every block pair $u<v$, direct exact evaluation uses the 16 atom-pair terms in \eqref{eq:profilekernel} and the main-gap coefficient in \eqref{eq:Jmain}.  To avoid repeated quarters, \cref{tab:worked-couplings} lists \(4J_{uv}\), \(4J_{uv}^{\rm main}\), and their difference.  These are exact integers.
\begin{landscape}
\scriptsize
\let\smallnum\numstr
\setlength{\tabcolsep}{1.5pt}
\setlength{\LTleft}{0pt}
\setlength{\LTright}{0pt}
\tablecert{\textbf{Purpose.}  The table certifies every cross-block coupling and its split into main-gap signal and within-band residual.  \textbf{Verification rule.}  Evaluate the 16 atom-pair terms, multiply by four to avoid fractions, compute the main-gap sum from $d_{u,k}d_{v,k}\Delta_k$, and subtract the two columns.}
\begin{longtable}{c c p{0.28\textheight} p{0.28\textheight} p{0.28\textheight}}
\caption{Exact compressed cross-block coefficients, scaled by four.}\label{tab:worked-couplings}\\
\toprule
$u$ & $v$ & $4J_{uv}$ & $4J_{uv}^{\rm main}$ & $4(J_{uv}-J_{uv}^{\rm main})$ \\
\midrule
\endfirsthead
\multicolumn{5}{c}{\tablename\ \thetable\ (continued)}\\
\toprule
$u$ & $v$ & $4J_{uv}$ & $4J_{uv}^{\rm main}$ & $4(J_{uv}-J_{uv}^{\rm main})$ \\
\midrule
\endhead
\midrule
\multicolumn{5}{r}{Continued on next page}\\
\endfoot
\bottomrule
\endlastfoot
1 & 2 & \smallnum{35561530143865734261602374176616278035024822539117778} & \smallnum{35561524482297720070439503418374254326344771319303686} & \smallnum{5661568014191162870758242023708680051219814092} \\
1 & 3 & \smallnum{26417136355604242489535592373190354673478623208816986} & \smallnum{26417132287476273656363783629439267520804300646026555} & \smallnum{4068127968833171808743751087152674322562790431} \\
1 & 4 & \smallnum{30481310964557610157619596271275247883125806571430953} & \smallnum{30481306525945838438857975298331669625222809894104353} & \smallnum{4438611771718761620972943578257902996677326600} \\
1 & 5 & \smallnum{18288785562242113164137896643272598523828821569358634} & \smallnum{18288783814134074062067720888539270675489526318112240} & \smallnum{1748108039102070175754733327848339295251246394} \\
1 & 6 & \smallnum{27433178375735541316077789478608820936384401166606761} & \smallnum{27433175559339255535796932691536360537817896208124864} & \smallnum{2816396285780280856787072460398566504958481897} \\
2 & 3 & \smallnum{23369005367416078157885574436049353535201886057872113} & \smallnum{23369001738113584352983311887602567016676608507429348} & \smallnum{3629302493804902262548446786518525277550442765} \\
2 & 4 & \smallnum{26417136283665639994155192155661250705627384692494548} & \smallnum{26417132438547338626710059823713502289715748963195151} & \smallnum{3845118301367445132331947748415911635729299397} \\
2 & 5 & \smallnum{17272742142904996807128864378326947436541046080912646} & \smallnum{17272740308470634664761206884011653839115364644969634} & \smallnum{1834434362142367657494315293597425681435943012} \\
2 & 6 & \smallnum{24385047804791271335391422193694028716143603217098066} & \smallnum{24385045009976566232416460949699660033690204069527657} & \smallnum{2794814705102974961243994368682453399147570409} \\
3 & 4 & \smallnum{20320874242544569222535800521233764930250657255135288} & \smallnum{20320870990919738244607991395736301522937664712554878} & \smallnum{3251624830977927809125497463407312992542580410} \\
3 & 5 & \smallnum{15240655066833375904196480609127666525762826615153450} & \smallnum{15240653232399013494422981156631761266403512315870420} & \smallnum{1834434362409773499452495905259359314299283030} \\
3 & 6 & \smallnum{20320873512367754496724563295291371255942519401253571} & \smallnum{20320870857833323968141678625848354343243056783566841} & \smallnum{2654534430528582884669443016912699462617686730} \\
4 & 5 & \smallnum{15240655394154016938563652401259073180536669740444233} & \smallnum{15240653232399013494422981156631761266403512315870420} & \smallnum{2161755003444140671244627311914133157424573813} \\
4 & 6 & \smallnum{21336917330964114447149225178990828970935777657213120} & \smallnum{21336914395869134476909122358629821174622425575878836} & \smallnum{2935094979970240102820361007796313352081334284} \\
5 & 6 & \smallnum{17272743038540597328342912800590940138111547715453003} & \smallnum{17272740308470634664761206884011653839115364644969634} & \smallnum{2730069962663581705916579286298996183070483369} \\
\end{longtable}
\end{landscape}

\begin{howtoread}
The first two coefficient columns in \cref{tab:worked-couplings} are close because they differ only on intervals lying inside bands.  Their difference is the precise within-band residual for that block pair.  Summing the absolute values of such differences is unnecessary; the uniform profile bound $|F_v|\le3$ yields the simpler global estimate used in $\Gamma$.
\end{howtoread}

The field-correction arithmetic is summarized in \cref{tab:worked-fields}.  The columns are scaled so that
\begin{equation}\label{eq:worked-field-check}
4e_v=4a_v^0-\eta_vD_v-4a_v^\star
\end{equation}
can be checked by integer subtraction.  For block 1, for example,
\[
\eta_1=
\operatorname{round}\!\left(\frac{4a_1^0-4a_1^\star}{D_1}\right)
=-\mednum{121608653220394640131792521921827012932990685915755836745300153673135,}
\]
and the remainder in \eqref{eq:worked-field-check} is
\[
4e_1=\readnum{91993098399800992075465492602897143.}
\]
The bound $|e_v|\le D_v/8$ is verified in every row.  Because the table displays $4e_v$, the convenient integer check is $|4e_v|\le D_v/2$.  For block 1, the displayed residual $91993098399800992075465492602897143$ is smaller than $D_1/2$, as required.
\begin{landscape}
\scriptsize
\let\smallnum\numstr
\setlength{\tabcolsep}{1.5pt}
\setlength{\LTleft}{0pt}
\setlength{\LTright}{0pt}
\tablecert{\textbf{Purpose.}  The table certifies the integer field correction and the residual error in each block.  \textbf{Verification rule.}  Check $4e_v=4a_v^0-\eta_vD_v-4a_v^\star$ by integer subtraction and verify $|4e_v|\le D_v/2$.}
\begin{longtable}{c p{0.20\textheight} p{0.17\textheight} p{0.14\textheight} p{0.20\textheight} p{0.11\textheight}}
\caption{Exact logical-field correction.  All displayed field columns are multiplied by four.}\label{tab:worked-fields}\\
\toprule
$v$ & $4a_v^0$ & $4a_v^\star$ & $D_v$ & $\eta_v$ & $4e_v$ \\
\midrule
\endfirsthead
\multicolumn{6}{c}{\tablename\ \thetable\ (continued)}\\
\toprule
$v$ & $4a_v^0$ & $4a_v^\star$ & $D_v$ & $\eta_v$ & $4e_v$ \\
\midrule
\endhead
\midrule
\multicolumn{6}{r}{Continued on next page}\\
\endfoot
\bottomrule
\endlastfoot
1 & \smallnum{-23227760216999904400496634511560428002240935718792495585799850159117121591078392935076075583630791887268} & \smallnum{59946569427529543850729097518840776005094889033779729} & \smallnum{191004172827270839572243054176742164} & \smallnum{-121608653220394640131792521921827012932990685915755836745300153673135} & \smallnum{91993098399800992075465492602897143} \\
2 & \smallnum{-20646897884173472877318655544106810947660541066093452746135549401273460459000116801988589378943123704295} & \smallnum{52834264790768354735200710302385908165460135964273320} & \smallnum{165536949783634727629277313619846160} & \smallnum{-124726823293289057041984530587099588067121354739907606831762205114467} & \smallnum{44500147224446456465221426642419105} \\
3 & \smallnum{-16001345658772503665537712596128803649853182454880499211205269725821871149702350242447040798952925932636} & \smallnum{41657785519875287150786764786071028787207812961896975} & \smallnum{114602503696362503743345832506054152} & \smallnum{-139624747650957194248927671169972797731094030532099011391196785378386} & \smallnum{-22622512333360885622529828761470939} \\
4 & \smallnum{-19614552835556321709700007797235723545341044315144813932488116709618415424964229981711247984553556902256} & \smallnum{45721959758344851933280956454963430891626322209974773} & \smallnum{140069726739998615686311573062950156} & \smallnum{-140034205049642231928344418796909733781188573487259147666761466327671} & \smallnum{-67239674239640297051854277130310353} \\
5 & \smallnum{-13420483252853301239858135015841491156877958675776002961694190276450547110018236025673074055215771459374} & \smallnum{32513393540869648159184188040643415105518876960840054} & \smallnum{76401669130908335828897221670710146} & \smallnum{-175656937937551388015534071777519207387151471005203134839749376426987} & \smallnum{36483612940352395244995787676810674} \\
6 & \smallnum{-18582207673645379862578575837231175656770200985970200493102699693627551799352840020538131195353118101563} & \smallnum{44705915867809890200558139575548014376933260853094498} & \smallnum{114602503696362503743345832506054152} & \smallnum{-162144866598016402741684166715715899102730544112490726603040139389225} & \smallnum{29891598690064267239692162084116139} \\
\end{longtable}
\end{landscape}

\begin{arithmeticguide}
The quotient defining $\eta_v$ is not expected to be integral.  Rounding chooses the nearest integer multiplier of $D_v$; equation \eqref{eq:worked-field-check} then records the exact leftover.  The proof needs only the half-step bound, not a small decimal relative error.
\end{arithmeticguide}

\subsubsection{The load-lock scale, anchor, and due date}\label{app:worked-loadlock}

The four terms in \eqref{eq:U} evaluate to
\[
1=1,
\]
\[
\displaystyle\sum_{i<j}p_ip_j\min\{r_i,r_j\}
=
\mathnum[0.52\textwidth]{25654998533637856711218783177390546966600785528823415044693842834837405883779529535220745015765134014131401267942631440865121723230297964517549394530,}
\]
\[
\displaystyle\sum_i|\eta_{b(i)}|p_i^2
=
\mathnum[0.52\textwidth]{22351429418363254224931416082581126509365172927767481961232649263072711195064493397836417303196892711619529061808121836053790729275879854346002172010721524,}
\]
\[
\displaystyle\sum_{i<j}|\bar\eta_{ij}|p_ip_j
=
\mathnum[0.52\textwidth]{562093418052650571895673292558307306876005080517367319774850385852645843472540618343347088295348930077885807518366628326198584559884180249290425125667473428.}
\]
Therefore
\[
U=\mathnum{584444873126012359758461419859671610775917220045920330559498079809561389505010995520713040819290838554639350711576018104883816154281783333934391815227589483}
\]
and
\[
\max_v|\eta_v|=\mednum{175656937937551388015534071777519207387151471005203134839749376426987.}
\]
The load-lock scale is
\[
H=U+2\max_v|\eta_v|+2
=\mathnum{584444873126012359758461419859671610775917220045920330559498079809561389505010995520713392133166713657415381779719573143298590457223793740204071313980443459.}
\]
In particular,
\[
H-U=\mednum{351313875875102776031068143555038414774302942010406269679498752853976}>0.
\]
This positive difference is the numerical certificate for load-lock dominance.  A wrong integer load vector pays at least $H$, while every non-dominant term together can improve by less than $U$.
Every state job in block \(v\) receives the common tardiness ratio \(s_i=HL_v+\eta_v\).  All six values are positive and decrease strictly with \(v\), as visible in the full job table below.

The anchor data are
\[
\begin{aligned}
p_a&=H,\\
r_a&=1+\max_i r_i=\mednum{7031676545942439210557755733595268355444518632781564950501995,}\\
C^\star&=\mathnum{267886806177807976135173003247823899187721758959732956168790713347663219908907798227605566471005179125919095260487893414629014523117639278783931399668030172330838659432497804801216895928228214103907831843093389083896975601641119383754333813593128421093636747252,}\\
s_a&=1+\left\lceil C^\star/H^2\right\rceil=2.
\end{aligned}
\]
The computed ratio satisfies $0<C^\star/H^2\le1$, so the ceiling is $1$ and $s_a=2$.  Its tardy self-cost is
\[
p_a^2s_a=\mathnum{683151619446561368413615410444933623585099609900914566805231553229996594117852535253587426979235841601201985188673749202537372133835586671076633705422428293801024314335090374585925621334531619631706967943361532152450437040923960956249613431193297658282439578766783389222785684354764168571996115924105420591769362,}
\]
which exceeds \(C^\star\) by
\[
\mathnum{683151619446561368413615410444933623585099609900914298918425375422020458944849287429688239257476881868245816397960401539317463226037359065510162700243302374705763826441675745571402503695252835700307299913189201313791004543119159739353685202979193750450596485377699492247184043235380414238182522795684326955022110.}
\]
Thus no optimal canonical assignment can place the anchor tardy.  The common due date is the total processing time of all 49 jobs:
\[
d=\mathnum{584444873126012359758461419859671610775917220045920330559498079809561389505010995520713392133166713657415381779805899466147883766136104722754835310286860893.}
\]

\subsubsection{The constructed 49-job instance}\label{app:worked-jobs}

The decision instance consists of the 49 triples $(p_j,w_j^E,w_j^L)$, the due date $d$, and the threshold $B$.  \Cref{tab:worked-job-ratios,tab:worked-job-weights} lists every job.  The first table gives the intermediate ratios; the second gives the actual integer weights.  The equalities $w_j^E=p_jr_j$ and $w_j^L=p_js_j$ hold row by row.

\begin{howtoread}
The ratios are construction aids; the actual scheduling input contains only processing times and weights.  To audit one row, multiply the displayed $p_j$ by $r_j$ and $s_j$.  Jobs in the same block share a tardiness ratio, while their early ratios differ according to their processing values.  The anchor is recognizable by $p_a=H$ and $s_a=2$.
\end{howtoread}
\begin{landscape}
\tiny
\setlength{\tabcolsep}{1.5pt}
\setlength{\LTleft}{0pt}
\setlength{\LTright}{0pt}
\tablecert{\textbf{Purpose.}  The table records the ratio-level construction data for all 49 jobs of the yes-instance.  \textbf{Verification rule.}  Compute $r_j=f_v+\lambda_vp_j$ and $s_j=HL_v+\eta_v$ within each block; the anchor is checked from $p_a=H$, $r_a=1+\max r_j$, and $s_a=2$.}

\end{landscape}
\begin{landscape}
\tiny
\setlength{\tabcolsep}{1.5pt}
\setlength{\LTleft}{0pt}
\setlength{\LTright}{0pt}
\tablecert{\textbf{Purpose.}  The table records the physical positive-integer input triples for the 49-job yes-instance.  \textbf{Verification rule.}  Multiply each displayed $p_j$ by its ratio in \cref{tab:worked-job-ratios} to recover $w_j^E$ and $w_j^L$; no visual comparison of full digit strings is required.}
%
\end{landscape}

\subsubsection{The master identity and decision threshold}\label{app:worked-threshold}

For this instance,
\[
\sum_{v=1}^6D_v=\readnum{802217525874537526203420827542356930}
\]
and
\[
\Gamma=\frac98(6^2)W+\frac18\sum_vD_v
=\frac{\readnum{181803236180931295715614635714017263362700959729}}{4}.
\]
The exact constant coefficient and shifted constant are
\[
C_0=\frac{\mathnum{267886806177807976135173003247823899187721758959732956168790713347663219908907798227605566471005179125919095260487893414629014523117639278783931399668030172330838659432497804801216895928228214103907831843093234899276852755333078435488265487586345988979990076685}}{1},
\qquad
A=\frac{\mathnum{535773612355615952270346006495647798375443517919465912337581426695326439817815596455211132942010358251838190520975786829258029046235278557567862799336060344661677318864995609602433791856456428207815663686186408327918875608758123919221156633624554244928453356171}}{2}.
\]
The gap-dominance comparison is
\[
\frac{\Delta_{\min}}2=\readnum{508021752831719698653257002263808418187080836571303}
>\readnum{181803236180931295715614635714017263362700959731/2}=2\Gamma+1.
\]
The left side is the least possible main penalty for one nonzero integer prefix error.  The right side allows the residual to favor the yes-state by $\Gamma$, favor the no-state by another $\Gamma$, and still leaves one full unit for the integer floor.
The positive margin is
\[
\frac{\Delta_{\min}}2-(2\Gamma+1)
=\readnum{1015861702427258466010798389891902819110798972182875/2.}
\]
Finally,
\[
B=\left\lfloor A+\Gamma\right\rfloor
=\mathnum{267886806177807976135173003247823899187721758959732956168790713347663219908907798227605566471005179125919095260487893414629014523117639278783931399668030172330838659432497804801216895928228214103907831843093204164004888613424294783539481975740781438304901918017.}
\]

\subsubsection{The ``if'' case: constructing and evaluating the schedule}\label{app:worked-if}

Select \(S_1\) and \(S_5\), so \(x=(1,0,0,0,1,0)\) and \(\sgnvec=(+,-,-,-,+,-)\).  In a block with \(\sigma_v=+1\), the early jobs are
\[
B_{v0},A_{v1},A_{v2},A_{v3},
\]
and the tardy jobs are
\[
A_{v0},B_{v1},B_{v2},B_{v3}.
\]
For \(\sigma_v=-1\), the two lists are interchanged.  Thus the exact-cover assignment contains 24 early state jobs, 24 tardy state jobs, and the early anchor.  Every block has tardy load \(\beta_v\); for example, in selected block 1,
\[
(P_{10}+3)+P_{11}+P_{12}+P_{13}=\beta_1,
\]
whereas in unselected block 2,
\[
P_{20}+(P_{21}+1)+(P_{22}+1)+(P_{23}+1)=\beta_2.
\]

Because $C_k(x)=k$ for all $k$, every squared term in $\mathcal P(x)$ is zero, so the prefix penalty is exactly zero.  No cancellation among different prefixes is involved.  The residual is
\[
R(\sgnvec)=\frac{\readnum{-2343399975092575105438479690358125499641790995}}{2}.
\]
Consequently, the master identity gives the integer cost
\[
\Phi_{\rm cover}=A+R(\sgnvec)
=\mathnum{267886806177807976135173003247823899187721758959732956168790713347663219908907798227605566471005179125919095260487893414629014523117639278783931399668030172330838659432497804801216895928228214103907831843093204163958266104391515672057859076967098059714405782588.}
\]
This satisfies
\[
B-\Phi_{\rm cover}
=\readnum{46622509032779111481622898773683378590496135429}>0.
\]
Hence the constructed \AWET\ instance is a yes-instance.

For completeness, \cref{tab:worked-schedule} gives the entire compact canonical schedule.  The early jobs are listed in increasing $r_j$, which is chronological order before $d$: reading in the opposite direction, outward from $d$, gives the nonincreasing order of \cref{lem:compact}.  The anchor is last and completes at $d$.  The tardy jobs are listed chronologically in decreasing $s_j$.  The early block starts at
\[
\readnum{43163161424646654456155491275381998153208717,}
\]
which is precisely the total tardy processing load.  The final completion time exceeds $d$ by
\[
\readnum{43163161424646654456155491275381998153208717.}
\]
The equality of the initial start offset and final tardiness span is forced by $d=\sum_jp_j$: both equal the total processing load assigned to the tardy side.
\begin{landscape}
\tiny
\setlength{\tabcolsep}{1.5pt}
\setlength{\LTleft}{0pt}
\setlength{\LTright}{0pt}
\tablecert{\textbf{Purpose.}  The table certifies feasibility and the job-by-job objective value of the canonical schedule induced by the exact cover.  \textbf{Verification rule.}  Accumulate each completion time from the previous row, subtract from or add to $d$ according to the side, and multiply the deviation by the applicable physical weight.}

\end{landscape}

\begin{howtoread}
For each row of \cref{tab:worked-schedule}, completion equals start plus processing time.  On the early side, deviation is $d-C_j$; on the tardy side it is $C_j-d$.  Multiplying the deviation by the relevant weight gives the final column.  Consecutive rows also verify compactness because each start equals the preceding completion.
\end{howtoread}

\subsubsection{The ``only if'' case: numerical enforcement and exhaustive audit}\label{app:worked-onlyif}

Suppose a schedule has cost at most \(B\).  Choose an optimal schedule and replace it, using \cref{lem:compact}, by an optimal canonical schedule.  The three mechanical enforcement steps specialize as follows.

\par\medskip\noindent\textbf{Anchor enforcement.}\quad
If the anchor in this canonical schedule were tardy, its full self-cost would exceed \(C^\star\), as calculated in \cref{app:worked-loadlock}.  Therefore the selected optimal canonical schedule keeps it early.

\par\medskip\noindent\textbf{Load enforcement.}\quad
For a nonzero integer load-error vector \(y=b-\beta\), the kernel identity gives \(y^\top K y\ge2\).  The dominant loss is therefore at least \(H\), whereas all remaining assignment-dependent oscillation is less than \(U\).  Numerically,
\[
H-U=\mednum{351313875875102776031068143555038414774302942010406269679498752853976}>0.
\]
Thus every optimal block has load \(\beta_v\), and the arithmetic synchronization calculation restricts it to one of the two coded states.

\par\medskip\noindent\textbf{Prefix enforcement.}\quad
If a coded state is not prefix-exact, one residual \(C_k(x)-k\) is a nonzero integer and
\[
\mathcal P(x)\ge\frac{\Delta_{\min}}2.
\]
Consequently,
\[
\Phi_{\rm can}(\sgnvec)
\ge A+\frac{\Delta_{\min}}2-\Gamma
=B+\readnum{2031723404854516932021596779783805638221597944365757/4}
>B.
\]
Thus a schedule meeting the bound must be prefix-exact.  Successive differences then give exactly one selected subset-column in every element row, so the selected subset-columns form an exact cover.

The logical space has only \(2^6=64\) states.  \Cref{tab:worked-state-audit} evaluates every one using exact arithmetic.  The column \(\mathcal P\) is the weighted prefix penalty, \(R\) is the bounded residual, and the last column is \(\Phi_{\rm can}-B\).  Exactly one row is prefix-exact: \(x=100010\), corresponding to \(S_1,S_5\).  The least expensive non-prefix-exact row is \(x=010001\), with
\[
(C_1,\ldots,C_6)=(1,2,3,4,6,6),
\qquad
(C_k-k)_{k=1}^6=(0,0,0,0,1,0),
\]
and it still exceeds \(B\) by
\[
\readnum{507974865948322853769201117934529122335558302158437.}
\]
This exhaustive audit is not needed by the general proof, but it independently demonstrates the full ``only if'' mechanism on the instance.  It also shows why the residual cannot create a false positive: the closest violating state has only one unit of prefix error, yet its main-gap penalty still leaves an enormous positive margin over $B$.
\begin{landscape}
\tiny
\setlength{\tabcolsep}{1.5pt}
\setlength{\LTleft}{0pt}
\setlength{\LTright}{0pt}
\tablecert{\textbf{Purpose.}  The table exhausts the 64 coded block states and certifies the master identity and decision comparison.  \textbf{Verification rule.}  From each bit vector compute the prefix totals, $\mathcal P(x)$, $R(2x-\mathbf1)$, and finally $A+\mathcal P+R-B$; only its sign determines the threshold outcome.}
\begin{longtable}{c p{0.10\textheight} p{0.11\textheight} c p{0.18\textheight} p{0.16\textheight} p{0.24\textheight}}
\caption{Exact audit of all 64 coded states.}\label{tab:worked-state-audit}\\
\toprule
$x$ & $C(x)$ & $C(x)-(1,\ldots,6)$ & RPE? & $\mathcal P(x)$ & $R(\sigma)$ & $\Phi_{\rm can}-B$ \\
\midrule
\endfirsthead
\multicolumn{7}{c}{\tablename\ \thetable\ (continued)}\\
\toprule
$x$ & $C(x)$ & $C(x)-(1,\ldots,6)$ & RPE? & $\mathcal P(x)$ & $R(\sigma)$ & $\Phi_{\rm can}-B$ \\
\midrule
\endhead
\midrule
\multicolumn{7}{r}{Continued on next page}\\
\endfoot
\bottomrule
\endlastfoot
\texttt{000000} & \smallnum{0,0,0,0,0,0} & \smallnum{-1,-2,-3,-4,-5,-6} & no & \smallnum{92459962968399192009913285559484191110613765994619783/2} & \smallnum{11600998877981094437295696934454575715863598716} & \smallnum{46229947634389428753227151171780101505566758185668676} \\
\texttt{000001} & \smallnum{0,0,1,2,3,3} & \smallnum{-1,-2,-2,-2,-2,-3} & no & \smallnum{26417132704720167332446023625306355559058079565646449/2} & \smallnum{4635543695973209980922003998029907345967878441} & \smallnum{13208525537094734406609063830998247305120545075461734} \\
\texttt{000010} & \smallnum{0,0,0,1,2,3} & \smallnum{-1,-2,-3,-3,-3,-3} & no & \smallnum{41657785613395469788661376630301505329605362715667489/2} & \smallnum{6446598013118369388834497569617946793961239244} & \smallnum{20828853802486702779876148245989393778433634643833057} \\
\texttt{000011} & \smallnum{0,0,1,3,5,6} & \smallnum{-1,-2,-2,-1,0,0} & no & \smallnum{10160435966657714440716528464146977456280405576633423/2} & \smallnum{2211212793774066638377383919492274607136002338} & \smallnum{5080174743732605761600973705798479716098969249079118} \\
\texttt{000100} & \smallnum{1,1,2,2,2,3} & \smallnum{0,-1,-1,-2,-3,-3} & no & \smallnum{12192522571531490330843055990702283061668963371495459} & \smallnum{6569792868416433863856376462065723020141772575/2} & \smallnum{12192480405618879306236059015231585590214632767141815} \\
\texttt{000101} & \smallnum{1,1,3,4,5,6} & \smallnum{0,-1,0,0,0,0} & no & \smallnum{508021835561112469018547382243186460513545732887628} & \smallnum{-1490927535658854843250287395190987015486999407/2} & \smallnum{507975639288299406767196853440560360704197314147993} \\
\texttt{000110} & \smallnum{1,1,2,3,4,6} & \smallnum{0,-1,-1,-1,-1,0} & no & \smallnum{2032087126428642714640082682742701437568274047889732} & \smallnum{584501145579265109423232356220731491186201257/2} & \smallnum{2032041967870170271448708490699951043618178965750429} \\
\texttt{000111} & \smallnum{1,1,3,5,7,9} & \smallnum{0,-1,0,1,2,3} & no & \smallnum{7620326698928899517576780958295258675528221054251535} & \smallnum{-2016079333168860185850272928437986178301603987/2} & \smallnum{7620280240080187700322759129499865952219291228209610} \\
\texttt{001000} & \smallnum{0,1,1,2,2,3} & \smallnum{-1,-1,-2,-2,-3,-3} & no & \smallnum{14224609690766272722090151825148484113878217995534358} & \smallnum{7763973669385208097718398724036050233392943127/2} & \smallnum{14224568121944062181870271780688917627587494016765990} \\
\texttt{001001} & \smallnum{0,1,2,4,5,6} & \smallnum{-1,-1,-1,0,0,0} & no & \smallnum{1524065416760084351498199483907920681343431564614532} & \smallnum{-857867833573395045690101114987887581163123963/2} & \smallnum{1524019537017122331976747735198434683083800307812619} \\
\texttt{001010} & \smallnum{0,1,1,3,4,6} & \smallnum{-1,-1,-2,-1,-1,0} & no & \smallnum{4064174245663425105887178517188902489777528671928631} & \smallnum{1124040664479304999700991804881511018186790243/2} & \smallnum{4064129356874712112715749464025876426217197090083821} \\
\texttt{001011} & \smallnum{0,1,2,5,7,9} & \smallnum{-1,-1,-1,1,2,3} & no & \smallnum{8636370280127871400056433059959992896358106885978439} & \smallnum{-2037660913152134731874349461544434430228310109/2} & \smallnum{8636323810488369591165138219126333619825051096583453} \\
\texttt{001100} & \smallnum{1,2,3,4,4,6} & \smallnum{0,0,0,0,-1,0} & no & \smallnum{1016043538035810585169112863689946286355926164549607/2} & \smallnum{-1182490778102345647382811877996376096553660455} & \smallnum{507975135718081957414980145374166642486025853374417} \\
\texttt{001101} & \smallnum{1,2,4,6,7,9} & \smallnum{0,0,1,2,2,3} & no & \smallnum{18288783781761702797803452898468461770531204454467627/2} & \smallnum{-2558316549611407116266700789712031651750359716} & \smallnum{9144343881755256554670681278874512668918109801634166} \\
\texttt{001110} & \smallnum{1,2,3,5,6,9} & \smallnum{0,0,0,1,1,3} & no & \smallnum{11176479112628142341609128561034305570961572149078993/2} & \smallnum{-2340702277111156525146888025659512546732163084} & \smallnum{5588191764802748826824110229970198621652398667136481} \\
\texttt{001111} & \smallnum{1,2,4,7,9,12} & \smallnum{0,0,1,3,4,6} & no & \smallnum{62994699973295303883765882363836128733367579728936281/2} & \smallnum{-986458085956636288114197651076171918858378976} & \smallnum{31497303549380520752422724164061484786196030330849233} \\
\texttt{010000} & \smallnum{1,2,2,2,3,3} & \smallnum{0,0,-1,-2,-2,-3} & no & \smallnum{18288783781761702797803452898468461770531204454467627/2} & \smallnum{2718379939698918068719447052225688406643273431} & \smallnum{9144349158451745864995866265022354606638168195267313} \\
\texttt{010001} & \smallnum{1,2,3,4,6,6} & \smallnum{0,0,0,0,1,0} & no & \smallnum{1016043538035810585169112863689946286355926164549607/2} & \smallnum{-1452260537205991426410251515516526564104876435} & \smallnum{507974865948322853769201117934529122335558302158437} \\
\texttt{010010} & \smallnum{1,2,2,3,5,6} & \smallnum{0,0,-1,-1,0,0} & no & \smallnum{2032087043699249906073957737309083667753530465454601/2} & \smallnum{-601586563021439322247437019013514833823143029} & \smallnum{1015997469454016698773727717558594316046090734344340} \\
\texttt{010011} & \smallnum{1,2,3,5,8,9} & \smallnum{0,0,0,1,3,3} & no & \smallnum{19304827416914627022962031470553875861808981465475849/2} & \smallnum{-2042157077262767111460556300456733621500809526} & \smallnum{9652366215491191015889975371061708969855028556688467} \\
\texttt{010100} & \smallnum{2,3,4,4,5,6} & \smallnum{1,1,1,0,0,0} & no & \smallnum{1524065416760084351498199483907920681343431564614532} & \smallnum{-3505208405413028608632227805560228326840279201/2} & \smallnum{1524018213346836412159966264135089396913427469235000} \\
\texttt{010101} & \smallnum{2,3,5,6,8,9} & \smallnum{1,1,2,2,3,3} & no & \smallnum{14224609690766272722090151825148484113878217995534358} & \smallnum{-5976299399282367393250902925452031564173910365/2} & \smallnum{14224561251807527848082526296038092883546595233339244} \\
\texttt{010110} & \smallnum{2,3,4,5,7,9} & \smallnum{1,1,1,1,2,3} & no & \smallnum{8636370280127871400056433059959992896358106885978439} & \smallnum{-5821631403965462048076741324210368492923964495/2} & \smallnum{8636321918503124184501480117930402286858019748756260} \\
\texttt{010111} & \smallnum{2,3,5,7,10,12} & \smallnum{1,1,2,3,5,6} & no & \smallnum{38609654862604694435409592285212210168008257961867899} & \smallnum{-2832582472507637420862257871504179364116628921/2} & \smallnum{38609607995504412948766952950424345911602735228313507} \\
\texttt{011000} & \smallnum{1,3,3,4,5,6} & \smallnum{0,1,0,0,0,0} & no & \smallnum{508021835561112469018547382243186460513545732887628} & \smallnum{-2742659219569340114337207467384673829946821913/2} & \smallnum{507975013422457451524561309980524263860790084236740} \\
\texttt{011001} & \smallnum{1,3,4,6,8,9} & \smallnum{0,1,1,2,3,3} & no & \smallnum{12192522571531490330843055990702283061668963371495459} & \smallnum{-5774871312321993335257718569043704846207748185/2} & \smallnum{12192474233286788937022459458184070035500699592381435} \\
\texttt{011010} & \smallnum{1,3,3,5,7,9} & \smallnum{0,1,0,1,2,3} & no & \smallnum{7620326698928899517576780958295258675528221054251535} & \smallnum{-5713723500190507897365983799344361682281088773/2} & \smallnum{7620278391258104189498903371644430499031539238467217} \\
\texttt{011011} & \smallnum{1,3,4,7,10,12} & \smallnum{0,1,1,3,5,6} & no & \smallnum{36577567743369912044162496450766009115799003337829000} & \smallnum{-3285795667615997706453336328405400332401048307/2} & \smallnum{36577520649663033003339714320438916408782996462064915} \\
\texttt{011100} & \smallnum{2,4,5,6,7,9} & \smallnum{1,2,2,2,2,3} & no & \smallnum{26417132704720167332446023625306355559058079565646449/2} & \smallnum{-2590688921212174621078667225290826492494243578} & \smallnum{13208518310862117221224461830327023984386706613339715} \\
\texttt{011101} & \smallnum{2,4,6,8,10,12} & \smallnum{1,2,3,4,5,6} & no & \smallnum{92459962968399192009913285559484191110613765994619783/2} & \smallnum{-1171699987618261128718561768324028648543372430} & \smallnum{46229934861690563153871585157521398726962393778697530} \\
\texttt{011110} & \smallnum{2,4,5,7,9,12} & \smallnum{1,2,2,3,4,6} & no & \smallnum{71123048896253768418408453090674022521894454840115103/2} & \smallnum{-1914466058078617841348428079356537261236803195} & \smallnum{35561477082851780897762456293250003400094125508014425} \\
\texttt{011111} & \smallnum{2,4,6,9,12,15} & \smallnum{1,2,3,5,7,9} & no & \smallnum{171711359776874062425398128792875165751680870559027705/2} & \smallnum{2234592838178877356928256663909256765784551322} & \smallnum{85855636672220824158752492421035318280781360388825243} \\
\texttt{100000} & \smallnum{1,2,3,3,3,3} & \smallnum{0,0,0,-1,-2,-3} & no & \smallnum{7112304863367787048558233576052072215014675321363907} & \smallnum{4469185676428734639974452383618453753660435161/2} & \smallnum{7112261647151580030101624659619335519925711476341556} \\
\texttt{100001} & \smallnum{1,2,4,5,6,6} & \smallnum{0,0,1,1,1,0} & no & \smallnum{1524065290867530245621535300499514977054728315002104} & \smallnum{-3828932116026472559198788568433749976214041595/2} & \smallnum{1524017925592426999561326797446302255863899532741375} \\
\texttt{100010} & \smallnum{1,2,3,4,5,6} & \smallnum{0,0,0,0,0,0} & yes & \smallnum{0} & \smallnum{-2343399975092575105438479690358125499641790995/2} & \smallnum{-46622509032779111481622898773683378590496135429} \\
\texttt{100011} & \smallnum{1,2,4,6,8,9} & \smallnum{0,0,1,2,3,3} & no & \smallnum{11684500735970377861824508608459096601155417638607831} & \smallnum{-5181377842220618892778562069812336863375301013/2} & \smallnum{11684452694472411518691133315519133190671145275717393} \\
\texttt{100100} & \smallnum{2,3,5,5,5,6} & \smallnum{1,1,2,1,0,0} & no & \smallnum{7112304953291039626605244170687858693199131179307655/2} & \smallnum{-1642897833839748564407338933354584332285168248} & \smallnum{3556105382938640740730128774346067487699392629245648} \\
\texttt{100101} & \smallnum{2,3,6,7,8,9} & \smallnum{1,1,3,3,3,3} & no & \smallnum{38609654600028794974550092336842386566524088318341721/2} & \smallnum{-2856861750097112061173598401584372845141072342} & \smallnum{19304778992343602157339056091163863194573358342858587} \\
\texttt{100110} & \smallnum{2,3,5,6,7,9} & \smallnum{1,1,2,2,2,3} & no & \smallnum{23369001691353492518334739331847236795976805168320681/2} & \smallnum{-2887435656156262765869177658428740801511707513} & \smallnum{11684452507432044870080674893087031464931760397212896} \\
\texttt{100111} & \smallnum{2,3,6,8,10,12} & \smallnum{1,1,3,4,5,6} & no & \smallnum{89411831955032517195802001266025072347532491597294015/2} & \smallnum{-1371329609750044556718857840359533131297128238} & \smallnum{44705869155377603615032515010495767309917273826278838} \\
\texttt{101000} & \smallnum{1,3,4,5,5,6} & \smallnum{0,1,1,1,0,0} & no & \smallnum{3048130714821474844111052501795456588780621931229857/2} & \smallnum{-1416291236240951259705520293474649399774119141} & \smallnum{1524018490310455948280337641718506315425070516255856} \\
\texttt{101001} & \smallnum{1,3,5,7,8,9} & \smallnum{0,1,2,3,3,3} & no & \smallnum{32513393285487609174521013202387050799346841485639933/2} & \smallnum{-2910815701939971974622697752588051802093670789} & \smallnum{16256648281119057414464603074836844307305777973909246} \\
\texttt{101010} & \smallnum{1,3,4,6,7,9} & \smallnum{0,1,1,2,2,3} & no & \smallnum{19304827452883927735840547662954834691558295920242883/2} & \smallnum{-2988149699591832632959490425203579712125949189} & \smallnum{9652365287483219043263711968328063637883595158932321} \\
\texttt{101011} & \smallnum{1,3,5,8,10,12} & \smallnum{0,1,2,4,5,6} & no & \smallnum{83315570640491331395772922131569736580355244764592227/2} & \smallnum{-1752604202627271641960088598017985931375017468} & \smallnum{41657738116832417837790890202037341767875850332038714} \\
\texttt{101100} & \smallnum{2,4,6,7,7,9} & \smallnum{1,2,3,3,2,3} & no & \smallnum{18288783961608208431407906155925886948912865946459727} & \smallnum{-4084314154634278669949175910462295232693849119/2} & \smallnum{18288736468642085881444642277679003213449408924295236} \\
\texttt{101101} & \smallnum{2,4,7,9,10,12} & \smallnum{1,2,4,5,5,6} & no & \smallnum{54358329642810410073522008864851505228818401299543601} & \smallnum{-1203173126091839894142808813096473333170283847/2} & \smallnum{54358283590414801794778132889788170176265894039161746} \\
\texttt{101110} & \smallnum{2,4,6,8,9,12} & \smallnum{1,2,3,4,4,6} & no & \smallnum{41657785563038448371695634893137337266705215256836660} & \smallnum{-2904521074447760073967861550091889542548361589/2} & \smallnum{41657738659968865914991669005547633716444603307415934} \\
\texttt{101111} & \smallnum{2,4,7,10,12,15} & \smallnum{1,2,4,6,7,9} & no & \smallnum{95000071552711284678570944486074609385726115254890168} & \smallnum{5436759879421842113671664119871924723116170421/2} & \smallnum{95000028820282179156668072418247740817372636137735447} \\
\texttt{110000} & \smallnum{2,4,5,5,6,6} & \smallnum{1,2,2,1,1,0} & no & \smallnum{5588239752346762512942820663918461871318165870591515} & \smallnum{-1972916171753292355661563333421960762340587225/2} & \smallnum{5588193315079631403472713929477866656021944025057971} \\
\texttt{110001} & \smallnum{2,4,6,7,9,9} & \smallnum{1,2,3,3,4,3} & no & \smallnum{24385045189823071942422583338065564667048422933757369} & \smallnum{-4681404554002549632346815548109257693919923163/2} & \smallnum{24384997398311749708323838260998862108103735298555856} \\
\texttt{110010} & \smallnum{2,4,5,6,8,9} & \smallnum{1,2,2,2,3,3} & no & \smallnum{15748675197449610129145793971878043495418855194197242} & \smallnum{-5116633098989866786085864820203688652770927357/2} & \smallnum{15748627188324015401388472025286704889258688133493632} \\
\texttt{110011} & \smallnum{2,4,6,8,11,12} & \smallnum{1,2,3,4,6,6} & no & \smallnum{51818220943396554223386763530036800130264476902332730} & \smallnum{-2364981555911960650937958462292993218209296557/2} & \smallnum{51818174310096731034582509157398640479452027122444520} \\
\texttt{110100} & \smallnum{3,5,7,7,8,9} & \smallnum{2,3,4,3,3,3} & no & \smallnum{56898439608343667808794537993847642585237610204153173/2} & \smallnum{-1018830456563316929893399043458879954556380044} & \smallnum{28449173334532332108256410199865849329423009870456611} \\
\texttt{110101} & \smallnum{3,5,8,9,11,12} & \smallnum{2,3,5,5,6,6} & no & \smallnum{137165879275034555621572308059401490525942975482242553/2} & \smallnum{562020332282294534584335856993784931735286271} & \smallnum{68582894748728564860256759710377673752440578801167616} \\
\texttt{110110} & \smallnum{3,5,7,8,10,12} & \smallnum{2,3,4,4,5,6} & no & \smallnum{107700616963347390030046446923030328366246013483105467/2} & \smallnum{-428933916737463473860922474935610742346976297} & \smallnum{53850262601930733044735820696933760743196423719336505} \\
\texttt{110111} & \smallnum{3,5,8,10,13,15} & \smallnum{2,3,5,6,8,9} & no & \smallnum{222513537246979547172346630756607483985182108051134115/2} & \smallnum{3881986834771729696533391711816050327015173387} & \smallnum{111256727054667563125079083008036525304325540365500513} \\
\texttt{111000} & \smallnum{2,5,6,7,8,9} & \smallnum{1,3,3,3,3,3} & no & \smallnum{46738003969006594478846850452733369934740820044543769/2} & \smallnum{-1008039666527062494975081365476331380224187569} & \smallnum{23368955525654585479537001347626390986723189122844384} \\
\texttt{111001} & \smallnum{2,5,7,9,11,12} & \smallnum{1,3,4,5,6,6} & no & \smallnum{124973356559625861274089733052724284212687447738009159/2} & \smallnum{292250572876891751351735544092719616603831192} & \smallnum{62486633121254458281112688974438757694747499797595840} \\
\texttt{111010} & \smallnum{2,5,6,8,10,12} & \smallnum{1,3,3,4,5,6} & no & \smallnum{97540181324010316700098759381916055715749223323496063/2} & \smallnum{-745463767735576210734736203607836011140074605} & \smallnum{48770044465732345381649240052562895745722759846433495} \\
\texttt{111011} & \smallnum{2,5,7,10,13,15} & \smallnum{1,3,4,6,8,9} & no & \smallnum{210321014531570852824864055749930277671926580306900721/2} & \smallnum{3284896434331959741508659992260211168758427525} & \smallnum{105160465099872815511567840479966202591858618236637954} \\
\texttt{111100} & \smallnum{3,6,8,9,10,12} & \smallnum{2,4,5,5,5,6} & no & \smallnum{66550853027248106875485864955108345911608713966311834} & \smallnum{4422425587528389124175597442366164077864612819/2} & \smallnum{66550809787651855406856498139248138590374912223378312} \\
\texttt{111101} & \smallnum{3,6,9,11,13,15} & \smallnum{2,4,6,7,8,9} & no & \smallnum{127005443718426874750016428613733624225204453388923365} & \smallnum{12893196026276777822469953277096892775683318909/2} & \smallnum{127005404714215842655581410945051334269335000555342888} \\
\texttt{111110} & \smallnum{3,6,8,10,12,15} & \smallnum{2,4,5,6,7,9} & no & \smallnum{107192594937148981480534800576331450068516427921658401} & \smallnum{9271087391999643035145542389931421130881986373/2} & \smallnum{107192554121883632247532389245443716529911152687411656} \\
\texttt{111111} & \smallnum{3,6,9,12,15,18} & \smallnum{2,4,6,8,10,12} & no & \smallnum{184919925936798384019826571118968382221227531989239566} & \smallnum{23201997756075195145273056797260142194841659201/2} & \smallnum{184919892086988216824600214851837852346982788734829235} \\
\end{longtable}
\end{landscape}

\subsubsection{How to reproduce the numerical audit}\label{app:worked-repro}

The numerical appendix is self-contained and can be regenerated with exact integer and rational arithmetic from the following sequence.
\begin{enumerate}[label=\arabic*.]
\item Enter the incidence matrix with elements in rows and subsets in columns, then compute $d_{v,k}$ by cumulative sums down subset-column $v$.
\item Form $G,M,T$, then $P_{vh}$ and $\beta_v$ from \eqref{eq:P0}--\eqref{eq:Ph}.
\item Form $\lambda_v,L_v,K$, and $f=K\beta$; verify $f_v-f_{v+1}=2\sum_{u\le v}\beta_u$.
\item Generate the 24 atoms from the two displayed effective-ratio formulas, group them into bands, and subtract endpoints to obtain $\Delta_k$ and $W$.
\item Expand the early objective to obtain $a_v^0$ and $J_{uv}$, form $a_v^\star,D_v,\eta_v,e_v$, and verify \eqref{eq:worked-field-check}.
\item Form $U,H$, all tardiness ratios, the anchor data, $C_0,A,\Gamma$, and $B$.
\item Enumerate the $2^6=64$ logical vectors $x$; for each, compute $C_k(x)$, $\mathcal P(x)$, $R(2x-\mathbf1)$, and $A+\mathcal P+R-B$.
\item For the exact-cover state, sort the early jobs by increasing $r_j$ and the tardy jobs by decreasing $s_j$, then accumulate start times, completion times, deviations, and costs.
\end{enumerate}
All divisions occur in rational quantities with small fixed denominators; no floating-point approximation is necessary.

\clearpage
\subsection{A ``no'' instance}\label{app:worked-no}

\begin{howtoread}
\textbf{Theorem-verification lane.}  Read the modified incidence matrix, the exhaustive two-column source audit, the single failed prefix of the closest vector, the no-instance gap summary, and the three reverse gates.  The full 49-job catalogues may be skipped.  \textbf{Full arithmetic-audit lane.}  Recompute every hatted quantity from the same formulas used for the yes-instance and verify all 64 coded-state differences in \cref{tab:no-state-audit}.
\end{howtoread}

The second example is deliberately as close as possible to the first one.  Starting from the yes-instance in \cref{app:worked-source}, exchange \(e_3\) and \(e_4\) between subset-columns \(S_1\) and \(S_3\):
\[
 S_1=\{e_1,e_2,e_3\},\quad S_3=\{e_2,e_4,e_6\}
 \quad\longrightarrow\quad
 \widehat S_1=\{e_1,e_2,e_4\},\quad
 \widehat S_3=\{e_2,e_3,e_6\}.
\]
All other subset-columns are unchanged.  This is a degree-preserving \(2\)-switch: each of the two changed columns still contains three elements, and the exchanged elements still occur in three columns overall.  The example therefore remains a valid three-uniform, three-regular \RXC\ instance, but the unique cover of the first example is destroyed.

\begin{howtoread}
The phrases ``failure of the if route'' and ``failure of the only-if supposition'' below do not mean that either proved implication is false.  In a no-instance, the forward proof has no source witness from which to build a low-cost schedule, while a hypothetical target witness in the reverse proof is driven backward to an impossible exact cover.  The example traces both phenomena numerically.
\end{howtoread}

\subsubsection{The modified restricted exact-cover instance}\label{app:no-source}

Let \(\widehat X=\{e_1,\ldots,e_6\}\), and let
\[
\begin{aligned}
\widehat S_1&=\{e_1,e_2,e_4\}, &
\widehat S_2&=\{e_1,e_2,e_5\}, &
\widehat S_3&=\{e_2,e_3,e_6\},\\
\widehat S_4&=\{e_1,e_3,e_6\}, &
\widehat S_5&=\{e_4,e_5,e_6\}, &
\widehat S_6&=\{e_3,e_4,e_5\}.
\end{aligned}
\]
With elements in rows and subset-columns in columns, the incidence matrix is
\begin{equation}\label{eq:no-incidence}
\widehat A^{\mathrm{inc}}=
\begin{pmatrix}
1&1&0&1&0&0\\
1&1&1&0&0&0\\
0&0&1&1&0&1\\
1&0&0&0&1&1\\
0&1&0&0&1&1\\
0&0&1&1&1&0
\end{pmatrix}.
\end{equation}
Every row and every column contains three ones.  Its intersection-cardinality matrix is
\[
\begin{pmatrix}
3&2&1&1&1&1\\
2&3&1&1&1&1\\
1&1&3&2&1&1\\
1&1&2&3&1&1\\
1&1&1&1&3&2\\
1&1&1&1&2&3
\end{pmatrix}.
\]
Every off-diagonal entry is positive.  Hence no two subset-columns are disjoint.  An exact cover of six elements by \(3\)-sets would contain exactly two columns: if \(x\) were a cover, then summing the six row equations would give
\[
 6=\mathbf 1^\top\widehat A^{\mathrm{inc}}x
   =3\sum_{v=1}^6x_v,
 \qquad\text{so}\qquad \sum_vx_v=2.
\]
Those two selected columns would have to be disjoint, contrary to the displayed matrix.  Thus this is a no-instance of \RXC.

\begin{figure}[ht]
\centering
\begin{tikzpicture}[font=\small]
\matrix (M) [matrix of nodes,
  nodes={draw,minimum width=8mm,minimum height=7mm,anchor=center},
  column 1/.style={nodes={fill=figureopen}},
  column 5/.style={nodes={fill=figureopen}},
  column sep=-\pgflinewidth,row sep=-\pgflinewidth] {
1&1&0&1&0&0\\
1&1&1&0&0&0\\
0&0&1&1&0&1\\
1&0&0&0&1&1\\
0&1&0&0&1&1\\
0&0&1&1&1&0\\
};
\foreach \i in {1,...,6}{
  \node[left=4mm of M-\i-1] {$e_{\i}$};
  \node[above=3mm of M-1-\i] {$\widehat S_{\i}$};
}
\node[draw,rounded corners,fill=figurehard,align=left,text width=38mm]
  (miss) at ([xshift=31mm,yshift=10mm]M-3-6.east)
  {row $e_3$: selected sum $0$\\(uncovered)};
\node[draw,rounded corners,fill=figurehard,align=left,text width=38mm]
  (double) at ([xshift=31mm,yshift=-10mm]M-4-6.east)
  {row $e_4$: selected sum $2$\\(double-covered)};
\draw[-{Latex[length=2mm]},thin] (miss.west) -- ([xshift=1mm]M-3-6.east);
\draw[-{Latex[length=2mm]},thin] (double.west) -- ([xshift=1mm]M-4-6.east);
\node[draw,rounded corners,fill=figureopen,below=8mm of M]
  {the former cover columns $\widehat S_1,\widehat S_5$ now fail};
\end{tikzpicture}
\caption{The no-instance incidence matrix.  The highlighted pair was the exact cover in \cref{fig:worked-incidence}; after the \(2\)-switch it leaves \(e_3\) uncovered and covers \(e_4\) twice.}
\label{fig:no-incidence}
\end{figure}

\subsubsection{Prefix data and an exhaustive source-level audit}\label{app:no-rpe}

Define
\[
 \widehat d_{v,k}
 =|\widehat S_v\cap\{e_1,\ldots,e_k\}|,
 \qquad
 \widehat C_k(x)=\sum_{v=1}^6\widehat d_{v,k}x_v.
\]
The full prefix matrix is as follows.
\begin{table}[ht]
\centering
\caption{Prefix data for the no-instance.  Each column is the cumulative sum down the corresponding column of \eqref{eq:no-incidence}.}\label{tab:no-prefix}
\begin{tabular}{c c c c c c c}
\toprule
Prefix & $\widehat S_1$ & $\widehat S_2$ & $\widehat S_3$ & $\widehat S_4$ & $\widehat S_5$ & $\widehat S_6$\\
\midrule
$k=1$ & 1 & 1 & 0 & 1 & 0 & 0 \\
$k=2$ & 2 & 2 & 1 & 1 & 0 & 0 \\
$k=3$ & 2 & 2 & 2 & 2 & 0 & 1 \\
$k=4$ & 3 & 2 & 2 & 2 & 1 & 2 \\
$k=5$ & 3 & 3 & 2 & 2 & 2 & 3 \\
$k=6$ & 3 & 3 & 3 & 3 & 3 & 3 \\
\bottomrule
\end{tabular}
\end{table}

The six prefix equations are
\begin{align*}
\widehat C_1(x)&=x_1+x_2+x_4=1,\\
\widehat C_2(x)&=2x_1+2x_2+x_3+x_4=2,\\
\widehat C_3(x)&=2x_1+2x_2+2x_3+2x_4+x_6=3,\\
\widehat C_4(x)&=3x_1+2x_2+2x_3+2x_4+x_5+2x_6=4,\\
\widehat C_5(x)&=3x_1+3x_2+2x_3+2x_4+2x_5+3x_6=5,\\
\widehat C_6(x)&=3(x_1+x_2+x_3+x_4+x_5+x_6)=6.
\end{align*}
\begin{landscape}
\small
\setlength{\tabcolsep}{3pt}
\setlength{\LTleft}{0pt}
\setlength{\LTright}{0pt}
\tablecert{\textbf{Purpose.}  The table exhausts the only possible two-column exact-cover candidates in the modified source instance.  \textbf{Verification rule.}  Intersect each pair, identify uncovered and doubled rows, or equivalently subtract $(1,\ldots,6)$ from its prefix-total vector; no row may have zero error.}
\begin{longtable}{c c c c p{0.23\textheight} p{0.23\textheight}}
\caption{Exhaustive audit of the fifteen two-column candidates.}\label{tab:no-pair-audit}\\
\toprule
selected columns & intersection & uncovered & doubled & $\widehat C(x)$ & $\widehat C(x)-(1,\ldots,6)$\\
\midrule
\endfirsthead
\multicolumn{6}{c}{\tablename\ \thetable\ (continued)}\\
\toprule
selected columns & intersection & uncovered & doubled & $\widehat C(x)$ & $\widehat C(x)-(1,\ldots,6)$\\
\midrule
\endhead
\midrule
\multicolumn{6}{r}{Continued on next page}\\
\endfoot
\bottomrule
\endlastfoot
$\widehat S_1,\widehat S_2$ & $e_1$,$e_2$ & $e_3$,$e_6$ & $e_1$,$e_2$ & \smallnum{2,4,4,5,6,6} & \smallnum{1,2,1,1,1,0} \\
$\widehat S_1,\widehat S_3$ & $e_2$ & $e_5$ & $e_2$ & \smallnum{1,3,4,5,5,6} & \smallnum{0,1,1,1,0,0} \\
$\widehat S_1,\widehat S_4$ & $e_1$ & $e_5$ & $e_1$ & \smallnum{2,3,4,5,5,6} & \smallnum{1,1,1,1,0,0} \\
$\widehat S_1,\widehat S_5$ & $e_4$ & $e_3$ & $e_4$ & \smallnum{1,2,2,4,5,6} & \smallnum{0,0,-1,0,0,0} \\
$\widehat S_1,\widehat S_6$ & $e_4$ & $e_6$ & $e_4$ & \smallnum{1,2,3,5,6,6} & \smallnum{0,0,0,1,1,0} \\
$\widehat S_2,\widehat S_3$ & $e_2$ & $e_4$ & $e_2$ & \smallnum{1,3,4,4,5,6} & \smallnum{0,1,1,0,0,0} \\
$\widehat S_2,\widehat S_4$ & $e_1$ & $e_4$ & $e_1$ & \smallnum{2,3,4,4,5,6} & \smallnum{1,1,1,0,0,0} \\
$\widehat S_2,\widehat S_5$ & $e_5$ & $e_3$ & $e_5$ & \smallnum{1,2,2,3,5,6} & \smallnum{0,0,-1,-1,0,0} \\
$\widehat S_2,\widehat S_6$ & $e_5$ & $e_6$ & $e_5$ & \smallnum{1,2,3,4,6,6} & \smallnum{0,0,0,0,1,0} \\
$\widehat S_3,\widehat S_4$ & $e_3$,$e_6$ & $e_4$,$e_5$ & $e_3$,$e_6$ & \smallnum{1,2,4,4,4,6} & \smallnum{0,0,1,0,-1,0} \\
$\widehat S_3,\widehat S_5$ & $e_6$ & $e_1$ & $e_6$ & \smallnum{0,1,2,3,4,6} & \smallnum{-1,-1,-1,-1,-1,0} \\
$\widehat S_3,\widehat S_6$ & $e_3$ & $e_1$ & $e_3$ & \smallnum{0,1,3,4,5,6} & \smallnum{-1,-1,0,0,0,0} \\
$\widehat S_4,\widehat S_5$ & $e_6$ & $e_2$ & $e_6$ & \smallnum{1,1,2,3,4,6} & \smallnum{0,-1,-1,-1,-1,0} \\
$\widehat S_4,\widehat S_6$ & $e_3$ & $e_2$ & $e_3$ & \smallnum{1,1,3,4,5,6} & \smallnum{0,-1,0,0,0,0} \\
$\widehat S_5,\widehat S_6$ & $e_4$,$e_5$ & $e_1$,$e_2$ & $e_4$,$e_5$ & \smallnum{0,0,1,3,5,6} & \smallnum{-1,-2,-2,-1,0,0} \\
\end{longtable}
\end{landscape}

No row of \cref{tab:no-pair-audit} has a zero error vector, so the associated \RPE\ instance is also a no-instance.  The last prefix equation forces any candidate to select exactly two columns; the table therefore exhausts every possible prefix-exact candidate, while each of the other binary vectors fails already at $k=6$.  The closest visual continuation of the yes-example is the former choice
\[
 x^\dagger=(1,0,0,0,1,0).
\]
It gives
\[
 \widehat C(x^\dagger)=(1,2,2,4,5,6),
 \qquad
 \widehat C(x^\dagger)-(1,\ldots,6)=(0,0,-1,0,0,0).
\]
Thus the entire failure is localized at the third prefix.  Successive differences make the same defect pointwise:
\[
\begin{array}{lll}
\widehat C_1-\widehat C_0=x_1+x_2+x_4=1,&
\widehat C_2-\widehat C_1=x_1+x_2+x_3=1,&
\widehat C_3-\widehat C_2=x_3+x_4+x_6=0,\\
\widehat C_4-\widehat C_3=x_1+x_5+x_6=2,&
\widehat C_5-\widehat C_4=x_2+x_5+x_6=1,&
\widehat C_6-\widehat C_5=x_3+x_4+x_5=1.
\end{array}
\]
The zero in the \(e_3\) row and the two in the \(e_4\) row are exactly the uncovered and double-covered elements in \cref{fig:no-incidence}.

\subsubsection{The numerical compiler for the no-instance}\label{app:no-compiler}

The dimensions and global scales are unchanged:
\[
\begin{aligned}
 n&=6,\qquad N=7,\\
 G&=7^{20},\qquad M=7^{40},\qquad T=7^{50},\\
 (Q_0,Q_1,Q_2,Q_3)&=(10,100,1000,10000).
\end{aligned}
\]
Only two incidence positions have changed.  Consequently,
\[
 \widehat P_{1,3}=P_{1,3}-M,\qquad
 \widehat P_{3,2}=P_{3,2}+M,
\]
and
\[
 \widehat\beta_1=\beta_1-M,\qquad
 \widehat\beta_3=\beta_3+M.
\]
All base values and balanced loads are displayed in \cref{tab:no-P}; the unchanged rows are repeated so that this subsection independently specifies the no-instance.

\begin{landscape}
\scriptsize
\let\smallnum\numstr
\setlength{\tabcolsep}{1.5pt}
\setlength{\LTleft}{0pt}
\setlength{\LTright}{0pt}
\tablecert{\textbf{Purpose.}  The table certifies integral bases and balanced target loads for every no-instance block.  \textbf{Verification rule.}  Reuse $P_{vh}=T+M\kappa(k_{vh})+Q_h$ with the modified incidence data and verify both coded orientations have load $\widehat\beta_v$.}
\begin{longtable}{c c c p{0.16\textheight} p{0.16\textheight} p{0.16\textheight} p{0.16\textheight} p{0.18\textheight}}
\caption{Exact base processing values and balanced block loads for the no-instance.}\label{tab:no-P}\\
\toprule
$v$ & $\widehat S_v$ & $(\kappa_1,\kappa_2,\kappa_3)$ & $\widehat P_{v0}$ & $\widehat P_{v1}$ & $\widehat P_{v2}$ & $\widehat P_{v3}$ & $\widehat\beta_v$ \\
\midrule
\endfirsthead
\multicolumn{8}{c}{\tablename\ \thetable\ (continued)}\\
\toprule
$v$ & $\widehat S_v$ & $(\kappa_1,\kappa_2,\kappa_3)$ & $\widehat P_{v0}$ & $\widehat P_{v1}$ & $\widehat P_{v2}$ & $\widehat P_{v3}$ & $\widehat\beta_v$ \\
\midrule
\endhead
\midrule
\multicolumn{8}{r}{Continued on next page}\\
\endfoot
\bottomrule
\endlastfoot
1 & $\{1,2,4\}$ & $(6,5,3)$ & \smallnum{1798465042647412146620280340569649349251259} & \smallnum{1798465080848246712074448255018260184595355} & \smallnum{1798465074481440951165420269276825045372254} & \smallnum{1798465061747829429347364297793954766933252} & \smallnum{7193860259724929239207513162658689346152123} \\
2 & $\{1,2,5\}$ & $(6,5,2)$ & \smallnum{1798465042647412146620280340569649349251259} & \smallnum{1798465080848246712074448255018260184595355} & \smallnum{1798465074481440951165420269276825045372254} & \smallnum{1798465055381023668438336312052519627709251} & \smallnum{7193860253358123478298485176917254206928122} \\
3 & $\{2,3,6\}$ & $(5,4,1)$ & \smallnum{1798465042647412146620280340569649349251259} & \smallnum{1798465074481440951165420269276825045371354} & \smallnum{1798465068114635190256392283535389906148253} & \smallnum{1798465049014217907529308326311084488485250} & \smallnum{7193860234257706195571401219692948789256119} \\
4 & $\{1,3,6\}$ & $(6,4,1)$ & \smallnum{1798465042647412146620280340569649349251259} & \smallnum{1798465080848246712074448255018260184595355} & \smallnum{1798465068114635190256392283535389906148253} & \smallnum{1798465049014217907529308326311084488485250} & \smallnum{7193860240624511956480429205434383928480120} \\
5 & $\{4,5,6\}$ & $(3,2,1)$ & \smallnum{1798465042647412146620280340569649349251259} & \smallnum{1798465061747829429347364297793954766923352} & \smallnum{1798465055381023668438336312052519627700251} & \smallnum{1798465049014217907529308326311084488485250} & \smallnum{7193860208790483151935289276727208232360115} \\
6 & $\{3,4,5\}$ & $(4,3,2)$ & \smallnum{1798465042647412146620280340569649349251259} & \smallnum{1798465068114635190256392283535389906147353} & \smallnum{1798465061747829429347364297793954766924252} & \smallnum{1798465055381023668438336312052519627709251} & \smallnum{7193860227890900434662373233951513650032118} \\
\end{longtable}
\end{landscape}

\begin{landscape}
\scriptsize
\let\smallnum\numstr
\setlength{\tabcolsep}{1.5pt}
\setlength{\LTleft}{0pt}
\setlength{\LTright}{0pt}
\tablecert{\textbf{Purpose.}  The table certifies the hatted block loads, kernel levels, and affine fields.  \textbf{Verification rule.}  Compute one row from $\widehat f=\widehat K\widehat\beta$ and verify the recurrence $\widehat f_v-\widehat f_{v+1}=2\sum_{u\le v}\widehat\beta_u$.}
\begin{longtable}{c p{0.19\textheight} p{0.13\textheight} p{0.13\textheight} p{0.30\textheight}}
\caption{Block loads and minimum-kernel data for the no-instance.}\label{tab:no-kernel}\\
\toprule
$v$ & $\widehat\beta_v$ & $\lambda_v$ & $L_v=2\lambda_v$ & $\widehat f_v$ \\
\midrule
\endfirsthead
\multicolumn{5}{c}{\tablename\ \thetable\ (continued)}\\
\toprule
$v$ & $\widehat\beta_v$ & $\lambda_v$ & $L_v=2\lambda_v$ & $\widehat f_v$ \\
\midrule
\endhead
\midrule
\multicolumn{5}{r}{Continued on next page}\\
\endfoot
\bottomrule
\endlastfoot
1 & \smallnum{7193860259724929239207513162658689346152123} & \smallnum{79792266297612007} & \smallnum{159584532595224014} & \smallnum{6888172941284439601043621954748764781708332706313514011566498} \\
2 & \smallnum{7193860253358123478298485176917254206928122} & \smallnum{79792266297612006} & \smallnum{159584532595224012} & \smallnum{6888172941284439586655901435298906303293306380996135319262252} \\
3 & \smallnum{7193860234257706195571401219692948789256119} & \smallnum{79792266297612005} & \smallnum{159584532595224010} & \smallnum{6888172941284439557880460409132800868281309701844248213101762} \\
4 & \smallnum{7193860240624511956480429205434383928480120} & \smallnum{79792266297612004} & \smallnum{159584532595224008} & \smallnum{6888172941284439514717298914451283042126510583306463528429034} \\
5 & \smallnum{7193860208790483151935289276727208232360115} & \smallnum{79792266297612003} & \smallnum{159584532595224006} & \smallnum{6888172941284439457166416938520741303010853053899910986796066} \\
6 & \smallnum{7193860227890900434662373233951513650032118} & \smallnum{79792266297612002} & \smallnum{159584532595224004} & \smallnum{6888172941284439385227814545009233260024616971038941980442868} \\
\end{longtable}
\end{landscape}

The minimum-kernel data in \cref{tab:no-kernel} show that the transfer of one \(M\)-unit from \(\widehat\beta_1\) to \(\widehat\beta_3\) changes \(\widehat f_1\) and \(\widehat f_2\), but cancels from \(\widehat f_3,\ldots,\widehat f_6\), because rows \(3,\ldots,6\) of the minimum kernel assign the same coefficient to blocks \(1\) and \(3\).

The \(24\) effective atoms are obtained from the same two formulas
\[
 \widehat\rho_{vh}=\widehat f_v+\lambda_v(2\widehat P_{vh}+1)
 \quad(h=1,2,3),
 \qquad
 \widehat\rho_{v0}=\widehat f_v+\lambda_v(2\widehat P_{v0}+3).
\]
Taking exact minima and maxima within each element band gives \cref{tab:no-bands}.  The narrowest main gap is \(\widehat\Delta_3\); this is precisely the gap charged to the single failed prefix of \(x^\dagger\).

\begin{landscape}
\scriptsize
\let\smallnum\numstr
\setlength{\tabcolsep}{1.5pt}
\setlength{\LTleft}{0pt}
\setlength{\LTright}{0pt}
\tablecert{\textbf{Purpose.}  The table certifies the no-instance band endpoints, widths, and separating gaps.  \textbf{Verification rule.}  Group hatted atoms by element, take endpoint differences, and use the resulting signs and minimum gap in the hatted residual-dominance calculation.}
\begin{longtable}{c p{0.24\textheight} p{0.24\textheight} p{0.18\textheight} p{0.24\textheight}}
\caption{Effective bands, widths, and gaps for the no-instance.}\label{tab:no-bands}\\
\toprule
band & lower endpoint & upper endpoint & width & gap below \\
\midrule
\endfirsthead
\multicolumn{5}{c}{\tablename\ \thetable\ (continued)}\\
\toprule
band & lower endpoint & upper endpoint & width & gap below \\
\midrule
\endhead
\midrule
\multicolumn{5}{r}{Continued on next page}\\
\endfoot
\bottomrule
\endlastfoot
$\widehat{\mathcal R}_1$ & \smallnum{7175180150600438722954775936120363829879968940409256887323878} & \smallnum{7175180150600438820071889461507325841908480593525868478033475} & \smallnum{97117113525386962012028511653116611590709597} & \smallnum{1016043624362133281659083514184113553246143428037817} \\
$\widehat{\mathcal R}_2$ & \smallnum{7175180149584395048235620810919552597607007679507211777723307} & \smallnum{7175180149584395098592642654461280315695855387163113459286061} & \smallnum{50357021843541727718088847707655901681562754} & \smallnum{1016043721479246781578822482576513263397014461855010} \\
$\widehat{\mathcal R}_3$ & \smallnum{7175180148568351143312937756529351365235304358013215944716282} & \smallnum{7175180148568351326756374029340730115030494416110197315868297} & \smallnum{183443436272811378749795190058096981371152015} & \smallnum{1016043487678788716160162101963550374940926910024249} \\
$\widehat{\mathcal R}_4$ & \smallnum{7175180147552307421833691013151363448112959009064812318199878} & \smallnum{7175180147552307655634149040369189263271753983072289034692033} & \smallnum{233800458027217825815158794974007476716492155} & \smallnum{1016043505663439410040125526345672807857031951590608} \\
$\widehat{\mathcal R}_5$ & \smallnum{7175180146536263700354444269773375530991906294830430006115874} & \smallnum{7175180146536263916170251603111237921767286201207780366609270} & \smallnum{215815807333337862390775379906377350360493396} & \smallnum{1016043538035810585169112863689946286355926164549607} \\
$\widehat{\mathcal R}_6$ & \smallnum{7175180145520220054410730017935331471914270055285828661319569} & \smallnum{7175180145520220162318633684604262667301960008474503841566267} & \smallnum{107907903666668931195387689953188675180246698} & \smallnum{1016043563214321398704278787282416659435905480383424} \\
$\widehat{\mathcal R}_0$ & \smallnum{7175180144504176257395950783017399696745334254878857657299910} & \smallnum{7175180144504176491196408619231052684631853395849923180936145} & \smallnum{233800457836213652987886519140971065523636235} & -- \\
\end{longtable}
\end{landscape}

Thus
\[
 \widehat W=\readnum{1122242198505178340869120933393414062424292850,}
 \qquad
 \widehat\Delta_{\min}=\widehat\Delta_3=\readnum{1016043487678788716160162101963550374940926910024249.}
\]
The exact field-correction data are in \cref{tab:no-fields}.  As before, the columns scaled by four satisfy
\[
 4\widehat e_v=4\widehat a_v^0-\widehat\eta_v\widehat D_v-4\widehat a_v^\star,
 \qquad |4\widehat e_v|\le\widehat D_v/2.
\]

\begin{landscape}
\scriptsize
\let\smallnum\numstr
\setlength{\tabcolsep}{1.5pt}
\setlength{\LTleft}{0pt}
\setlength{\LTright}{0pt}
\tablecert{\textbf{Purpose.}  The table certifies every no-instance logical-field correction and residual field error.  \textbf{Verification rule.}  Check $4\widehat e_v=4\widehat a_v^0-\widehat\eta_v\widehat D_v-4\widehat a_v^\star$ and $|4\widehat e_v|\le\widehat D_v/2$.}
\begin{longtable}{c p{0.20\textheight} p{0.17\textheight} p{0.14\textheight} p{0.20\textheight} p{0.11\textheight}}
\caption{Exact logical-field correction for the no-instance.  All displayed field columns are multiplied by four.}\label{tab:no-fields}\\
\toprule
$v$ & $4\widehat a_v^0$ & $4\widehat a_v^\star$ & $\widehat D_v$ & $\widehat\eta_v$ & $4\widehat e_v$ \\
\midrule
\endfirsthead
\multicolumn{6}{c}{\tablename\ \thetable\ (continued)}\\
\toprule
$v$ & $4\widehat a_v^0$ & $4\widehat a_v^\star$ & $\widehat D_v$ & $\widehat\eta_v$ & $4\widehat e_v$ \\
\midrule
\endhead
\midrule
\multicolumn{6}{r}{Continued on next page}\\
\endfoot
\bottomrule
\endlastfoot
1 & \smallnum{-21679242848733936298983405246017092329267659591390358448222236329019234519752662372288414841668717901842} & \smallnum{56898438712708069579630563498852236715949337269836384} & \smallnum{178270561305452783600760183898294162} & \smallnum{-121608653105591756666661151680492411973085435526728815502014049158185} & \smallnum{-10584813658782193594642372387722256} \\
2 & \smallnum{-20646897884173472877318655818918277264987165404731754651778919036041064645669488345832858134583187226111} & \smallnum{52834264690054311939470061393469545484521209463473952} & \smallnum{165536949783634727629277313619846160} & \smallnum{-124726823293289057041984532247221190383216262808956521869559866252237} & \smallnum{-29942504744365429559070048954840143} \\
3 & \smallnum{-17549863027038471055466091560866847619476611577537027532384916321062722277173936121041174115963617454932} & \smallnum{44705915932554632168807768595682673972701406288978080} & \smallnum{127336115218180559714828702784502154} & \smallnum{-137823138368624970100157463649044675108826429553799575119536071567838} & \smallnum{-61452303140095695548705426838309960} \\
4 & \smallnum{-19614552835556321541507226869247048757100799992943056357209653512915544341699776347306690099041956617518} & \smallnum{45721959556916765450466852109866787525947549717015897} & \smallnum{140069726739998615686311573062950156} & \smallnum{-140034205049642230727565457899864029706785171275297714537692357870883} & \smallnum{-8202020797390013371609652760925667} \\
5 & \smallnum{-13420483252853301239858135473860595254009457826536166160026962231001451270052067637424124752151885357758} & \smallnum{32513393540869648668528648913365653964833688098760134} & \smallnum{76401669130908335828897221670710146} & \smallnum{-175656937937551388015534077772402687123062467012156260564467214760027} & \smallnum{-14450833146919828679376961919983950} \\
6 & \smallnum{-18582207673645379694385794039006192187845355764894663592240233032518093788195407259673517254903737108247} & \smallnum{44705915716738825383015201643088727752864227457943348} & \smallnum{114602503696362503743345832506054152} & \smallnum{-162144866598016401274065429136918646935339761336360158570722682893316} & \smallnum{4424375671750243478458522739796437} \\
\end{longtable}
\end{landscape}

For completeness, \cref{tab:no-couplings} gives all fifteen cross-block coefficients.  The third column is the main-gap coefficient
\[
 \widehat J_{uv}^{\rm main}
 =\frac14\sum_{k=1}^6\widehat\Delta_k
   \widehat d_{u,k}\widehat d_{v,k},
\]
and the last column is its exact within-band remainder.

\begin{landscape}
\scriptsize
\let\smallnum\numstr
\setlength{\tabcolsep}{1.5pt}
\setlength{\LTleft}{0pt}
\setlength{\LTright}{0pt}
\tablecert{\textbf{Purpose.}  The table certifies the hatted cross-block couplings and their main-gap/residual decomposition.  \textbf{Verification rule.}  Recompute the scaled atom-pair sum and main-gap sum for one block pair, then subtract; the global proof later uses only the aggregate residual bound.}
\begin{longtable}{c c p{0.28\textheight} p{0.28\textheight} p{0.28\textheight}}
\caption{Exact cross-block couplings for the no-instance.}\label{tab:no-couplings}\\
\toprule
$u$ & $v$ & $\widehat J_{uv}$ & $\widehat J_{uv}^{\rm main}$ & difference \\
\midrule
\endfirsthead
\multicolumn{5}{c}{\tablename\ \thetable\ (continued)}\\
\toprule
$u$ & $v$ & $\widehat J_{uv}$ & $\widehat J_{uv}^{\rm main}$ & difference \\
\midrule
\endhead
\midrule
\multicolumn{5}{r}{Continued on next page}\\
\endfoot
\bottomrule
\endlastfoot
1 & 2 & \smallnum{16764721350453620361955976127549538099518931384542881/2} & \smallnum{8382359351556524896929074967292417866466646357373945} & \smallnum{2647340570568097826192964702366585638669794991/2} \\
1 & 3 & \smallnum{13716590063720255620996128896018855810076799576420385/2} & \smallnum{13716587862399020493696116399381346263379317292049561/2} & \smallnum{1100660617563650006248318754773348741142185412} \\
1 & 4 & \smallnum{14224611809358115085391679702712052952007277982567969/2} & \smallnum{28449219349160174269051316312946806080004778012136939/4} & \smallnum{4269556055901732043092477299824009777952998999/4} \\
1 & 5 & \smallnum{9144392781121056696671452017998803005260243290711335/2} & \smallnum{9144391907067037164736781423359223038314901082760141/2} & \smallnum{437027009765967335297319789983472671103975597} \\
1 & 6 & \smallnum{6604283722014188207280658692342569512033784817161637} & \smallnum{26417131920589401747421602220752404109150530333989425/4} & \smallnum{2967467351081701032548617873938984608934657123/4} \\
2 & 3 & \smallnum{25401092810374571810178500053929059114534678333936147/4} & \smallnum{12700544356735581083655990873035673455522285340458953/2} & \smallnum{4096903409642866518307857712203490107653018241/4} \\
2 & 4 & \smallnum{6604284070916410017639215321642396633631151590795640} & \smallnum{26417132337833295448971065260255460464290714108955723/4} & \smallnum{3945832344621585796026314126070233892254226837/4} \\
2 & 5 & \smallnum{8636371071452498454498878276435697604202004154248331/2} & \smallnum{8636370154235317459716718660186386634386385106964837/2} & \smallnum{458608590497391079808124655484907809523641747} \\
2 & 6 & \smallnum{6096261951197817856131675711605105129130923791558520} & \smallnum{24385044909262522927341351168061058493436466430808209/4} & \smallnum{2895528748497185351678359362023087228735425871/4} \\
3 & 4 & \smallnum{22352961638742971154782657279719495023594793085094073/4} & \smallnum{11176478957960147107697466768057470537467851944981841/2} & \smallnum{3722822676939387723743604553948659089195130391/4} \\
3 & 5 & \smallnum{7620327533416687952098240304563833262881413307576725/2} & \smallnum{3810163308099753437273802898248220174015229471207615} & \smallnum{917217181077550634508067392914850954365161495/2} \\
3 & 6 & \smallnum{10668458525201782502746003514036419043660944096782783/2} & \smallnum{5334228573788772792958503144247804908592171984289847} & \smallnum{1377624236916828997225540809226476600128203089/2} \\
4 & 5 & \smallnum{15240655394154016938563652401259073180536669740444233/4} & \smallnum{3810163308099753437273802898248220174015229471207615} & \smallnum{2161755003189468440808266192484475751855613773/4} \\
4 & 6 & \smallnum{5334229332741028611787306294747707242733944414303280} & \smallnum{5334228573788772792958503144247804908592171984289847} & \smallnum{758952255818828803150499902334141772430013433} \\
5 & 6 & \smallnum{17272743038540597328342912800590940138111547715453003/4} & \smallnum{8636370154235317459716718660186386634386385106964837/2} & \smallnum{2730069962408909475480218166869338777501523329/4} \\
\end{longtable}
\end{landscape}

\subsubsection{Load lock, anchor, and the 49-job no-instance}\label{app:no-jobs}

The exact load-lock quantities are
\[
 \widehat U=\mathnum{583489201983731785266607839650588849977614462575543787905298044664841507417338080675476531688081516158715736685814354106598477105310962383846749076147397863,}
\]
\[
 \max_v|\widehat\eta_v|=\mednum{175656937937551388015534077772402687123062467012156260564467214760027,}
\]
and
\[
 \widehat H=\widehat U+2\max_v|\widehat\eta_v|+2=\mathnum{583489201983731785266607839650588849977614462575543787905298044664841507417338080675476883001957391261491767753969898911972723230244986696367878010576917919.}
\]
The strict dominance certificate is
\[
 \widehat H-\widehat U=\mednum{351313875875102776031068155544805374246124934024312521128934429520056}>0.
\]
Choose the all-rejected coded state \(\sigma=(-1,\ldots,-1)\) as the explicit benchmark used in the anchor construction.  Its cost is
\[
 \widehat C^\star=\mathnum{267448763683410661369552775798154563648899529608119085375293826634448762597664705230994596377703116027184387031023196308427965856403257033045344487944571498492190534308029665037069211634968042718586903984048367505303673231949601766248866470467640636208882322899.}
\]
The anchor data and common due date are
\[
\begin{aligned}
\widehat p_a&=\widehat H,\\
\widehat r_a&=1+\max_i\widehat r_i=\mednum{7031676545942439210557755708128045311808406689815824393605991,}\\
\widehat s_a&=1+\left\lceil\widehat C^\star/\widehat H^2\right\rceil=2,\\
\widehat d&=\mathnum{583489201983731785266607839650588849977614462575543787905298044664841507417338080675476883001957391261491767754056225234822016539157297678918642006883335353.}
\end{aligned}
\]
Its tardy self-cost is
\[
 \widehat p_a^2\widehat s_a=\mathnum{680919297663224297469522772039887703131818185701605521353164494680402051534719634615271115978950656748767054982151116098783494343969724943804661695902292754150322500412307014293134310459733809136889835337744566351718894261991040961104909767454545739800961024637863943069853588098070640688084360565913330526581122.}
\]
This value exceeds \(\widehat C^\star\) by
\[
 \mathnum{680919297663224297469522772039887703131818185701605253904400811269740681981943836460707467079421048629681679688324481650020896679264493949208283992786265569763291477215998586327277907202700763792401890766246074161184586232326003891893274799411827152896976976270358639396621638496304391821613892925277121644258223}>0.
\]
Therefore no optimal canonical assignment can place the anchor tardy.

The following two tables list every parameter of the no-instance.  The first gives the construction ratios; the second gives the actual integer input triples \((p_j,w_j^E,w_j^L)\).

\begin{landscape}
\tiny
\setlength{\tabcolsep}{1.5pt}
\setlength{\LTleft}{0pt}
\setlength{\LTright}{0pt}
\tablecert{\textbf{Purpose.}  The table records the ratio-level construction data for all 49 jobs of the no-instance.  \textbf{Verification rule.}  Compute each block ratio from the hatted affine formulas and check the anchor from $\widehat p_a=\widehat H$, $\widehat r_a=1+\max\widehat r_j$, and $\widehat s_a=2$.}

\end{landscape}

\begin{landscape}
\tiny
\setlength{\tabcolsep}{1.5pt}
\setlength{\LTleft}{0pt}
\setlength{\LTright}{0pt}
\tablecert{\textbf{Purpose.}  The table records the physical positive-integer input triples for the 49-job no-instance.  \textbf{Verification rule.}  Multiply $p_j$ by the corresponding ratios in \cref{tab:no-job-ratios}; rowwise equality, not visual digit matching, is the certificate.}
%
\end{landscape}

\subsubsection{Master identity and decision threshold for the no-instance}\label{app:no-threshold}

The master decomposition is unchanged in form.  For a coded state
\(x\in\{0,1\}^6\), with \(\sigma=2x-\mathbf1\), write
\[
 \widehat{\mathcal P}(x)
 =\sum_{k=1}^6\frac{\widehat\Delta_k}{2}
   \bigl(\widehat C_k(x)-k\bigr)^2
\]
and
\[
 \widehat R(\sigma)
 =\sum_{v=1}^6\widehat e_v\sigma_v
 +\sum_{1\le u<v\le6}
   \bigl(\widehat J_{uv}-\widehat J_{uv}^{\rm main}\bigr)
   \sigma_u\sigma_v.
\]
Then the exact identity is
\begin{equation}\label{eq:no-master}
 \widehat\Phi_{\rm can}(\sigma)
 =\widehat A+\widehat{\mathcal P}(x)+\widehat R(\sigma).
\end{equation}
There is no asymptotic term hidden in \eqref{eq:no-master}; all within-band and rounding effects are included in \(\widehat R\).
The exact objective constant is
\[
 \widehat C_0=\frac{\mathnum{534897527366821322739105551596309127297799059216238170750587653268897525195329410461989192755406232054368774062046392616855931712806514066090688975889142996984381068616059330074138423269936085437173807968096703513233692016041062418063754462248811152089393704635}}{2}.
\]
Moreover,
\[
 \sum_{v=1}^6\widehat D_v=\readnum{802217525874537526203420827542356930,}
\]
and therefore
\[
 \widehat\Gamma
 =\frac98(6^2)\widehat W+\frac18\sum_v\widehat D_v
 =\frac{\readnum{181803236158239999983734859972834788526506620165}}{4}.
\]
The shifted constant is
\[
 \widehat A=\frac{\mathnum{1069795054733642645478211103192618254595598118432476341501175306537795050390658820923978385510812464108737548124092785233711863425613028132181377951778285993968762137232118660148276846539872170874347615936193285101241564406205502437302657948362994797686001789057}}{4}.
\]
The decisive gap comparison is
\[
 \frac{\widehat\Delta_{\min}}2=\frac{\readnum{1016043487678788716160162101963550374940926910024249}}{2}
 >\frac{\readnum{181803236158239999983734859972834788526506620167}}{2}=2\widehat\Gamma+1,
\]
with positive margin
\[
 \frac{\widehat\Delta_{\min}}2-(2\widehat\Gamma+1)
 =\readnum{507930842221315238080089183551788770076200201702041.}
\]
Finally, the no-instance decision threshold is
\[
 \widehat B=\left\lfloor\widehat A+\widehat\Gamma\right\rfloor
 =\mathnum{267448763683410661369552775798154563648899529608119085375293826634448762597664705230994596377703116027184387031023196308427965856403257033045344487944571498492190534308029665037069211634968042718586903984048321275355841910590935609321598202083957396553127102305.}
\]
Thus a single nonzero integer prefix error costs more than the most favorable possible residual swing on both sides of the threshold comparison.  This is the numerical fact that will stop the attempted forward construction and make the reverse supposition contradictory.

\subsubsection{The ``if'' route: where the forward construction fails}\label{app:no-if}

The forward implication proved in \cref{lem:red2if} starts with an exact-cover vector.  The present source instance has no such vector, so there is no legitimate witness with which to begin that proof.  To see the obstruction at the closest possible point, nevertheless try the former yes-witness
\[
 x^\dagger=(1,0,0,0,1,0),
 \qquad
 \sigma^\dagger=(+1,-1,-1,-1,+1,-1).
\]
This produces the same formal early--tardy orientation pattern as in \cref{app:worked-if}: blocks \(1\) and \(5\) use their \(+1\) states and all other blocks use their \(-1\) states.  Every block is arithmetically synchronized and has its target tardy load.  The construction fails only when the logical prefix equations are evaluated.

For this attempted witness,
\[
 \widehat C(x^\dagger)=(1,2,2,4,5,6),
 \qquad
 \widehat C(x^\dagger)-(1,\ldots,6)=(0,0,-1,0,0,0).
\]
Therefore all terms in \(\widehat{\mathcal P}\) vanish except the third one, and
\[
 \widehat{\mathcal P}(x^\dagger)
 =\frac{\widehat\Delta_3}{2}
 =\frac{\readnum{1016043487678788716160162101963550374940926910024249}}{2}.
\]
The exact residual for this state is
\[
 \widehat R(\sigma^\dagger)=\frac{\readnum{-3294787992173267513354522350016919741241278067}}{4}.
\]
Substitution in \eqref{eq:no-master} gives the integer schedule cost
\[
 \widehat\Phi_{\dagger}
 =\widehat A+\widehat{\mathcal P}(x^\dagger)+\widehat R(\sigma^\dagger)
 =\mathnum{267448763683410661369552775798154563648899529608119085375293826634448762597664705230994596377703116027184387031023196308427965856403257033045344487944571498492190534308029665037069211634968042718586903984048321783331311243947690372528376838278431939949645139872.}
\]
It misses the required threshold by
\[
 \widehat\Phi_{\dagger}-\widehat B
 =\readnum{507975469333356754763206778636194474543396518037567}>0.
\]
This calculation is the direct numerical failure of the attempted ``if'' construction: the former cover still passes the block and load gadgets, but the uncovered element \(e_3\) creates one unit of third-prefix error, and the main gap converts that single unit into an insurmountable cost.

The former witness is not quite the least expensive coded state.  Exact enumeration shows that the minimum coded cost occurs at
\[
 \widehat x^{\rm best}=(0,1,0,0,0,1),
 \qquad
 \widehat C(\widehat x^{\rm best})=(1,2,3,4,6,6),
\]
so its sole error is at the fifth prefix.  For this state,
\[
 \widehat{\mathcal P}(\widehat x^{\rm best})
 =\frac{\readnum{1016043538035810585169112863689946286355926164549607}}{2},
\]
\[
 \widehat R(2\widehat x^{\rm best}-\mathbf1)
 =\frac{\readnum{-5625598711919279964842548215293427898608008223}}{4},
\]
and
\[
 \widehat\Phi_{\rm can}(2\widehat x^{\rm best}-\mathbf1)-\widehat B
 =\readnum{507974911809187752764569287492926111123856803617707}>0.
\]
Hence even the most favorable coded assignment is above the threshold.  \Cref{tab:no-state-audit} supplies the exhaustive \(2^6=64\)-state check.  The last column is the exact value of \(\widehat\Phi_{\rm can}-\widehat B\); every entry is positive.
\begin{landscape}
\tiny
\setlength{\tabcolsep}{1.5pt}
\setlength{\LTleft}{0pt}
\setlength{\LTright}{0pt}
\tablecert{\textbf{Purpose.}  The table exhausts all 64 no-instance coded states and certifies that none reaches the decision threshold.  \textbf{Verification rule.}  Compute the hatted prefix totals, penalty, residual, and $\widehat A+\widehat{\mathcal P}+\widehat R-\widehat B$; every final entry must be positive.}
\begin{longtable}{c p{0.15\textheight} p{0.17\textheight} c p{0.22\textheight} p{0.22\textheight} p{0.27\textheight}}
\caption{Exact audit of all 64 coded states for the no-instance.}\label{tab:no-state-audit}\\
\toprule
$x$ & $\widehat C(x)$ & $\widehat C(x)-(1,\ldots,6)$ & RPE? & $\widehat{\mathcal P}(x)$ & $\widehat R(\sigma)$ & $\widehat\Phi_{\rm can}-\widehat B$ \\
\midrule
\endfirsthead
\multicolumn{7}{c}{\tablename\ \thetable\ (continued)}\\
\toprule
$x$ & $\widehat C(x)$ & $\widehat C(x)-(1,\ldots,6)$ & RPE? & $\widehat{\mathcal P}(x)$ & $\widehat R(\sigma)$ & $\widehat\Phi_{\rm can}-\widehat B$ \\
\midrule
\endhead
\midrule
\multicolumn{7}{r}{Continued on next page}\\
\endfoot
\bottomrule
\endlastfoot
\texttt{000000} & \smallnum{0,0,0,0,0,0} & \smallnum{-1,-2,-3,-4,-5,-6} & no & \smallnum{92459962716614084396639698718108541805605806098669265/2} & \smallnum{47695293424111348295371716423956535537330164009/4} & \smallnum{46229947831321358666156927268268383683239655755220594} \\
\texttt{000001} & \smallnum{0,0,1,2,3,3} & \smallnum{-1,-2,-2,-2,-2,-3} & no & \smallnum{26417132704720167841790484498028594418372890703566529/2} & \smallnum{18927046305926658913194663649671590772513624417/4} & \smallnum{13208525633312620842559974613965216418387006853534328} \\
\texttt{000010} & \smallnum{0,0,0,1,2,3} & \smallnum{-1,-2,-3,-3,-3,-3} & no & \smallnum{41657785361610361398637486958024441764142315834388849/2} & \smallnum{27077689966468620937625082912483704892294337173/4} & \smallnum{20828853999418632756473981951567955794300249364123677} \\
\texttt{000011} & \smallnum{0,0,1,3,5,6} & \smallnum{-1,-2,-2,-1,0,0} & no & \smallnum{10160435966657714682655147378690040914454940867145461/2} & \smallnum{9229722697919569457368902805676115237483890897/4} & \smallnum{5080174839950492261219942097855728667559148177890414} \\
\texttt{000100} & \smallnum{1,1,2,2,2,3} & \smallnum{0,-1,-1,-2,-3,-3} & no & \smallnum{12192522571531490496380005774337010690946276991319485} & \smallnum{13423743216239965821231612833885425029852265211/4} & \smallnum{12192480476658254996371465148525225953605402827730747} \\
\texttt{000101} & \smallnum{1,1,3,4,5,6} & \smallnum{0,-1,0,0,0,0} & no & \smallnum{508021860739623390789411241288256631698507230927505} & \smallnum{-3201267808843462710537441503053251376084059453/4} & \smallnum{507975609613631619923737673212887659688531583257601} \\
\texttt{000110} & \smallnum{1,1,2,3,4,6} & \smallnum{0,-1,-1,-1,-1,0} & no & \smallnum{2032087126428642746474111487287841366275449744009737} & \smallnum{1453159771355112226718044092350497392238893467/4} & \smallnum{2032042038909546025252172233083871245202666177078063} \\
\texttt{000111} & \smallnum{1,1,3,5,7,9} & \smallnum{0,-1,0,1,2,3} & no & \smallnum{7620326724107410560316954274611860575800450197547431} & \smallnum{-4251571404092678403130137577110823903691337881/4} & \smallnum{7620280210405519977147357558362473089397342648057920} \\
\texttt{001000} & \smallnum{0,1,2,2,2,3} & \smallnum{-1,-1,-1,-2,-3,-3} & no & \smallnum{25401088767425114274419095062858134935138697410676787/2} & \smallnum{14071190638337216628067617653802726142846305193/4} & \smallnum{12700502450701177161513708614618487709553752790259651} \\
\texttt{001001} & \smallnum{0,1,3,4,5,6} & \smallnum{-1,-1,0,0,0,0} & no & \smallnum{2032087345841380063237905996760626816643157889892827/2} & \smallnum{-3676062584512840776305108646670405820944609687/4} & \smallnum{1015997303096004343408762988388158532022992082138951} \\
\texttt{001010} & \smallnum{0,1,2,3,4,6} & \smallnum{-1,-1,-1,-1,-1,0} & no & \smallnum{5080217877219418774607306488759796285797042916057291/2} & \smallnum{791324629314894346385523285648703132731770317/4} & \smallnum{2540063685631827156027243907045726346377173014316184} \\
\texttt{001011} & \smallnum{0,1,3,5,7,9} & \smallnum{-1,-1,0,1,2,3} & no & \smallnum{16256697072576954402292992063407834704847043823132679/2} & \smallnum{-6035648743899525156066330347347073721053051247/4} & \smallnum{8128301576567251666265211081406337306957960021648487} \\
\texttt{001100} & \smallnum{1,2,4,4,4,6} & \smallnum{0,0,1,0,-1,0} & no & \smallnum{1016043512857299650664637482826748330648426537286928} & \smallnum{-5309068861776614951098067720473748007851072041/4} & \smallnum{1015996734781044646510903774594825003514292947863877} \\
\texttt{001101} & \smallnum{1,2,5,6,7,9} & \smallnum{0,0,2,2,2,3} & no & \smallnum{10668457097220523716908055526769213905769344714054336} & \smallnum{-10913085991525411505062795583600611612761771993/4} & \smallnum{10668408918139986275555183327355324796919309896956297} \\
\texttt{001110} & \smallnum{1,2,4,5,6,9} & \smallnum{0,0,1,1,1,3} & no & \smallnum{6096261300153465649853954788770459702038517174807640} & \smallnum{-9941914858041063469547097318689868010543151825/4} & \smallnum{6096213363865711579588091468281136820874382912364643} \\
\texttt{001111} & \smallnum{1,2,5,7,9,12} & \smallnum{0,0,2,3,4,6} & no & \smallnum{33021415192987324635530810153085698545932205565504722} & \smallnum{-4625652138154222121590952514339376505447758461/4} & \smallnum{33021368585765250536975283821632576752390947576910066} \\
\texttt{010000} & \smallnum{1,2,2,2,3,3} & \smallnum{0,0,-1,-2,-2,-3} & no & \smallnum{18288783731404681285335624747647776686715908698035925/2} & \smallnum{11560533412276668010621067110899167951266827885/4} & \smallnum{9144349305026654151834819095375672859452810539069893} \\
\texttt{010001} & \smallnum{1,2,3,4,6,6} & \smallnum{0,0,0,0,1,0} & no & \smallnum{1016043538035810585169112863689946286355926164549607/2} & \smallnum{-5625598711919279964842548215293427898608008223/4} & \smallnum{507974911809187752764569287492926111123856803617707} \\
\texttt{010010} & \smallnum{1,2,2,3,5,6} & \smallnum{0,0,-1,-1,0,0} & no & \smallnum{2032086993342228126200287628309223182797958861614857/2} & \smallnum{-1719332597407802070195571912815137741390730999/4} & \smallnum{1015997616028925151149630331546640178917412456469638} \\
\texttt{010011} & \smallnum{1,2,3,5,8,9} & \smallnum{0,0,0,1,3,3} & no & \smallnum{19304827416914627264900650385096939319983516755987887/2} & \smallnum{-7985184871968112143738314571530378481259473791/4} & \smallnum{9652366261352056080422293324254833568700006436470455} \\
\texttt{010100} & \smallnum{2,3,4,4,5,6} & \smallnum{1,1,1,0,0,0} & no & \smallnum{1524065416760084389699034049362088595792042399958538} & \smallnum{-6927687417108371279413779974891006987194163565/4} & \smallnum{1524018234029190552606218262202101664343163974762606} \\
\texttt{010101} & \smallnum{2,3,5,6,8,9} & \smallnum{1,1,2,2,3,3} & no & \smallnum{14224609615230740211449790684374393029980739070374767} & \smallnum{-11970583448203058404469396863737334478188784745/4} & \smallnum{14224561171775838600685193633310183886949987896523540} \\
\texttt{010110} & \smallnum{2,3,4,5,7,9} & \smallnum{1,1,1,1,2,3} & no & \smallnum{8636370280127871559226577082685692539893985366578464} & \smallnum{-11560533414034967596997354228667409672429267357/4} & \smallnum{8636321939185478490484681899632142164344435632606584} \\
\texttt{010111} & \smallnum{2,3,5,7,10,12} & \smallnum{1,1,2,3,5,6} & no & \smallnum{38609654787069162300410771038070770242855452250924367} & \smallnum{-5683149595494016820132098450036382053417795221/4} & \smallnum{38609607915472723866906570071331164525062807269820521} \\
\texttt{011000} & \smallnum{1,3,4,4,5,6} & \smallnum{0,1,1,0,0,0} & no & \smallnum{2032087209158035497738984584540063638337941371879259/2} & \smallnum{-5675955734925997583451600810440681012604957967/4} & \smallnum{1015996734781044457370100495654836000301585908045097} \\
\texttt{011001} & \smallnum{1,3,5,6,8,9} & \smallnum{0,1,2,2,3,3} & no & \smallnum{27433175606099347141240497854564672506715334712711717/2} & \smallnum{-11841093963787313581110889662821464061454169363/4} & \smallnum{13716539391967143063791857715844927339294520366158477} \\
\texttt{011010} & \smallnum{1,3,4,5,7,9} & \smallnum{0,1,1,1,2,3} & no & \smallnum{16256696935893609836794070651187271526541827305119111/2} & \smallnum{-11618084295990062588203700690836179070341224891/4} & \smallnum{8128300112616691360881392340953469845529014440598292} \\
\texttt{011011} & \smallnum{1,3,5,7,10,12} & \smallnum{0,1,2,3,5,6} & no & \smallnum{76203265949776191319162458561957426932464761073810917/2} & \smallnum{-6862942675215740683942116875739607009184342971/4} & \smallnum{38101585808343387295646062361734501322633496614164675} \\
\texttt{011100} & \smallnum{2,4,6,6,7,9} & \smallnum{1,2,3,2,2,3} & no & \smallnum{15748675071557055711295647503923173146538762626843887} & \smallnum{-9272885856553485978512029680436219594285427853/4} & \smallnum{15748627302526552012924156942200759828786732428831883} \\
\texttt{011101} & \smallnum{2,4,7,8,10,12} & \smallnum{1,2,4,4,5,6} & no & \smallnum{49786133565182802704880416715926697215096147234419504} & \smallnum{-3294787992313541125763320095470734284254424321/4} & \smallnum{49786087290676765066495139341381680138715444544158383} \\
\texttt{011110} & \smallnum{2,4,6,7,9,12} & \smallnum{1,2,3,3,4,6} & no & \smallnum{38101633167323856629918402130239657786701623478294273} & \smallnum{-6567994404859677220031064790893814644599239685/4} & \smallnum{38101586074516215854999101188758466854550830701829311} \\
\texttt{011111} & \smallnum{2,4,7,9,12,15} & \smallnum{1,2,4,5,7,9} & no & \smallnum{89411831969420238542936608662615955124031778299799564} & \smallnum{10330383309015905534638517461549025775437857163/4} & \smallnum{89411789101207026236912996388530327302591090532608814} \\
\texttt{100000} & \smallnum{1,2,2,3,3,3} & \smallnum{0,0,-1,-1,-2,-3} & no & \smallnum{15240653214414363055215248168610758263144812843264101/2} & \smallnum{10330383309213982479634994521720348166130939217/4} & \smallnum{7620283738993969271103248059339016352962316327711814} \\
\texttt{100001} & \smallnum{1,2,3,5,6,6} & \smallnum{0,0,0,1,1,0} & no & \smallnum{2032087043699249995209238390035619094212958116140215/2} & \smallnum{-6567994404643902772347586756808658162946971883/4} & \smallnum{1015996429041984276628930174406127136244806694672096} \\
\texttt{100010} & \smallnum{1,2,2,4,5,6} & \smallnum{0,0,-1,0,0,0} & no & \smallnum{1016043487678788716160162101963550374940926910024249/2} & \smallnum{-3294787992173267513354522350016919741241278067/4} & \smallnum{507975469333356754763206778636194474543396518037567} \\
\texttt{100011} & \smallnum{1,2,3,6,8,9} & \smallnum{0,0,0,2,3,3} & no & \smallnum{22352957933904945495021026964133957743554612610759711/2} & \smallnum{-9272885856395514863416230961068570960313095851/4} & \smallnum{11176431197921969088631801694294245395937434600450852} \\
\texttt{100100} & \smallnum{2,3,4,5,5,6} & \smallnum{1,1,1,1,0,0} & no & \smallnum{2032087169591804094719096812534924999720558375753842} & \smallnum{-6862942675050471822135199869054723229534963585/4} & \smallnum{2032040003047095772101145345019964527342619365357905} \\
\texttt{100101} & \smallnum{2,3,5,7,8,9} & \smallnum{1,1,2,3,3,3} & no & \smallnum{16764718379389338736550104500238575049623318949351287} & \smallnum{-11618084295807096223709782710237461199732659757/4} & \smallnum{16764670024059225224776052639077904281560887389531307} \\
\texttt{100110} & \smallnum{2,3,4,6,7,9} & \smallnum{1,1,1,2,2,3} & no & \smallnum{10160435538623030674286765372204201751679533293964376} & \smallnum{-11841093963679848051891651970854088129484725777/4} & \smallnum{10160387127540500194324756465576215829460369296127891} \\
\texttt{100111} & \smallnum{2,3,5,8,10,12} & \smallnum{1,1,2,4,5,6} & no & \smallnum{42165807056891200235551210380280625070355064081491495} & \smallnum{-5675955734800834551545362144559470989676328633/4} & \smallnum{42165760187093226975342576560225095721790185035754296} \\
\texttt{101000} & \smallnum{1,3,4,5,5,6} & \smallnum{0,1,1,1,0,0} & no & \smallnum{3048130714821474907779110110885736446194973323469867/2} & \smallnum{-5683149595541749087696004172059881370077953007/4} & \smallnum{1524018485814299008452287197726831999430012515591641} \\
\texttt{101001} & \smallnum{1,3,5,7,8,9} & \smallnum{0,1,2,3,3,3} & no & \smallnum{32513393134416544191441125486293036546000494470664757/2} & \smallnum{-11560533414065002361874258976777074898130239395/4} & \smallnum{16256648226265879019469976340866780870034391076117489} \\
\texttt{101010} & \smallnum{1,3,4,6,7,9} & \smallnum{0,1,1,2,2,3} & no & \smallnum{19304827452883928066914447230224289950112923159890935/2} & \smallnum{-11970583448308594004620981900478341642528878331/4} & \smallnum{9652365282987062396308726526151676646773919321070844} \\
\texttt{101011} & \smallnum{1,3,5,8,10,12} & \smallnum{0,1,2,4,5,6} & no & \smallnum{83315570489420267189443337246377136587463984734945173/2} & \smallnum{-6927687417196209376878364037718180060575071403/4} & \smallnum{41657738061979239735669328469882565655489845597049695} \\
\texttt{101100} & \smallnum{2,4,6,7,7,9} & \smallnum{1,2,3,3,2,3} & no & \smallnum{18288783835715654236395961319787355166181342505820407} & \smallnum{-7985184872048652494491780347040316408963334245/4} & \smallnum{18288736388610396664232841763127275197405108638331805} \\
\texttt{101101} & \smallnum{2,4,7,9,10,12} & \smallnum{1,2,4,5,5,6} & no & \smallnum{54358329340668280050060981584482224850452791016577240} & \smallnum{-1719332597470644918262036714411241578135405705/4} & \smallnum{54358283460026091122399756085258053038945264856070773} \\
\texttt{101110} & \smallnum{2,4,6,8,9,12} & \smallnum{1,2,3,4,4,6} & no & \smallnum{41657785437145894565058841472449512614201235308861401} & \smallnum{-5625598712057623648183693305520873673991804477/4} & \smallnum{41657738579937176990652933492811193025285685184255241} \\
\texttt{101111} & \smallnum{2,4,7,10,12,15} & \smallnum{1,2,4,6,7,9} & no & \smallnum{95000071250569155298157299057517155567245454033547908} & \smallnum{11560533412156021829966922994585556266842217379/4} & \smallnum{95000028689893468777162760615532911004937389117447212} \\
\texttt{110000} & \smallnum{2,4,4,5,6,6} & \smallnum{1,2,1,1,1,0} & no & \smallnum{8128349041657159119343773936489336075988086301622321/2} & \smallnum{-4625652138075915195571937172404334310574036979/4} & \smallnum{4064127913606505480693092141545381728213333880646875} \\
\texttt{110001} & \smallnum{2,4,5,7,9,9} & \smallnum{1,2,2,3,4,3} & no & \smallnum{43689872689467091914020466494036313893929164436114853/2} & \smallnum{-9941914857945059040841081002840991724710244595/4} & \smallnum{21844888408445791910745477103032913028019519413841237} \\
\texttt{110010} & \smallnum{2,4,4,6,8,9} & \smallnum{1,2,1,2,3,3} & no & \smallnum{28449219931862854619155562510587674725329740796241817/2} & \smallnum{-10913085991504907911631459556383077265567986311/4} & \smallnum{14224561786850889873350807413713955058198422379469290} \\
\texttt{110011} & \smallnum{2,4,5,8,11,12} & \smallnum{1,2,2,4,6,6} & no & \smallnum{98556224196614057252699129708880199080816359358593697/2} & \smallnum{-5309068861738413854979730719342379569698100611/4} & \smallnum{49278065320230773631746105175792426496116155628116655} \\
\texttt{110100} & \smallnum{3,5,6,7,8,9} & \smallnum{2,3,3,3,3,3} & no & \smallnum{24893067421046497575754424956729674370432909213888675} & \smallnum{-6035648743854026313236875058898470137223032433/4} & \smallnum{24893020461325272052247850713795916437118243281475526} \\
\texttt{110101} & \smallnum{3,5,7,9,11,12} & \smallnum{2,3,4,5,6,6} & no & \smallnum{64010743540106555144926783812494382913772136218294329} & \smallnum{791324629378090691901979548011140807520974879/4} & \smallnum{64010698287128672929449460854274276707860206471883008} \\
\texttt{110110} & \smallnum{3,5,6,8,10,12} & \smallnum{2,3,3,4,5,6} & no & \smallnum{50294156098548359074755530836771724391164654346028883} & \smallnum{-3676062584525145266063332672939310084794526673/4} & \smallnum{50294109728723673383469218387223562947640001520742174} \\
\texttt{110111} & \smallnum{3,5,7,10,13,15} & \smallnum{2,3,4,6,8,9} & no & \smallnum{106684572526079051563361327012909206203276651564364211} & \smallnum{14071190638342609640996394601447655969955573955/4} & \smallnum{106684530593067671589013741328292863356493512426602659} \\
\texttt{111000} & \smallnum{2,5,6,7,8,9} & \smallnum{1,3,3,3,3,3} & no & \smallnum{46738003969006595306531599370907008081127388143663899/2} & \smallnum{-4251571404260180689671505017370603416170856239/4} & \smallnum{23368955470801407028220631333862256489215708402462849} \\
\texttt{111001} & \smallnum{2,5,7,9,11,12} & \smallnum{1,3,4,5,6,6} & no & \smallnum{124973356207126710444876317082436425167805842152475207/2} & \smallnum{1453159771205307442863677626004551970718560857/4} & \smallnum{62486633016044258463765023323422625876343782129222777} \\
\texttt{111010} & \smallnum{2,5,6,8,10,12} & \smallnum{1,3,3,4,5,6} & no & \smallnum{97540181324010318304533811130991108122590878407944315/2} & \smallnum{-3201267809068768329666488258030538736243513611/4} & \smallnum{48770044410879167325074827215158496344963623516438714} \\
\texttt{111011} & \smallnum{2,5,7,10,13,15} & \smallnum{1,3,4,6,8,9} & no & \smallnum{210321014179071703281745403483266071746814872844614971/2} & \smallnum{13423743216032357704789567052821971760651996801/4} & \smallnum{105160464994662616088962132005309805870203244958651645} \\
\texttt{111100} & \smallnum{3,6,8,9,10,12} & \smallnum{2,4,5,5,5,6} & no & \smallnum{66550852800641509884743271210053451447938264024873146} & \smallnum{9229722697719259087637975311929897113960669871/4} & \smallnum{66550809657263144754558047185832286221715410888385573} \\
\texttt{111101} & \smallnum{3,6,9,11,13,15} & \smallnum{2,4,6,7,8,9} & no & \smallnum{127005443214856658625749642642809379625646178966438188} & \smallnum{27077689966286008070581156392651320859730301895/4} & \smallnum{127005404533470110637251664354383484579779262272358621} \\
\texttt{111110} & \smallnum{3,6,8,10,12,15} & \smallnum{2,4,5,6,7,9} & no & \smallnum{107192594710542385132839588683088382164730927041843814} & \smallnum{18927046305668545210876056841207864801310467591/4} & \smallnum{107192553991494921989975895468387599257999995742805671} \\
\texttt{111111} & \smallnum{3,6,9,12,15,18} & \smallnum{2,4,6,8,10,12} & no & \smallnum{184919925433228168793279397436217083611211612197338530} & \smallnum{47695293423870932095740110589406643657086192931/4} & \smallnum{184919891906242485201012425437529737754175394842231722} \\
\end{longtable}
\end{landscape}

The table gives a stronger check than the reduction itself needs.  Source-level reasoning already ruled out every exact cover.  The numerical audit additionally verifies, without appealing to an asymptotic estimate, that no one of the 64 canonical schedules reaches the constructed bound.

\subsubsection{The ``only if'' route: the hypothetical target witness is impossible}\label{app:no-onlyif}

Now run the reverse implication in full.  Suppose, for contradiction, that the constructed no-instance admitted a schedule of cost at most \(\widehat B\).  Choose an optimal schedule and replace it, using \cref{lem:compact}, by an optimal canonical schedule.

\par\medskip\noindent\textbf{Gate 1: the anchor must be early in the selected canonical schedule.}\quad
The all-rejected coded state provides a feasible benchmark of cost \(\widehat C^\star\), and
\[
 \widehat C^\star-\widehat B=\readnum{46229947831321358666156927268268383683239655755220594}>0.
\]
If the anchor were tardy, its tardy self-cost alone would be
\[
 \widehat p_a^2\widehat s_a=\mathnum{680919297663224297469522772039887703131818185701605521353164494680402051534719634615271115978950656748767054982151116098783494343969724943804661695902292754150322500412307014293134310459733809136889835337744566351718894261991040961104909767454545739800961024637863943069853588098070640688084360565913330526581122.}
\]
This value exceeds \(\widehat C^\star\), and hence \(\widehat B\), by a vast margin.  Therefore the selected optimal canonical schedule keeps the anchor early.

\par\medskip\noindent\textbf{Gate 2: every block must have its target load and one of its two coded states.}\quad
If the vector of block-load errors is a nonzero integer vector, positive definiteness of the minimum kernel makes the load-lock loss at least \(\widehat H\).  All other assignment-dependent oscillation is bounded in absolute value by \(\widehat U\).  The exact slack is
\[
 \widehat H-\widehat U=\mednum{351313875875102776031068155544805374246124934024312521128934429520056}>0.
\]
Thus a low-cost schedule cannot carry any block-load error.  The arithmetic synchronization argument then leaves, in each of the six blocks, only the displayed state and its complement.  Consequently the whole schedule decodes to a binary vector \(x\in\{0,1\}^6\).

\par\medskip\noindent\textbf{Gate 3: every prefix equation would have to be exact.}\quad
Because the source audit in \cref{tab:no-pair-audit} proves that no binary vector is prefix-exact, the decoded vector has at least one nonzero integer error \(\widehat C_k(x)-k\).  Therefore
\[
 \widehat{\mathcal P}(x)\ge\frac{\widehat\Delta_{\min}}2.
\]
Using \(\widehat R(\sigma)\ge-\widehat\Gamma\) in \eqref{eq:no-master} yields
\[
\begin{aligned}
 \widehat\Phi_{\rm can}(\sigma)-\widehat B
 &\ge \widehat A+\frac{\widehat\Delta_{\min}}2
       -\widehat\Gamma-\widehat B\\
 &=
 \frac{\readnum{1015861684442630476160178367103577540152400403404085}}{2}>0.
\end{aligned}
\]
This contradicts the supposed cost bound.  Equivalently, gap dominance would force \(\widehat{\mathcal P}(x)=0\); then \(\widehat C_k(x)=k\) for every \(k\), and successive differences would give
\[
 \widehat A^{\mathrm{inc}}x=\mathbf1.
\]
But \(\widehat C_6(x)=6\) forces exactly two selected columns, whereas every pair of columns intersects.  The decoded exact cover demanded by the reverse implication therefore cannot exist.  This closes the contradiction from the hypothetical ``only if'' starting assumption.

\clearpage
\subsubsection{Side-by-side conclusion}\label{app:no-conclusion}

The two examples differ only by exchanging \(e_3\) and \(e_4\) between columns \(S_1\) and \(S_3\), yet the reduction records that local combinatorial change exactly.
\begin{table}[ht]
\centering
\caption{What changes between the closely related yes- and no-instances.}\label{tab:yes-no-summary}
\begin{tabular}{p{0.18\textwidth} p{0.34\textwidth} p{0.34\textwidth}}
\toprule
feature & yes-instance in \cref{app:worked-yes} & no-instance in \cref{app:worked-no}\\
\midrule
source structure & $S_1\cap S_5=\varnothing$ and $S_1\cup S_5=X$ & every pair of subset-columns intersects\\
closest vector & $x=(1,0,0,0,1,0)$ & $x^\dagger=(1,0,0,0,1,0)$\\
prefix totals & $(1,2,3,4,5,6)$ & $(1,2,2,4,5,6)$\\
main penalty & $\mathcal P(x)=0$ & $\widehat{\mathcal P}(x^\dagger)=\widehat\Delta_3/2$\\
threshold test & $\Phi_{\rm cover}-B<0$ & $\widehat\Phi_\dagger-\widehat B>0$, and all 64 coded states are above $\widehat B$\\
logical outcome & exact cover decodes to a low-cost schedule & a low-cost schedule would decode to a nonexistent exact cover\\
\bottomrule
\end{tabular}
\end{table}

In the yes-instance, the forward route begins with a genuine exact cover, annihilates every prefix square, and reaches the decision bound.  In the no-instance, the nearest analogous assignment incurs one third-prefix square; more generally, the reverse route shows that any schedule reaching the bound would have to annihilate all prefix squares and hence create an exact cover that the source incidence matrix forbids.  The two examples therefore exhibit both sides of the reduction, including the precise numerical mechanism by which the no-instance is rejected.

\clearpage

\clearpage
\setrunningunit{Glossary}
\setshortsectionhead{Glossary and notation}
\section{Glossary and Notation}
\label{app:glossary}
\begingroup
\small
\setstretch{1.05}
\setlength{\tabcolsep}{5pt}
\tablecert{\textbf{Purpose.}  The table fixes the principal terminology and symbol meanings used across the five Parts and appendices.  \textbf{Verification rule.}  Treat construction-specific symbols as local unless the scope column or an explicit notation reset states otherwise.}
\begin{longtable}{@{}p{0.23\textwidth}p{0.71\textwidth}@{}}
\toprule
Term or symbol & Meaning in this tutorial\\
\midrule
\endfirsthead
\toprule
Term or symbol & Meaning in this tutorial\\
\midrule
\endhead
\bottomrule
\endfoot
\AWET & Single-machine asymmetric weighted earliness--tardiness scheduling with common due date $d=\sum_jp_j$.\\
$p_j,w_j^E,w_j^L$ & Processing time, earliness weight, and tardiness weight of job $j$.\\
$r_j,s_j$ & Smith-type ratios $w_j^E/p_j$ and $w_j^L/p_j$ for the early and tardy arms.\\
$E,T$ & Early/on-time and tardy job sets in a canonical compact V-shaped schedule.\\
Canonical side objective & The exact quadratic set function in \eqref{eq:front-canonical}; it is the common starting point of all four technical Parts.\\
\RXC & Restricted Exact Cover by 3-Sets, the strong-hardness source in Part I.\\
\RPE & Regular prefix exactness, the intermediate source whose squared prefix errors are compiled into Part I's scheduling objective.\\
Anchored SDP & The Part II relaxation obtained after forcing one enumerated job to have early marginal one.\\
$e_i,t_i,e_{ij},t_{ij}$ & Part II pseudo-marginal early/tardy masses and pseudo-joint masses.\\
Minimum kernel & A matrix with entries $\min\{q_i,q_j\}$; its Gram representation is the positive-semidefinite engine of the tardy payment in Part II.\\
Strict antithetical & The class with $r_1<\cdots<r_n$ and $s_1>\cdots>s_n$ after one common indexing.\\
$Q,S,U,M,D$ in Part III & Total tagged load, selected early tagged load, residual budget $3Q^3$, dominant scale $8Q^3$, and due-date scale $M+Q$.\\
Ratio-order refinements & Fixed total orders $\prec_E,\prec_L$ refining nondecreasing $r_j$ and $s_j$; ties may be broken deterministically because equal-ratio swaps do not alter pair coefficients.\\
Ratio permutation & After indexing jobs by $\prec_E$, the permutation of their ranks in $\prec_L$.\\
Inversion graph $G_\times$ & The graph joining exactly those job pairs whose relative orders in $\prec_E$ and $\prec_L$ disagree.\\
Separable permutation & A permutation recursively constructed by direct and skew sums; equivalently, its inversion graph is a cograph.\\
Separating-tree block $J(M)$ & The exact leaf set below separating-tree node $M$; it is an interval in both ratio orders.\\
Ordered separating tree & A binary direct/skew decomposition tree for a separable ratio permutation, with left child earlier in $\prec_E$.\\
$z_M=(e_M,t_M,u_M,v_M)$ & Part IV's additive block signature: early processing load, tardy processing load, earliness weight of early jobs, and tardiness weight of tardy jobs.\\
$\mathcal D_M(z)$ & Minimum internal canonical cost of a side assignment on separating-tree block $J(M)$ having signature $z$; the exact Part IV dynamic-programming table.\\
Geometric box & A four-tuple of logarithmic coordinate buckets, with zero kept in a singleton bucket, used to trim the exact separating-tree table.\\
\end{longtable}
\endgroup

\clearpage
\setrunningunit{Citation Guide}
\section{Citing This Document}
\label{app:citing}
The title is \emph{Asymmetric Weighted Earliness Tardiness: Scheduling with a Nonrestrictive Common Due Date}, version dated \versiondate.  Citations to a particular result should include its Part-based theorem number and, where helpful, the descriptive Part title.  The front-matter provenance note states the tutorial's policy for historical priority claims.

\clearpage
\phantomsection
\addcontentsline{toc}{section}{References}
\label{app:references}
\setrunningunit{References}

\end{document}